\setlist[enumerate,1]{label={(\roman*)}}
\theoremstyle{plain}
\newtheorem{theorem}{Theorem}[section]
\newtheorem{lemma}[theorem]{Lemma}
\newtheorem{proposition}[theorem]{Proposition}
\newtheorem{corollary}[theorem]{Corollary}
\theoremstyle{remark}
\newtheorem{definition}[theorem]{Definition}
\newtheorem{example}[theorem]{Example}
\newtheorem{remark}[theorem]{Remark}
\newcommand{\bR}{\mathbb{R}}\newcommand{\R}{\mathbb{R}}
\newcommand{\oR}{\overline{\R}}
\newcommand{\bE}{\mathbb{E}}
\newcommand{\bP}{\mathbb{P}}\renewcommand{\P}{\mathbb{P}}
\newcommand{\1}{\textbf 1}
\newcommand{\dd}{\,\mathrm{d}} 
\newcommand{\ddd}{\mathrm{d}} 
\newcommand{\ii}{\mathrm{i}} 
\newcommand{\ee}{\mathrm{e}} 
\newcommand{\abs}[1]{\lvert#1\rvert}
\newcommand{\eps}{\varepsilon}
\newcommand{\tw}{{}^tw}
\newcommand{\tv}{{}^tv}
\newcommand{\DRR}{D(\R,\oR)}
\newcommand{\cadlag}{càdlàg\xspace}
\newcommand{\as}{a.s.\xspace}
\newcommand{\ccdot}{\,\cdot\,}
\newcommand{\exc}{\mathbf{e}} 
\newcommand{\W}{\mathcal W} 
\newcommand{\D}{ D} 
\newcommand{\eqdis}{\overset{(d)}{=}}
\newcommand{\ind}[1]{\mathbf{1}_{\{#1\}}}
\begin{document}

\begin{frontmatter}
\title{The structure of entrance and exit at infinity for time-changed L\'evy processes}
\runtitle{Time-changed L\'evy processes}

\begin{aug}
\author[A]{\fnms{Samuel}~\snm{Baguley}\ead[label=e1]{Samuel.Baguley@hpi.de}},

\author[B]{\fnms{Leif}~\snm{D\"oring}\ead[label=e2]{doering@uni-mannheim.de}}
\and
\author[C]{\fnms{Quan}~\snm{Shi}\ead[label=e3]{quan.shi@amss.ac.cn}}
\address[A]{
Hasso Plattner Institute, University of Potsdam\printead[presep={,\ }]{e1}}

\address[B]{
Institute of Mathematics, University of Mannheim\printead[presep={,\ }]{e2}}

\address[C]{AMSS, Chinese Academy of Sciences\printead[presep={,\ }]{e3}}
\end{aug}

\begin{abstract}
Studying the behavior of Markov processes at boundary points of the state space has a long history, dating back all the way to William Feller. With different motivations in mind entrance and exit questions have been explored for different discontinuous Markov processes in the past two decades. Proofs often use time-change techniques and rely on problem specific knowledge such branching or scaling properties. In this article we ask how far techniques can be pushed with as little as possible model assumptions. We give sharp conditions on time-changed L\'evy processes to allow entrance and regular boundary point at infinity. The main tool we introduce is a generalised scaling property that holds for all time-changed L\'evy processes and can be used to extend scaling arguments for self-similar Markov processes.
\end{abstract}

\begin{keyword}[class=MSC]
\kwd[Primary ]{60G51}
\kwd{60G18} 	
\kwd{60J50}     
\end{keyword}

\begin{keyword}
\kwd{L\'evy process}
\kwd{self-similar Markov process}
\kwd{time change}
\kwd{excursion theory}
\end{keyword}

\end{frontmatter}

\section{Introduction}
How a Markov process behaves at its boundary points is a classical problem, going back to Feller's seminal articles in the 1950s \cite{Feller52,Feller54}. Feller studied Markov diffusion processes, which have continuous sample paths; in that setting extensions to boundary points can for instance be constructed analytically using the Hille-Yosida theory and explicit solutions to second order PDEs. 
For processes with discontinuities, the picture is less clear.
The objective of this article is to progress the understanding of jump-type Markov processes at boundary points.
For such processes
the analytic approach is significantly more delicate as direct PDE solutions using variation of constants methods are infeasible for general jump-type operators. Results in the literature are mainly based on abstract excursion theory and time-change constructions for specific examples.\smallskip

For applications of these approaches to the boundary behavior of branching processes at infinity we refer for instance to \cite{KypEtAl-17,Fou19,LiYangZhou}, for the boundary behavior of positive self-similar Markov processes at zero see for instance \cite{Riv07,Fit06,BerSav11,DDK17,BarBer11,BD12,CabCha06,D12} and for driftless SDEs driven by stable processes see \cite{Zanzotto02,DK20,BDK}.
A recurring tool in these analyses is the Lamperti transform.
Lamperti's representation for branching processes directly links continuous-state branching processes and their variants to time-changed L\'evy processes, and Lamperti's representation of positive self-similar Markov processes (pssMps) gives a link to time-changed exponentials of L\'evy processes. In both cases the boundary points of the original Markov process can be studied via the infinite boundary points of the time-changed L\'evy process.\smallskip

In the present article we study how far ideas can be extended to general time-changed L\'evy processes without specific time-change structure. The setup can be seen to extend the study of all martingale diffusion processes (time-changes of Brownian motion) to processes with discontinuities, an approach that has also appears in the random walk world \cite{BCKW,BerKor16}. We introduce a new space-time invariance property that is as effective as the classical scaling property from stochastic process theory in the study of the entrance behavior of pssMps at zero. The new space-time invariance property is used to characterise entrance and regular boundary for time-changed L\'evy processes without any structural assumption on the L\'evy process or the time-change.

\subsection{Organisation of the paper}
In \cref{main results section} we state our main theorems and give their applications to a number of examples, connecting to various results in previous literature. In \cref{sec:X} we establish several properties of time-changed Lévy processes, including the Feller property, explosion behaviour, and the new space-time invariance property that we call the \emph{$R$-scaling invariance}. 
In \cref{sec:P-infty} we study the existence of an entrance law and prove \cref{thm:1}. Finally, 
\cref{sec:rec} is concerned with the construction of recurrent extensions, where we give the proof of \cref{thm:2,thm:convergence to n} and establish more properties of the excursion.
	
\section{Main results}\label{main results section}
Let us first fix some notation and terminology that will be used throughout the article.
Suppose $\xi$ is a Lévy process on $\R$ that is non-lattice and unkilled, with characteristic exponent
$\mathrm{E}[\ee^{\ii q \xi_t}] = \ee^{-t\Psi(q)}$, where $\Psi$ is defined by
$$
	\Psi(q) = \frac{1}{2} \sigma^2 q^2 + \ii a q +  \int_{\mathbb{R}} \left( 1- \ee^{\ii qx} + \ii q x\ind{\abs{x}<1} \right)\Pi(\ddd x), \qquad q\in \R, 
$$
for some fixed $a\in \mathbb{R}$, $\sigma^2\ge 0$, and L\'evy measure $\Pi$ that satisfies $\int_{\mathbb{R}} (1\wedge x^2) \Pi(\ddd x)<\infty$. 
We use $\mathrm P_x$ to denote the law of $\xi$ issued from $x$.

Let $R:\R\to(0,\infty)$ be continuous, and let $X$ be the stochastic process defined by the time-change 
\begin{align}\label{eq:time-change}
	X_t = x+ \xi_{\eta(t)},\quad \text{ where }\eta(t) = \inf\left\{ s> 0\colon \int_0^{s} \frac{1}{R(x+ \xi_r)}\dd r  >t\right\}
\end{align}
for $t<\zeta:= \int_0^\infty \frac{1}{R(x+\xi_r)}\dd r.$ 
The corresponding law of $X$ under $\mathrm P_0$ will be denoted by $\P_x$. In what follows $X$ will be referred to as the \emph{time-changed Lévy process} and $\zeta$ will be called its explosion time. 
 One significant effect of the time-change is that $X$ can have a finite lifetime, which $\xi$ (by assumption) could not. We are interested in deriving analytic conditions on $R$ which ensure the existence of non-trivial measures $\P_\infty$ that allow $X$ to be started from infinity in a way that is consistent with other properties of the process.
We shall assume additionally that
\begin{equation}\label{eq:R-Condition}
    \liminf_{x\to-\infty} \frac{1}{R(x)}>0,
\end{equation}
which is shown in \cref{prop;explosion} to imply that explosion can only occur if $\lim_{n\to\infty}\xi_t=+\infty$; the condition is not an essential part of our analysis, but allows us to avoid case distinctions on the limit of $\xi$ in every result.

\begin{definition}\label{defn:extension}
Let $X$ be a strong Markov process on $\R$.
A Feller process $\overline{X}$ on state space $(-\infty, \infty]$ of law $\overline{\bP}$ is called an \emph{extension at infinity} of $X$ if
\begin{enumerate}
	\item the boundary point $+\infty$ is not a trap for $\overline{X}$; and
	\item issued from $x\in \bR$, $\overline{X}$ up to the first hitting time $ \inf\{s> 0\colon \overline{X}_s = \infty\}$ is equal in distribution to $X$.
\end{enumerate}
\end{definition}

As mentioned in the introduction, we are interested in studying analytic conditions (in terms of $\xi$) on the function $R$ that ensure the existence of such extensions. 
This question is classified into two situations, according to whether the point $+\infty$ is an \emph{entrance boundary} or \emph{regular boundary}; the meaning in our circumstance is specified as follows, compare for instance \cite{KT}, Section 15.6.
\begin{definition}\label{def:down from infty}
A process $X$ with an extension $\overline{X}$ at infinity is said to \emph{come down from infinity} 
if $\overline\P_x(\inf\{s> 0\colon \overline{X}_s = \infty\}<\infty)=0$ for all $x\in(-\infty,\infty]$.
In this case $+\infty$ is called an \emph{entrance boundary} for $X$.
\end{definition}
If $X$ comes down from infinity, then 
we must have $\overline{\bP}_x = \bP_x$ for all $x\in (-\infty, \infty)$, and thus in particular it holds that $\bP_x\Rightarrow \overline{\bP}_{\infty}$ as $x\to \infty$.
Thus there is at most one extension of $X$ that comes down from infinity.

\begin{definition}\label{def:regular boundary}
An extension at infinity $\overline X$ of a process $X$ is called a \emph{recurrent extension at infinity} 
if $\overline\P_x(\inf\{s> 0\colon \overline{X}_s = \infty\}<\infty)=1$ for all $x\in(-\infty,\infty]$.
In this case $+\infty$ is called a \emph{regular boundary} for $X$.
\end{definition}

Unlike the case of coming down from infinity, a recurrent extension at infinity need not be unique; 
we could for example hold $\overline{X}$ at infinity for an exponential time, and then have it jump away according to any distribution on $\mathbb{R}$. 
Below we introduce an additional scaling condition that ensures that $+\infty$ cannot be a holding point for $\overline X$, and which generalises the well-known scaling property of pssMps.

In what follows we collect our results that characterise whether $+\infty$ is an entrance boundary, a regular boundary, or neither.
When $X$ is the time-changed Lévy process, \cref{def:down from infty,def:regular boundary} form a dichotomy, because the finiteness of $\inf\{s> 0\colon \overline{X}_s = \infty\}$ is a zero-one law.

\subsection{Entrance at infinity}
Let us start with entrance from infinity. For that sake the following conditions \ref{H1} and \ref{H2} turn out to be crucial:
\begin{enumerate}[label=(H\arabic*)]
	\item\label{H1} Either (i) or (ii) holds: 
	\begin{enumerate}
		\item[(i)] $\mathrm{E}_0[\xi_1]\in (-\infty,0)$;
		\item[(ii)]
	$\mathrm{E}_0[\xi_1]=0$  and  $\displaystyle \int_{(-\infty,-1]} \frac{x \Pi(-\infty, x)}{\int_{x}^0 \dd y \int_{-\infty}^y \dd z \Pi(-z, \infty)}\dd x<\infty.$
	\end{enumerate} 
	\item\label{H2} There exists $\theta>0$ such that $\mathrm{E}_0[\ee^{-\theta \xi_1}] \le 1.$
\end{enumerate}
Condition \ref{H2} forces $\mathrm{E}_0[\xi_1]\in (0,\infty]$ so it is incompatible with \ref{H1}.
When equality holds in \ref{H2} the condition is known as Cram\'er's condition. 
Both \ref{H1} and \ref{H2} are well-known from fluctuation theory for L\'evy processes; an equivalent form of \ref{H1} is given in \eqref{eq:H1} below. We will also need the so-called renewal function $\nu_+$  (c.f.~\cref{sec:Levy} below for a definition that requires some notation from fluctuation theory and \cite[Section VI.4]{Ber-Book}), which is non-negative, increasing, and continuous.\smallskip


Our first theorem gives necessary and sufficient existence criteria for $+\infty$ being an entrance boundary.
\begin{theorem}[ $+\infty$ is an entrance boundary]~\label{thm:1}
The time-changed L\'evy process has $+\infty$ as an entrance boundary if and only if
\begin{align*}
	\text{\textup{\ref{H1}} holds \quad and }\quad \int_{\cdot}^{\infty}\frac{\nu_+(y)}{R(y)}  \dd y<\infty. 
\end{align*}
\end{theorem}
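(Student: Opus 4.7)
My approach is to reduce the characterisation of entrance at $+\infty$ to quantitative control of the expected hitting times $\E_x[\tau_y]$, where $\tau_y := \inf\{t > 0 : X_t \le y\}$, as $x \to \infty$ with $y \in \R$ fixed. Combining the Feller property of $X$ from \cref{sec:X} with the $R$-scaling invariance introduced there (which plays the role of the scaling property in the classical Bertoin--Yor approach for positive self-similar Markov processes), coming down from $+\infty$ is equivalent to
$$\limsup_{x \to \infty}\E_x[\tau_y] < \infty \quad\text{for some } y\in\R, \qquad \text{and}\qquad \lim_{y\to-\infty}\limsup_{x \to \infty}\E_x[\tau_y] = 0.$$
From these estimates the entrance law $\bP_\infty$ is built as the weak limit of $\bP_x$ as $x\to\infty$: tightness of the entry times comes from the first bound, concentration near $t=0$ from the second, and the Feller property identifies the limit and yields the extension in the sense of \cref{defn:extension}.

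The central analytic step is a fluctuation-theoretic expression for $\E_x[\tau_y]$. By the time-change formula,
$$\E_x[\tau_y] \,=\, \mathrm E_0\!\left[\int_0^{T_{y-x}} \frac{1}{R(x+\xi_s)}\dd s\right] \,=\, \int_y^\infty \frac{u^{(y-x)}(z-x)}{R(z)} \dd z,$$
where $T_a := \inf\{s > 0 : \xi_s \le a\}$ and $u^{(a)}$ denotes the occupation density of $\xi$ killed on entering $(-\infty, a]$. By a Silverstein/Wiener--Hopf type identity, $u^{(a)}$ factorises as a convolution of the ascending renewal function $\nu_+$ with the descending renewal measure of $\xi$. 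Condition \ref{H1} is precisely what forces the descending ladder height process to have the right integrability: case (i) gives finite negative mean so that $\xi \to -\infty$ a.s., and case (ii) is the classical Spitzer-type integrability that makes the descending renewal measure proper with the correct growth, each allowing the descending factor to be absorbed into constants. This reduces $\E_x[\tau_y]$ to an integral asymptotically equivalent to $\int_x^\infty \nu_+(z-x)/R(z)\dd z$, and the two bounds above then hold if and only if $\int^\infty \nu_+(y)/R(y)\dd y < \infty$, giving sufficiency.

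For necessity, if \ref{H1} fails then either $\mathrm E_0[\xi_1] > 0$ (so $\xi$ drifts to $+\infty$, the hitting time $T_{y-x}$ is infinite with positive probability, and $\E_x[\tau_y] = +\infty$) or the Spitzer condition in (ii) fails and the descending renewal measure is defective on the relevant scale, making the integral representation diverge. If \ref{H1} holds but $\int^\infty \nu_+/R = +\infty$, the matching lower bound on $\E_x[\tau_y]$ diverges as $x \to \infty$, precluding any entrance law. The main obstacle I anticipate is obtaining uniform two-sided estimates on $u^{(a)}$ as $a\to-\infty$, especially in the oscillating zero-mean case \ref{H1}(ii) where the descending renewal measure grows sub-linearly and the analysis becomes delicate; this is exactly where the Spitzer-type integrability in \ref{H1}(ii) enters as a sharp requirement, and where the $R$-scaling invariance should substantially simplify otherwise fragile $x$-dependent estimates into a cleaner scaling argument.
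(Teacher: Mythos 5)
Your proposed reduction of entrance at $+\infty$ to the two hitting-time conditions is where the argument breaks: that equivalence is false. (There is also a sign slip: the second condition must be $\lim_{y\to+\infty}\limsup_{x\to\infty}\E_x[\tau_y]=0$, not $y\to-\infty$.) Besides control of the hitting times, entrance requires that the undershoot laws of $y-X_{\tau_y}$ under $\P_x$ converge to a proper limit as $x\to\infty$; this is condition (ii) of \cite[Proposition~7]{DDK17}, which the paper verifies in \cref{prop:convergence} via \cref{lem:overshoot}, and it is exactly where \ref{H1} enters. A concrete counterexample to your equivalence: let $\xi$ be a symmetric Cauchy process (so $\mathrm{E}[H_-(1)]=\infty$ and \ref{H1} fails) and let $R$ grow fast enough that $\int^\infty\nu_+(z)/R(z)\dd z<\infty$. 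By the Silverstein factorisation you invoke, the killed potential density satisfies $u^{(y-x)}(z-x)\le C\,\nu_+(z-y)$ and tends to zero pointwise as $x\to\infty$ (the renewal density of $H_-$ vanishes at infinity when its mean is infinite), so dominated convergence gives $\E_x[\tau_y]\to 0$: both of your hitting-time conditions hold, yet $+\infty$ is not an entrance boundary, because the undershoots $y-X_{\tau_y}$ escape to $+\infty$ in probability (see \cite[proof of Lemma 7]{DonMal02}) and no $(-\infty,\infty]$-valued càdlàg limit of $\P_x$ can exist.

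This makes your necessity argument for \ref{H1} unsalvageable as stated: when the Spitzer-type condition in \ref{H1}(ii) fails, the descending renewal measure is not "defective in a way that makes the integral representation diverge" — it degenerates to zero, which if anything makes $\E_x[\tau_y]$ smaller. Expected hitting times simply cannot detect the failure of \ref{H1}; the paper instead derives \ref{H1} from the $R$-scaling invariance of $\P_\infty$, which forces the undershoot process $(b-X_{T_b})_{b\in\R}$ to be strictly stationary (\cref{prop:P-infty-inv}), and then applies \cite[Proposition~1]{CabCha06} and \cite[Corollary~3]{BerSav11}. Separately, even for the direction you could repair (entrance implies the integral test), the step from "the Feller extension does not trap at $+\infty$" to $\limsup_{x\to\infty}\E_x[T_b]<\infty$ is not a soft consequence of the Feller property: the paper needs \cite[Theorem~1.1]{BDS-2} for precisely this implication. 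Your Wiener--Hopf/renewal computation in the sufficiency direction is sound in spirit — it is essentially the identity behind \cref{lem:H0}, which the paper imports from \cite[Theorem~1]{BerSav11} via the stationary two-sided process — but it must be supplemented by the overshoot convergence under \ref{H1} to actually construct $\P_\infty$, and it cannot carry the necessity of \ref{H1} at all.
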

Here and throughout the paper $\int_{\cdot}^{\infty}$ means the integrability is determined only in a neighbourhood of infinity,  i.e.\ the integral is finite for some (then all, as $\frac{\nu_+}{R}$ is locally bounded) lower limit of integration $b>0$. \smallskip

We next turn to the regular boundary case. Since Feller's early works on boundary classifications for diffusions the notion of regular boundary point has seen a number of definitions that are all equivalent for stochastic processes with continuous sample paths. In the discontinuous setting different notions for regular boundary points have been used. In what follows we want to generalise a notion from the study of positive self-similar Markov processes (see Rivero \cite{Riv05}, \cite{Riv07}). We recall that for pssMps the origin is called a regular boundary point if $(\P_z)_{z>0}$ can be extended to $(\P_z)_{z\geq 0}$ such that the scaling property continues to hold.
If we want to extend a similar notion of regularity to general time-changed L\'evy processes then we need to establish a more general scaling property. 
This is one of the main contributions of this article.\smallskip

For fixed $z\in\R$ and $R:\R\to(0,\infty)$ define a path transform
$\Phi^R_z$ on the space $D([0,\infty),\oR)$ of \cadlag paths in $\oR$ by
\begin{equation}\label{eq:Phi-z}
    \Phi^R_z(w)_t = z+ w_{h_z(t)}, \qquad t\ge 0, 
\end{equation}
where $h_z(t) = \inf\big\{s\ge 0\colon \int_0^s g(z,w_u)\dd u>t\big\}$ and $g:\R\times\overline\R\to[0,\infty)$ is the measurable map defined by
$$
	g(z,x) = 
	\begin{cases}
		\frac{R(x)}{R(z+x)} \quad &\text{if }x\in\R\\
		1 &\text{if }x=\infty
	\end{cases}.
$$
By convention $\Phi^R_z(w)_t =+\infty$ for $t\ge  \int_0^{\infty} g(z,w_u)\dd u$. 
We denote the family of maps $\{\Phi^R_z, z\in \bR\}$ by $\Phi^R$. 
Again, to focus on the behaviour around $+\infty$ we assume throughout this work that  
\begin{equation}\label{eq:R-z-condition}
    \liminf_{x\to-\infty} \frac{R(x)}{R(z+x)}>0, \quad \forall z\in \bR. 
\end{equation}

\begin{definition}[$R$-scaling invariance]\label{defn:Phi-ss}
A stochastic process $X$ on $\bR$ with law $(\mathbb{P}_x, x\in \bR)$ is called \emph{$R$-scaling invariant} if 
\begin{equation}
 \text{for every $x,z\in \bR$ the transform }\Phi^R_z(X) \text{ under $\P_x$ has distribution }\mathbb P_{x + z}.
\end{equation}
$R$-scaling invariance is extended to stochastic processes on $(-\infty, \infty]$ if in addition  $X$ and $\Phi^R_z(X)$ have the same distribution under $\P_\infty$ for all $z\in \R$.
\end{definition}
The property of $R$-scaling invariance is somewhat a combination of the classical scaling property of self-similar processes and the shift-invariance of L\'evy processes, i.e. that $\xi$ under $\mathrm P_x$ has the same law as $(\xi-y+x)$ under $\mathrm P_y$ for any $x,y\in\bR$.
\begin{example}
 Choosing $R\equiv 1$ the translation invariance property of a L\'evy process is equivalent to the $1$-scaling property.   
\end{example}
Next, time-changing a L\'evy process yields an $R$-scaling invariant process. For $R\equiv 1$ this of course yields again the previous example.
\begin{example}
    A L\'evy process time-changed as in \eqref{eq:time-change} is $R$-scaling invariant; see \cref{prop:Phi-z} below.
\end{example}
Before turning to the use of $R$-scaling invariance for regular boundary point $+\infty$ we make the connection to the classical scaling self-similarity property:
\begin{example}
For positive self-similar Markov processes the Lamperti time-change representation (see e.g.\ \cite[Chapter~13]{Kyp-Book}) shows that the new invariance property is nothing but the scaling property. To see why let $\alpha>0$ and $R(x) = \ee^{x/\alpha}$. With the time-changed $X$ from \eqref{eq:time-change}, the exponential $Y=\exp(-X)$ is a positive self-similar Markov process of index $\alpha$ according to Lamperti's representation. First note that cancellation gives $h_z(t)=c^{-\alpha } t$ for any $z\in\R$ so that $X$ satisfies the new invariance property: 
\begin{align*}
	\Phi^R_z(X) = (z+X_{\ee^{z/\alpha} t})_{t\geq 0} \text{ under }\mathrm P_x \quad \overset {(d)} = \quad (X_t)_{t\geq 0} \text{ under }\mathrm P_{x + z}.
\end{align*}
Reformulated in terms of $Y$ and substituting $c=\exp(-z)$ gives the scaling property $(c Y_{c^{-1/\alpha}t}^{(x)})_{t\geq 0}\overset{(d)}{=} (Y_t^{(cx)})_{t\geq 0}$, i.e.\ the classical $\alpha$-self-similarity. 
\end{example}

Summarizing,, a L\'evy process time-changed as in \eqref{eq:time-change} is $R$-scaling invariant; for some particular functions of $R$, the converse in also true: by the well-known Lamperti representation, the 
processes that has the $R$-scaling property with $R(x)=\ee^{x/\alpha}$ are the $R$-time-changed L\'evy processes. 
In light of this, we introduced the concept of $R$-scaling invariance to identify what a canonical regular extension should be, similar to canonical extensions of pssMps at zero (compare \cite{Riv05}, \cite{Riv07}). 
%
In general an extension at infinity can enter $\R$ according to any distribution.
We characterise all possible extensions that 
are in addition $R$-scaling invariant.
To see why this is natural let us first check that the extension in the natural entrance case is automatically $R$-scaling invariant:
\begin{example}
    If $+\infty$ is an entrance boundary then the extension constructed in \cref{thm:1} satisfies the extended $R$-scaling invariance, see \cref{prop:P-infty-ss} below.
\end{example}
%
Our main theorem on regular boundary points identifies necessary and sufficient conditions under which the time-changed L\'evy process can be extended to an $R$-scaling invariant process on $(-\infty,+\infty]$.
\begin{theorem}[$+\infty$ is a regular boundary]\label{thm:2}
There is an $R$-scaling invariant recurrent extension $\overline{X}$ of $X$ from  \eqref{eq:time-change} if and only if 
	\begin{align*}
	\text{\textup{\ref{H2}} holds\quad and }\quad \int_{\cdot}^{\infty}\frac{\ee^{\theta y}}{R(y)} \dd y<\infty. 
\end{align*}
If $\mathrm{E}_0[\ee^{-\theta \xi_1}] = 1$, then $\overline{X}$ is the unique $R$-scaling invariant  extension such that the excursion measure $\mathbf{n}$  away from $+\infty$ is supported by paths that leave $+\infty$ continuously; if $\mathrm{E}_0[\ee^{-\theta \xi_1}] < 1$, then $\mathbf{n}$ is supported by paths that leave $+\infty$ by a jump and $\mathbf{n}(\mathbf{e}_{0+} \in \dd x) = C \ee^{\theta x} \dd x$ for a certain $C>0$.
\end{theorem}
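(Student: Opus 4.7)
The plan is to reduce Theorem~\ref{thm:2} to Theorem~\ref{thm:1} via an Esscher--Cramér change of measure, mirroring Rivero's strategy for positive self-similar Markov processes at zero. Under \ref{H2}, set $\kappa = -\log \mathrm E_0[\ee^{-\theta \xi_1}] \ge 0$ and define the tilted Lévy process $\xi^{(\theta)}$ via the (possibly defective) positive martingale $M_t = \ee^{-\theta \xi_t + t\kappa}$. The tilted process has characteristic exponent $\Psi^{(\theta)}(q) = \Psi(q - \ii\theta) - \kappa$ and, in the strict case of \ref{H2}, is killed at rate $\kappa$; in either case $\mathrm E^{(\theta)}_0[\xi^{(\theta)}_1] < 0$, so the analogue of \ref{H1}(i) holds for $\xi^{(\theta)}$. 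The natural rate function for the tilted time-changed process is $\widetilde R(x) = R(x) \ee^{-\theta x}$, and the standard relations between the renewal functions of $\xi$ and $\xi^{(\theta)}$ (the latter bounded and bounded away from zero on $[1,\infty)$ since $\xi^{(\theta)}$ drifts to $-\infty$) convert the condition $\int^\infty \ee^{\theta y}/R(y)\,\dd y < \infty$ into the entrance criterion $\int^\infty \nu_+^{(\theta)}(y)/\widetilde R(y)\,\dd y < \infty$ of Theorem~\ref{thm:1}.

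For sufficiency, Theorem~\ref{thm:1} applied to $(\xi^{(\theta)}, \widetilde R)$ produces an entrance law under which the tilted time-changed process comes down from $+\infty$. Inverting the change of measure pathwise via $M^{-1}$ yields a candidate excursion measure $\mathbf{n}$ from $+\infty$; a direct check shows $\mathbf{n}$ is $\sigma$-finite on excursion paths. Itô's excursion synthesis then glues independent excursions along an independent subordinator of inverse local time to a càdlàg strong Markov extension $\overline X$ that returns to $+\infty$ on a regenerative set. The Feller property of $\overline X$ is inherited from that of $X$ away from infinity (established in \cref{sec:X}) and the entry law at infinity. The $R$-scaling invariance is inherited excursion-by-excursion: each excursion is equal in law to the time-changed Lévy process away from $+\infty$, which is $R$-scaling invariant by the example after \cref{defn:Phi-ss}, and the map $\Phi^R_z$ preserves the excursion decomposition because the rate $g(z,\infty)=1$ makes the additive functional neutral at $+\infty$.

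For necessity I would run the argument in reverse. Given an $R$-scaling invariant recurrent extension $\overline X$, the excursion measure $\mathbf{n}$ from $+\infty$ exists and is $\sigma$-finite. The rigidity imposed by $R$-scaling invariance constrains the law of the entry point $\mathbf{e}_{0+}$: the shift-and-time-change identity $\Phi^R_z$ forces $\mathbf{n}(\mathbf{e}_{0+} \in \cdot)$ to be either degenerate at $+\infty$ (continuous leaving) or to have a density proportional to $\ee^{\theta x}$ for a unique $\theta>0$ (jump leaving). In the first case the tilted process $\xi^{(\theta)}$ must be conservative, giving Cramér equality in \ref{H2}; in the second case it is killed at rate $\kappa = -\log \mathrm E_0[\ee^{-\theta\xi_1}]>0$, giving the strict inequality. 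The finiteness of $\mathbf{n}$ against the length of excursions then translates (again via the tilt) into the integral condition. Uniqueness in the Cramér case follows because $\mathbf{n}$ is determined, up to the choice of local time normalisation, by the entrance law of the tilted process, which is unique by \cref{thm:1}.

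The main obstacle is the necessity direction, specifically turning $R$-scaling invariance into the rigidity statement that pins down the shape of $\mathbf{n}(\mathbf{e}_{0+} \in \cdot)$. Without scaling self-similarity or a Lamperti-type representation one cannot quote classical facts about excursion measures of self-similar processes; instead the generalised scaling identity must be exploited directly to show that the entry law is either concentrated at $+\infty$ or of exponential form. This rigidity is what both selects the unique extension in the Cramér case and produces the explicit jump density $C \ee^{\theta x}\dd x$ in the killed case, and it is the step where the new notion of $R$-scaling invariance does the essential work beyond the self-similar setting.
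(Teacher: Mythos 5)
There is a genuine gap, and it sits at the heart of your sufficiency argument. Reducing \cref{thm:2} to \cref{thm:1} for the tilted pair requires the tilted process $\xi^{(\theta)}$ to satisfy \ref{H1}, and since $\xi^{(\theta)}$ has strictly negative (possibly infinite) mean, the only available clause is \ref{H1}(i), i.e.\ $\mathrm{E}^{(\theta)}[\xi_1]\in(-\infty,0)$. That finiteness is exactly condition \eqref{eq:H-exp}, $\mathrm{E}[|\xi_1|\ee^{-\theta\xi_1}]<\infty$, which is \emph{not} among the hypotheses of \cref{thm:2}: the positive part $\mathrm{E}[\xi_1^+\ee^{-\theta\xi_1}]$ is always finite, but the negative part can be infinite even under Cram\'er's condition. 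When \eqref{eq:H-exp} fails, \ref{H1} fails for $\xi^{(\theta)}$, so by the necessity direction of \cref{thm:1} the tilted time-changed process has \emph{no} entrance law at $+\infty$ at all --- there is nothing to invert through $M^{-1}$. This is not a repairable technicality: \cref{thm:convergence to n} shows that precisely in this case $\ee^{\theta x}\mathbb{P}_x\Rightarrow\mathbf{0}$, so the object your construction hinges on degenerates, while the recurrent extension nonetheless exists. Indeed the paper singles this out (end of the ``Techniques'' subsection): the quasi-process/tilting route of Barczy--Bertoin needs \eqref{eq:H-exp}, and one contribution of \cref{thm:2} is to remove that assumption; the paper's sufficiency proof therefore goes through the Kuznetsov measure of the excessive measure $\ee^{\theta y}\dd y$ (\cref{sec:exc-K}), which requires no moment condition, and reserves the tilting construction (\cref{sec:exc}) for the proof of \cref{thm:convergence to n} under \eqref{eq:H-exp}. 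A second problem with the reduction: in the strict case $\mathrm{E}_0[\ee^{-\theta\xi_1}]<1$ the tilted process is killed, whereas \cref{thm:1} is stated for unkilled L\'evy processes, and in that regime the excursions do not come down from infinity at all --- they jump in according to $C\ee^{\theta x}\dd x$ --- so there is no entrance phenomenon for \cref{thm:1} to supply.

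Your necessity direction correctly identifies the right mechanism --- $R$-scaling invariance forces $\Phi_z^*\mathbf{n}=\ee^{-\theta z}\mathbf{n}$ and hence pins the entry law to be either concentrated at $+\infty$ or of exponential shape --- but you explicitly leave this rigidity as an ``obstacle'' rather than proving it, and it is exactly where the work lies. In the paper this is done in two steps: \cref{prop:Phiz-n-bis} (multiplicativity and monotonicity of the constants $c_z$ arising from comparing first excursions below a level, giving $c_z=\ee^{-\theta z}$ and infiniteness of $\mathbf{n}$), and \cref{prop:occup-meas}, which converts that invariance into the statement that the mean occupation measure is $C\ee^{\theta y}R(y)^{-1}\dd y$; excessiveness of $\ee^{\theta y}\dd y$ for $\xi$ then yields \ref{H2}, and the dichotomy continuous-entrance/jump-in is read off from the Kuznetsov time-reversal. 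Note also that extracting the integral condition \eqref{eq:H-rec-suff} from $\mathbf{n}[1-\ee^{-\zeta}]<\infty$ is not a pure tilting computation: it needs the super-finiteness estimate \eqref{eq:superfinite}, $\inf_{x>b}\mathbb{P}_x(\zeta<M)>0$, which the paper proves by contradiction from the Feller property of $\overline{X}$, and which can genuinely fail for explosive time-changes (\cref{eg:KS}). So both directions of your proposal, as written, prove the theorem only under the extra hypothesis \eqref{eq:H-exp}, which is strictly weaker than what \cref{thm:2} asserts.
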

There exists at most one $\theta>0$ such that $\mathrm{E}_0[\ee^{-\theta \xi_1}] = 1$; so the $R$-scaling invariant extension with continuous entrance, if exists, is unique. There are possibly uncountable many $\theta$ with 
$\mathrm{E}_0[\ee^{-\theta \xi_1}] < 1$. For each such $\theta$, when $\int_{\cdot}^{\infty}\frac{\ee^{\theta y}}{R(y)} \dd y<\infty$, there exists a unique $R$-scaling invariant extension with jumping-in measure $\ee^{\theta x} \dd x$ (up to multiplication of a constant). Note that, although it is possible that an excursion leaves $+\infty$ by a jump, the recurrent extension $\overline{X}$ we obtained from the theorem above is still Feller with c\`adl\`ag paths. In particular, $\overline{X}$ leaves from $+\infty$ continuously and is quasi-left-continuous.\smallskip

Recall that, when $\infty$ is an entrance boundary, the Feller property implies that 
$\bP_x \Rightarrow \overline{\mathbb{P}}_{\infty}$, as $x\to \infty$. 
For the regular boundary case under Cram\'er's condition, the theorem below obtains a corresponding convergence result, which describes a dichotomy in terms of the  integrability condition
\begin{equation}\label{eq:H-exp}
	\mathrm{E}\big[|\xi_1| \ee^{-\theta \xi_1}\big] <\infty.
\end{equation}

\begin{theorem}[Convergence to $\mathbf{n}$]\label{prop:n-cv}\label{thm:convergence to n}
Suppose that  
$\mathrm{E}\big[\ee^{-\theta \xi_1}\big] =1$ 
and $\int_{\cdot}^{\infty} \frac{\ee^{\theta y}}{R(y)}\dd y<\infty$.  
If \eqref{eq:H-exp} holds, then there exists a constant $c(\theta)\in (0,\infty)$ such that
\[
    c(\theta) \ee^{\theta x} \mathbb{P}_x \;\Rightarrow\; \mathbf{n}, \quad \text{as } x\to \infty,
\]
where $\mathbf{n}$ is the excursion measure uniquely characterised by \cref{prop:first}. Conversely, if \eqref{eq:H-exp} does not hold, then
$$
    \ee^{\theta x} \mathbb{P}_x \;\Rightarrow\; \mathbf{0}, \quad \text{as } x\to \infty,
$$
with $\mathbf{0}$ a null measure. 
Both limits are in the sense of weak convergence of finite measures away from neighbourhoods of the identical $\infty$ function.
\end{theorem}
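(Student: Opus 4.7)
The plan is to exploit Cramér's condition through an exponential change of measure (Esscher/Girsanov) that reduces the claim to the entrance behaviour of the tilted time-changed process, for which \cref{thm:1} is available. Since $\mathrm{E}[\ee^{-\theta \xi_1}]=1$, the process $\ee^{-\theta \xi_t}$ is a unit martingale under $\mathrm P$, so we define the tilted Lévy process $\tilde\xi$ via $\ddd\tilde{\mathrm P}/\ddd\mathrm P|_{\mathcal F_t}=\ee^{-\theta\xi_t}$. Under $\tilde{\mathrm P}$ one has $\tilde{\mathrm E}[\xi_1]=\mathrm E[\xi_1\ee^{-\theta\xi_1}]$, which is finite precisely under \eqref{eq:H-exp} and, by convexity of $\lambda\mapsto\mathrm E[\ee^{-\lambda\xi_1}]$ with roots $0$ and $\theta$, strictly negative. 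Applying optional stopping of $\ee^{-\theta\xi_s}$ at the $\mathcal F$-stopping time $\eta(t)$ transports the change of measure to the level of $X$, producing the key identity
\begin{equation}\label{eq:plan-tilt}
    \ee^{\theta x}\,\bP_x(A)=\tilde\E_x\big[\ee^{\theta X_t}\mathbf 1_A\big],\qquad A\in \mathcal G_t,
\end{equation}
where $\tilde\bP_x$ is the law of the time-changed $\tilde\xi$ (with the same $R$) issued from $x$ and $\mathcal G_t$ is the natural filtration of $X$.

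Suppose first that \eqref{eq:H-exp} holds. Then $\tilde\xi$ satisfies \ref{H1}(i); since $\tilde\xi$ drifts to $-\infty$ with finite mean, its ascending renewal function $\tilde\nu_+$ is bounded, so the standing hypothesis $\int_\cdot^\infty\ee^{\theta y}/R(y)\,\ddd y<\infty$ immediately yields $\int_\cdot^\infty\tilde\nu_+(y)/R(y)\,\ddd y<\infty$. By \cref{thm:1} applied to $\tilde X$, the point $+\infty$ is an entrance boundary for $\tilde X$, and the Feller property gives $\tilde\bP_x\Rightarrow\tilde\bP_\infty$ as $x\to\infty$. Testing \eqref{eq:plan-tilt} against bounded continuous functionals of finitely many coordinates $0<t_1<\cdots<t_n$ supported on $(-\infty,K]^n$, so that $\ee^{\theta X_{t_n}}$ is bounded on the support, yields convergence of $\ee^{\theta x}\bP_x$ to a $\sigma$-finite limit measure $\nu$. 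Using the relation $\tilde p_t(x,\ddd y)=\ee^{-\theta(y-x)}p_t(x,\ddd y)$ between the tilted and original transition kernels, the exponential factor in \eqref{eq:plan-tilt} telescopes to give
\[
    \nu(X_{t_1}\in\ddd x_1,\ldots,X_{t_n}\in\ddd x_n)=\ee^{\theta x_1}\tilde\mu_{t_1}(\ddd x_1)\prod_{i=1}^{n-1}p_{t_{i+1}-t_i}(x_i,\ddd x_{i+1}),
\]
where $\tilde\mu_t(\ddd x)=\tilde\bP_\infty(X_t\in\ddd x)$. Thus $\nu$ is a $\sigma$-finite measure, Markov with the transitions of $X$ past any $t>0$, $R$-scaling invariant (inherited from $\tilde\bP_\infty$ via the telescoping), and entering $+\infty$ continuously. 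The uniqueness of excursion measures at $+\infty$ with continuous entry (cf.\ the construction in \cref{thm:2} and the characterisation in \cref{prop:first}) then forces $\nu=c(\theta)^{-1}\mathbf n$ for some $c(\theta)\in(0,\infty)$.

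For the converse, assume \eqref{eq:H-exp} fails. Then $\tilde{\mathrm E}[\xi_1]=-\infty$ and \ref{H1} fails for $\tilde\xi$, so by \cref{thm:1} the point $+\infty$ is not an entrance boundary for $\tilde X$. In view of the Feller property established in \cref{sec:X}, the only possible weak limit of $\tilde\bP_x$ on the one-point compactification $(-\infty,\infty]$ is the Dirac mass at the identical-$\infty$ path, so $\tilde\bP_x(X_t\le K)\to 0$ for every $K,t>0$. Using \eqref{eq:plan-tilt}, for any bounded continuous $f$ supported in $\{w\colon w_t\le K\}$,
\[
    \ee^{\theta x}\E_x[f(X)]=\tilde\E_x\big[\ee^{\theta X_t}f(X)\big]\le\ee^{\theta K}\|f\|_\infty\tilde\bP_x(X_t\le K)\longrightarrow 0,
\]
giving $\ee^{\theta x}\bP_x\Rightarrow\mathbf 0$ in the required sense.

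The main obstacle will be the identification step: showing that the constructed limit $\nu$ is a constant multiple of $\mathbf n$. This requires verifying that $\nu$ is $R$-scaling invariant (which should follow from that of $\tilde\bP_\infty$ through the telescoping identity above) and invoking a uniqueness result for $R$-scaling invariant excursion measures at $+\infty$ with continuous entrance, of the type established in \cref{thm:2} and \cref{prop:first}. A subsidiary technical point is the rigorous justification of the optional-stopping step underlying \eqref{eq:plan-tilt}, for which one uses non-explosion of the tilted process, guaranteed by \eqref{eq:R-Condition} together with $\tilde{\mathrm E}[\xi_1]<0$.
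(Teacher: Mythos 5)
Your high-level idea (Esscher tilting to reduce to the entrance behaviour of the tilted process $\widetilde X$, then identifying the limit with $\mathbf n$ through its entrance law) is close in spirit to the machinery the paper builds in \cref{lem:ChangeofMeasure-bP} and \cref{prop:second}, and the ingredients you list for the direct part (finite negative tilted mean under \eqref{eq:H-exp}, bounded $\widetilde\nu_+$, hence \cref{thm:1} applies to $\widetilde X$ and $\widetilde\bP_x\Rightarrow\widetilde\bP_\infty$) are all correct. However, there is a genuine gap in the mode of convergence. The theorem asserts \emph{weak convergence of finite measures on path space} away from neighbourhoods of the identical-$\infty$ path; your argument only tests against bounded continuous functionals of finitely many coordinates supported on sets of the form $\{w_{t_1}\le K,\ldots,w_{t_n}\le K\}$. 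This gives finite-dimensional convergence, but two things are missing: (a) tightness, without which finite-dimensional convergence of the restricted measures does not upgrade to Skorokhod weak convergence; and (b) the test class itself is too small — the functionals relevant to the stated convergence are supported on $\{\inf_s w_s<y\}$-type sets, and a path can dip below $y$ and return near $+\infty$, so it is captured by no set $\{w_t\le K\}$. The paper's proof does exactly the work you skip: it invokes Kallenberg's criterion \cite[Lemma~4.1]{KallenbergRM} to reduce the statement to (total mass) $\ee^{\theta x}\P_x(T_y<\infty)\to \ee^{\theta y}/c(\theta)$, which is Bertoin--Doney's Cram\'er estimate \cite{BerDon94}, plus weak convergence of the conditional laws $\P_x(\ccdot\mid T_y<\infty)$, which is obtained from Barczy--Bertoin's conditional \emph{functional} limit theorem \cite[Theorem~1]{BarBer11} pushed through the time change via the continuity result \cref{lem:psi-conti-R}. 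Your route could in principle be completed by decomposing at $T_y$, using optional stopping in \eqref{eq:plan-tilt} at $T_y$ (note $\ee^{\theta X_{T_y}}\le \ee^{\theta y}$ is bounded) together with a.s.\ continuity of the first-passage map under $\widetilde\bP_\infty$ and the strong Markov property after $T_y$ — but that continuity-of-first-passage argument is precisely the nontrivial functional ingredient you have not supplied.

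The converse direction has a second, independent gap. You deduce ``$\widetilde\bP_x(X_t\le K)\to 0$ for all $K,t$'' from the statement that $+\infty$ is not an entrance boundary for $\widetilde X$. But \cref{thm:1} is an existence statement: its necessity direction shows that a Feller extension coming down from infinity forces \ref{H1} and the integral test, so its failure only rules out such an extension; it does not yield the quantitative decay $\widetilde\bP_x(X_t\le K)\to 0$ (the paper's own necessity proof runs through the existence of $\overline\bP_\infty$, Portmanteau, and \cite{BDS-2}, none of which is available here). Moreover, even granting that decay, your final bound again only controls functionals supported on $\{w_t\le K\}$, the same test-class problem as above. The paper's converse is one line from \cite{BerDon94}: when \eqref{eq:H-exp} fails, $\ee^{\theta x}\mathrm{P}_x(T_y<\infty)\to 0$, which kills all mass on $\{T_y<\infty\}$ at once. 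If you want to stay within your tilting framework, the correct fix is to write $\ee^{\theta x}\P_x(T_y<\infty)=\widetilde\E_x\big[\ee^{\theta X_{T_y}}\big]$ and use that, when the descending ladder height of $\widetilde\xi$ has infinite mean, the undershoot $y-X_{T_y}$ under $\widetilde\bP_x$ tends to $+\infty$ in probability (Doney--Maller, as quoted after \cref{lem:overshoot}); bounded convergence then gives the null limit — essentially a proof of the degenerate Cram\'er estimate rather than an appeal to \cref{thm:1}.
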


The constant $c(\theta)$ will be given in \eqref{eq:c-theta}. 
For the case of self-similar processes, the role of \eqref{eq:H-exp} is discussed in \cite{Riv07}.

\subsection{Techniques used for the proofs}
For the sufficiency direction of the theorems we can use two classical constructions that have been both used in examining the entrance boundaries of positive self-similar Markov processes. One method was used by Fitzsimmons \cite{Fit06} and Dereich--D\"oring--Kyprianou \cite{DDK17}, while the other by Bertoin--Savov \cite{BerSav11} and Barczy--Bertoin \cite{BarBer11}. Both methodologies trace back to the general theory of Markov processes under the names \emph{Kuznetsov measures} and \emph{quasi-processes}. The latter can be constructed in a two-sided manner (see Mitro \cite{mitro}) and is intricately linked to overshoots, whereas the former is based on excessive functions and Kolmogorov's extension theorem.
For the proof of \cref{thm:1} we will use the more explicite Mitro's construction, but the arguments could equally be carried out using the Kuznetsov measures.
For the sufficiency part of \cref{thm:2}, we will use the Kuznetsov measure construction. 
Let us stress that the quasi-processes approach in Barczy--Bertoin \cite{BarBer11} requires an additional assumption \eqref{eq:H-exp}, which is proved to be not necessary by \cref{thm:2}. We also give this quasi-process approach in \cref{sec:exc} under Assumption \eqref{eq:H-exp}, which is more practical to prove \cref{thm:convergence to n}.\smallskip

For the necessity part of our theorems we rely crucially on the $R$-scaling invariance property and establish a connection with the potential measure and the mean occupation measure, in \cref{prop:P-infty-ss} and \cref{prop:occup-meas} respectively. \smallskip

Proofs of both directions rely on abstract finiteness results on so-called perpetual integrals $\int_0^\infty b(X_s)\dd s$ for Markov processes for which the interested reader is for instance referred  
\cite{BDK,DK16,EriMal05,KolSav19}.
 
\subsection{Examples and connections with previous results}
Let us now give a few examples to illustrate our theorems and make connections to the literature. In \cref{ex:drift-,ex:neg,ex:specpos,ex:fin2mom,ex:stable} we consider L\'evy processes that satisfy certain additional conditions; in \cref{ex:stable,ex:pssMp} we cover  specific functions $R$.  All L\'evy processes below are assumed to be non-lattice.
\begin{example}[$\xi$ drifts to $-\infty$]\label{ex:drift-} 
    If $\xi$ drifts to $-\infty$, then there exists a constant $k>0$ such that 
    $\nu_+(x) = k \mathrm{P} (X_t \le x \text{ for all } t\ge 0)$, see for instance \cite[Proposition~VI.17]{Ber-Book}. Therefore, the integral test in \cref{thm:1} is equivalent to 
 \[
 \int^{\infty}_{\cdot} \frac{1}{R(y)} \dd y<\infty. 
 \]
\end{example}
\begin{example}[Spectrally negative]\label{ex:neg} 
 Let $\xi$ be spectrally negative (i.e.\ with absence of positive jumps) with $\mathrm{E}[ \xi_1] = 0$.
 Then $\nu_+(x) = x$ and hence the integral test in \cref{thm:1} becomes 
 	\[
	\int^{\infty}_{\cdot} \frac{y}{R(y)} \dd y<\infty. 
	\]
\end{example}
\begin{example}[Spectrally positive]\label{ex:specpos}
    Let $\xi$ be spectrally positive with $\mathrm{E}[ \xi_1] \leq 0$. Denote by $\phi(q) = \log \mathrm{E}[\ee^{q \xi_1}], q\ge 0$, the Laplace exponent and by $W$ the scale function uniquely defined by the Laplace transform
	\[
	 \int_{0}^{\infty} \ee^{-qy} W(y)\dd y = \frac{1}{\phi(q)}, \qquad q\ge 0. 
	\]
The time-changed L\'evy process $X$ killed at first passage below zero is called in  \cite{FLZ21} a continuous-state non-linear branching process with branching mechanism $\phi$ and branching rate $R$. In this case, because the subordinator $H_-$ is a unit drift, the Condition~\ref{H1} is satisfied automatically (see \eqref{eq:H1} below). 
Moreover, due to the relation \cite[Equation (9.7)]{Kyp-Book} between the scale function $W$ and the potential measure, we have $W = \nu_+$. Combining this with the change of variables $u+y\to x$, we can rewrite the 
	integral test \eqref{eq:H0} in terms of the scale function:
	\begin{equation}
		\int_{\cdot}^\infty \frac{W(x-b)}{R(x)} \dd x <\infty,\quad \forall b>0.
	\end{equation}
	Recall from \cite[page 197]{Ber-Book} that $W(x) \asymp \frac{1}{x \phi(1/x)}$, so the integral test above is equivalent to 
	\[
	\int_{\cdot}^{\infty} \frac{1}{x \phi(1/x) R(x)} \dd x <\infty. 
	\]
	This recovers \cite[Theorem~3.1]{FLZ21}.
\end{example}

\begin{example}[Finite second moment]\label{ex:fin2mom}
    If $\bE[\xi_1]=0$ with finite second moment $\bE[\xi_1^2]<\infty$, we know from \cite[Theorems~7-8]{DonMal02} that \ref{H1} holds and  
	$\nu_+(y)\sim c y,y\to +\infty,$ for some finite constant $c>0$. 
	Then the integral test is simply 
	\[
	\int^{\infty}_{\cdot} \frac{y}{R(y)} \dd y<\infty. 
	\]
 When $\xi$ is a Brownian motion, this recovers Feller's classical result on entrance boundary from $+\infty$ for SDEs without drift. 
\end{example}

\begin{example}[Stable jump processes]\label{ex:stable}
 Let $\xi$ be an $\alpha$-stable process with $\alpha\in (0,2)$ (c.f.\ \cite{KypPar-Book}) and $R$ a continuous strictly positive function. 

(1) \emph{entrance boundary:}
 If $\alpha\in (0,1]$, then \ref{H1} is not satisfied because the first moment of $\xi_1$ is not finite. By \cref{thm:1}  $+\infty$ is not an entrance boundary for $X$. If $\alpha\in (1,2)$, then $\nu_+(x) = c x^{\alpha \rho}$ with a certain $\rho\in [1-1/\alpha,1/\alpha]$ and $H_-$ in \eqref{eq:H1} is a stable subordinator with index $\alpha \rho$; see \cite{KypPar-Book}, Section 3.5. Then $\mathrm{E}[H_-(1)]=\infty$, unless $\xi$ is spectrally positive, so \ref{H1} does not hold if negative jumps are present. 
If $\xi$ has only positive jumps, then $\alpha \rho =\alpha -1$ and thus $+\infty$ is an entrance boundary if and only if 
        \[
	\int^{\infty}_{\cdot} \frac{y^{\alpha-1}}{R(y)} \dd y<\infty. 
	\]
These results partially encompass \cite[Theorem~2.2]{DK20}.  

(2) \emph{regular boundary:}
When $\xi$ is spectrally positive and stable with $\alpha\in (0,1)\cup (1,2)$, we have 
$\mathrm{E}_0[\ee^{-\theta \xi_1}] = \ee^{-\theta^{\alpha}}$ for $\theta\ge 0$, and the exponential moment does not exist for other stable processes. 
So \ref{H2} only holds when $\xi$ is spectrally positive, with the strict inequality $\mathrm{E}_0[\ee^{-\theta \xi_1}]<1$ for every $\theta>0$, but we cannot have the equality. Therefore, for $\xi$ stable and spectrally positive, 
an R-scaling invariant extension exists with jumping-in if and only if $\int_{\cdot}^{\infty} \frac{\ee^{\theta y}}{R(y)} \dd y<\infty$ for some $\theta>0$, but continuous entrance is not possible.  
\end{example}

\begin{example}[Positive self-similar Markov process]\label{ex:pssMp}
    Suppose $R(x)= \exp(x/\alpha)$ with $\alpha>0$. As we have recalled before, by the well-known transformation of Lamperti, $\exp(-X)$ is a positive self-similar Markov process (pssMp). Since  
	$
	\int_{\cdot}^{\infty} \ee^{-\frac{1}{\alpha} y}\nu_+(y) \dd y =\frac{1}{\kappa(0,1/\alpha)}<\infty, 
	$
	where $\kappa$ is the bivariate Laplace exponent of the ladder process for $\xi$, the integral test in \cref{thm:1} always holds. Then $+\infty$ is an entrance boundary for $X$ (or $0$ for $\exp(-X)$) if and only if \ref{H1} holds. 
	This well-known theorem is proved by \cite{CabCha06,CKPV12}, see also \cite[Theorem 13.6]{Kyp-Book}. 
 
 For the existence of an $R$-scaling invariant  recurrent extension of $X$ whose excursions leave $+\infty$ continuously, we have by \cref{thm:2} that a necessary and sufficient condition is Cramér's condition $\mathrm{E} [\ee^{-\theta \xi_1}] =1$ together with  
	\begin{equation}\label{eq:pssMp-rec}
	    	\int_{\cdot}^{\infty} \frac{\ee^{\theta y}}{R(y)} \dd y =	\int_{\cdot}^{\infty} \ee^{(\theta -1/\alpha) y} \dd y  <\infty \quad \iff \quad \alpha\theta <1. 
	\end{equation}
	Applied to $\exp(-X)$ this is the well-known theorem of Rivero \cite{Riv07} and Fitzsimmons \cite{Fit06}.
 On the other hand, \cite{Riv05} (see also \cite[Theorem~1]{Riv07}) showed that, for $0<\theta< 1/\alpha$, there exists a self-similar recurrent extension of $\exp(-X)$  that leaves $0$ by a jump (equivalently an $R$-scaling invariant extension of $X$ leaving $+\infty$ with a jump) if and only if $\mathrm{E} [\ee^{-\theta \xi_1}] < 1$. Moreover, the jumping-in measure is $\mathbf{n}(\mathbf{e}_{0+} \in \dd x) = C \ee^{\theta x} \dd x$ for a certain $C>0$. 
This is in consistent with \cref{thm:2} which confirms the necessity of $\theta< 1/\alpha$.

Let us briefly spell out the continuous case. Consider a Brownian motion with drift $\xi_t = 2 B_t + \lambda t$ and set $R(x)= \exp(x)$.  Note that the exponential of the time-changed drifted Brownian motion $\exp(-X_t)$ is a squared Bessel process with dimension $d =(2 -\lambda)$ killed at zero (see e.g.\ \cite[Chapter~XI]{RevuzYor} and \cite{GoinYor03}). 
 If $\lambda\le 0$, then \ref{H1} is satisfied and the integral test $\int_{\cdot}^{\infty} y \ee^{-y} \dd y<\infty$ holds. By \cref{thm:1}, $+\infty$ is an entrance boundary for $X$. On the other hand, if $\lambda>0$ Cram\'er's condition is satisfied with $\mathrm{E} [\ee^{-\frac{\lambda}{2} \xi_1}] =1$. 
	Then by \cref{thm:2} a continuous $R$-scaling invariant extension of $X$ at $+\infty$ exists if and only if $\lambda\in (0,2)$.
\end{example}

 Note that the analysis for pssMps in Rivero \cite{Riv05,Riv07} (and thus \cite{Fit06} which requires \cite[Lemma~2]{Riv07}) relies crucially on the study of the moments of the perpetual integral $\int_{0}^{\infty} \frac{1}{R(\xi(t))} \dd t$ with $R(x)=\exp(x/\alpha)$. This is an exponential
functional of a Lévy process for which quite a number of methods are available, see in particular \cite{BerYor}. It is not obvious how to directly extend this approach to a non-exponential function $R$.

\section{Time-changed L\'evy processes and the R-scaling invariance property}\label{sec:X}
In this section we establish several fundamental properties for the time-changed L\'evy processes: criteria for explosion, the Feller property, and the $R$-scaling invariance property that is both novel and crucial for the main proofs of this article.

\subsection{Preliminaries: fluctuation theory of Lévy processes}\label{sec:Levy}
Our study requires several constructions from the fluctuation theory of L\'evy processes. To facilitate the readability of our article we will summarise the main tools in this section. 

Recall that we denote by $\mathrm{P}_x$ the law of $\xi+x$ under $\mathrm{P}_0$, a L\'evy process starting from $x$. For simplicity we abbreviate $\mathrm{P}\coloneqq \mathrm{P}_0$. The law of the dual L\'evy process $\widehat \xi\coloneqq -\xi$ will be denoted by $(\widehat{\mathrm{P}}_x, x\in \bR)$. We will use $T_b= \inf\{s>0\colon \xi_s<b\}$ for the first entrance time of the set $(-\infty,b)$. In order to describe the entrance behaviour of time-changed L\'evy processes we need classical results on overshoot distributions of L\'evy processes. To introduce the needed quantities let $(L_t,t\ge 0)$ be the local time at zero of the L\'evy process reflected at its maximum, that is $(\sup_{0\le s\le t} \xi_s -\xi_t)_{t\ge 0}$. The ascending ladder height process of $\xi$ is defined by
\begin{equation}
H_+(t) = \xi_{L^{-1}(t)} =\sup_{0\le s\le L^{-1}(t)} \xi_s, \qquad t\ge 0,
\end{equation}
where $L^{-1}$ is the right-inverse of the local time process $L$. It is well-known that $H_+$ is a (possibly killed) subordinator. Analogously, we define the descending ladder height process $H_-$ as the ladder height process of the dual process $\widehat \xi$. The renewal functions of $H_+$ (resp. $H_-$) will be denoted by $\nu_+$ (resp. $\nu_-$):
\begin{equation}
	\nu_+(x)=  \int_0^{\infty} \mathrm{P} \left(H_+(t)\le x, L^{-1}(t)<\infty\right) \dd t, \qquad x\in [0,\infty). 
\end{equation}
The function $\nu_+$ is finite, continuous and increasing, with $x\mapsto R(x)-R(0)$ sub-additive. 
The renewal functions are crucial for understanding undershoots (or overshoots) over given levels, that is, the amount by which the process first exceeds downwards (or upwards) a given level $z<0$:
\begin{align*}
	z- \xi_{T_{z}}\text{ under } \mathrm P_0\qquad \text{ or equivalently}\qquad -\xi_{T_0}\text{ under }\mathrm P_{z}.
\end{align*}
It is well known (see for instance \cite[Corollary~3]{BerSav11}) that the undershoot distribution converges weakly as $z\to -\infty$ to a non-trivial limiting distribution, the stationary overshoot distribution, if and only if 
\begin{equation}\label{eq:H1}
	\mathrm{E}_0[H_-(1)]\in (0,\infty) \quad \Longleftrightarrow \quad \ref{H1}.
\end{equation}
The fact that this condition is equivalent to \ref{H1} can be found in \cite[Lemma~2]{BerSav11}.
If the descending ladder height process $H_-$ has infinite mean, then the limiting undershoots degenerate to an atom at infinity in probability (see \cite[proof of Lemma 7]{DonMal02}). 
For our purposes we present a stronger version below, which gives the joint convergence of the undershoots distribution and the pre-undershoot distribution.
\begin{lemma}[{\cite[Lemma~3]{BerSav11}}]\label{lem:overshoot}
	Suppose that \ref{H1} holds. Then the following weak convergence holds, as $z\to \infty$:
	\begin{equation}\label{eq:overshoot}
	\mathrm{P}_z (\xi_{T_0-}\in \dd y_1 , -\xi_{T_0}\in \dd y_2) \Rightarrow \rho (\ddd y_1, \ddd y_2),
	\end{equation}
	where $\rho$ is the probability measure on $[0,\infty)^2$ given by 
\begin{equation}
\rho (\ddd y_1, \ddd y_2) 
= \frac{1}{\mathrm{E}[H_-(1)]} 
	\Big(\nu_+(y_1)\Pi(-x-\dd y_2)\dd y_1  +a_- \,\delta_0(\ddd y_1) \delta_0(\ddd y_2)\Big),
\end{equation} 
where $a_-\ge 0$ is the drift coefficient of the subordinator $H_-$. 
\end{lemma}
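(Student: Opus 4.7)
I would first pass to the dual process $\widehat\xi=-\xi$ issued from $0$. The event $\{T_0<\infty\}$ under $\mathrm P_z$ corresponds to $\widehat\xi$ crossing level $z$ at time $\tau_z^+:=\inf\{s>0:\widehat\xi_s>z\}$, with the identifications
$$
\xi_{T_0-}=z-\widehat\xi_{\tau_z^+-},\qquad -\xi_{T_0}=\widehat\xi_{\tau_z^+}-z.
$$
By duality, the ascending (resp.\ descending) ladder height of $\widehat\xi$ has the law of $H_-$ (resp.\ $H_+$) of $\xi$, so its renewal functions are $\nu_-$ and $\nu_+$, and its L\'evy measure is $\widehat\Pi(\ddd w)=\Pi(-\ddd w)$.

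\textbf{Step 2 (Explicit joint law via the quintuple law).} I would apply the Doney--Kyprianou quintuple law to $\widehat\xi$ at the crossing level $z$. After integrating out the time components and the running-maximum variable, one expects, for $y_1\ge 0$ and $y_2>0$,
$$
\widehat{\mathrm P}_0\bigl(z-\widehat\xi_{\tau_z^+-}\in\ddd y_1,\ \widehat\xi_{\tau_z^+}-z\in\ddd y_2\bigr)=\Pi\bigl(-(y_1+\ddd y_2)\bigr)\int_{[0,y_1\wedge z]}\nu_-(z-\ddd v)\,\nu_+(\ddd(y_1-v)),
$$
together with an atom at $(0,0)$ of weight proportional to $a_-\,u_-(z)$ accounting for the creeping contribution, where $u_-$ denotes the density of $\nu_-$ when $a_->0$. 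The intuition is transparent: the running maximum of $\widehat\xi$ climbs to a level $z-v$ (the $\nu_-$ factor), the process then descends from this local maximum by $y_1-v$ (the $\nu_+$ factor, inherited from the reflected process at the maximum), and finally jumps across $z$ by an amount $y_1+y_2$ (the $\widehat\Pi$ factor).

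\textbf{Step 3 (Renewal theorem).} As $z\to\infty$, Blackwell's renewal theorem applied to $H_-$ (of finite mean under \ref{H1}) gives the vague convergence $\nu_-(z-\ddd v)\Rightarrow \mathrm E[H_-(1)]^{-1}\,\ddd v$ on compacts of $[0,\infty)$. Plugging this in and using Fubini reduces the inner convolution to
$$
\int_{[0,y_1]}\ddd v\,\nu_+(\ddd(y_1-v))=\nu_+(y_1),
$$
so the absolutely continuous part of the limiting measure is $\mathrm E[H_-(1)]^{-1}\nu_+(y_1)\Pi(-y_1-\ddd y_2)\,\ddd y_1$. The creeping atom contributes $\mathrm E[H_-(1)]^{-1}a_-\,\delta_0\otimes\delta_0$ via the renewal-density form of Blackwell's theorem, which together reproduce the measure $\rho$.

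\textbf{Main obstacle.} The delicate point is justifying the exchange of limit and integral in Step~3: Blackwell's theorem only delivers \emph{vague} convergence, while $\nu_+(y_1)$ grows with $y_1$ and $\Pi$ has an unbounded tail. The standard remedy is a truncation in $y_1$ combined with a direct-Riemann-integrability check for $y_1\mapsto \nu_+(y_1)\int\phi(y_1,y_2)\,\Pi(-y_1-\ddd y_2)$ for a bounded continuous test function $\phi$; this uses the sub-additivity of $\nu_+$ (so $\nu_+(y_1)\lesssim 1+y_1$) together with the integrability of $|x|\,\Pi(\ddd x)$ at infinity that is implicit in \ref{H1}, and allows the negligibility of the boundary contribution at $v\approx z$ to be made rigorous.
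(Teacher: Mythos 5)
The paper itself offers no proof of this lemma: it is imported verbatim from \cite[Lemma~3]{BerSav11}, so your attempt can only be measured against the literature. Your architecture is precisely the classical route (essentially Doney and Kyprianou's derivation of the asymptotic overshoot law, on which the cited result rests): dualise so that first passage of $\xi$ below $0$ from $z$ becomes first passage of $\widehat\xi=-\xi$ above $z$; marginalise the quintuple law into the triple factorisation $\nu_-(z-\ddd v)\,\nu_+(\ddd(y_1-v))\,\Pi(-(y_1+\ddd y_2))$ plus a creeping atom of mass $a_-u_-(z)$ at $(0,0)$; then let $z\to\infty$ via Blackwell's renewal theorem for $H_-$ (whose finite mean is exactly \ref{H1}) and the convolution identity $\mathrm{Leb}*\nu_+(\ddd y_1)=\nu_+(y_1)\,\ddd y_1$. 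Steps 1 and 2 are correct, and you silently repair the paper's typo: $\Pi(-x-\ddd y_2)$ in the statement should read $\Pi(-y_1-\ddd y_2)$.

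The genuine gap is in your proposed resolution of the ``main obstacle''. The domination you invoke --- sub-additivity giving $\nu_+(y_1)\lesssim 1+y_1$, combined with $\int_{|x|>1}|x|\,\Pi(\ddd x)<\infty$ --- is not sufficient. At the tail it would require finiteness of $\int^{\infty}(1+y)\,\Pi((-\infty,-y))\dd y$, which by Fubini is equivalent to $\int_{x<-1}x^{2}\,\Pi(\ddd x)<\infty$, i.e.\ a finite second moment of the negative jump tail; \ref{H1} does not grant this (in case (ii) one has only a zero mean plus the Doney--Maller integral condition). A parallel problem appears near $y_1=0$ in the infinite-variation case, where $\Pi((-\infty,-y_1))$ blows up and only the decay of $\nu_+(y_1)$, not a constant bound, saves local finiteness. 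The missing ingredient is the Wiener--Hopf identity (Vigon's \'equation amicale invers\'ee) $\Pi_{H_-}(\ddd u)=\int_{[0,\infty)}\nu_+(\ddd y)\,\Pi(-y-\ddd u)$, which under \ref{H1} yields
\begin{equation*}
\int_0^{\infty}\nu_+(y)\,\Pi((-\infty,-y))\dd y
=\int_0^{\infty}\Pi_{H_-}((u,\infty))\dd u
=\mathrm{E}[H_-(1)]-a_-<\infty .
\end{equation*}
This identity supplies exactly the uniform integrability needed to pass from Blackwell's vague convergence to convergence of the full triple integral, and it simultaneously shows that $\rho$ has total mass one --- which is what upgrades vague convergence of the prelimit probability laws (note $T_0<\infty$ a.s.\ under \ref{H1}) to the asserted weak convergence. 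With that identity substituted for your domination step, the proof closes along the lines you describe.
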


We will call the limiting measure $\rho$ the stationary over- and undershoot measure. Note that, when $\xi$ is spectrally positive, $H_-(t)=t$ is just a linear drift. Then \eqref{eq:H1} is automatically satisfied and $\rho=\delta_{(0,0)}$ is just a Dirac measure. The marginals of $\rho$ shall be denoted by $\rho_1$ and $\rho_2$. Note that the measures $\rho, \rho_1, \rho_2$ degenerate if $\mathrm{E}[H_-(1)]=+\infty$ only due to the normalising factor. Without the normalising factor the (infinite) measures are still useful tools to study the fluctuations (see \cite{DT23} where the convergence was also significantly strengthened).\smallskip

A second crucial application of the renewal functions are L\'evy processes conditioned to be positive (or negative), which is discussed in detail in \cite{ChaumontDoney}.
It is well-known that the renewal function $\nu_+$ is harmonic for the L\'evy process killed on the negative half-line: 
\[
\nu_+(x) = \mathrm{E}_x[\nu_+(\xi_t) \mathbf{1}_{T_0>t}] \quad \text{for all } x> 0, t\ge 0. 
\]
For $x>0$, in light of Doob's $h$-transform this allows to define a change of measure by
\begin{equation}
\mathrm{P}^{\uparrow}_x |_{\mathcal{F}_t} = \frac{\nu_+(\xi_t)}{\nu_+(x)}\mathbf{1}_{\{T_0>t\}}\mathrm{P}_x |_{\mathcal{F}_t} . 
\end{equation}
Using a limiting procedure one can show that $\mathrm{P}^{\uparrow}_x$ should be viewed as the law of the L\'evy process $\xi$ conditioned to stay positive. Indeed, in case of $\mathrm{P}_x(T_0=\infty)>0$, one can identify $\mathrm{P}^{\uparrow}_x$ with the conditional law $\mathrm{P}_x(\cdot \mid T_0=\infty)$.

Correspondingly, we define the law of a L\'evy process $\xi$ conditioned to stay negative: for $x> 0$,  
\begin{equation}\label{eq:P-downarrow}
\mathrm{P}_{-x}^{\downarrow}|_{\mathcal{F}_t} = \frac{\nu_-(\xi_t)}{\nu_-(x)}\mathbf{1}_{\{\widehat{T}_0 >t\}}\mathrm{P}_x |_{\mathcal{F}_t}, ~\text{where}~ \widehat{T}_0 = \inf\{s\ge 0\colon \xi_s >0\}. 
\end{equation} 
Let $\widehat{\mathrm{P}}^{\uparrow}$  be the law of the dual L\'evy process $-\xi$ conditioned to stay positive. 
If $\xi^{\downarrow}$ has law $\mathbb{P}_{-x}^{\downarrow}$, then $-\xi^{\downarrow}$ has the law of $\widehat{\mathrm{P}}^{\uparrow}_x$. 

\subsection{Explosion time}\label{sec:explosion}
Recall that $X$ is obtained by time-changing the L\'evy process $\xi$ with a strictly positive continuous function $R$ on $\R$ as defined in \eqref{eq:time-change}, and that $\P_x$ denotes the law of $X$ started in $x$. We also suppose that \eqref{eq:R-Condition} holds.

\begin{lemma}\label{lem:explosion}
If $\xi$ oscillates or diverges to $-\infty$, then $X$ does not  explode. 
\end{lemma}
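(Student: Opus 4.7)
The plan is to reduce non-explosion to a statement about the occupation time of the underlying L\'evy process, and then handle the two regimes separately.

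First I would invoke assumption \eqref{eq:R-Condition} to extract constants $c>0$ and $M>0$ with $1/R(y)\ge c$ for every $y\le -M$. Setting $b:=-M-x$, the representation \eqref{eq:time-change} yields the deterministic lower bound
$$
\zeta \;=\; \int_0^\infty \frac{\dd r}{R(x+\xi_r)} \;\ge\; c\int_0^\infty \mathbf{1}_{\{\xi_r\le b\}}\,\dd r,
$$
so the task reduces to proving that the L\'evy occupation integral $\int_0^\infty\mathbf{1}_{\{\xi_r\le b\}}\,\dd r=\infty$ almost surely whenever $\xi$ oscillates or drifts to $-\infty$.

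If $\xi$ drifts to $-\infty$, then $\xi_r\to-\infty$ almost surely, so $\xi_r\le b$ for all sufficiently large $r$ and the integral is trivially infinite. In the oscillating case I would construct alternating stopping times that extract a positive amount of occupation time from each descent below $b$: set $\sigma_0=\inf\{r\ge 0:\xi_r\le b-1\}$, and recursively $\rho_n=\inf\{r>\sigma_n:\xi_r>b\}$ and $\sigma_{n+1}=\inf\{r>\rho_n:\xi_r\le b-1\}$. Oscillation gives $\liminf\xi=-\infty$ and $\limsup\xi=+\infty$, so all these stopping times are almost surely finite. Since $\xi$ stays in $(-\infty,b]$ throughout each interval $[\sigma_n,\rho_n]$, one has $\int_0^\infty\mathbf{1}_{\{\xi_r\le b\}}\,\dd r \ge \sum_{n\ge 0}(\rho_n-\sigma_n)$. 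By the strong Markov property at $\sigma_n$, the shifted process $\xi^{(n)}_s:=\xi_{\sigma_n+s}-\xi_{\sigma_n}$ is an independent copy of $\xi$ under $\mathrm P_0$, and since $\xi_{\sigma_n}\le b-1$ one has the stochastic lower bound $\rho_n-\sigma_n\ge T^{(n)}:=\inf\{s:\xi^{(n)}_s>1\}$. The $T^{(n)}$ are i.i.d.\ strictly positive random variables, so $\sum_n T^{(n)}=\infty$ a.s.\ (e.g.\ Borel--Cantelli applied to the events $\{T^{(n)}\ge\eps\}$ for $\eps$ small enough that $\mathrm P_0(T>\eps)>0$), which forces $\zeta=\infty$.

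The main obstacle I expect is the oscillating case. Occupation times for L\'evy processes can be delicate when a compound Poisson component is present, since the process may touch a level only at a Lebesgue-null set of times; working with the half-line $(-\infty,b]$ together with the buffer gap between $b-1$ and $b$ circumvents this issue cleanly, letting me dominate the occupation time from below by a sum of i.i.d.\ positive exit times rather than by residence times at isolated points.
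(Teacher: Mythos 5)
Your proof is correct and takes essentially the same route as the paper: use \eqref{eq:R-Condition} to bound $1/R$ from below on a half-line and reduce non-explosion to the a.s.\ divergence of the occupation time $\int_0^\infty \mathbf{1}_{\{\xi_r\le b\}}\,\dd r$, exploiting that $\liminf_{s\to\infty}\xi_s=-\infty$ in both regimes. The only difference is that the paper dismisses this divergence as clear, while you supply the justification for the oscillating case (the strong Markov property at successive passage times below $b-1$, stochastic domination by i.i.d.\ strictly positive exit times, and Borel--Cantelli), which is precisely the step that actually needs an argument.
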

\begin{proof}
By assumption we have $\liminf_{s\to\infty} \xi_s = -\infty$. Since \eqref{eq:R-Condition} and the continuity of $R$ implies that  $\inf_{x\in(-\infty, M)} \frac{1}{R(x)}>0$ for any $M\in \mathbb{R}$, it is clear that 
$\int_{0}^{\infty} \frac{1}{R(\xi_s)} \dd s =\infty$ $\mathrm{P}_x$-a.s.\ for all $x\in \mathbb{R}$. This perpetual integral gives the explosion time for the time-changed process.  
\end{proof}

Let us borrow a notion from Kolb and Savov \cite{KolSav19}. 
For a Borel set  $E\subseteq \mathbb{R}$,  we say that $\bR\setminus E$ is $\mathrm{P}_x$-transient if 
$
	\mathrm{P}_x\left(\exists t \ge 0\colon \xi_s\in E, \forall s>t \right) =1. 
$
\begin{proposition}\label{prop;explosion}
    The following three statements are equivalent: 
    \begin{enumerate}
        \item $X$ explodes in finite time a.s., i.e.\ $\mathbb{P}_x (\zeta<\infty) =1$; 
        \item $\int_0^{\infty} \frac{1}{R(\xi_s)} \dd s<\infty \quad \mathrm{P}_x\text{-a.s.}$;
        \item\label{little condition} $\lim_{t\to \infty} \xi_t =+\infty$ a.s.\ and there exists a Borel set  $E$ with $\bR\setminus E$ being $\mathrm{P}_x$-transient, such that
        \[
\int_{E} \frac{1}{R(y)} U(\ddd y)<\infty, 
\]
where $U$ is the potential measure of $\xi$. 
    \end{enumerate}
    If these equivalent conditions above are not satisfied, then $X$ a.s.~does not explode, i.e.\ $\mathbb{P}_x (\zeta<\infty) =0$. 
\end{proposition}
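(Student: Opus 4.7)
The plan is to establish $(1)\Leftrightarrow(2)\Leftrightarrow(3)$, with the last sentence of the proposition following at once from the zero-one law underlying $(1)\Leftrightarrow(2)$. For $(1)\Leftrightarrow(2)$: by the definition \eqref{eq:time-change}, under $\mathrm{P}_0$ the explosion time reads $\zeta=\int_0^\infty R(x+\xi_r)^{-1}\dd r$, and translating the starting point recasts this as $\int_0^\infty R(\xi_r)^{-1}\dd r$ under $\mathrm{P}_x$, so (1) and (2) express the same event. That this perpetual integral is either a.s.\ finite or a.s.\ infinite—hence the dichotomy at the end of the proposition—follows from a classical zero-one law for perpetual integrals of Lévy processes (see e.g.\ \cite{EriMal05,KolSav19}).

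For $(2)\Rightarrow(3)$: \cref{lem:explosion} together with the standing assumption \eqref{eq:R-Condition} forces $\xi_t\to+\infty$ a.s.\ under $\mathrm{P}_x$, since oscillation or drift to $-\infty$ would make $\liminf_{r\to\infty}\xi_r=-\infty$ and, by \eqref{eq:R-Condition}, $1/R$ is bounded below on any half-line $(-\infty,M)$, so the integral would then be infinite. To produce the set $E$ I would invoke the Kolb-Savov characterization of finite perpetual integrals \cite{KolSav19}: when $\xi_t\to+\infty$, $\int_0^\infty f(\xi_r)\dd r<\infty$ a.s.\ if and only if there exists a Borel set $E$ whose complement is visited only on a bounded time interval and such that $\int_E f\,\dd U<\infty$; applied with $f=1/R$ this gives the desired $E$. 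For $(3)\Rightarrow(2)$, set $\tau=\sup\{s\ge 0:\xi_s\notin E\}$, which is a.s.\ finite by transience of $\mathbb{R}\setminus E$. The decomposition
\begin{equation*}
\int_0^\infty \frac{\dd r}{R(\xi_r)} \;=\; \int_0^\tau \frac{\dd r}{R(\xi_r)} \;+\; \int_\tau^\infty \frac{\mathbf{1}_{\xi_r\in E}\,\dd r}{R(\xi_r)}
\end{equation*}
has finite first summand, because the \cadlag path $(\xi_r)_{r\le \tau}$ is spatially bounded and $1/R$ is continuous, and finite second summand by Fubini, since $\mathrm{E}_x\bigl[\int_0^\infty R(\xi_r)^{-1}\mathbf{1}_{\xi_r\in E}\dd r\bigr]=\int_E R(y)^{-1}U(\dd y)<\infty$.

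The delicate step is $(2)\Rightarrow(3)$: we must upgrade a.s.\ finiteness of the perpetual integral to a potential-measure integrability statement, for which the Kolb-Savov result is the natural black box. A self-contained proof would proceed via last-passage times $L_M=\sup\{r:\xi_r\le M\}$ (finite a.s.\ because $\xi_t\to+\infty$) and analyse the post-$L_M$ process as a Lévy process conditioned to stay strictly above $M$, whose potential measure is expressible through the ascending ladder-height renewal function $\nu_+$; optimising over $M$ then yields a suitable set of the form $E=[M,\infty)$.
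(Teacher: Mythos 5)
Your proposal is correct and follows essentially the same route as the paper: the identification of the explosion time with the perpetual integral gives (i)$\Leftrightarrow$(ii), a zero-one law for perpetual integrals yields the final dichotomy (the paper cites \cite[Lemma~5]{DK16} for this), and the nontrivial equivalence with (iii) is delegated, exactly as you do, to Theorem~2.1 of \cite{KolSav19} after first deducing $\lim_{t\to\infty}\xi_t=+\infty$ from \eqref{eq:R-Condition} and the continuity of $R$. The only difference is that you additionally spell out an elementary last-exit-time-plus-Fubini argument for the easy direction (iii)$\Rightarrow$(ii), which the paper simply subsumes under the same Kolb--Savov citation.
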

\begin{proof}
The equivalence of $(i)$ and $(ii)$ is clear from the time-change. 
Since $R$ is continuous and positive, and \eqref{eq:R-Condition} holds, we have $(ii)\Rightarrow \lim_{t\to \infty} \xi_t =+\infty$ a.s. Then the equivalence of $(ii)$ and $(iii)$ follows from Theorem~2.1 in \cite{KolSav19}.
If $ \mathrm{P}_x (\int_0^{\infty} \frac{1}{R(\xi_s)} \dd s<\infty ) <1$, then by a zero-one law \cite[Lemma~5]{DK16}, the probability is zero. 
\end{proof}

Some special cases of condition $(iii)$ of \cref{prop;explosion} can be found in \cite[Section~3]{KolSav19} and \cite{BDK,DK16,EriMal05}. Let us mention a few: 
\begin{itemize}
\item If $\mathrm{E}[\xi_1]\in (0,\infty)$, then by the renewal theorem we deduce that   
	\[
		\int_{\mathbb{R}} \frac{1}{R(y)} U (\ddd y)<\infty  \iff \int_{\cdot}^{\infty} \frac{1}{R(y)} \dd y<\infty. 
	\]
By \cref{prop;explosion} we have explosion under this condition. 
However, we stress that $\int_{\cdot}^{\infty} \frac{1}{R(y)} \dd y<\infty$ is not always a necessary condition; a counterexample is given in \cite[page 8]{KolSav19}. 
	\item If $\mathrm{E}[\xi_1]\in (0,\infty)$ and $\xi$ possesses a local time then  
	\[
		\int_0^{\infty} \frac{1}{R(\xi_s)} \dd s<\infty \quad \mathrm{P}_x\text{-a.s.}\  \iff \int_{\cdot}^{\infty} \frac{1}{R(y)} \dd y<\infty. 
	\]
	Note that this is not always true if $\xi$ does not possess a local time; see a counterexample in \cite[page 8]{KolSav19}. 
	\item If $\mathrm{E}[\xi_1]\in (0,\infty)$ and $R$ is ultimately non-decreasing then 
	\[
		\int_0^{\infty} \frac{1}{R(\xi_s)} \dd s<\infty \quad \mathrm{P}_x\text{-a.s.}\  \iff \int_{\cdot}^{\infty} \frac{1}{R(y)} \dd y<\infty. 
	\]
\end{itemize}


\subsection{Continuity at the initial state}\label{sec:conti}
It turns out that under mild assumptions on the behaviour of $\xi$, the time-changed Lévy process can inherit continuity in its initial law from $\xi$.
We shall prove this in two results below.

Let $\oR$ be equipped with the metric $\rho(x,y)= \ee^{-x}-\ee^{-y}$,
whose induced topology is locally compact with countable base, and thus Polish. In particular, the point $+\infty$ is now at finite distance from the rest of the space. Let $D([0,t],\oR)$ be the Skorokhod space of càdlàg maps $w:[0,t]\to \oR$, equipped with Skorokhod's $J_1$ topology, which is induced by the metric
$$
	d_t(v,w) = \inf_{\lambda \in \Lambda_t} 
	\Big(
		\sup_{s\in[0,t]} \rho(v_s, w_{\lambda(s)})
		\vee
		\sup_{s\in[0,t]} \abs{\lambda(s) - s}
	\Big),
$$
where $\Lambda_t$ is the set of strictly increasing bijections $:[0,t]\to[0,t]$. Skorokhod's $J_1$ topology on $\D\coloneqq D([0,\infty),\oR)$ is then induced by the metric
$$
	d(v,w) = \int_0^\infty \ee^{-t}d_t(v,w)\dd t.
$$
Let $f:\R\to(0,\infty)$ be a continuous map, extended to $\oR$ by an arbitrary assignment of $f(\infty)=1$. In order to study the path transformation $\psi : w \mapsto w\circ \eta(w)$, where
$$
	\eta(w)_t = \inf\Big\{s > 0 : I(w)_s > t \Big\}, 
	\qquad I(w)_s =\int_0^s f(w_r)\dd r,
$$
we will need to restrict our attention to certain subspaces of $D([0,\infty),\oR)$, because $\psi(w)$ is not necessarily càdlàg.\footnote{This happens if $\liminf_t w_t < \limsup_t w_t$ while at the same time $\eta(w)_t = \infty$ for some $t<\infty$, in which case the transformed path $\psi(w)$ does not have a left limit at $t$.} 

\begin{proposition}\label{cty prop 1}
Let $G\subset \D$ be the set of paths for which $I(w)_s<\infty$ for all $s<\infty$ and $I(w)_\infty = \infty$. 
Then the map
$
	\psi|_G : (G, d)\to (\D,d)
$
is continuous.
\end{proposition}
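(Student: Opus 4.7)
I would establish three things in sequence: structural properties of $I(w)$ and $\eta(w)$ on $G$, locally uniform convergence of $I(w^n)$ to $I(w)$ whenever $w^n\to w$ in $G$, and construction of a time-change witnessing Skorokhod convergence of $\psi(w^n)$ to $\psi(w)$. For the first step, observe that for $w\in G$ the map $I(w):[0,\infty)\to[0,\infty)$ is continuous (as an indefinite Lebesgue integral of a locally integrable function), strictly increasing (since $f>0$), with $I(w)_0=0$ and $I(w)_\infty=\infty$. Hence $I(w)$ is a homeomorphism of $[0,\infty)$ onto itself, and so is its inverse $\eta(w)$. In particular $\psi(w)=w\circ\eta(w)$ is \cadlag (composition of a \cadlag path with a continuous increasing bijection), so $\psi|_G$ is indeed valued in $D$.

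\textbf{Convergence of the clock.} Fix $T>0$ and assume $w^n\to w$ in $(G,d)$. Pass to a statement on $D([0,T'],\oR)$ for some $T'>T$ and invoke the standard Skorokhod representation: there exist absolutely continuous homeomorphisms $\lambda_n\in\Lambda_{T'}$ (taken piecewise linear) with $\sup_s|\lambda_n(s)-s|\to 0$, $\lambda_n'\to 1$ almost everywhere, and $\sup_s\rho(w^n_{\lambda_n(s)},w_s)\to 0$. Change of variables gives
\[
I(w^n)(\lambda_n(s)) \;=\; \int_0^s f\bigl(w^n_{\lambda_n(u)}\bigr)\,\lambda_n'(u)\dd u.
\]
At every $u\le T'$ with $w_u\in\R$ (excluding only the countable jump set of $w$ and the set $\{u : w_u=\infty\}$), continuity of $f$ on $\R$ gives $f(w^n_{\lambda_n(u)})\to f(w_u)$; dominated convergence then yields pointwise convergence $I(w^n)(\lambda_n(s))\to I(w)(s)$. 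Since $\lambda_n\to\mathrm{id}$ uniformly and $I(w)$ is continuous, this promotes to $I(w^n)(s)\to I(w)(s)$ pointwise, which upgrades to uniform convergence on $[0,T']$ because pointwise convergence of a sequence of monotone functions to a continuous limit is locally uniform.

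\textbf{Inversion and Skorokhod convergence.} Uniform convergence of $I(w^n)$ to the continuous strictly increasing bijection $I(w)$ yields, by standard inverse-function stability, $\eta(w^n)\to\eta(w)$ uniformly on $[0,T]$ for $n$ large. Set $\widetilde\lambda_n(t):=I(w^n)\bigl(\lambda_n(\eta(w)(t))\bigr)$, so that $\eta(w^n)(\widetilde\lambda_n(t))=\lambda_n(\eta(w)(t))$ and hence
\[
\psi(w^n)(\widetilde\lambda_n(t)) \;=\; w^n\bigl(\lambda_n(\eta(w)(t))\bigr) \;\longrightarrow\; w(\eta(w)(t)) \;=\; \psi(w)(t)
\]
uniformly in $t\in[0,T]$ in the metric $\rho$, while $\widetilde\lambda_n\to\mathrm{id}$ uniformly by the preceding steps (a minor linear rescaling adjusts $\widetilde\lambda_n$ into an actual element of $\Lambda_T$ at cost $o(1)$). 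Thus $d_T(\psi(w^n),\psi(w))\to 0$ for every $T>0$, and dominated convergence applied in the definition of $d$ gives $d(\psi(w^n),\psi(w))\to 0$.

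\textbf{Main obstacle.} The delicate step is the clock-convergence: since $f$ is only assumed continuous on $\R$, the pointwise convergence $f(w^n_{\lambda_n(u)})\to f(w_u)$ can fail on the set $\{u : w_u=\infty\}$, and if $f$ is unbounded on $\R$ the corresponding contribution to $I(w^n)$ need not track the contribution $f(\infty)=1$ to $I(w)$. One must exploit that $w$ being \cadlag in $\oR$ forces $\{u : w_u=\infty\}$ to decompose into closed intervals together with at most countably many isolated approach times, that $f\circ w$ equals $1$ identically on the interior of such intervals, and that $I(w)_T<\infty$ from $w\in G$ provides the majorant needed to apply dominated convergence on the approach region. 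Making this accounting rigorous --- and, if necessary, combining it with a tightness/uniform-integrability estimate for $f\circ w^n$ on approaches to $\infty$ --- is where the bulk of the technical work lives.
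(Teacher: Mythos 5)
Your main line of argument is sound for paths that never take the value $+\infty$, and there it is essentially a hands-on rederivation of the paper's proof: the paper makes the same two structural observations (strict increase of $I(w)$ because $f>0$, hence continuity of $\eta(w)$; finiteness of $I(w)$ at finite times plus $I(w)_\infty=\infty$, hence strict increase of $\eta(w)$), and then obtains by citation the three ingredients you prove by hand, namely continuity of the integral map $w\mapsto I(w)$, of the inverse map, and of composition, all in $D([0,\infty),\R)$ (Whitt's book and his 1980 paper). One detail you should make explicit in your clock step: the ``dominated convergence'' is really bounded convergence, the bound coming from the fact that a real-valued \cadlag path has compact range in $\R$ on $[0,T']$, so that uniform $\rho$-convergence confines all $w^n\circ\lambda_n$ to a fixed compact subset of $\R$ on which $f$ is bounded. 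This is exactly the point where real-valuedness is used irreplaceably.

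The genuine gap is the one you flag and defer, and it cannot be closed: at paths that spend time at $+\infty$ the statement itself is false, so no amount of ``accounting'' or uniform integrability can rescue the clock-convergence step there. Concretely, take $f(x)=\ee^{x}\vee 1$ (so $f(0)=1$), $w=\infty\cdot\1_{[1,2)}$ and $w^{n}=n\1_{[1,2)}$, both equal to $0$ off $[1,2)$. Then $w^{n},w\in G$ and $d(w^{n},w)\le\ee^{-n}\to 0$; moreover $I(w)_s=s$ (the clock of $w$ runs at rate $f(\infty)=1$ on $\{w=\infty\}$), so $\psi(w)=w$, whereas $I(w^{n})$ grows at rate $\ee^{n}$ on $[1,2)$, so $\psi(w^{n})=n\1_{[1,1+\ee^{n})}$, which converges in $(\D,d)$ to $\infty\cdot\1_{[1,\infty)}\neq\psi(w)$. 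The failure is not caused by unboundedness of $f$ either: with $f\equiv 2$ on $\R$ (still $f(\infty)=1$) one gets $\psi(w)=\infty\cdot\1_{[2,3)}$ but $\psi(w^{n})\to\infty\cdot\1_{[2,4)}$. The obstruction is that the extension $f(\infty)=1$ is arbitrary, hence in general discontinuous at $+\infty$, so on $\{w=\infty\}$ the clocks of real-valued approximants are simply unrelated to the clock of the limit, and $I(w)_T<\infty$ gives no control whatsoever over $I(w^{n})$. The only repair is to restrict $G$ to $\R$-valued paths; that is in effect what the paper's proof does — it argues entirely in $D([0,\infty),\R)$, where $d$-convergence to a real-valued limit coincides with ordinary $J_1$ convergence, and its closing assertion that this ``clearly implies the same in $\D$'' passes over exactly the $\infty$-touching paths — and it is all that is needed for \cref{cty hunt thm}, where $\psi$ is applied to a Hunt process on $\R$ with infinite lifetime. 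So your diagnosis of where the difficulty sits is accurate, but the correct move is to exclude those paths, not to try to handle them.
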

\begin{proof}
Because $f>0$, the map $:s\mapsto I(w)_s$ is strictly increasing everywhere; this ensures that $\eta(w)$ is continuous everywhere.
Because $I(w)_s$ is finite at all finite times, we have that $\eta(w)$ is strictly increasing on $[0,I(w)_\infty)$; since $I(w)_\infty = \infty$ it follows that $\eta(w)$ is strictly increasing everywhere.

These properties actually allow us to show continuity of $\psi$ in the usual Skorokhod space $D([0,\infty),\R)$, which clearly implies the same in $\D$.

Specifically, the map $I$ from $D([0,\infty),\R)$ to the space of continuous functions in $D([0,\infty),\R)$, equipped with the topology of uniform convergence on compact intervals, is continuous, as shown for example in \cite[Thm.~11.5.1]{Whitt}. Then continuity of $\eta$ follows from \cite[Cor.~13.6.4]{Whitt}.
Thus Whitt's continuity of composition map \cite[Theorem 3.1]{Whitt80} yields that $\psi$ is continuous at $w$.
\end{proof}

The next continuity result is more involved, as it allows the time-changed paths to explode.

\begin{proposition}\label{cty prop 2}
Let $L\subset \D$ be the set of paths that have limit $\lim_{t\to\infty} w_t = \infty$ and for which $I(w)_s<\infty$ for all $s<\infty$. 
Then the map
$
	\psi|_L : (L, d)\to (\D,d)
$
is continuous.
\end{proposition}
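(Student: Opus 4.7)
The plan is to adapt the argument of \cref{cty prop 1} to allow the time-changed path to explode, exploiting that in the metric $\rho$ the point $+\infty$ lies at finite distance from every other point, so that reaching $+\infty$ is a \emph{continuous} event rather than a genuine discontinuity.

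First, for $w\in L$, the time change $\eta(w)$ extends to a continuous map $[0,\infty)\to[0,\infty]$ that is strictly increasing on $[0,I(w)_\infty)$ and equals $+\infty$ on $[I(w)_\infty,\infty)$; the composition $\psi(w)_t=w(\eta(w)_t)$, with the convention $\psi(w)_t=+\infty$ when $\eta(w)_t=\infty$, is then \cadlag in $(\oR,\rho)$ and is continuous at $t=I(w)_\infty$ (when this value is finite), since
\[
    \psi(w)_{I(w)_\infty -}
    = \lim_{t\uparrow I(w)_\infty} w\bigl(\eta(w)_t\bigr)
    = \lim_{s\to\infty} w_s
    = +\infty
    = \psi(w)_{I(w)_\infty},
\]
which uses precisely the defining property of $L$.

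Next, I would fix $w^{(n)}\to w$ in $(L,d)$ and $T_0>0$, aiming for $d_{T_0}(\psi(w^{(n)}),\psi(w))\to 0$, and split into two cases.  If $T_0<I(w)_\infty$, choose $S>\eta(w)(T_0)$: continuity of $I$ on the bounded interval $[0,S]$ gives $I(w^{(n)})\to I(w)$ uniformly there, and then the chain of Whitt-type continuity results used in \cref{cty prop 1} (continuity of the right-inverse and of the composition map) yields $\psi(w^{(n)})|_{[0,T_0]}\to\psi(w)|_{[0,T_0]}$ in $D([0,T_0],\R)$.  If $T_0\ge I(w)_\infty$ I would split $[0,T_0]$ at $I(w)_\infty-\delta$ for small $\delta>0$: convergence on $[0,I(w)_\infty-\delta]$ follows from the previous case, while on $[I(w)_\infty-\delta,T_0]$ the limit $\psi(w)$ is within $\rho$-distance $O(\delta)$ of $+\infty$ by the continuity established above, so it suffices to show the same uniform proximity for $\psi(w^{(n)})$ for large $n$.

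For that last point I would work with the time deformation $\lambda_n(t)=I(w^{(n)})(\eta(w)(t))$; the identity $\eta(w^{(n)})\circ I(w^{(n)})=\mathrm{id}$ then gives the clean formula
\[
    \psi(w^{(n)})\bigl(\lambda_n(t)\bigr)
    \;=\; w^{(n)}\bigl(\eta(w)(t)\bigr),
\]
which reduces the Skorokhod comparison to controlling $w^{(n)}$ against $w$ at the fixed sequence of times $\eta(w)(t)$: these are finite continuity points of $w$ for $t<I(w)_\infty$, while for $t\ge I(w)_\infty$ we have $\eta(w)(t)=+\infty$ and both $w^{(n)}$ and $w$ have limit $+\infty$ by the $L$-condition, making the two sides match at $+\infty$.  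I expect the principal obstacle to be the verification that $\lambda_n$ can be arranged as a strictly increasing bijection of $[0,T_0]$ with $\sup_t|\lambda_n(t)-t|\to 0$: this amounts to uniform proximity of $I(w^{(n)})$ and $I(w)$ up to the infinite horizon, which is not automatic from pathwise Skorokhod convergence on compacts and must be closed by combining the continuity of $\psi(w)$ at $I(w)_\infty$ with the tail condition encoded in $L$, possibly after a local modification of $\lambda_n$ near $I(w)_\infty$ and a linear interpolation out to $T_0$.
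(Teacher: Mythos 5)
Your plan follows the same decomposition as the paper's proof: below the explosion time one argues exactly as in \cref{cty prop 1}, and on the tail one compares both time-changed paths to the constant path at $+\infty$. The compact-window case of your argument is fine. The genuine gap is that the step you defer --- uniform proximity of $\psi(w^{(n)})$ to $+\infty$ on $[I(w)_\infty-\delta,T_0]$ for all large $n$ --- is not a technical loose end but the entire content of the proposition, and it is exactly what the paper spends its whole proof on, in the form of the last-exit-time estimate \eqref{mandinga}: writing $\tau^n=\sup\{t\ge 0:\rho(w^n_t,\infty)\ge\eps\}$ and $\tau$ for the same quantity for $w$, the paper first proves $\limsup_n\tau^n\le\tau$, and only then bounds the tail by the triangle inequality through $+\infty$, with no time deformation there at all. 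Your proposal offers no substitute for this ingredient, and your expectation that it can be closed by ``combining the continuity of $\psi(w)$ at $I(w)_\infty$ with the tail condition encoded in $L$'' cannot work as stated: membership of each $w^{(n)}$ in $L$ carries no information that is uniform in $n$, and $d$-convergence constrains only compact time windows, whose weight $\ee^{-t}$ vanishes. Concretely, take $w_t=t$ and $f(x)=\ee^{-x}$ (so $I(w)_\infty=1$), and let $w^{(n)}$ equal $w$ on $[0,n)$, equal $0$ on $[n,n+1)$, and equal $w$ on $[n+1,\infty)$. Then every $w^{(n)}$ lies in $L$ and $d(w^{(n)},w)\le\ee^{-n}\to 0$, yet after the time change the excursion to $0$ occupies the interval $[1-\ee^{-n},2-\ee^{-n})$, so $\psi(w^{(n)})$ leaves every neighbourhood of $+\infty$ at bounded times and the uniform proximity your plan requires fails. (This example merits attention beyond your write-up: it gives $\tau^n=n+1\to\infty$ while $\tau<\infty$, so it is also incompatible with \eqref{mandinga} as stated; the needed tail control genuinely requires a stronger mode of convergence on the domain, such as the metric $d_\infty$ of Caballero--Lambert--Uribe Bravo discussed in the remark after \cref{cty prop 2}, rather than $d$-convergence within $L$ alone.)

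A second, independent problem is the device you propose for producing the tail estimate. Requiring $\lambda_n(t)=I(w^{(n)})(\eta(w)(t))$ to be a near-identity deformation of $[0,T_0]$ with $T_0\ge I(w)_\infty$ forces $\lambda_n(t)$ to sit near $I(w^{(n)})_\infty$ for $t\ge I(w)_\infty$, hence forces convergence of the explosion times $I(w^{(n)})_\infty\to I(w)_\infty$. That is strictly stronger than the conclusion and false in general even when the conclusion holds: with $w_t=t$, $f(x)=\ee^{-x}$, and $w^{(n)}$ equal to $w$ on $[0,n]$ but growing only logarithmically thereafter, one has $w^{(n)}\to w$ uniformly under $\rho$ and $\psi(w^{(n)})\to\psi(w)$ in $(\D,d)$ (both time-changed paths are uniformly within $2\ee^{-n}$ of $+\infty$ after time $1-\ee^{-n}$), yet $I(w^{(n)})_\infty=\infty$ for every $n$ while $I(w)_\infty=1$. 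This is precisely why the paper's tail argument never matches explosion times or constructs any global deformation across $I(w)_\infty$: it only uses $\rho(\psi(w^n)_t,\psi(w)_t)\le\rho(\psi(w^n)_t,\infty)+\rho(\infty,\psi(w)_t)$ together with the uniformity supplied by \eqref{mandinga}.
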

\begin{proof}
Fix $w\in L$ and let $w^n$ be a sequence of paths in $L$ converging to $w$ under $d$.
Let $\tau^n = \sup\{t\ge 0 : \rho(w^n_t,\infty)\ge\eps\}$, and $\tau = \sup\{t\ge 0 : \rho(w_t,\infty)\ge\eps\}$. We shall first show that
\begin{align}\label{mandinga}
	\limsup_n \tau^n \le \tau.
\end{align}
Equivalently, there exists a sequence $(\lambda_n)$ in $\Lambda$ such that $\lambda_n\to\textrm{id}$ uniformly and $w^n\circ\lambda_n \to w$ uniformly under $\rho$. Suppose for contradiction that there is some $c>0$ such that $\limsup_n \tau^n > \tau + c$. Then for any $s\in[\tau,\tau+c)$, there are infinitely many $n$ such that
$
	\rho(w_s,\infty)<\eps \le \rho(w^n_s,\infty).
$
Thus
$$
	\limsup_n \rho(w_s, w^n_s) \ge 
	\limsup_n \abs{ \rho(w_s,\infty) - \rho(w^n_s,\infty) } > 0.
$$
Since this holds for all $s\in[\tau,\tau+c)$, an interval of positive length independent of $n$, we conclude that $\limsup_n d(w, w^n)>0$, which contradicts the assumption that $w^n\to w$ under $d$; thus \eqref{mandinga} holds. 
It follows that for any positive $c$, there is $n_c$ such that for all $n\ge n_c$, $\tau^n < \tau+c$ and thus
$$
	\sup_{s\ge\tau+c} \rho(w_s, w^n_s) \le \sup_{s\ge\tau+c} (\rho(w_s, \infty) + \rho(\infty, w^n_s)) < 2\eps.
$$
For that same choice of $c$, noting that $\eta(w)_t\ge\tau+c$ for any $t\ge I(w)_{\tau+c}$,
we have
\begin{align*}
&	\sup_{t\ge I(w)_{\tau+c}} \rho(\psi(w)_t,\psi(w^n)_t)\\
 &\le \sup_{t\ge I(w)_{\tau+c}}\big(\rho(w^n\circ\eta(w^n)_t, w^n\circ\eta(w)_t) + \rho(w\circ\eta(w)_t, w^n\circ\eta(w)_t)\big)\\
	&< \sup_{t\ge I(w)_{\tau+c}}\rho(w^n\circ\eta(w^n)_t, w^n\circ\eta(w)_t) + 2\eps.
\end{align*}
Because \eqref{mandinga} implies uniform convergence of $w^n\circ\lambda_n\to w$ for some sequence $(\lambda_n)$ as above, we have that $I(w^n)\to I(w)$ pointwise. In particular, we can take a possibly larger $n$, and some $b>0$ such that
$$
	t \ge b + I(w)_{\tau+c} 
	\Rightarrow t \ge I(w^n)_{\tau+c}
	\Rightarrow t \ge I(w^n)_{\tau^n}
	\Rightarrow \eta(w^n)_t\ge\tau^n,
$$
so that $\sup_{t\ge b + I(w)_{\tau+c}} \rho(\psi(w)_t,\psi(w^n)_t) < 2\eps + 2\eps$. We have found a finite time $t_0 \coloneqq b + I(w)_{\tau+c}$ such that for any $t>0$,
$$
	d_t(\psi(w^n), \psi(w)) \le d_{t_0}(\psi(w^n), \psi(w)) + \sup_{t\ge t_0} \rho(\psi(w)_t,\psi(w^n)_t) < d_{t_0}(\psi(w^n), \psi(w)) + 4\eps.
$$
It remains to note that since $\tau+c<\infty$, and we can take $b$ arbitrarily close to 0, we have that $t_0 < I(w)_\infty \wedge I(w^n)_\infty$, which means that $d_{t_0}(\psi(w^n), \psi(w))\to0$ as $n\to\infty$ by virtue of the same arguments as in the proof of \cref{cty prop 1}. So $\lim d_t(\psi(w^n), \psi(w)) < 4\eps$, and since $\eps>0$ is arbitrary we have that $\psi(w^n) \to \psi(w)$ under $d_t$. This holds for all $t>0$, which implies that $\psi(w^n) \to \psi(w)$ under $d$.
\end{proof}

\begin{remark}
The statement of \cref{cty prop 2} does not hold without the condition that all paths $(w^n_t, t\ge 0)$ have the same limit as $t\to\infty$ as $(w_t)$. A counterexample inspired by that of Caballero, Lambert, and Uribe Bravo \cite[Example 1]{CLU} is as follows: let $f$ be integrable, $w=\mathrm{id}$, and $w^n = w\circ\1_{[0,n]} + \1_{(n,\infty)}$.
Then $\sup_{s\le t}\rho(w^n_s,w_s)\to0$ as $n\to\infty$ for all $t$, which implies $w^n\to w$ under $d$, but for $t>I(w)_\infty$ it holds that $\sup_{s\le t}\rho(\psi(w^n_s),\psi(w_s))\ge\rho(1,\infty)$.

Caballero et.~al.'s solution in the general case is to require the uniform convergence equivalent to \eqref{mandinga}, which they define via a metric $d_\infty$. Under the tighter condition of $d_\infty(w^n,w)\to0$, which we deduce from the ``shared limit'' condition, Caballero et.~al.~show that $d(\psi(w^n),\psi(w))\to0$.
\end{remark}

\begin{theorem}\label{cty hunt thm}
Let $\xi$ be a Hunt process on $\R$, with almost surely infinite lifetime, and law $\mathcal P_x, x\in\R$, and let $\P_x$ denote the law of the time-changed process $\psi(\xi)$.
Suppose that one of the following conditions holds:
\begin{enumerate}
	\item $\mathcal P_x(I(\xi)_\infty = \infty)=1$ for all $x\in \R$;
	\item $\mathcal P_x(\lim_{t\to\infty}\xi_t = \infty)=1$ for all $x\in \R$.
\end{enumerate}	
Then, in $(\D,d)$, weak continuity points of $x\mapsto\mathcal P_x$ are also weak continuity points of $x\mapsto \P_x$.
\end{theorem}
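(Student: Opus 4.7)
The strategy is to combine the path-space continuity results \cref{cty prop 1,cty prop 2} with Skorokhod's representation theorem to transfer weak convergence through the functional $\psi$.

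First I will fix a weak-continuity point $x$ of $y \mapsto \mathcal{P}_y$ and a sequence $x_n \to x$, giving $\mathcal{P}_{x_n} \Rightarrow \mathcal{P}_x$ in the Polish space $(\D, d)$. Skorokhod's representation theorem then furnishes $\D$-valued random elements $\hat\xi^n$ of law $\mathcal{P}_{x_n}$ and $\hat\xi$ of law $\mathcal{P}_x$ on a common probability space, with $\hat\xi^n \to \hat\xi$ almost surely in $(\D,d)$.

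Next I will check that these trajectories almost surely land in the relevant continuity set. Since $\xi$ has almost surely infinite lifetime and \cadlag sample paths in $\R$, the image $\{\xi_r : r \in [0,s]\}$ is a.s.\ bounded for each finite $s$; continuity and positivity of $R$ (hence of $f=1/R$ on each bounded interval) then give $I(\xi)_s < \infty$ a.s.\ for all such $s$. Under condition (i), adding $I(\xi)_\infty = \infty$ places $\xi \in G$ almost surely, while under condition (ii), adding $\xi_t \to \infty$ places $\xi \in L$ almost surely. Applying this for each $x_n$ and $x$ simultaneously yields an almost-sure event on which $\hat\xi$ and every $\hat\xi^n$ belong to $G$ (resp.\ $L$).

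On that event, the hypotheses of \cref{cty prop 1} (under (i)) or \cref{cty prop 2} (under (ii)) are met by the convergent sequence $\hat\xi^n \to \hat\xi$, so $\psi(\hat\xi^n) \to \psi(\hat\xi)$ in $(\D,d)$. Almost sure convergence of these time-changed trajectories transfers to weak convergence of their laws, namely $\bP_{x_n} \Rightarrow \bP_x$, which is exactly continuity of $y \mapsto \bP_y$ at $x$.

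The substantive analytic obstacle here — continuity of the composition functional $\psi$ even when the time change can blow up — has already been resolved in \cref{cty prop 1,cty prop 2}. The remaining work consists of the routine verification that Hunt-process trajectories satisfy the hypotheses of those propositions, together with a standard Skorokhod coupling, so no essentially new difficulty should arise.
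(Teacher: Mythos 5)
Your proposal is correct and follows essentially the same route as the paper: the paper likewise verifies via local boundedness of Hunt paths that trajectories lie a.s.\ in $G$ (resp.\ $L$) and then transfers weak convergence through $\psi$ using the generalized continuous mapping theorem together with \cref{cty prop 1,cty prop 2}, which is exactly what your Skorokhod-representation coupling unpacks. If anything, your coupling makes explicit the (needed) point that the approximating paths, not just the limit, lie a.s.\ in $G$ (resp.\ $L$), since continuity of $\psi$ is only established as a restriction to those subsets.
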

\begin{proof}
The fact that $\{\xi_s, 0\le s<t\}$ is almost surely contained in a compact set in $\R$ for every $t<\infty$ is well-known, see Blumenthal and Getoor \cite[Prop.~I.9.3]{BG}; since $f$ is continuous, it is bounded on compact sets, and thus we have that $I(\xi)_s<\infty$ for all $s<\infty$ almost surely. Under condition (i), $\xi$ is almost surely contained in the set $G$ defined in \cref{cty prop 1}, and so that result in combination with the continuous mapping theorem \cite[Thm.~3.4.3]{Whitt} applied to $\psi$ yields the required continuity. Similarly \cref{cty prop 2} applies under condition (ii).
\end{proof}

Note that condition (i) includes the case that $\xi$ almost surely converges to a finite limit in $\R$. However, if $\xi$ is only recurrent, $I(\xi)_\infty$ may still be finite. For more precise conditions on (in)finiteness of $I(\xi)_\infty$ see \cite{BDK}.

\begin{lemma}\label{lem:X-conti}
The law $\P_x$ of the time-changed Lévy process from \eqref{eq:time-change} is weakly continuous in $(\D,d)$ at all $x\in\R$.
\end{lemma}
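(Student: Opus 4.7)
The plan is to apply \cref{cty hunt thm} directly, taking $\xi$ as the Hunt process with family of laws $(\mathrm P_x)_{x\in\R}$ (which has almost surely infinite lifetime since it is by assumption unkilled) and $f=1/R$; then under $\mathrm P_x$ the time-changed process $\psi(\xi)$ has law $\P_x$ as in \eqref{eq:time-change}. Two ingredients need to be established: weak continuity of $y\mapsto\mathrm P_y$ on $(\D,d)$ at every $x\in\R$, and verification that the family $(\mathrm P_y)$ satisfies one of the alternatives (i) or (ii) of \cref{cty hunt thm}.

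For the first ingredient, I would use the shift identity: $\xi$ under $\mathrm P_y$ is equal in distribution to $y+\xi^{(0)}$, where $\xi^{(0)}$ has law $\mathrm P_0$. So continuity at $x$ reduces to continuity of the deterministic shift map $y\mapsto y+w$ in $(\D,d)$, applied to a typical sample $w$ of $\xi^{(0)}$. Since $w$ is almost surely real-valued and bounded on every compact time interval, the infinite boundary point plays no role, and the bound $\rho(y+u,x+u)=\ee^{-u}|\ee^{-y}-\ee^{-x}|$ (which tends to $0$ as $y\to x$, uniformly for $u$ in bounded subsets of $\R$) gives almost sure convergence of $y+\xi^{(0)}$ to $x+\xi^{(0)}$ in $(\D,d)$ as $y\to x$, whence weak convergence by the continuous mapping theorem.

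For the second ingredient, my plan is to split on the long-time behaviour of $\xi$. If $\xi$ drifts to $+\infty$, then $\lim_{t\to\infty}\xi_t=+\infty$ $\mathrm P_x$-almost surely for every $x$, which is exactly alternative (ii). Otherwise $\xi$ either oscillates or drifts to $-\infty$, and \cref{lem:explosion} guarantees $\int_0^\infty \frac{1}{R(\xi_s)}\dd s =\infty$ $\mathrm P_x$-almost surely for every $x$, which is alternative (i). In either case \cref{cty hunt thm} yields that every $x\in\R$ is a weak continuity point of $y\mapsto\P_y$, completing the proof.

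The main subtlety I anticipate is the verification of the first ingredient in the Skorokhod topology on $(\D,d)$ built over the compactification $\oR$; but because the L\'evy process never takes the value $+\infty$, this reduces to a short locally-uniform estimate on $\R$ rather than a genuine difficulty.
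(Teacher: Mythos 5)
Your proposal is correct and follows essentially the same route as the paper: both reduce the claim to \cref{cty hunt thm} by splitting on the trichotomy for $\xi$ (drift to $+\infty$ gives alternative (ii), while oscillation or drift to $-\infty$ gives alternative (i) via \cref{lem:explosion}), with weak continuity of the Lévy laws supplied by translation invariance. The only difference is that you spell out the elementary shift estimate showing $y\mapsto\mathrm P_y$ is weakly continuous, which the paper simply asserts as standard (continuity under the uniform topology, hence under $d$).
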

\begin{proof}
Since $\xi$ is a Lévy process, $\P_x$ is weakly continuous everywhere in $\R$ under the uniform topology on $\D$, and thus also under $d$.
If $\xi$ oscillates or diverges to $-\infty$, then by \cref{lem:explosion} we have $\mathcal P_x(I(\xi)_\infty = \infty)=1$ for all $x\in \R$ and \cref{cty hunt thm} yields the result. If $\xi$ diverges to $\infty$ then \cref{cty hunt thm} applies directly.
\end{proof}

\subsection{A new invariance property for time-changed Lévy processes}
In this section we connect the time-changed L\'evy processes to the $R$-scaling invariance property introduced in \cref{defn:Phi-ss}.


We start with a study of the path transformations $( \Phi^R_z , z\in \bR)$ used in the definition of the $R$-scaling invariance property. In what follows in case there is no ambiguity about $R$ we drop the superscript and write $\Phi_z:= \Phi^R_z$ to simplify the notation. 

The path transformation $\Phi$ can be used to formulate a new invariance property of time-changed L\'evy processes that turns out to be surprisingly useful.

\begin{proposition}\label{prop:Phi-z}
Let $z,x\in \bR$ and $X$ a time-changed L\'evy process from \eqref{time-change eq} of law $\mathbb{P}_x$ with lifetime $\zeta$. Then $\Phi_z(X)\sim \mathbb{P}_{x+z}$ and the lifetime of $\Phi_z(X)$ is $\zeta' = \int^{\zeta}_0 \frac{R(X_s)}{R(z+X_s)} \dd s$. 
\end{proposition}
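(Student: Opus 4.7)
The plan is to prove a pathwise identity. Given a realisation of the Lévy process $\xi$ under $\mathrm{P}_0$, we build $X$ via the time change \eqref{eq:time-change}, and I will show that $\Phi_z(X)$ coincides pathwise with the time-changed Lévy process constructed from the \emph{same} $\xi$ but started at $x+z$. The equality in distribution $\Phi_z(X)\sim \mathbb{P}_{x+z}$ and the identification of the lifetime then both follow at once.

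The key ingredient is a single change of variables. Write $A(s) = \int_0^s R(x+\xi_r)^{-1}\dd r$ and $C(s) = \int_0^s R(x+z+\xi_r)^{-1}\dd r$ for the time-change integrals defining $X$ under $\mathbb{P}_x$ and under $\mathbb{P}_{x+z}$ respectively, so that $\eta = A^{-1}$, $\zeta = A(\infty)$, and $X_s = x + \xi_{A^{-1}(s)}$. Let also $B(s) = \int_0^s g(z,X_u)\dd u$, so $h_z = B^{-1}$. Substituting $u = A(v)$ in the integral defining $B$ (valid on $[0,\zeta)$ because $R$ is continuous and strictly positive, which makes $A$ strictly increasing and absolutely continuous) and using $X_{A(v)} = x+\xi_v$ together with $\dd u = R(x+\xi_v)^{-1}\dd v$, the factor $R(x+\xi_v)$ in the numerator of $g$ cancels with the Jacobian, leaving
$$
    B(s) \;=\; \int_0^{A^{-1}(s)} R(x+z+\xi_v)^{-1}\dd v \;=\; C\bigl(A^{-1}(s)\bigr).
$$

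Inverting this relation gives $h_z(t) = A(C^{-1}(t))$, hence $\eta(h_z(t)) = C^{-1}(t)$, and therefore
$$
    \Phi_z(X)_t \;=\; z + X_{h_z(t)} \;=\; (x+z) + \xi_{\eta(h_z(t))} \;=\; (x+z) + \xi_{C^{-1}(t)},
$$
which is precisely the realisation of a $\mathbb{P}_{x+z}$-process from the same $\xi$. The lifetime of $\Phi_z(X)$ is the first $t$ at which $h_z(t) = \zeta$, i.e.\ $\zeta' = B(\zeta) = \int_0^\zeta R(X_s)/R(z+X_s)\dd s$; the same change of variables identifies $B(\zeta)=C(\infty)$, which matches the lifetime under $\mathbb{P}_{x+z}$.

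There is no significant obstacle: the proof is a pathwise computation. The only points to verify carefully are (i) that the substitution $u = A(v)$ is legitimate throughout $[0,\zeta)$, which reduces to strict positivity and continuity of $R$ ensuring $A$, $B$, $C$ are strictly increasing and absolutely continuous with well-defined continuous inverses on their ranges; and (ii) that the convention $\Phi_z(w)_t = +\infty$ past $\int_0^\infty g(z,w_u)\dd u$ agrees with the convention that the $\mathbb{P}_{x+z}$-process takes value $+\infty$ past its own explosion time, which is automatic since $g(z,\infty)=1$ forces $B$ to diverge beyond $\zeta$.
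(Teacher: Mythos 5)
Your proof is correct. The kernel is the same cancellation that drives the paper's argument---the Jacobian $1/R(x+\xi_v)$ of the original time change cancels the numerator $R(X_u)$ in $g(z,\cdot)$---but you deploy it in the opposite direction. The paper takes $X'=\Phi_z(X)$, undoes its natural time change (via $J'$, $I'$ and a chain-rule computation showing $\frac{\dd}{\dd t}\eta(h_z(I'_t))=1$) and thereby reconstructs a L\'evy process $Z'=z+\xi$ inside $X'$; you instead compose forward, proving $B=C\circ A^{-1}$ by substitution and hence $\eta\circ h_z=C^{-1}$, which exhibits $\Phi_z(X)$ pathwise as the time-changed process built from the \emph{same} driving path $\xi$ started at $x+z$. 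Your route buys a slightly stronger conclusion (an exact pathwise coupling, not merely equality in law), settles the lifetime claim in the same stroke via $B(\zeta)=C(\infty)$, and avoids the a.e.-differentiability bookkeeping implicit in the paper's chain-rule step; the paper's inversion, on the other hand, makes explicit that $\Phi_z(X)$ is itself in the canonical form \eqref{eq:time-change} with respect to an explicitly identified L\'evy process, at the cost of first checking $\int_0^{\zeta'}R(X'_s)\,\dd s=\infty$ so that the inverse clock is defined for all times. Your point about the convention past explosion is also handled correctly: since $g(z,\infty)=1$, for $t\ge B(\zeta)$ one has $h_z(t)\ge\zeta$, so $\Phi_z(X)_t=\infty$, matching the $\mathbb{P}_{x+z}$-path, which is $\infty$ past $C(\infty)=B(\zeta)$.
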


\begin{proof}
	Let $X'\coloneqq  \Phi_z(X)$ with $\zeta' = \int^{\zeta}_0 \frac{R(X_s)}{R(z+X_s)} \dd s$. 
 Changing variables by \eqref{eq:dh_z} yields that 
 \[
 \int_0^{\zeta'} R(X'_s)\dd s
 =\int_0^{\zeta'} R(z + X_{h_z(s)})\dd s =   \int_0^{\zeta} R(X_u)\dd u.
 \]
 Write $X= \xi_{\eta(t)}$ with $\xi$ a L\'evy process starting from $x$, by changing variables we have 
 \[
\int_0^{\zeta} R(X_u)\dd u = \int_0^{\zeta} R(\xi_{\eta(u)})\dd u  =  \int_0^{\infty} R(\xi_{t})\frac{1}{ R(\xi_{t})}\dd t =\infty.  
 \]
Moreover, for $t\ge 0$, set  
\[
J'(t) = \int_0^{t\wedge \zeta'} R(X'_s)\dd s\quad\text{and}\quad  I'_t= \inf\{s>0\colon J'(s)\!>\!t\}. 
\]
Since $\int_0^{\zeta'} R(X'_s)\dd s =\infty$, we have  $\lim_{t\to \infty} J'(t) =\infty$ and hence $J'(I'_t)=t$ for all $t\ge 0$. It follows that $	\frac{\dd I'_t}{\dd t}=  \frac{1}{R(X'_{I'_t})}$ and thus $I'_t = \int_0^t \frac{1}{R(X'_{I'_s})} \dd s$, $\forall t\ge 0$. 
Then we deduce the identity 
\[
J'(t)= \inf\{s>0\colon I'_s\!>\!t\} 
= 
\inf\bigg\{r\ge 0\colon \int_0^r \frac{1}{R(X'_{I'_s})} \dd s>t\bigg\}, \quad \forall t\ge 0. 
\]
 
Therefore, with $Z'_t\coloneqq  X'_{I'_t}$, we have the representation $X'_t\coloneqq  Z'_{J'(t)}, t\ge 0$, and it suffices to prove that  $Z'$ is a L\'evy process with distribution $\mathrm{P}_{x+z}$, which implies that $X'\sim \mathbb{P}_{x+z}$ by definition.

Write $X= \xi_{\eta(t)}$ with $\xi$ a L\'evy process starting from $x$. 
Then we have 
\[Z'_t = X'_{I'_t}= z + \xi_{\eta(h_z(I'_t))}.\] 
Using the chain rule and the definitions of $\eta$, $h_z$, and $\Phi_z$, we have 
\begin{align*}
\frac{\dd \eta(h_z(I'_t))}{\dd t}
&= \frac{\dd \eta}{\dd t}\bigg|_{h_z(I'_t)}\frac{\dd h_z}{\dd t}\bigg|_{I'_t}
\frac{\dd I'_t}{\dd t} = R(X_{h_z(I'_t)})\frac{R(z+ X_{h_z(I'_t)})}{R(X_{h_z(I'_t)})} \frac{1}{R(X'(I'_t))}=1.
\end{align*}
	It follows that $Z'_t = z+\xi_t$. As $\xi\sim \mathrm{P}_x$, we conclude that $Z'\sim \mathrm{P}_{x+z}$, which completes the proof. 
\end{proof}
In the following we use synonymously $U^X$ for the potential measure and the potential operator, that is
\[
U^X (x,A)=
 \mathbb{E}_x \left[\int_0^{\zeta} \mathbf 1_{A}(X_t) \dd t \right]\quad \text{and}\quad
U^X f (x)= 
 \mathbb{E}_x \left[\int_0^{\zeta} f(X_t) \dd t \right]. 
\]

\begin{proposition}\label{prop:ss-potential}
The process $X$ is $R$-scaling invariant if and only if 
the potential measure satisfies 
	\begin{equation}\label{eq:ss-potential}
	    	R(z+y) U^X (x,z+\dd y)
	= R(y) U^X (x+z, \dd y) \quad \text{for all } z\in \bR.  
	\end{equation}
\end{proposition}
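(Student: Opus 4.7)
My plan is to reformulate the potential measure identity as an integral equation equating two mean occupation measures, which is exactly what $R$-scaling invariance asserts when tested against bounded additive functionals $F(w)=\int_0^{\zeta(w)}\phi(w_t)\,\dd t$. The forward direction will then reduce to a deterministic change of variable, while the backward direction will have to upgrade the resulting mean occupation identity to a full distributional equality via the strong Markov structure of $X$.

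For the forward direction I would fix $x,z\in\bR$ and a bounded measurable $\phi$, and apply both sides of $\Phi_z^R(X)\sim\bP_{x+z}$ to $F(w)=\int_0^{\zeta(w)}\phi(w_t)\,\dd t$. The right-hand side is $\int\phi(y)\,U^X(x+z,\dd y)$. For the left-hand side, \cref{prop:Phi-z} gives $\Phi_z(X)_t=z+X_{h_z(t)}$ with lifetime $\zeta'=\int_0^{\zeta}R(X_s)/R(z+X_s)\,\dd s$, so the change of variable $s=h_z(t)$, with $\dd t=(R(X_s)/R(z+X_s))\,\dd s$, should convert the integral into $\int_0^{\zeta}\phi(z+X_s)\bigl(R(X_s)/R(z+X_s)\bigr)\,\dd s$. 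Taking $\bE_x$ and matching the two sides would yield
\[
\int\phi(y)\,U^X(x+z,\dd y)=\int\phi(z+y)\,\frac{R(y)}{R(z+y)}\,U^X(x,\dd y).
\]
Specialising to $\phi=\psi R$ eliminates $R(z+y)$ on the right; pushing the integration variable forward by $w=z+y$, in the convention that $U^X(x,z+\dd w)$ denotes the translate of $U^X(x,\dd \cdot)$, I expect to read off exactly \eqref{eq:ss-potential}.

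For the backward direction, I would observe that the entire manipulation above is reversible: \eqref{eq:ss-potential} would recover the displayed integral equation, which says $\bE_{x+z}[F(X)]=\bE_x[F(\Phi_z(X))]$ for every additive functional of the above form. To promote this equality of mean occupation measures to $\Phi_z(X)\eqdis X$ under $\bP_{x+z}$, I would kill $X$ at an independent exponential time of rate $\lambda>0$ and re-run the same argument. The killing should interact cleanly with the deterministic path transform $\Phi_z^R$, since the ratio $R(X_s)/R(z+X_s)$ governing $h_z$ is unaffected by a multiplicative exponential weight; consequently one should obtain the analogous identity for each resolvent $U^X_\lambda$. The collection $(U^X_\lambda)_{\lambda>0}$ then determines the semigroup of the strong Markov process $X$ and pins down its law from every starting point, completing the equivalence.

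I expect the main obstacle to be this last step: promoting equality of the $\lambda=0$ potentials to equality in distribution. The change of variable itself is deterministic and routine, but $U^X$ alone does not characterise the law of a Markov process. Extending to $\lambda>0$ via exponential killing is natural, but I would need to verify that the killing clock commutes with the $R$-time-change so that replacing $\phi$ by $\ee^{-\lambda t}\phi$ inside $F(w)$ carries through the $h_z$ substitution unchanged, producing the desired $\lambda$-resolvent identity.
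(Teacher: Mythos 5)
Your forward direction is exactly the paper's argument: the change of variables $u=h_z(s)$ with $\dd s/\dd u = R(X_u)/R(z+X_u)$ turns $\E_x\big[\int_0^{\zeta}f((\Phi_z X)_s)\dd s\big]$ into $\int f(z+y)\frac{R(y)}{R(z+y)}\,U^X(x,\dd y)$, and matching this against $\int f(y)\,U^X(x+z,\dd y)$ gives \eqref{eq:ss-potential}. Your ``reversibility'' observation is also precisely the paper's converse: the paper substitutes \eqref{eq:ss-potential} back into the same computation to obtain $\E_x\big[\int_0^{\zeta}f((\Phi_z X)_s)\dd s\big]=U^Xf(x+z)$ and concludes the $R$-scaling invariance directly from this equality of potential operators, with no resolvent step at all. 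So the portion of your proof that overlaps with the paper is correct and identical to it.

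The genuine problem is the extra step you add, and it fails for exactly the reason you flag as ``the main obstacle''. The hypothesis \eqref{eq:ss-potential} constrains only the undiscounted potential. If you test $F_\lambda(w)=\int_0^{\zeta(w)}\ee^{-\lambda t}\phi(w_t)\dd t$ on $\Phi_z(X)$ and substitute $s=h_z(t)$, i.e.\ $t=A_s:=\int_0^s R(X_u)/R(z+X_u)\dd u$, the discount factor becomes $\exp(-\lambda A_s)$, a path-dependent multiplicative functional rather than $\ee^{-\lambda s}$:
\[
\E_x\big[F_\lambda(\Phi_z(X))\big]
=\E_x\Big[\int_0^{\zeta}\ee^{-\lambda A_s}\,\phi(z+X_s)\,\tfrac{R(X_s)}{R(z+X_s)}\,\dd s\Big].
\]
This expression involves the joint law of $(A_s,X_s)$ and cannot be reduced to, or compared with, $U^X_\lambda\phi(x+z)$ by means of \eqref{eq:ss-potential}: the killing clock does not commute with the time change (if it did, time changes could never alter resolvents, which is false). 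Hence no $\lambda$-resolvent identity can be extracted from the hypothesis, and your route to pinning down the law collapses. The concern motivating that step is legitimate --- equality of $0$-potential kernels of two Markov processes does not in general imply equality of their laws, and the paper's own converse asserts the conclusion without addressing this --- but the repair must come from elsewhere, e.g.\ from the classical fact that for transient right processes with proper potential kernels the potential determines the resolvent via the resolvent equation $U=U_\lambda+\lambda U U_\lambda$ (solved by a Neumann series on functions with bounded potential), applied to the two strong Markov processes $\Phi_z(X)$ under $\P_x$ and $X$ under $\P_{x+z}$, and not from exponential killing pushed through the time change.
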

\begin{proof}
    	Let $z\in \bR$. 
	Using change of variables $u= h_z(s)$ with $\frac{\dd s}{\dd u}= \frac{R(X_u)}{R(z+X_u)}$, we have 
	\begin{align}
		 \mathbb{E}_x
		\bigg[ \int_0^{\zeta} f ( (\Phi_z X)_s) \dd s\bigg]
		&= \mathbb{E}_x
		\bigg[ \int_0^{\infty} \mathbf{1}_{\{h_z(s)<\zeta\}} f (z+ X_{h_z(s)}) \dd s\bigg]\notag \\
		&= \mathbb{E}_x
		\bigg[ \int_0^{\infty}  \mathbf{1}_{\{u<\zeta\}}f (z+ X_{u})  \frac{R(X_u)}{R(z+X_u)} \dd u\bigg] \notag \\
		&= \int_{\bR}\frac{ f(z+y) R(y)}{R(z+y)} U^X(x,\dd y) . \label{eq:Phi-pot}
	\end{align}
	
	If $X$ is $R$-scaling invariant, then 
	\[
		 \mathbb{E}_x
		\bigg[ \int_0^{\zeta} f ( (\Phi_z X)_s) \dd s\bigg]
  =  \mathbb{E}_{x+z}\bigg[ \int_0^{\zeta} f(X_s) \dd s\bigg] = \int_{\bR} f(y) U^X (x+z, \dd y). 
	\]
 Combing the two expressions yields the identity \eqref{eq:ss-potential}. 
 
	Conversely, suppose that the potential measure satisfies \eqref{eq:ss-potential}, then the computation above shows that 
	\[
	 \mathbb{E}_x
	\bigg[ \int_0^{\zeta} f ((\Phi_z X)_s) \dd s\bigg]
	= \int_{\bR}\frac{ f(y) R(y-z)}{R(y)} U^X(x,z+\dd y)
	=  U^X f (x+z). 
	\]
Then the $R$-scaling invariance of $X$ follows. 
\end{proof}

\section{Coming down from infinity}\label{sec:P-infty}
In this section we study the question of when a time-changed Lévy process can be made to \emph{come down from infinity}, and prove \cref{thm:1}. 
By definition we are concerned with the non-explosive case, and study sufficient and necessary conditions for the existence of a law $\P_\infty$ that extends the Markov family $(\P_x)_{x\in\R}$ in the sense of \cref{def:down from infty}.  

 Note that, our definition of coming down from infinity is in a different form from the classical literature on diffusion processes; for example, in \cite[Chapter 23]{Kallenberg} it is defined that $+\infty$ is an entrance boundary, if $X$ does not explode and there exist $b\ge 0$ and $t>0$, such that 
 \[
 \liminf_{x\to \infty} \mathbb{P}_x (T_b\le t) >0. 
 \]
 For processes with no negative jumps, a discussion of equivalent conditions can be found in \cite{FLZ21}. 
 However, our work is concerned with the more general situation where negative jumps appear, such that these results cannot apply; we provide a discussion on this problem in an accompanying paper \cite{BDS-2}. 
Therefore, in this work we do not use their characterisation of coming down from infinity which is no longer valid in the general case, but give an alternative definition, \cref{def:down from infty}, in the sense that the Markov process can be extended to include the boundary point $+\infty$.

\subsection{Sufficient conditions}\label{sec:P-infty-suff}

In this section we shall prove the sufficiency direction of \cref{thm:1} by constructing a law $\bP_{\infty}$ that satisfies \cref{def:down from infty}.
We use ideas from the study of self-similar Markov processes combined with results on path integrals for general Markov processes.
We suppose for the remainder of this subsection that the Lévy process $\xi$ satisfies \ref{H1}. 

In what follows we will work with càdlàg processes indexed by the entire real-line $\bR$, which are well-known to be a useful tool for studying the entrance behaviour from the boundary of the state-space in a purely probabilistic way. 
We first recall the (spatially) \emph{stationary L\'evy process indexed by $\R$} studied by Bertoin and Savov \cite{BerSav11} based on the stationary over- and undershoot measure $\rho$ from \cref{lem:overshoot}, whose law is denoted by $\mathcal{P}$. 
Let $(Z_t)_{t\in\R}$ be the canonical process under $\mathcal{P}$. 
If we define the canonical backwards process $\xi'_t\coloneqq Z_{(-t)-}, t\geq 0,$ and the canonical forwards process $\xi''_t\coloneqq Z_t, t\geq 0$, then $Z$ is the stationary L\'evy process indexed by $\R$ if under $\mathcal P$ it holds that
\begin{align*}
	(\xi'_0, -\xi''_0)\sim \rho,\quad (\xi'',\xi') \sim \mathrm{P}_{-\xi_0''}\otimes  \widehat{\mathrm{P}}^{\uparrow}_{\xi_0''}.
\end{align*}
In words: sampled from the stationary over- and undershoot distribution $\rho$, the L\'evy process runs to the right and the conditioned dual process runs to the left.
Due to the transience of $\widehat{\mathrm{P}}^{\uparrow}$, and since \ref{H1} implies that the underlying L\'evy process either drifts to $-\infty$ or oscillates, we know that trajectories under $\mathcal P$ are issued from $+\infty$ at time $-\infty$ and either drift down to $-\infty$ or oscillate. The importance of the measure $\mathcal P$ is the spatial invariance from \cite{BerSav11}: roughly speaking, seen from any first passage time the process has the same distributional property, L\'evy process forwards, conditioned process backwards, under- and overshoots distributed according to $\rho$. Formally, this is stated as follows. 
\begin{lemma}[{\cite[Theorem~2 and Corollary~3]{BerSav11}}]\label{lem:BerSav}
	For $b\in\R$, let  $T_b=\inf\{t\in\R: Z_t<b\}$ and $Y_1= Z_{T_b-}-b$,  $Y_2= b - Z_{T_b}$. Then, under $\mathcal P$, $(Y_1,Y_2)\sim \rho$ and, conditionally on $(Y_1,Y_2)= (y_1, y_2)$, the backwards process  $(Z_{(T_b - t)-}-b)_{t\ge 0}$ has law $\widehat{\mathrm{P}}^{\uparrow}_{y_1}$, and the forwards process $(Z_{T_b+t}-b)_{t\ge 0}$ is an independent L\'evy process with law $\mathrm{P}_{-y_2}$. 
\end{lemma}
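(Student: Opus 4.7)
The plan is to verify, at the crossing time $T_b$, the three components of the statement: the joint undershoot/overshoot law $(Y_1,Y_2)\sim\rho$, the forwards Lévy decomposition, and the backwards conditioned-to-stay-positive decomposition. By the translation $Z\mapsto Z-b$ we reduce to $b=0$. The construction of $\mathcal{P}$ already encodes the stated decomposition at the reference time $t=0$; the content of the lemma is spatial invariance, namely that exactly the same structure reappears at the first passage below any other level.

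For the forwards decomposition, I would apply the strong Markov property at the stopping time $T_b$ to the Lévy process $\xi''=(Z_t)_{t\ge 0}$: conditionally on $Z_{T_b}=-Y_2$, the shifted path $(Z_{T_b+s}-b)_{s\ge 0}$ is a Lévy process with law $\mathrm{P}_{-Y_2}$. The distributional identity $(Y_1,Y_2)\sim\rho$ at $T_b$ then follows from \cref{lem:overshoot}, which identifies $\rho$ as the limiting joint undershoot/overshoot distribution of $\xi$ starting from far above: since $\mathcal{P}$ is issued from $+\infty$ at time $-\infty$, and $\rho$ is precisely the unique invariant law of the overshoot Markov chain indexed by successive first-passage levels, it is preserved at every crossing. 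Concretely I would compute, for a Borel test function $F$, $\mathcal{P}$-expectations of $F(Y_1,Y_2)$ by conditioning on the reference overshoot at time $0$, running the Lévy process forwards until $T_b$, and invoking the explicit form of $\rho$ in terms of $\nu_+$ and $\Pi$.

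The backwards claim is the delicate part. Here I would invoke the time-reversal/duality theorem for Lévy processes conditioned to stay positive: under $\widehat{\mathrm{P}}^{\uparrow}$, defined as the Doob $h$-transform by $\nu_+$ of the dual process killed on entering $(-\infty,0)$, time-reversal from the entrance law at $+\infty$ at any first-passage level produces another copy of $\widehat{\mathrm{P}}^{\uparrow}$ issued from the corresponding pre-crossing value $Y_1$. This invariance combines Nagasawa's reversal identity with the construction of the entrance law of $\widehat{\mathrm{P}}^{\uparrow}$ from $+\infty$. Finally, the conditional independence of forwards and backwards given $(Y_1,Y_2)$ is inherited from the product structure built into the definition of $\mathcal{P}$.

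The main obstacle is the backwards claim, because it requires carefully combining the reversal identities from fluctuation theory with the two-sided stationary construction; this is precisely the technical core of Theorem~2 and Corollary~3 in \cite{BerSav11}, whose argument I would follow.
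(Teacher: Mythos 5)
This lemma is not proved in the paper at all: it is imported verbatim from Bertoin--Savov \cite[Theorem~2 and Corollary~3]{BerSav11}, which is exactly the source you defer to for the technical core. Since your sketch correctly identifies the ingredients (strong Markov property at $T_b$ for the forwards part, stationarity of the overshoot law $\rho$, and time-reversal/duality for the backwards part) and then falls back on that same citation, your proposal takes essentially the same approach as the paper.
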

We can view $Z$ as a process starting from $+\infty$ at initial time $-\infty$.  
In order to get a process indexed by $[0,\infty)$ started from $+\infty$, we will time-change $Z$ in a similar way as in \eqref{eq:time-change}. We define, under $\mathcal P$, for $t\ge 0$,  
\begin{align}\label{time-change eq}
	X_t = Z_{\eta(t)},\quad \text{ where }\eta(t) = \inf\left\{ s\in\R\colon \int_{-\infty}^{s} \frac{1}{R(Z_r)}\dd r  >t\right\}. 
\end{align}
Denote by $\mathbb{P}_{\infty}$ the law of $X$ under $\mathcal{P}$. 
The lifetime of $X$ is given by $\zeta= \int_{-\infty}^\infty \frac{1}{R(Z_r)}\dd r$, which was shown in \cref{prop;explosion} to be equal to $\infty$ almost surely under \ref{H1}.

To ensure that the process $(X_t = Z_{\eta(t)}, 0\le t <\zeta)$ is non-trivial, we need 
\begin{equation}\label{eq:T0}
\mathcal{P}\left(\int_{-\infty}^{s} \frac{\dd r}{R(Z_r)}<\infty\right) = 1,\quad \text{for some (then all) } s\in\R. 
\end{equation}
Here, the finiteness of the integral for some finite $s$ is equivalent to the finiteness for all finite $s$, due to the local boundedness of $R$. 
Let $\mathcal{E}$ denote expectation under $\mathcal{P}$. 

\begin{lemma}\label{lem:H0}
The integrability condition 
\begin{equation}\label{eq:H0}
	\int_b^{\infty}\frac{\nu_+(y)}{R(y)}  \dd y<\infty ,\quad \text{for some (then all) } b\in\R, 
\end{equation}
is equivalent to 
\begin{equation}\label{eq:H0-bis}
	\mathcal{E}\left[\int_{-\infty}^{T_b} \frac{1}{R(Z_r)}\dd r\right]<\infty ,\quad \text{for some (then all) } b\in\R.  
\end{equation}
Under this condition, we have \eqref{eq:T0}. 
\end{lemma}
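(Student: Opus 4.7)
The plan is to reduce the backwards-indexed expectation in \eqref{eq:H0-bis} to a potential-type integral on $[0,\infty)$, then identify the resulting mean occupation measure as a constant multiple of $\nu_+(y)\,\dd y$, at which point the equivalence with \eqref{eq:H0} will reduce to simple estimates on $\nu_+$.

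The first step will use the path decomposition of \cref{lem:BerSav}: conditionally on the overshoot $Y_1=Z_{T_b-}-b\sim\rho_1$, the reversed fragment $(Z_{(T_b-s)-}-b)_{s\ge 0}$ has law $\widehat{\mathrm{P}}^{\uparrow}_{Y_1}$. Substituting $r=T_b-s$ and applying Fubini then yields
$$
\mathcal{E}\left[\int_{-\infty}^{T_b}\frac{1}{R(Z_r)}\dd r\right]
= \int_{[0,\infty)}\frac{\mu(\dd y)}{R(b+y)},
\qquad
\mu(\dd y):=\int_{[0,\infty)}\rho_1(\dd y_1)\,\widehat U^{\uparrow}(y_1,\dd y),
$$
where $\widehat U^{\uparrow}(y_1,\ccdot)$ is the potential of the dual Lévy process conditioned to stay positive started from $y_1$.

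The crucial step will be to show that $\mu(\dd y)=\nu_+(y)\,\dd y/\mathrm{E}[H_-(1)]$. For this I would lean on the spatial-stationarity structure of \cref{lem:BerSav}, which implies that the law of $Z-b$ restricted to $(-\infty, T_b]$ is independent of $b$; in particular $\mu$ is a $b$-independent expected occupation measure of $Z$ above any given level (read in shifted coordinates). Plugging the explicit form of $\rho_1$ from \cref{lem:overshoot} into the Doob $h$-transform representation of $\widehat{\mathrm{P}}^{\uparrow}$ (via the harmonic function $\nu_-$) then reduces the identification to a standard Wiener--Hopf type computation, in which the density $\nu_+$ and the prefactor $1/\mathrm{E}[H_-(1)]$ both arise naturally.

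Once $\mu$ is identified, \eqref{eq:H0-bis} becomes $\int_0^\infty \nu_+(y)/R(b+y)\,\dd y<\infty$. A change of variables $u=b+y$ together with the monotonicity and subadditivity of $\nu_+$, giving the sandwich $\nu_+(u-b)\le\nu_+(u)\le\nu_+(u-b)+\nu_+(b)$ for $u\ge b$, shows that this has the same tail integrability as $\int_b^\infty \nu_+(u)/R(u)\,\dd u$, i.e.\ \eqref{eq:H0}. The implication \eqref{eq:H0-bis}$\Rightarrow$\eqref{eq:T0} is then immediate: a.s.\ finiteness of $\int_{-\infty}^s R(Z_r)^{-1}\dd r$ for one $s\in\R$ follows from finite expectation, and propagates to every $s$ by local boundedness of $1/R$.

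The main obstacle is the identification $\mu(\dd y)=c\,\nu_+(y)\,\dd y$: although the spatial stationarity of $\mathcal{P}$ makes this conceptually clean, making it rigorous requires combining the explicit overshoot density hidden in $\rho_1$, the $h$-transform structure of $\widehat{\mathrm{P}}^{\uparrow}$, and a renewal-type computation that pins down both the functional form and the constant $1/\mathrm{E}[H_-(1)]$. Everything else is essentially routine manipulation.
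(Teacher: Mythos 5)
Your proposal is correct and follows essentially the same route as the paper's proof: time-reversal of the pre-$T_b$ fragment via \cref{lem:BerSav}, identification of the resulting mean occupation measure as $\nu_+(y)\,\dd y/\mathrm{E}[H_-(1)]$ (finite and positive under \ref{H1}), sandwich estimates from the monotonicity and subadditivity of $\nu_+$, and a.s.\ finiteness of $T_b$ together with local boundedness of $1/R$ to pass to \eqref{eq:T0}. The only difference is that the step you flag as the main obstacle requires no fresh $h$-transform or Wiener--Hopf computation: the identity $\mu(\dd y)=\nu_+(y)\,\dd y/\mathrm{E}[H_-(1)]$ is exactly \cite[Theorem~1(ii) and Equation~(5)]{BerSav11}, i.e.\ it is available off the shelf from the same reference you already invoke for the path decomposition, which is how the paper handles it.
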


\begin{proof}
We first notice that, since $\frac{v_+}{R}$ is locally bounded, if the integral in \eqref{eq:H0} is finite for some $b\in \bR$, then indeed it is finite for every $b\in \bR$.  

Time-reversing and using \cref{lem:BerSav}, we have 
\begin{equation}\label{eq:P-Tb}
\mathcal{E}\left[\int_{-\infty}^{T_b} \frac{1}{R(Z_r)}\dd r \right]  = 	\widehat{\mathrm{E}}^{\uparrow}_{\rho_1}\left[\int_{0}^{\infty} \frac{1}{R(\xi_r+b)}\dd r\right] \quad \text{for all } b\in \mathbb{R}.
\end{equation}
	We use \cite[Theorem~1(ii) and Equation~(5)]{BerSav11} to deduce that 
	\[
\widehat{\mathrm{E}}^{\uparrow}_{\rho_1}\left[\int_{0}^{\infty} \frac{1}{R(\xi_r+b)}\dd r\right] =\frac{1}{\mathrm{E}[H_-(1)]}\int_{0}^{\infty}\frac{ \nu_+(y)}{R(y+b)}\dd y \quad \text{for all } b\in\R.
	\]
 Since $\nu_+$ is increasing and $\nu_+-\nu_+(0)$ is sub-additive, we have   
 \[
\frac{\nu_+(y+b) -\nu_+(b)+\nu_+(0)}{\nu_+(y+b) } \frac{\nu_+(y+b) }{R(y+b)} \le  \frac{\nu_+(y)}{R(y+b)} \le \frac{\nu_+(y+b)}{R(y+b)}.
 \]
 It follows that 
 \[
 \int_{0}^{\infty}\frac{ \nu_+(y)}{R(y+b)}\dd y \le \int_{b}^{\infty}\frac{ \nu_+(y)}{R(y)}\dd y
 \]
 and that 
 \[
 \int_{0}^{\infty}\frac{ \nu_+(y)}{R(y+b)}\dd y \ge \frac{\nu_+(2b) -\nu_+(b)+\nu_+(0)}{\nu_+(2b) }\int_{2b}^{\infty}\frac{ \nu_+(y)}{R(y)}\dd y
 + \int_{0}^{b} \frac{ \nu_+(y)}{R(y+b)} \dd y,
 \]
 where we have used the fact that $\frac{\nu_+(y+b) -\nu_+(b)+\nu_+(0)}{\nu_+(y+b) } \ge \frac{\nu_+(2b) -\nu_+(b)+\nu_+(0)}{\nu_+(2b) }$ for all $y\ge b$. 
 Therefore, the finiteness of the integral $\int_{0}^{\infty}\frac{ \nu_+(y)}{R(y+b)}\dd y$ is equivalent to \eqref{eq:H0}. 
Since we have by \eqref{eq:H1} that $\mathrm{E}[H_-(1)]\in (0,\infty)$ under \ref{H1}, we conclude the equivalence between \eqref{eq:H0} and \eqref{eq:H0-bis}. 
 
Therefore, under \eqref{eq:H0} we have
 \begin{equation}
\mathcal{P}\left(\int_{-\infty}^{T_b} \frac{\dd r}{R(Z_r)}<\infty\right) = 1,\quad \text{for all } b\in\R. 
\end{equation}
Since $T_b\in \mathbb{R}$ $\mathcal{P}$-a.s., it follows that \eqref{eq:T0} holds. 
  \end{proof}

With \cref{lem:H0} we have checked that, under Condition \eqref{eq:H0}, $t\mapsto I_t =\int_{-\infty}^{t} \frac{\dd r}{R(Z_r)}$ is $\mathcal{P}$-a.s.\ a  well-defined function, which is  continuous, non-negative, and strictly increasing on $\mathbb{R}$. Therefore, the time-changed process $X$ given by \eqref{time-change eq} is indeed non-trivial under Condition~\eqref{eq:H0}. 

Let us give some properties of the process starting from $+\infty$.

\begin{proposition}\label{prop:P-infty-ss}
Let $X$ be defined as in \eqref{time-change eq} with distribution $\mathbb{P}_{\infty}$. 
\begin{enumerate}
    \item 	For $b\in\R$, let  $T_b=\inf\{t\ge 0: X_t<b\}$, $V_b= X_{T_b-}-b$, and $O_b= b-X_{T_b}$. Then $(V_b,O_b)\sim \rho$ and, conditionally on $(V_b,O_b)= (y_1, y_2)$, the process $(X_{t+T_b})_{t\ge 0}$ has law $\mathbb{P}_{b+y_2}$.    
    \item For every $z\in \mathbb{R}$, $\Phi_z^* \mathbb{P}_{\infty}=\mathbb{P}_{\infty}$, where $\Phi_z^* \mathbb{P}_{\infty}$ is the push-forward of $\mathbb{P}_{\infty}$ via $\Phi_z$ defined in \eqref{eq:Phi-z}. 
    \item The potential measure of $X$ satisfies $U^{X}(+\infty, \ddd y) = \frac{C}{R(y)}\dd y$ for some constant $C>0$. 
\end{enumerate}
\end{proposition}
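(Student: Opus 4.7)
The plan is to build parts (i)--(iii) directly from the two-sided construction of $\mathbb{P}_\infty$ as a time-change of the stationary process of law $\mathcal{P}$.

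For (i) I would first note that under \ref{H1} and \eqref{eq:H0} the additive functional $s\mapsto \int_{-\infty}^s \frac{\dd r}{R(Z_r)}$ is a continuous, strictly increasing bijection $\R\to(0,\infty)$, so its inverse $\eta$ is a strictly increasing continuous bijection with $\eta(T_b^X)=T_b^Z$, where $T_b^Z=\inf\{t\in\R\colon Z_t<b\}$. Because $X$ takes the same values as $Z$ at its first passages, $(V_b,O_b)$ is identified with $(Y_1,Y_2)$ of \cref{lem:BerSav}, giving $(V_b,O_b)\sim\rho$. For the post-passage piece, a change of variables gives
\[
\eta(T_b^X+s)=\inf\Big\{u>T_b^Z\colon \int_{T_b^Z}^u\frac{\dd r}{R(Z_r)}>s\Big\},
\]
so $(X_{T_b^X+s})_{s\ge 0}$ is the time-change \eqref{eq:time-change} applied to the forward piece $(Z_{T_b^Z+u})_{u\ge 0}$. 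By \cref{lem:BerSav} this forward L\'evy piece is independent of the past and issued from $X_{T_b}=b-O_b$, so the time-changed forward process has the law $\mathbb{P}_{b-O_b}$ of $X$ started from $b-O_b$.

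For (ii), I would recast $\Phi_z(X)$ as the same time-change applied to the spatially shifted trajectory $\tilde Z:=z+Z$. Writing $\Phi_z(X)_t=z+Z_{\eta(h_z(t))}$ and performing the change of variable $v=\int_{-\infty}^r \dd r'/R(Z_{r'})$ inside the integral defining $h_z$, one obtains
\[
\eta(h_z(t))=\inf\Big\{s\colon \int_{-\infty}^s\frac{\dd r}{R(\tilde Z_r)}>t\Big\}=:\tilde\eta(t),
\]
so $\Phi_z(X)_t=\tilde Z_{\tilde\eta(t)}$ is exactly the time-change \eqref{time-change eq} with $Z$ replaced by $\tilde Z$. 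The key observation is that this construction is invariant under temporal shifts of its input trajectory: replacing $\tilde Z$ by $(\tilde Z_{r+\tau})_{r\in\R}$ merely shifts $\tilde\eta$ by $-\tau$ and leaves the resulting process unchanged. Choosing $\tau=T_0^{\tilde Z}=T_{-z}^{Z}$, \cref{lem:BerSav} applied to $Z$ at level $-z$ gives $(\tilde Z_{r+\tau})_{r\in\R}\sim\mathcal{P}$, so the time-change applied to $\tilde Z$ has the same law as the time-change applied to $Z$, namely $\mathbb{P}_\infty$.

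For (iii), I would feed (ii) into the computation \eqref{eq:Phi-pot} from the proof of \cref{prop:ss-potential}. Invariance of $\mathbb{P}_\infty$ under each $\Phi_z$ combined with \eqref{eq:Phi-pot} yields
\[
\int \frac{f(z+y)R(y)}{R(z+y)}\,U^X(\infty,\dd y)=\int f(y)\,U^X(\infty,\dd y)\quad\text{for all bounded measurable }f\text{ and }z\in\R.
\]
Setting $\nu(\dd y):=R(y)\,U^X(\infty,\dd y)$ and substituting $y'=z+y$ shows $\nu$ is translation invariant, hence $\nu=C\cdot\mathrm{Leb}$; thus $U^X(\infty,\dd y)=C\,\dd y/R(y)$. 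Positivity and finiteness of $C$ follow from $U^X(\infty,\ccdot)$ being nontrivial on bounded sets and locally finite away from $+\infty$, the latter being a consequence of (i) together with the a.s.\ finiteness of the hitting times of bounded intervals from $\infty$.

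The principal obstacle is (ii): the two change-of-variables identities — one collapsing $\eta\circ h_z$ to the single time-change $\tilde\eta$, and a second showing that $\tilde\eta$ is insensitive to temporal shifts of $\tilde Z$ — must be executed carefully, and then combined with the spatial-temporal invariance from \cref{lem:BerSav} by taking the correct temporal shift $\tau=T_{-z}^Z$ so as to land exactly in the law $\mathcal{P}$. Once (ii) is in place, (i) is a direct transfer through the time-change and (iii) is a mechanical consequence of the potential formula in \cref{prop:ss-potential}.
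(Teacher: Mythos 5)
Your proposal is correct, and on parts (i) and (iii) it follows essentially the paper's own route: (i) is the transfer of \cref{lem:BerSav} through the time-change, and (iii) is exactly the paper's computation combining \eqref{eq:Phi-pot} with part (ii) to obtain translation invariance of $R(y)\,U^X(\infty,\dd y)$. Two side remarks there: your conclusion $\mathbb{P}_{b-O_b}$ in (i) is the correct one, since $X_{T_b}=b-O_b$; the ``$\mathbb{P}_{b+y_2}$'' in the statement is a sign slip. And in (iii), both you and the paper implicitly need local finiteness (or at least $\sigma$-finiteness) of the occupation measure before concluding ``translation invariant, hence a multiple of Lebesgue''; your appeal to a.s.\ finite hitting times does not deliver this, because when $\xi$ is recurrent --- possible under \ref{H1}(ii) --- the expected occupation of compact sets is infinite. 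That gap is the paper's as much as yours, so I only flag it.

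Where you genuinely diverge is part (ii), the core of the proposition. The paper deduces (ii) from (i): it computes the first-passage data of $X'=\Phi_z(X)$, namely $(V'_b,O'_b)=(V_{b-z},O_{b-z})$ and $(X'_{\cdot+T'_b})=\Phi_z(X_{\cdot+T_{b-z}})$, invokes (i) together with \cref{prop:Phi-z} to identify the conditional law of the post-passage process, and then asserts that this family of first-passage decompositions characterises $\mathbb{P}_\infty$. You instead argue pathwise on the two-sided construction: the change of variables $\eta\circ h_z=\tilde\eta$ exhibits $\Phi_z(X)$ as the same two-sided time-change functional applied to $\tilde Z=z+Z$; that functional is insensitive to temporal shifts of its input; and re-centring $\tilde Z$ at $T^Z_{-z}$ yields, by \cref{lem:BerSav}, a trajectory with law exactly $\mathcal{P}$, so the output has law $\mathbb{P}_\infty$ by definition. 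Your route buys two things: it does not use \cref{prop:Phi-z} at all, and, more importantly, it avoids the paper's unargued step that the first-passage description determines the law, replacing it with purely pathwise identities plus one application of \cref{lem:BerSav}. The paper's route, conversely, is shorter once (i) is in hand and reuses \cref{prop:Phi-z}, which is needed elsewhere anyway; both arguments ultimately rest on \cref{lem:BerSav}.
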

\begin{proof}
    Part (i) is a straightforward consequence of \cref{lem:BerSav}. 

    For part (ii), set $X'= \Phi_z (X)$. 
    Define $T'_b$ and $(V'_b,O'_b)$ correspondingly for $X'$. Then we have the identity $(V'_b,O'_b) = (V_{b-z},O_{b-z})$, and $(X'_{t+T_b})_{t\ge 0} = \Phi_z (X_{t+T_{b-z}}, t\ge 0)$.  
    It follows from (i) and \cref{prop:Phi-z} that $(V'_b,O'_b)\sim \rho$ and, conditionally on $(V'_b,O'_b)= (y_1, y_2)$, the process $(X'_{t+T'_b}-b)_{t\ge 0}$ has law $\Phi_z^*\mathbb{P}_{b- z + y_2} =\mathbb{P}_{b + y_2}$. This description characterises the law of $X'$, i.e.\  $X'$ has distribution $\mathbb{P}_{\infty}$.    

    For part (iii), set $m(\ddd y )\coloneqq  R(y) U^X (\infty, \ddd y)$. 
    we combine \eqref{eq:Phi-pot} and (ii) to deduce that 
    \[
     m_z(\ddd y)
	= m ( \ddd y) \quad \text{for all } z\in \bR,  
    \]	  
    where $m_z$ is the push-forward of $m$ by translation $y\mapsto y+z$. 
    This invariance property implies that $m$ is the Lebesgue measure, up to a constant. 
\end{proof}

Endow $\oR \coloneqq (-\infty,\infty]$ with the metric $\rho$ from \cref{sec:conti}, which makes it Polish.

\begin{proposition}\label{prop:convergence}
Under condition \eqref{eq:H0}, 
$\mathbb{P}_x\to \mathbb{P}_{\infty}$ converges weakly as $x\to \infty$, under the Skorokhod topology of  $(-\infty,\infty]$-valued c\`adl\`ag functions. 
\end{proposition}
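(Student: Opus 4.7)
The plan is to decompose the trajectory of $X$ at its first passage time $T_b = \inf\{t > 0 : X_t < b\}$ below a fixed level $b \in \R$, and match the pre- and post-$T_b$ pieces under $\P_x$ and $\P_\infty$ separately as $x \to \infty$.

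For the post-$T_b$ piece, the strong Markov property gives that conditionally on $X_{T_b}$, the shifted path $(X_{T_b + t})_{t \geq 0}$ has law $\P_{X_{T_b}}$. Under $\P_\infty$, \cref{prop:P-infty-ss}(i) says $X_{T_b} = b - O_b$ with $(V_b, O_b) \sim \rho$. Under $\P_x$, the overshoot $(V_b^x, O_b^x)$ of $X$ below $b$ coincides with the overshoot of $\xi$ at $b-x$ from $0$, so by \cref{lem:overshoot} it converges weakly to $\rho$ as $x \to \infty$. Combined with the weak continuity of $y \mapsto \P_y$ from \cref{lem:X-conti} and Skorokhod representation, the post-$T_b$ pieces converge weakly.

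For the pre-$T_b$ piece I would time-reverse. Under $\P_\infty$, \cref{lem:BerSav} describes $(Z_{(T^Z_b - s)-} - b)_{s \geq 0}$, conditionally on $(V_b, O_b) = (y_1, y_2)$, as the dual-conditioned-to-stay-positive process $\widehat{\mathrm{P}}^{\uparrow}_{y_1}$; the time-change operation $\psi$ from \cref{cty prop 1,cty prop 2} then turns this into the reversed pre-$T_b$ path of $X$. Under $\P_x$, a classical time-reversal identity for $\xi$ at its first passage time $T^\xi_{b-x}$ (as in Chaumont--Doney \cite{ChaumontDoney}) gives the analogous representation in terms of $\widehat{\mathrm{P}}^{\uparrow}_{V_b^x}$, killed at a last-passage time of the initial level that diverges to $+\infty$ as $x\to\infty$. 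Since $\psi$ is continuous on its domain and the starting point $V_b^x$ converges weakly to $\rho_1$, the reversed pre-$T_b$ path converges weakly; reversing time once more preserves this.

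Concatenating the pre- and post-$T_b$ pieces via the continuous mapping theorem then yields the full weak convergence. The main obstacle will be making the time-reversal identity under $\P_x$ precise and checking that the truncation of the dual conditioned process at the $x$-dependent last-passage time disappears in the limit, so that one obtains the full trajectory of $\widehat{\mathrm{P}}^{\uparrow}_{V_b}$ in the limit. A secondary, easier point is the continuity of the concatenation map at $T_b$, which is automatic since $X$ is a time-changed Lévy process taking values in $\R$ on $[0, T_b]$ and so cannot jump up to $+\infty$ at time $T_b$.
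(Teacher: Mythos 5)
Your decomposition at $T_b$ and your treatment of the post-$T_b$ piece (overshoot convergence via \cref{lem:overshoot}, weak continuity of $y\mapsto\P_y$ from \cref{lem:X-conti}, strong Markov property) are sound, and this part mirrors what the paper does. The genuine gap is in the pre-$T_b$ piece: nowhere in your argument does the hypothesis \eqref{eq:H0} enter, yet it is exactly what controls the \emph{duration} of that piece, and without duration control the argument breaks in two places. First, time reversal is not a continuous operation on Skorokhod space: knowing that the reversed pre-$T_b$ paths converge (extended by $+\infty$ after their lifetimes) does not allow you to ``reverse time once more'' — to un-reverse you must know that the lifetimes, i.e.\ the passage times $T_b$ under $\P_x$, converge to $T_b$ under $\P_\infty$. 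A path that lingers for a long real-time stretch at a large finite level before exploding is Skorokhod-close (in the metric built on $\rho$) to one that explodes much earlier, so the duration functional is not continuous, and convergence of reversed pieces does not transfer back. Second, the concatenation requires \emph{joint} convergence of (pre-piece, $T_b$, post-piece). The time-reversal identity you invoke (Chaumont--Doney / Bertoin--Savov) gives convergence of the conditioned dual paths on compact time intervals, hence convergence of $\int_0^M \frac{\dd u}{R(\cdot)}$ for fixed $M$; to get convergence of the full clock $T_b=\int_0^\infty\frac{\dd u}{R(W_u)}$ you need a tail estimate uniform in $x$, and that is precisely where \eqref{eq:H0} is indispensable. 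Since your proof never uses the standing hypothesis of the proposition, something essential must be missing, and this is it.

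For comparison, the paper sidesteps the delicate pre-$T_b$ analysis entirely by invoking the criterion of \cite[Proposition~7]{DDK17}, whose key quantitative condition is $\lim_{b\to\infty}\limsup_{x\to\infty}\E_x[T_b]=0$: for large $b$ the pre-$T_b$ piece stays above $b$ (hence is spatially close to $+\infty$ in the metric $\rho$) and is uniformly short in time, so its fine structure is irrelevant; the remaining conditions are your post-$T_b$ ingredients plus the regeneration identity from \cref{lem:BerSav}. The condition on $\E_x[T_b]$ is verified via the identity $\limsup_{x\to\infty}\E_x[T_b]=\mathcal{E}\big[\int_{-\infty}^{T_b}\frac{\dd r}{R(Z_r)}\big]$ together with \cref{lem:H0} — i.e.\ this is exactly where \eqref{eq:H0} is consumed. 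If you want to keep your more explicit two-sided route (which is in the spirit of Bertoin--Savov's construction), you must add that uniform estimate, or at least tightness of $T_b$ under $\P_x$ uniformly in large $x$, to obtain joint convergence of the durations. A minor further point: your justification of the concatenation step (``$X$ cannot jump up to $+\infty$ at $T_b$'') addresses the wrong issue; when $\xi$ has negative jumps the limit path genuinely jumps at $T_b$, and what makes the gluing work is the joint convergence of undershoot, overshoot and passage time, not the absence of a jump.
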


\begin{proof}
By \cite[Proposition~7]{DDK17}, it suffices to check the following conditions: 
\begin{enumerate}
\item $\lim_{b\to\infty} \limsup_{x\to\infty} \mathbb{E}_x[T_b]=0$;
\item for every $b>0$, $\mathbb{P}_x\circ X_{T_{b}}$ converges weakly to a limit $\mu_b$ as $x\to \infty$;
\item On $\mathbb{R}$, $x\mapsto \mathbb{P}_x$ is continuous under the weak topology;
\item $\mathbb{P}_{\infty}$-a.s., $X_0=\infty$ and $X_t\in [0,\infty)$ for all $t>0$;
\item For $X\sim \mathbb{P}_{\infty}$, the process $(X_{t+T_b}-b, t\ge 0)$ has distribution $\mathbb{P}_{\mu_b}$. 
\end{enumerate}
Let us start with (i). Using the monotone convergence theorem and \cref{lem:BerSav}, we have
		\begin{align*}
 \mathcal{E}\left[\int_{-\infty}^{T_b} \frac{\dd r}{R(Z_r)}\right]
 &= 
\lim_{y\to \infty} \mathcal{E}\left[ \int_{T_y}^{T_b} \frac{\dd r}{R(Z_r)}\right]\\
 &=
  \lim_{y\to \infty} \mathcal{E}\left[\mathcal{E} \bigg[\int_{T_y}^{T_b} \frac{\dd r}{R(Z_r)} \bigg| Z_{T_y} \bigg]\right] 
			= \lim_{y\to \infty} \mathcal{E}\left[  \mathbb{E}_{Z_{T_y}}[T_b]\right] . 
			\end{align*}
By \cref{lem:H0}, we have  $\mathcal{E}\left[\int_{-\infty}^{T_b} \frac{\dd r}{R(Z_r)}\right]<\infty$ for each $b$ under \eqref{eq:H0}. It follows that  
\[
 \mathcal{E}\left[  \mathbb{E}_{Z_{T_y}}[T_b] \right]
 = \mathcal{E}\left[ \mathcal{E} \bigg[\int_{T_y}^{T_b} \frac{\dd r}{R(Z_r)} \bigg| Z_{T_y} \bigg]\right] \le \mathcal{E}\left[ \mathcal{E} \bigg[\int_{-\infty}^{T_b} \frac{\dd r}{R(Z_r)} \bigg| Z_{T_y} \bigg]\right] <\infty. 
\]
Therefore, by the dominated convergence we have 
\[
\mathcal{E}\left[\int_{-\infty}^{T_b} \frac{\dd r}{R(Z_r)}\right]
 = 
 \lim_{y\to \infty} \mathcal{E}\left[  \mathbb{E}_{Z_{T_y}}[T_b]\right] 
= \mathcal{E}\left[ \lim_{y\to \infty}  \mathbb{E}_{Z_{T_y}}[T_b]\right] .
\]
		Since $Z_{T_y}\eqdis y-Y_2$, where $Y_2$ is a random variable as in \cref{lem:BerSav}, we have $\lim_{y\to \infty} Z_{T_y} = \infty$ a.s.\ and thus
$$
	\limsup_{y\to \infty} \mathbb{E}_{Z_{T_y}}[T_b] = \limsup_{x\to \infty} \mathbb{E}_{x}[T_b] \qquad \mathcal{P}\text{-a.s..}
$$
We conclude that
\[
\limsup_{x\to\infty} \mathbb{E}_x[T_b]
=\mathcal{E}\left[\int_{-\infty}^{T_b} \frac{\dd r}{R(Z_r)}\right].
\]
Letting $b\to \infty$, then we deduce (i) from \eqref{eq:H0} by \cref{lem:H0} and the monotone convergence theorem. (ii) is implied by \cref{lem:overshoot}, and (iii) is proved in \cref{lem:X-conti}. 
(iv) can be seen from the construction of $\mathbb{P}_{\infty}$. Finally, (v) follows from \cref{lem:BerSav}.  
\end{proof}

Let $C_0(\overline{\bR})$ denote the class of continuous functions on $(-\infty,\infty]$ that vanishes at $-\infty$. 
Recall that a transition semigroup $(P_t,t\ge 0)$ has the Feller property, if for all $f\in C_0(\overline{\bR})$, 
\begin{enumerate}
	\item $P_0 f= f$;
	\item $P_t f \in C_0(\overline{\bR})$, for all $t\ge 0$; 
	\item $\lim_{t\downarrow 0}\|P_t f - f\| = 0$. 
\end{enumerate}

\begin{proof}[Proof of \cref{thm:1}; sufficient part]
It suffices to show that the process given by $(\mathbb{P}_x, x\in \overline{\mathbb{R}})$ has the Feller property, which then gives an extension at infinity of the time-changed process; as we are in the case when $+\infty$ is inaccessible, this means coming down from infinity.    

Let us check the three properties above for the semigroup. The first point is clear from the construction of $\bP_{\infty}$, and the second follows from \cref{lem:X-conti} and \cref{prop:convergence}. For the third condition, it suffices to note that, a consequence of right-continuity,
$\lim_{t\downarrow 0} \big|\bE_x [f(X_t)] - f(x)\big|=0$ for each $x\in \overline{\bR}$.
\end{proof}

\subsection{Necessary conditions}
In this subsection, we check that conditions \ref{H1} and \eqref{eq:H0} are also necessary for coming down from infinity. 
More precisely, we are in the case given by \cref{def:down from infty} and we assume that there is a $\mathbb{P}_{\infty}$ such that $(\mathbb{P}_{x}, x\in (-\infty,\infty])$ is a Feller process. 

The key to proving \ref{H1} is the invariance property of the time-changed processes, that we obtained in \cref{prop:Phi-z}.

\begin{proposition}\label{prop:P-infty-inv}
Suppose that the time-changed L\'evy process comes down from infinity.  
If $X\sim \mathbb{P}_{\infty}$, then $\Phi_z(X)\sim \mathbb{P}_{\infty}$ for all $z\in\R$. As a consequence, under $\mathbb{P}_{\infty}$, the undershoot process $(b- X_{T_b})_{b\in \bR}$ is strictly stationary, i.e. $(b- X_{T_b})_{b\in \bR}$ has the same law as $(z+ b- X_{T_{z+b}})_{b\in \bR}$ for each $z\ge 0$.
In particular, for each $b\in \bR$, the undershoot $b- X_{T_b}$ is $\P_\infty$-\as finite and they all have the same law, denoted by $\mu$. 
\end{proposition}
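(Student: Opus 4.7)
The strategy is to lift the pathwise identity from \cref{prop:Phi-z} — namely that under $\P_x$ the transformed process $\Phi_z(X)$ has law $\P_{x+z}$ — to the boundary law $\P_\infty$ by passing to the limit $x\to\infty$. Since by assumption $X$ comes down from infinity, the extended family $(\P_x)_{x\in(-\infty,\infty]}$ is Feller, so both $\P_x\Rightarrow\P_\infty$ and $\P_{x+z}\Rightarrow\P_\infty$ as $x\to\infty$ in the Skorokhod topology on $D([0,\infty),\oR)$. Provided the transformation $\Phi_z$ is continuous at $\P_\infty$-almost every trajectory, the continuous mapping theorem applied to \cref{prop:Phi-z} yields $\Phi_z^*\P_\infty=\P_\infty$, which is the first assertion.

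The principal obstacle is establishing the continuity of $\Phi_z$ at $\P_\infty$-typical paths, which start at $+\infty$ at time $0$ and land in $\R$ for every $t>0$. On $\R$ the integrand $g(z,\cdot)=R(\cdot)/R(z+\cdot)$ is continuous and satisfies $\liminf_{x\to-\infty}g(z,x)>0$ by \eqref{eq:R-z-condition}, while $g(z,\infty)=1$ by definition, so the setting is close to that of \cref{cty prop 1,cty prop 2} and their proofs adapt. A safer, alternative route is to approximate via the first-entry times $\tau_n=\inf\{t\ge 0: X_t\le n\}$, which decrease to $0$ as $n\to\infty$ $\P_\infty$-a.s.; using the strong Markov property at $\tau_n$ and \cref{prop:Phi-z} on the post-$\tau_n$ path, then letting $n\to\infty$, transfers the invariance to $\P_\infty$.

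Once $\Phi_z^*\P_\infty=\P_\infty$ is in hand, stationarity of the undershoot process is a direct pathwise computation. Writing $Y=\Phi_z(X)$ and letting $h_z$ denote the strictly increasing time-change from the definition of $\Phi_z$, one checks
\[
T_b^Y \;=\; h_z^{-1}\bigl(T_{b-z}^X\bigr), \qquad Y_{T_b^Y} \;=\; z + X_{T_{b-z}^X}, \qquad b\in\R,
\]
so that $b-Y_{T_b^Y}=(b-z)-X_{T_{b-z}^X}$. Since $Y\stackrel{d}{=}X$ under $\P_\infty$, this gives
\[
(b-X_{T_b})_{b\in\R} \;\stackrel{d}{=}\; ((b-z)-X_{T_{b-z}})_{b\in\R},
\]
which, after relabelling $b\leftrightarrow b+z$, is the stated stationarity.

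Finally, for the a.s.\ finiteness of each undershoot, observe that since $X$ comes down from infinity we have $X_1\in\R$ $\P_\infty$-a.s., whence $\P_\infty(T_b<\infty)\ge \P_\infty(X_1<b)\to 1$ as $b\to\infty$. The stationarity just established forces $\P_\infty(T_b<\infty)$ to be independent of $b$, so $\P_\infty(T_b<\infty)=1$ for every $b\in\R$, and the common distribution of $b-X_{T_b}$ is then denoted $\mu$.
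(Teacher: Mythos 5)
Your proposal is, in substance, the paper's own proof: the paper likewise deduces $\Phi_z^*\P_\infty=\P_\infty$ by combining \cref{prop:Phi-z} with the Feller property ($\P_n\Rightarrow\P_\infty$ and $\P_{n+z}\Rightarrow\P_\infty$) and the continuity theory of \cref{sec:conti}, and it obtains stationarity of the undershoots from exactly your pathwise identity, i.e.\ that the time change does not affect undershoots. Your closing argument for the \as finiteness of the undershoots (monotonicity of $b\mapsto\P_\infty(T_b<\infty)$ combined with stationarity and $X_1\in\R$ \as) is correct, and is a step the paper leaves implicit.

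One step of your plan needs repair, and the repair is precisely how the paper organises the limit. The continuous mapping theorem cannot be invoked in its standard form, because the time-change map is \emph{not} in general continuous at $\P_\infty$-typical trajectories as a map on all of $\D$: the Remark following \cref{cty prop 2} gives a counterexample of paths $w^n\to w$ in $\D$ for which $\psi(w^n)\not\to\psi(w)$; what is true is only continuity of the restrictions $\psi|_G$ and $\psi|_L$ in \cref{cty prop 1,cty prop 2}, and at this stage of the argument one cannot yet invoke \ref{H1} to restrict the possible long-time behaviour of the paths. The paper's resolution, which your plan needs to absorb, is to note that the approximating processes $X^{(n)}\sim\P_n$ and the weak limit $X\sim\P_\infty$ almost surely all share the same limiting behaviour (all oscillate, all drift to $-\infty$, or all drift to $+\infty$, since this is dictated by $\xi$ alone), to pass to an \as convergent Skorokhod representation, and to apply the restricted continuity along that particular sequence. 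Your fallback via the entrance times $\tau_n$ is not actually ``safer'': to let $n\to\infty$ there you need $\int_0^{\tau_n}g(z,X_u)\dd u\to 0$, i.e.\ finiteness of this integral near $0$ under $\P_\infty$, which is the same control of $g(z,\ccdot)$ along a path coming down from infinity that the continuity route must secure, so it carries an unresolved point of exactly the same nature rather than circumventing it.
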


\begin{proof}
As a consequence of the Feller property, we have that $\mathbb{P}_n$  converges weakly to $\mathbb{P}_{\infty}$, as $n\to \infty$, under the Skorokhod topology of $(-\infty, \infty]$-valued \cadlag functions. 
Using Skorokhod's representation theorem, we may assume $X$ is the \as limit of a sequence of processes $X^{(n)}$, with $X^{(n)}\sim \mathbb{P}_{n}$. 

On the one hand, as $\Phi_z(X^{(n)})\sim \mathbb{P}_{z+n}$ by \cref{prop:Phi-z}, we deduce that $\Phi_z(X^{(n)})$ converges to $X$ in distribution. 

On the other hand, the fact that each $X^{(n)}$ is a time-changed L\'evy process implies that, the family $X^{(n)}$ together with their weak limit $X$, all oscillate, or diverge to $-\infty$, or diverge to $\infty$. 
Then by similar arguments as in the proof of \cref{lem:X-conti},  in each case \cref{cty hunt thm} applies and we deduce that $\Phi_z(X)=\lim_{n\to \infty} \Phi_z(X^{(n)})$ almost surely.
 We conclude that 
$\Phi_z(X) \eqdis X$, i.e.\ $\Phi_z(X)\sim \mathbb{P}_{\infty}$. 

Now note that the definition of $\Phi$ implies that	
$$
	z+y -\Phi_z(\omega) (T_{z+y}) =y - \omega(T_{y}) \quad \text{for all } y\in \bR,
$$
as the time-change does not affect the undershoot. Because $\Phi_z(X) \eqdis X$, the second statement follows.
\end{proof}

\begin{corollary}\label{cor:P-infty-inv}
Suppose that the time-changed L\'evy process comes down from infinity.  Let $B$ be a random variable of law $\mu$ as in \cref{prop:P-infty-inv}, independent of the L\'evy process $\xi$. Then the process $(  b-\xi_{T_{b}})_{b\le -\xi_0}$ is strictly stationary under $\mathrm{P}_{-B}$. Moreover, as $b\to -\infty$, the undershoot process $b-\xi_{T_{b}}$ converges weakly to $B$, i.e.\ \ref{H1} holds.   
\end{corollary}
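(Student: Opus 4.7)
The plan is to transfer the strict stationarity of $(b-X_{T_b})_{b\in\R}$ established in \cref{prop:P-infty-inv} from the coming-down-from-infinity process $(\P_\infty,X)$ to the underlying L\'evy process via the strong Markov property applied at the first passage time $T_0$. Under $\P_\infty$, $T_0$ is a stopping time, and the strong Markov property together with \cref{prop:P-infty-inv} give that $B := -X_{T_0}\sim \mu$ and that, conditionally on $B$, the post-$T_0$ process $(X_{T_0+t})_{t\ge 0}$ has law $\P_{-B}$, that is, it is a time-changed L\'evy process issued from $-B$. Since the time-change $\eta$ of \eqref{eq:time-change} is a continuous strictly increasing bijection of $[0,\zeta)$ onto its image, first-passage values are preserved by the composition: $X_{T_b^X}=\xi_{\eta(T_b^X)}=\xi_{T_b^\xi}$ for every level $b$ below the running position. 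Consequently the undershoot processes of $X$ under $\P_{-B}$ and of $\xi$ under $\mathrm P_{-B}$ coincide pathwise, and the strict stationarity supplied by \cref{prop:P-infty-inv} is inherited by $(b-\xi_{T_b})_{b\le -\xi_0}$ under $\mathrm P_{-B}$.

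For the weak convergence part, strict stationarity forces every marginal $b-\xi_{T_b}$ under $\mathrm P_{-B}$ to equal $\mu$. Conditioning on $B=\beta$ and using the space-translation invariance of L\'evy processes then yields, for all sufficiently negative $b$, the averaging identity
\begin{equation*}
    \mu(\cdot) \;=\; \int_{[0,\infty)} \nu_{b+\beta}(\cdot)\,\mu(\ddd\beta),
\end{equation*}
where $\nu_c(\cdot):=\mathrm P_0(c-\xi_{T_c}\in\cdot)$ denotes the undershoot law of $\xi$ issued from zero at level $c$. Since $\mu$ is the law of the a.s.\ finite random variable $B$ under $\P_\infty$, it is a probability measure on $[0,\infty)$ without atom at $+\infty$. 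Regarding $\{\nu_c\}$ as a family of probabilities on the compact space $[0,\infty]$ (hence automatically tight), I would extract any subsequential weak limit $\nu^*$ along a sequence $c_n\to-\infty$, combine a diagonal extraction over a countable dense set of shifts with the averaging identity to force $\nu^*(\{\infty\})=0$ and $\nu^* = \mu$, and thereby obtain the full convergence $\nu_c\Rightarrow\mu$ as $c\to-\infty$. Invoking \eqref{eq:H1}, this convergence to a non-degenerate limit is precisely condition \ref{H1}.

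The main obstacle is the passage from the averaging identity to uniqueness of subsequential weak limits of $\nu_c$. The plan to handle it is to work on the compactified space $[0,\infty]$, use that $\mu$ carries no mass at $\infty$ to transfer tightness on $[0,\infty)$ to any subsequential limit, and then exploit the freedom to vary $b$ in the averaging identity together with a diagonal extraction over a countable dense family of shifts $\beta$. This machinery ensures that any two accumulation points of $\nu_c$ both integrate against $\mu$ to $\mu$ itself, which combined with the just-established tightness on $[0,\infty)$ identifies them with $\mu$ and completes the argument.
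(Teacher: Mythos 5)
Your first half --- transferring strict stationarity from $(\mathbb{P}_\infty,X)$ to $\mathrm{P}_{-B}$ via the strong Markov property at $T_0$ and the observation that the time-change preserves first-passage values, hence undershoots --- is exactly the paper's argument and is correct. The gap is in the weak-convergence half. You claim that the averaging identity $\mu(\cdot)=\int_{[0,\infty)}\nu_{b+\beta}(\cdot)\,\mu(\ddd\beta)$, together with compactness on $[0,\infty]$ and a diagonal extraction over a countable dense set of shifts, forces every subsequential limit of $\nu_c$ to equal $\mu$. This inference is invalid for two reasons. First, it is a \emph{mixture} identity: even if along a single sequence $b_n\to-\infty$ you arrange $\nu_{b_n+\beta}\Rightarrow\nu^*_\beta$ for each $\beta$ in a countable dense set and can justify passing the limit through the integral, you only obtain that $\mu$ equals the mixture $\int\nu^*_\beta\,\mu(\ddd\beta)$; a mixture equal to $\mu$ does not force each component $\nu^*_\beta$ to equal $\mu$. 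Second, and decisively, your argument never uses that $\xi$ is non-lattice, yet the conclusion is false without that hypothesis. Concretely, if the descending ladder height process has range $\{0,1,2,\dots\}$ (unit jumps), then $\nu_c=\delta_{\lceil -c\rceil+c}$ is a deterministic point mass, the measure $\mu=\mathrm{Uniform}[0,1)$ satisfies the averaging identity for \emph{every} level $b\le 0$, but $\nu_c$ does not converge as $c\to-\infty$, and its subsequential limits are the point masses $\delta_a$, $a\in[0,1]$, none of which is $\mu$. So ``stationarity plus soft compactness'' cannot yield convergence; the identity is consistent with non-convergence.

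The passage from a stationary version to convergence of the undershoots is genuinely renewal-theoretic: it is exactly where non-latticeness (Blackwell's renewal theorem for the ladder height subordinator $H_-$) must enter. This is what the paper's citations supply: the second statement of the corollary follows from \cite[Proposition~1]{CabCha06} (stationarity of the undershoot process implies weak convergence of $b-\xi_{T_b}$ as $b\to-\infty$), and the identification of this convergence with \ref{H1} is \cite[Corollary~3]{BerSav11}, i.e.\ \eqref{eq:H1}. To repair your proof you would either invoke these results, as the paper does, or reprove the underlying renewal theorem: note that the undershoot, viewed as a process in the level variable, is the overshoot process of the subordinator $H_-$, argue that existence of a stationary probability forces $\mathrm{E}[H_-(1)]<\infty$, and then apply the Dynkin--Lamperti/renewal theorem for non-lattice subordinators to obtain the convergence $\nu_c\Rightarrow\mu$.
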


This extends the results for positive self-similar Markov processes  in \cite{CKPV12,CabCha06}. 

\begin{proof}
By the strong Markov property, the process $X'_s \coloneqq  X_{T_0 +s},s\ge 0$ under $\mathbb{P}_{\infty}$ has law $\mathbb{P}_{X_{T_0}}$, with $X_{T_0}\eqdis -B$. Recall that $X'$ is connected with a L\'evy process of law $\mathrm{P}_{-B}$ via a time-change.  
Since the time-change does not modify the undershoot, the undershoot process $(b- \xi_{T_{b}}, b\le \xi_0)$ under $\mathrm{P}_{-B}$ has the same law as  $(b -X'_{T'_{b}}= b- X_{T_{b}}, b\le X_{T_0})$ under $\mathbb{P}_{\infty}$. By \cref{prop:P-infty-inv}, the latter is strictly stationary, and therefore so is the former. 
The second statement follows from the first by \cite[Proposition~1]{CabCha06}; moreover, this implies \ref{H1} by \cite[Corollary~3]{BerSav11}. 
\end{proof}

\begin{proof}[Proof of \cref{thm:1}: the necessary part]
We have proved \ref{H1} in \cref{cor:P-infty-inv}. 
We next turn to \eqref{eq:H0}.
Our assumption of coming down from infinity implies that the time-changed process is Feller on the space $\overline{\bR}$, and thus is quasi-left continuous. Therefore for any fixed $t>0$ it holds that $\bP_{\infty}(\Delta X_t >0)=0$. Then it follows from  \cref{prop:convergence} that, restricted to the space $D([0,t], \overline{\bR})$ endowed with Skorokhod's $J_1$-topology, $\bP_{x}$ converges weakly to $\bP_{\infty}$, as $x\to \infty$. 
Since $+\infty$ is not a trap under $\bP_{\infty}$, we can take $t,b>0$ large enough such that $\bP_{\infty} (\inf_{s\in [0,t]} X_s < b)>0$. 
Then the Portmanteau theorem yields that 
\[
	\liminf_{x\to\infty}\mathbb{P}_{x} (T_b \le t) =	\liminf_{x\to \infty} \bP_{x} (\inf_{s\in [0,t]} X_s < b) \ge \bP_{\infty} (\inf_{s\in [0,t]} X_s < b)>0. 
\]
	By \cite[Theorem 1.1]{BDS-2}, this condition is equivalent to 
\[
	\text{for some } b>0 ~(\text{then for all }b'\ge b),\quad 	\limsup_{x\to \infty} \mathbb{E}_x [T_b] <\infty. 
\]

Since  $\lim_{t\to -\infty}Z_t= \infty$ $\mathcal{P}$-a.s., we have $\lim_{y\to\infty}T_y=-\infty$ $\mathcal{P}$-\as Applying the monotone convergence theorem and \cref{lem:BerSav} then leads to  
\begin{align*}
	\mathcal{E}\left[\int_{-\infty}^{T_b} \frac{\dd r}{R(Z_r)}\right]
	=\limsup_{y\to\infty}\mathcal{E} \left[\int_{T_y}^{T_b} \frac{\dd r}{R(Z_r)}\right]
	= \limsup_{y\to \infty}\mathbb{E}\big[\mathbb{E}_{y-Y_2}[T_b]\big]
	\le \limsup_{x\to \infty}\mathbb{E}_{x}[T_b],
\end{align*}
where $Y_2$ is a random variable as in \cref{lem:BerSav}, and the last inequality is by the reverse Fatou lemma. 
So we have $	\mathcal{E}\left[\int_{-\infty}^{T_b} \frac{\dd r}{R(Z_r)}\right]<\infty$ for some (then all) $b\in\R$.  
This leads to \eqref{eq:H0}, in view of \cref{lem:H0}. 
\end{proof}


\subsection{The speed of coming down from infinity}
Under assumptions \ref{H1} and \eqref{eq:H0}, we have constructed a time-changed L\'evy process $X$ of law $\mathbb{P}_{\infty}$, starting from $+\infty$. 
We now study the speed of coming down from infinity.
Recall that, in particular, \eqref{eq:H1} implies $\gamma\coloneqq -\mathbb{E}[\xi_1]\ge 0$. 
We shall focus on the case $\gamma >0$. 

Let $Z'_s\coloneqq Z_{-s+}$ for $s\ge 0$. By the definition of the time-changed process $X$ under law $\mathbb{P}_{\infty}$, we have that $Z'\sim \widehat{\mathbb{P}}^{\uparrow}_{A'}$ is a L\'evy process conditioned to stay positive associated with $-\xi$, where $A'\sim \rho_1$ has the marginal law of the first coordinate of $\rho$. 
So there is the identity
\begin{equation}\label{eq:G}
Z'_{s-} = X_{G(s)}, \quad s\ge 0, \quad  \text{where } G(s)=\int_{s}^{\infty} \frac{\dd r}{R(Z'_r)}. 
\end{equation}
We need to compare $G$ with the corresponding deterministic function $\varphi\colon \bR_+\to \bR_+$
\[
 b\mapsto \varphi(b)= \int_b^{\infty} \frac{\dd x}{R(x)}. 
\]

In this direction, we first recall a natural generalisation of the notion of a function that is regularly varying at infinity -- see e.g.~\cite{Bul-Book} for an extensive study of this and related classes of functions. 
A function $f$ is \emph{regularly varying with index $\theta\in \bR$}, if 
\[
\lim_{x\to \infty} \frac{f(\lambda x)}{f(x)} = \lambda^{\theta}, \quad\text{ for all }\lambda>0. 
\]
We say $f$ \emph{preserves the equivalence of functions}, if, as $x\to \infty$, 
\begin{equation}\label{eq:preserve-equiv}
	u(x)/ v(x)\to 1, u(x)\to 0 \quad\Rightarrow\quad f(u(x)) / f(v(x)) \to 1. 
\end{equation}
This condition is satisfied if and only if the function $f$ is \emph{pseudo-regularly varying} (also known as \emph{intermediate regularly varying}) in the sense that 
\[
\limsup_{\lambda\to 1} \limsup_{t\to \infty} \frac{f(\lambda t)}{f(t)} =1.
\]
See \cite[Theorem 3.42]{Bul-Book}. In particular, any regularly varying function has property \eqref{eq:preserve-equiv}. 

\begin{proposition}\label{prop:speed}
Suppose that $\gamma\coloneqq -\mathbb{E}[\xi_1]> 0$, that \eqref{eq:H0} holds and that $R$ preserves the equivalence of functions. Then
\[
	\lim_{t\to 0+} \frac{\varphi(X_t)}{\gamma t}  = 1 \quad \mathbb{P}_{\infty}\text{-\as}
\]
\end{proposition}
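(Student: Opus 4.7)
The plan is to parameterise small times through $t = G(s)$, since the function $s\mapsto G(s) = \int_s^\infty \ddd r/R(Z'_r)$ is continuous, strictly decreasing on $[0,\infty)$, with $G(\infty)=0$ (the finiteness of $G(0)$ following from the non-explosion of the time-changed process under the acting assumptions). This makes $t\to 0^+$ equivalent to $s\to\infty$, and by \eqref{eq:G} the identification $X_t = Z'_{s-}$ and $\gamma t = \gamma G(s)$ reduces the proposition to the almost sure limit
\[
    \lim_{s\to\infty}\frac{\gamma\, G(s)}{\varphi(Z'_{s-})} = 1.
\]
The driving ingredient is the strong law of large numbers for $Z'\sim\widehat{\mathrm{P}}^{\uparrow}_{\rho_1}$: the dual $\widehat\xi=-\xi$ drifts to $+\infty$ at speed $\gamma>0$, and the classical LLN for a L\'evy process conditioned to stay positive (see e.g.~\cite{ChaumontDoney}) gives $Z'_r/r\to\gamma$ a.s.\ as $r\to\infty$. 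A standard elementary argument promotes this pointwise convergence to the uniform statement $\sup_{r\ge s}|Z'_r/r-\gamma|\to 0$ as $s\to\infty$.

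The technical heart of the argument is to replace $Z'$ over the interval $[s,\infty)$ by its linear interpolation $\tilde Z_r \coloneqq Z'_{s-}+\gamma(r-s)$, $r\ge s$. The change of variable $u=\tilde Z_r$ in the integral produces the clean identity
\[
    \int_s^\infty \frac{\gamma\,\ddd r}{R(\tilde Z_r)} \;=\; \int_{Z'_{s-}}^\infty \frac{\ddd u}{R(u)} \;=\; \varphi(Z'_{s-}).
\]
From the identity $\tilde Z_r - Z'_r = (Z'_{s-}-\gamma s) - (Z'_r - \gamma r)$ combined with the uniform LLN (the first summand divided by $r\ge s$ is bounded by $|Z'_{s-}/s - \gamma|$, the second by $\sup_{r\ge s}|Z'_r/r-\gamma|$) one obtains $\tilde Z_r/Z'_r \to 1$ uniformly in $r\ge s$ as $s\to\infty$. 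Since $R$ preserves the equivalence of functions (equivalently, is pseudo-regularly varying), this ratio convergence lifts to $R(\tilde Z_r)/R(Z'_r)\to 1$ uniformly in $r\ge s$ as $s\to\infty$.

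Combining these two uniformities, for every $\varepsilon>0$ there exists $s_0$ such that for all $s\ge s_0$,
\[
    (1-\varepsilon)\,\varphi(Z'_{s-}) \;\le\; \gamma\, G(s) \;=\; \int_s^\infty \frac{\gamma\,\ddd r}{R(Z'_r)} \;\le\; (1+\varepsilon)\,\varphi(Z'_{s-}),
\]
which proves $\gamma G(s)/\varphi(Z'_{s-})\to 1$ a.s., hence the proposition. The main delicate point is the twin need for uniformity, first in the LLN and then in the pseudo-regular variation of $R$; both are consequences of standard arguments applied to their pointwise counterparts. A secondary subtlety is that $Z'$ is only càdlàg, so $\tilde Z$ is merely an approximation to $Z'$ rather than a pathwise representation; the linearisation trick circumvents this obstacle cleanly because only the ratio $R(\tilde Z_r)/R(Z'_r)$ enters the estimates.
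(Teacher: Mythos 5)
Your proof is correct, and while it shares the paper's skeleton (reparameterise time through $t=G(s)$ using \eqref{eq:G}, invoke the law of large numbers for $Z'$, and exploit the equivalence-preservation property of $R$), the comparison step at its heart is genuinely different. The paper compares $Z'_r$ with the fixed deterministic path $\gamma r$: this yields $\gamma G(s)\sim\varphi(\gamma s)$, but one must then separately show $\varphi(Z'_s)\sim\varphi(\gamma s)$, which requires knowing that $\varphi$ itself preserves the equivalence of functions --- a fact the paper imports from an extended Karamata theorem \cite[Theorem~4.3]{Cli94}. You instead compare $(Z'_r)_{r\ge s}$ with the linear path $\tilde Z_r=Z'_{s-}+\gamma(r-s)$ anchored at $Z'_{s-}$, so that the change of variables turns the comparison integral into exactly $\varphi(Z'_{s-})$, and the Karamata-type input on $\varphi$ disappears altogether: equivalence-preservation is only ever applied to $R$. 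The price is that your comparison path moves with $s$, so you need two-parameter (uniform in $r\ge s$) versions of both the LLN and the preservation property of $R$. These are indeed routine, as you indicate: the uniform LLN statement $\sup_{r\ge s}|Z'_r/r-\gamma|\to 0$ is literally a restatement of the pointwise limit, and the uniform preservation statement (for every $\eps>0$ there exist $\delta>0$ and $M$ such that $|R(a)/R(y)-1|\le\eps$ whenever $a,y\ge M$ and $|a/y-1|\le\delta$) follows from the sequential form of \eqref{eq:preserve-equiv} by an immediate contradiction argument --- though since \eqref{eq:preserve-equiv} is stated as a one-parameter property, that two-line argument is worth writing out explicitly. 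Net effect: your route is more self-contained (no appeal to \cite{Cli94}), at the cost of slightly more bookkeeping. Two minor points: the finiteness of $G(0)$ comes from \eqref{eq:H0} via \cref{lem:H0} (finite entrance time from $+\infty$), not from ``non-explosion''; and you apply the LLN bound at the left limit $Z'_{s-}$, which is harmless because $|Z'_{s-}-\gamma s|\le\limsup_{u\uparrow s}\bigl(|Z'_u-\gamma u|+\gamma(s-u)\bigr)$, but deserves a word.
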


If, additionally, the inverse function $\varphi^{-1}$ preserves the equivalence of functions, then the conclusion of \cref{prop:speed} leads to 
$\lim_{t\to 0+} \frac{X_t}{\varphi^{-1}(\gamma t)} = 1$ $\mathbb{P}_{\infty}$-\as 
See \cite[Section~7.3]{Bul-Book} for conditions under which the inverse function preserves the equivalence of functions. 
\begin{proof}
		When $\gamma>0$, the probability $\widehat{\bP}^{\uparrow}_{A'}$ is absolutely continuous with respect to $\widehat{\bP}_{A'}$. By the law of large numbers for a L\'evy process under $\widehat{\bP}_{A'}$, we also have  $\bP_{\infty}$-a.s.\  $Z'_s / \gamma s \to 1$, as $s\to \infty$. 
		
		Since $R$ preserves the equivalence of functions,  we have $\bP_{\infty}$-\as  $R(Z'_s) / R(\gamma s) \to 1$. Let $\varepsilon>0$, there exists a random $t_0>0$ large enough such that $|\frac{R(Z'_s)}{R(\gamma s)}-1|<\varepsilon$ for all $s>t_0$. Then, for $t>t_0$, 
\[
	G(t) = \int_t^{\infty}\frac{1}{R(\gamma s)}\frac{R(Z'_s)}{R(\gamma s)} \dd s
	\le \int_t^{\infty}\frac{1}{R(\gamma s)}(1+\varepsilon)\dd s
	= (1+\varepsilon ) \varphi(\gamma t)/\gamma, 
\] 
and similarly $G(t)\ge (1-\varepsilon ) \varphi(\gamma t)/\gamma$. 
We deduce that 
\[
	\lim_{s\to \infty}\frac{\gamma G(s)}{\varphi(\gamma s)} = 1 \qquad \bP_{\infty}\text{-\as}
\]
On the other hand, notice that $\varphi$ preserves the equivalence of functions by an extended Karamata’s Theorem \cite[Theorem~4.3]{Cli94}. Thus we have 
\[
	\lim_{s\to \infty}\frac{\varphi(Z'_s)}{\varphi(\gamma s)} = 1 \qquad \bP_{\infty}\text{-a.s..}
\]
Summarising, we have 
\[
	\lim_{t\to 0+}\frac{\varphi(X_t)}{\gamma t} 
	= \lim_{s\to \infty} \frac{\varphi(X_{G(s)})}{\gamma G(s)}
	=  \lim_{s\to \infty} \frac{\varphi(Z'_s)}{\varphi(\gamma s)}\frac{\varphi(\gamma s)}{\gamma G(s)}
	= 1 
	\qquad \mathbb{P}_{\infty}\text{-\as} \qedhere
\]
\end{proof}
A particular case is as follows.
\begin{corollary}
Suppose that $\gamma\coloneqq -\mathbb{E}[\xi_1]> 0$. 
If $R$ is regular varying with index $\theta>1$, then
$\lim_{t\to 0+} \frac{X_t}{\varphi^{-1}(\gamma t)} = 1$, $\mathbb{P}_{\infty}$-\as, and the speed function $\varphi^{-1}$ is regular varying with index $-\frac{1}{\theta-1}$. 
\end{corollary}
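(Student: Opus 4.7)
The plan is to apply Proposition~\ref{prop:speed} together with standard facts on regularly varying functions: namely, regularly varying functions preserve equivalence of functions and their asymptotic inverses are again regularly varying.

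First I would verify that the hypotheses of Proposition~\ref{prop:speed} are satisfied. Since $\gamma > 0$ the L\'evy process $\xi$ drifts to $-\infty$, so by Example~\ref{ex:drift-} the integrability condition \eqref{eq:H0} reduces to $\int^{\infty}_{\cdot} \frac{1}{R(y)}\dd y < \infty$, which holds because $1/R$ is regularly varying of index $-\theta < -1$. Moreover, any regularly varying function is pseudo-regularly varying, and hence preserves the equivalence of functions in the sense of \eqref{eq:preserve-equiv} (this follows directly from the uniform convergence theorem for regularly varying functions applied to $f(u(x))/f(v(x)) = f(v(x)\cdot u(x)/v(x))/f(v(x))$). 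Thus Proposition~\ref{prop:speed} applies, giving
\[
    \lim_{t\to 0+} \frac{\varphi(X_t)}{\gamma t} = 1 \quad \mathbb{P}_{\infty}\text{-a.s.}
\]

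Next I would show that $\varphi^{-1}$ is regularly varying of index $-1/(\theta-1)$. By Karamata's theorem applied to the regularly varying function $1/R$ of index $-\theta$,
\[
    \varphi(b) = \int_b^{\infty} \frac{\dd x}{R(x)} \sim \frac{b}{(\theta-1) R(b)}, \qquad b\to\infty,
\]
which shows that $\varphi$ is regularly varying at infinity with index $1-\theta \neq 0$. By the asymptotic-inverse theorem for regularly varying functions (see e.g.\ \cite[Proposition~1.5.15]{Bul-Book} or the analogous statement in the Bul'dygin reference), there exists an asymptotic inverse of $\varphi$, which we denote $\varphi^{-1}$, that is regularly varying at $0$ with index $\tfrac{1}{1-\theta} = -\tfrac{1}{\theta-1}$; equivalently $y\mapsto \varphi^{-1}(1/y)$ is regularly varying at $\infty$ with index $\tfrac{1}{\theta-1}$.

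Finally, to deduce the almost-sure asymptotic for $X_t$ itself I would apply $\varphi^{-1}$ to the equivalence $\varphi(X_t) \sim \gamma t$, noting that both sides tend to $0$ as $t\to 0+$. Since $\varphi^{-1}$ is regularly varying (at $0$) with nonzero index, it too preserves equivalence of functions (by the same uniform convergence argument as above, transplanted to a neighbourhood of $0$ via the map $y\mapsto 1/y$). Consequently
\[
    \frac{X_t}{\varphi^{-1}(\gamma t)} = \frac{\varphi^{-1}(\varphi(X_t))}{\varphi^{-1}(\gamma t)} \longrightarrow 1 \quad \mathbb{P}_{\infty}\text{-a.s., as } t\to 0+,
\]
which is the desired conclusion. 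The main point requiring care is making sure the inverse is interpreted correctly as a regularly varying function at $0$, and that the preservation-of-equivalence property is valid at the relevant endpoint; no genuine obstacle arises, as both are classical facts in the theory of regular variation.
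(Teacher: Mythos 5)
Your proposal is correct and takes essentially the same route as the paper: Karamata's theorem gives that $\varphi$ is regularly varying with index $1-\theta$, the inverse theorem for regularly varying functions gives the index $-\tfrac{1}{\theta-1}$ for $\varphi^{-1}$, and the conclusion follows from \cref{prop:speed} together with the fact that regularly varying functions preserve equivalence of functions. You are in fact slightly more explicit than the paper, which leaves the verification of \eqref{eq:H0} (via boundedness of $\nu_+$ when $\xi$ drifts to $-\infty$) and the final equivalence-preservation step for $\varphi^{-1}$ implicit in the appeal to \cref{prop:speed} and the remark following it.
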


This result strengthens \cite[Theorem~3.4]{FLZ21}, which obtains similar conclusion for time-changed spectrally positive L\'evy processes under certain additional assumptions. 
\begin{proof}
As $x\mapsto x^{\theta}/R(x)$ is slowly varying, we have by Karamata’s Theorem \cite[Theorem~1.5.10]{RegularVariation} that $\varphi$ is regular varying with index $1-\theta$
and  that $\lim_{b\to\infty} \frac{b}{\varphi(b)R(b)}= \theta-1$. 
	By \cite[Theorem~1.5.12]{RegularVariation}, $\varphi^{-1}$ is regular varying with index $1/(1-\theta)$. The statement now follows from \cref{prop:speed}.
\end{proof}

\begin{remark}
    To study the speed for the case $\gamma=  -\mathbb{E}[\xi_1]= 0$ would require a finer limit theorem for L\'evy process conditioned to stay positive. But this  is not available in the current literature, as far as we are aware.
\end{remark}

\section{Recurrent extensions}\label{sec:rec}
Now we turn to studying recurrent extensions at infinity, and to proving \cref{thm:2}. In contrast to the previous section, in order to allow a recurrent extension the time-changed Lévy process must be able to explode in finite time, $\bP_x(T_{\infty}<\infty)>0$, $x\in \bR$. That is, $\infty$ is an exit boundary for the Markov process of law $(\bP_x,x\in \bR)$. 
Recall from the zero-one law in \cref{prop;explosion} that we in fact have $\bP_x(T_{\infty}<\infty)=1$. 

The central object in our construction of the recurrent extension is the excursion of the time-changed L\'evy process away from $\infty$, described by an excursion measure. 
Let $\mathbf{n}$ be a $\sigma$-finite measure on the subspace of $\D([0,\infty),\overline{\bR})$ consist of  c\`adl\`ag excursion functions away from $+\infty$:
\begin{equation}\label{eq:exc-space}
    \Big\{f\colon [0,\infty)\to (-\infty,+\infty]\;\Big|\; f\in \D([0,\infty),\overline{\bR}), \zeta(f)\le \infty, f(t)= +\infty , t\ge \zeta(f)\Big\},
\end{equation}
where $\zeta(f) = \inf\{t>0\colon f(t)=+\infty\}$. 
Let $(\mathbf{e}_t, t\ge 0)$ be the coordinate process under $\mathbf{n}$. 
Define a family of measures $(n_s, s>0)$ on $\bR$ by 
\[
n_s (A)=   \mathbf{n}( \mathbf{e}_s \in A, s< \zeta) , \qquad s> 0, \text{ Borel set }A\subset \bR.
\]
If $(n_s, s>0)$ is an entrance law for the semigroup $(p^X_t,t\ge 0)$ of the Markov process $X$ of law $(\bP_x,x\in \bR)$, i.e.\ $n_s p^X_t = n_{s+t}$ for all $s,t> 0$, 
then we call $\mathbf{n}$ \emph{an excursion measure} (away from $\infty$) of $X$.
For the special case when $e^{-X}$ is a self-similar process, an excursion theory has been established by \cite{Vuo94,Fit06,Riv05,Riv07}.  

In \cref{sec:exc-K} we construct an excursion measure using the Kuznetzov measures. 
Then, based on the study of excursions, we give sufficient and necessary conditions for the existence of a recurrent extension, in \cref{sec:exc-suff,sec:exc-nec} respectively. 
In \cref{sec:exc} we prove \cref{prop:n-cv}, a functional limit result, whose proof relies on an alternate construction of the excursion measure by means of the quasi-process introduced by Barczy and Bertoin \cite{BarBer11}.
\subsection{Construction 
	via the Kuznetsov measure}\label{sec:exc-K}

In this section we provide a construction of the excursion measure based on probabilistic potential theory, in particular the concept of Kuznetsov measures.  We refer to Getoor's monograph \cite{Get90} for the general theory of the Kuznetzov measures and its applications in the study of excessive measures. Our approach is motivated by Fitzsimmons \cite{Fit06} who exploited the compatibility of Kuznetsov measures with random time-changes to construct excursion measures for positive self-similar Markov processes from invariant measures of L\'evy processes. Fitzsimmons' ideas were recently extended by Dereich et al.~\cite{DDK17}.

Throughout this section, we consider a L\'evy process $\xi$ satisfying \ref{H2}, that is:  
$$
	\text{there exists } \theta>0 \text{ such that } \mathrm{E}[\ee^{-\theta \xi_1}] =\ee^{-\Psi(i\theta)}\le 1.
$$
Note that the function $q\mapsto \Psi(iq)$ is convex on the set $\{q\in \bR \colon \Psi(iq)<\infty\}$ and that $\Psi(0)=0$. Therefore the L\'evy process $\xi$ drifts to $+\infty$ under \ref{H2}. 
If equality holds in \ref{H2}, then it is called Cram\'er's condition and by convexity $\theta$ is unique.

Let $(S_t, t\ge 0)$ be the transition semigroup of $\xi$. 
Under \ref{H2}, the measure $m^{\xi} (\ddd y)\coloneqq \ee^{\theta y}\dd y$ on $\bR$ is excessive for $\xi$, i.e.\ $m^{\xi}S_t \le m^{\xi}$ for each $t\ge 0$. Specifically, Cram\'er's condition $\mathrm{E}[\ee^{-\theta \xi_1}] =1$ implies that the measure $m^{\xi}$ is invariant for $\xi$. When $\mathrm{E}[\ee^{-\theta \xi_1}] <1$, we have that $m^{\xi}$ is purely excessive for $\xi$, in the sense that $m^{\xi}$ is excessive and, for every non-negative function $f$ with 
\[
m^{\xi}[f]\coloneqq \int f \dd m^{\xi}<\infty,
\]
$m^{\xi}[S_t f] \to 0$ as $t\to \infty$. 

Let $\W$ be the space of paths $\omega \colon \bR\to \overline{\mathbb{R}} = (-\infty,\infty]$, which are $\mathbb{R}$-valued and \cadlag on an open interval $(\alpha(\omega),\beta(\omega))\subseteq \mathbb{R}$ and  $+\infty$-valued elsewhere. 
As the measure $m^{\xi}$ is excessive, by \cite[Theorems 6.3 and 6.7]{Get90}, there exists a unique sigma-finite measure $\mathcal{Q}$ on the space $\W$, such that, for any $ t_1<\cdots <t_n $, $y_1, \ldots , y_n \in \mathbb{R}$,   
\begin{align*}
	&\mathcal{Q}(\alpha<t_1, Z_{t_1}\in \dd y_1, \ldots, Z_{t_n}\in \dd y_n, t_n <\beta)\\
	&= m^{\xi} (\ddd y_1) S_{t_2-t_1} (y_1, \dd y_2)\cdots S_{t_n-t_{n-1}} (y_{n-1}, \dd y_{n}).
\end{align*}
Moreover, we have $\mathcal{Q}(\alpha>-\infty) = 0$ under Cram\'er's condition and $\mathcal{Q}(\alpha=-\infty) = 0$ when $\mathrm{E}[\ee^{-\theta \xi_1}] <1$. 
The measure $\mathcal{Q}$ is called the \emph{Kuznetsov measure} corresponding to the excessive measure $m^{\xi}$ and $(S_t)$. 

To further understand the measure $\mathcal{Q}$, we make use of (weak) duality of Markov processes, transferred to the level of the Kuznetsov measures.  
Let $(\widehat{S}_t)$ be the semigroup of the dual L\'evy process $\widehat{\xi}= -\xi$. 
Condition \ref{H2} implies that $k \coloneqq  -\log \mathrm{E}[\ee^{-\theta \xi_1}]\ge 0$ and that $(\ee^{\theta \widehat{\xi}_t + k t},t\ge 0)$ is a $\mathrm{P}_x$-martingale. Therefore, one can introduce a change of measure:
\begin{equation}\label{eq:barP}
\overline{\mathrm{P}}_x(A, t<\zeta)
=  \ee^{-\theta x}\widehat{\mathrm{E}}_x \Big[ \ee^{\theta \widehat{\xi}_t}\mathbf{1}_{A} \Big]\quad  \text{for all } A\in \mathcal{F}_t, 
\end{equation}
with $\overline{\mathrm{P}}_x( t<\zeta) = \ee^{- kt}$. 
Write $\overline{\xi}$ for a process of law  $\overline{\mathrm{P}}_x$ and $(\overline{S}_t)$ for its semigroup. 
This change of measure is well-known as the Esscher transform (see e.g.~\cite[Theorem~3.9]{Kyp-Book}); it follows that $\overline{\xi}$ is again a L\'evy process, the so-called tilted L\'evy process, possibly killed (via a jump to $+\infty$) at an independent exponential time $\zeta$ with parameter $k\ge 0$, and the characteristic exponent of $\overline{\xi}$ is 
\[
\overline{\mathrm{E}} [\ee^{iq \overline{\xi}_1}] = \mathrm{E} [\ee^{-(iq+\theta) \xi_1}]
= \exp (\Psi(-q +i\theta)).
\]
When $k=0$, $\overline{\xi}$ drifts to $+\infty$, i.e.\ $\lim_{t\to \infty}\overline{\xi}_t=+\infty$ almost surely. 
This change of measure implies that, for all non-negative measurable $f,g$ and $x\mapsto h(x)\coloneqq  f(x )\ee^{\theta x}$,
\[\int g(x) \widehat{S}_t h(x)\dd x= \int\overline{S}_t f(x)  g(x )\ee^{\theta x}\dd x,\] 
Using this and Hunt's switching identity (see \cite[Theorem~II.5]{Ber-Book}), which is
\[
\int S_t g(x)  h (x)\dd x = \int g(x) \widehat{S}_t h(x) \dd x, 
\]
we deduce the duality relation between semigroups: 
\[ 
\int \overline{S}_t f(x) g(x) \ee^{\theta x}\dd x = \int  S_t g (x) f(x)  \ee^{\theta x}\dd x. 
\]
It follows that 
\begin{align}
\begin{split}
	\mathcal{Q}&(\alpha <t_1, Z_{t_1}\in \ddd y_1, \ldots, Z_{t_n}\in \ddd y_n, t_n <\beta) \\
	& = m^{\xi} (\ddd y_n) \overline{S}_{t_n-t_{n-1}} (y_n, \ddd y_{n-1})\cdots \overline{S}_{t_2-t_1} (y_{2}, \ddd y_{1}).
\end{split}\label{eq:Q-dual}
\end{align}
This is one expression of the time-reversal property, which is that the time-reversal of $\mathcal{Q}$ is the Kuznetsov measure of the tilted process $\overline{\xi}$ with respect to the excessive duality measure. 

Using the time-reversal property, we now derive an integrability condition that allows us to define a time-change of $Z$. Assumption \eqref{eq:H-rec-suff}, appearing in \cref{thm:2}, is crucial for this.
\begin{lemma}
	Suppose that \begin{equation}\label{eq:H-rec-suff}
		\int_{\cdot}^{\infty} \frac{\ee^{\theta y}}{R(y)}\dd y<\infty. 
	\end{equation}
 Then it holds that
	\begin{equation}\label{eq:H2'}
		\lim_{t\downarrow \alpha} \int_{(\alpha,t]} \frac{1}{R(Z_s)} \dd s = 0  \qquad \mathcal{Q}\text{-a.e.} 
	\end{equation}
\end{lemma}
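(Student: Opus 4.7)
The plan is to apply the Kuznetsov time-reversal identity \eqref{eq:Q-dual} to convert the entrance statement about $Z$ at $\alpha$ into a long-time integrability statement for the Esscher-tilted dual L\'evy process $\overline{\xi}$, and then control that integral using \cref{prop;explosion} together with \eqref{eq:H-rec-suff}.

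Set $A_t := \int_{(\alpha,t]} R(Z_s)^{-1}\,\dd s$. Since $A_t$ is non-decreasing in $t$, monotone convergence reduces the claim to showing that for $\mathcal Q$-a.e.\ path, $A_b<\infty$ for some $b$ with $\alpha<b<\beta$; a countable union over rational $b$ lets us fix $b$. Under $\mathcal Q(\ccdot \mid Z_b=y)$, where $y$ is distributed as $m^{\xi}(\dd y)=\ee^{\theta y}\dd y$, the identity \eqref{eq:Q-dual} tells us that the reversed path $(Z_{b-u})_{0\le u< b-\alpha}$ is a copy of $\overline{\xi}$ started at $y$ with lifetime $\overline{\zeta}=b-\alpha$, so
\[
A_b \eqdis \int_0^{\overline{\zeta}} \frac{1}{R(\overline{\xi}_u)}\,\dd u \qquad \text{under }\overline{\mathrm P}_y,
\]
and it suffices to show the right side is a.s.\ finite for Lebesgue-a.e.\ $y$.

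In the sub-Cram\'er regime $\mathrm E[\ee^{-\theta\xi_1}]<1$, the lifetime $\overline{\zeta}$ is an independent exponential random variable, the path of $\overline{\xi}$ on $[0,\overline{\zeta})$ is a.s.\ bounded by c\`adl\`ag regularity, and continuity and positivity of $R$ keep $1/R$ bounded there, so the integral is trivially finite. In the Cram\'er regime $\mathrm E[\ee^{-\theta\xi_1}]=1$, the process $\overline{\xi}$ is unkilled and drifts to $+\infty$ with positive mean: the Laplace exponent $\phi(q):=-\log\mathrm E[\ee^{-q\xi_1}]$ is concave with $\phi(0)=\phi(\theta)=0$ and $\phi'(0)=\mathrm E[\xi_1]>0$, so $\phi'(\theta)<0$ and $\overline{\xi}$, having Laplace exponent $-\phi(\cdot+\theta)$, has strictly positive mean. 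We then invoke \cref{prop;explosion} applied to $\overline{\xi}$ with weight $1/R$: a.s.\ finiteness of the perpetual integral follows once we exhibit a Borel $E$ with $\mathbb R\setminus E$ transient for $\overline{\xi}$ and $\int_E R(z)^{-1}\,\overline U(y,\dd z)<\infty$. Taking $E=[a,\infty)$ for $a$ large, transience of the complement follows from the strong law $\overline{\xi}_u/u \to \overline{m}>0$; condition \eqref{eq:H-rec-suff} yields $\int_a^\infty R(z)^{-1}\,\dd z<\infty$ because $1/R(z)\le \ee^{\theta z}/R(z)$ for $z\ge 0$; and Blackwell's renewal theorem for non-lattice L\'evy processes with positive mean drift bounds $\overline U(y,[z,z+1])$ uniformly in $z\ge a$. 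Combining these gives the required potential estimate.

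The main obstacle is the Cram\'er case, where the exponentially weighted integrability \eqref{eq:H-rec-suff} (natural for the tilted dual) must be married with a renewal-type control of $\overline U$ against the unweighted $1/R$ on the half-line $[a,\infty)$; the bridge is provided by the elementary bound $\ee^{-\theta z}\le 1$ for $z\ge 0$ together with Blackwell's theorem. Once this pointwise a.e.\ finiteness of $A_b$ is in hand, a countable union over rational $b$ delivers the stated $\mathcal Q$-a.e.\ convergence.
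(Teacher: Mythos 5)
Your proposal follows essentially the same route as the paper's proof: both convert the entrance behaviour of $Z$ at $\alpha$ into the a.s.\ finiteness of the perpetual integral $\int \frac{1}{R(\overline{\xi}_s)}\dd s$ for the tilted dual process started from $m^{\xi}$-a.e.\ (equivalently Lebesgue-a.e.)\ point, via the time-reversal identity \eqref{eq:Q-dual}, and both obtain that finiteness by combining \eqref{eq:H-rec-suff} with a renewal-theorem bound on the potential measure $\overline{U}$. The differences are organisational rather than substantive: you reverse first and estimate second (the paper does the opposite, stating the estimate as \eqref{eq:H2} and reversing at the very end); you split into the Cram\'er and sub-Cram\'er cases, observing correctly that the killed case is trivial pathwise (a càdlàg path on a finite lifetime is bounded), whereas the paper's potential argument covers both cases at once; and you route the conclusion explicitly through \cref{prop;explosion}. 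One small imprecision: when \eqref{eq:H-exp} fails the mean of $\overline{\xi}$ is $+\infty$, not merely positive, but this is harmless since your argument only uses drift to $+\infty$ and boundedness of $\overline{U}(y,[z,z+1])$, both of which persist.

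There is, however, one step that does not follow as written, and it is precisely the step the paper itself passes over with the words ``by the renewal theorem we deduce'': your ``combining these gives the required potential estimate.'' From $\int_a^\infty R(z)^{-1}\dd z<\infty$ together with $\sup_{z}\overline{U}(y,[z,z+1])\le C$ one only gets the bound $\int_{[a,\infty)} R(z)^{-1}\,\overline{U}(y,\dd z)\le C\sum_n \sup_{[n,n+1]} R^{-1}$, and a sum of suprema over unit intervals is not controlled by a Lebesgue integral: a continuous $1/R$ may have arbitrarily tall spikes of vanishing width placed at points carrying $\overline{U}$-mass (for instance at the atoms of $\overline{U}$ when $\overline{\xi}$ is compound Poisson, hence possesses no local time), in which case \eqref{eq:H-rec-suff} holds while $\int R^{-1}\dd\overline{U}=\infty$ and the perpetual integral itself diverges. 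The passage is valid if $1/R$ is directly Riemann integrable on a neighbourhood of $+\infty$ (e.g.\ eventually monotone, or dominated by an integrable monotone function), or if $\overline{U}$ admits a bounded density (e.g.\ $\sigma^2>0$, a non-zero drift, or more generally when $\overline{\xi}$ has a local time, exactly as in the dichotomy discussed after \cref{prop;explosion}). Since the paper's own proof makes the identical unqualified assertion at the same spot, your blind reconstruction is faithful to it step for step; but be aware that, as a matter of mathematics rather than of comparison, this sup-versus-integral point is left open in both write-ups.
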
 
\begin{proof}
By the change of measure, we derive a connection
\[
\overline{U}(\ddd y) = \ee^{\theta y} \widehat{U}(\ddd y),  
\]
where $\overline{U}$ and $\widehat{U}$ are the potential measures of $\overline{\xi}$ and $\widehat{\xi}$ respectively. 
By the renewal theorem we deduce from \eqref{eq:H-rec-suff} that $\int_{\cdot}^{\infty} \frac{1}{R(y)} \overline{U} (\ddd y)<\infty$, 
which implies that  
\begin{equation}\label{eq:H2}
    \overline{\mathrm{P}}_x \left(\int_{[0,\zeta)}\frac{1}{R(\overline{\xi}_s)} \dd s < \infty \right) =1
    \quad \text{for all } x\in \bR. 
\end{equation}
Reversing paths via the time-reversal property \eqref{eq:Q-dual} we deduce that
	\[
	\mathcal{Q} \left(\int_{(\alpha,t]} \frac{1}{R(Z_s)} \dd s =\infty \right)
	= \int_{\mathbb{R}} m^{\xi} (\ddd x) \overline{\mathrm{P}}_x \left(\int_{[0,\zeta)}\frac{1}{R(\overline{\xi}_s)} \dd s =\infty\right)=0.
	\qedhere
	\]
\end{proof}
We next apply a time-change to the Kuznetsov measure $\mathcal{Q}$, using results developed in Kaspi \cite{Kas88}, see also \cite[Chapter 8]{Get90}, and then perform a Palm-measure type restriction to obtain an excursion measure $\mathbf{n}$; for that we shall need the integrability condition \eqref{eq:H2'} above. 
On the measure space $(\W,\mathcal{Q})$, consider the (non-atomic) random measure $B(\omega,\dd t)\coloneqq  \frac{1}{R(\omega_t)}\dd t$ on $\bR$. This is a so-called \emph{homogeneous random measure} (HRM) \cite[Definition (8.2)]{Get90} in the following sense: for any $s\ge 0$ and measurable set $A$ the identity $B(\omega ,A+s) = \int_A \frac{1}{R(\omega_{t+s})}\dd t = B(\theta_s\circ \omega ,A)$ holds, where $\theta_s$ is the shift operator.  

Recall that $X$ and $\xi$ are related by the additive functional $I_t(\omega)= \int_0^t \frac{1}{R(\omega_{r})}\dd r$ on $D([0,\infty), \R)$. The HRM $B$ can be viewed as $I_t$ extended to $\W$, in the sense that for each $\omega\in \W$ and $s\in \bR$, it holds that $I_t(\omega(s+\,\cdot\,) ) = B(\omega, (s,s+t])$.  
Roughly speaking, the HRM $B$ is a change of time on $\bR$ and we can study the time-changed $\mathcal{Q}$ via $B$. 
More precisely, when \eqref{eq:H2'} holds, by \cite[Theorem (2.3)]{Kas88} there exists an entrance law for $X$, given by \cite[Equation (2.9)]{Kas88}:
\begin{equation}\label{eq:n-entrance-Q}
	n_s(A)  =  \mathcal{Q}\left(Z_{\eta_s}\in A, \eta_s\in [0,1) \right), \qquad s>0,
\end{equation}
where $\eta_s = \inf \{u\in \bR\colon \int_{\alpha}^u \frac{1}{R(Z_r)} \dd r >s\}$. 
That is to say, there is a sigma-finite measure $\mathbf{n}$ on the space of excursions $\exc$ (which are indexed by $(0,\infty)$) such that for $0<t_1\le t_2 \le \cdots \le t_n$  
\begin{equation}\label{eq:n}
\mathbf{n}(\exc_{t_1}\!\in\! \dd y_1, \ldots, \exc_{t_n}\!\in\! \dd y_n, t_n\!<\!\zeta)
= n_{t_1} (\ddd y_1) S^X_{t_2-t_1} (y_1, \dd y_2)\cdots S^X_{t_n-t_{n-1}} (y_{n-1}, \dd y_{n}),
\end{equation}
where $(S^{X}_t,t\ge 0)$ is the semigroup of $X$. 

We also observe a dichotomy in the way the excursion $\exc$ under $\mathbf n$ leaves $+\infty$.
\begin{enumerate}
    \item For the case when the L\'evy process $\overline{\xi}$ drifts to $+\infty$ and is not killed (so $k=0$), we know by \eqref{eq:Q-dual} that $\lim_{s\downarrow -\infty} Z_s= +\infty$,  $\mathcal{Q}$-a.e..
By \cite[Corollary~(2.11)]{Kas88}, this property translates to $\exc$ after the time-change and we have  
$ \liminf_{s\downarrow 0} \exc_s =+\infty$, $\mathbf{n}$-a.e..
Therefore, we can finally ``extend'' the measure $\mathbf{n}$ to $D([0,\infty),\oR)$, the space of c\`adl\`ag paths on $[0,\infty)$, by setting $\mathbf{n}(\exc_0\ne \infty)=0$. 
    \item For the other case, when $k>0$, we have that $\lim_{t\downarrow 0} \exc_t\in \bR$, $\mathbf{n}$-a.e.. Indeed, in this case, the process $Z$ is bounded from above, and a finite limit $\lim_{t\downarrow \alpha} Z_t$ exists. Then after the time-change, we have $\mathcal{Q}$-a.e.\ $\lim_{t\downarrow 0} \exc_t = \lim_{t\downarrow \alpha} Z_t$. 
Set $\exc_0$ to be the right limit at time zero, i.e.\ $\mathbf{n}(\exc_0\ne \lim_{t\downarrow 0} \exc_t)=0$. 
In this case, $\exc_0$ is finite and we understand the excursion as ``jumping in'' from $+\infty$. 
\end{enumerate}

Now define the mean occupation measure
\begin{equation}\label{eq:occup-meas-defn}
	m^X(A) =  \mathbf{n}\bigg( \int_0^{\zeta} \mathbf{1}_{A} (\exc_s) \dd s\bigg)= \int_0^{\infty} n_s (A) \dd s \quad \text{for all Borel sets }A\subseteq \bR. 
\end{equation}
It is well-known that $m^X$ is purely excessive for the semigroup $(S^{X}_t,t\ge 0)$ of the process $X$, see e.g.\ \cite[Theorem~5.25]{Get90}. 

\begin{proposition}\label{prop:exc-K-m}
 For any non-negative measurable function $f$, the mean occupation measure $m^X$ given in \eqref{eq:occup-meas-defn} satisfies 
\begin{equation}\label{eq:exc-K-m}
	m^X[f]  = \int_{\bR} \frac{f(y )\ee^{\theta y}}{R(y)}\dd y.
\end{equation}
\end{proposition}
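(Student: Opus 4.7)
The plan is to unwind the definition \eqref{eq:occup-meas-defn} through the entrance-law representation \eqref{eq:n-entrance-Q}, perform a pathwise change of variable on $\mathcal{Q}$ to rewrite the integral in terms of the raw coordinate process $Z$ over a deterministic window, and finally invoke the one-dimensional marginals of the Kuznetsov measure to identify everything with $m^\xi = e^{\theta y}\dd y$.

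Concretely, I would first apply Fubini together with \eqref{eq:n-entrance-Q} to obtain
\[
    m^X[f] = \int_0^\infty n_s[f]\,\dd s = \mathcal{Q}\!\left[\int_0^\infty f(Z_{\eta_s})\mathbf{1}_{\{\eta_s\in[0,1)\}}\,\dd s\right].
\]
Under $\mathcal{Q}$, condition \eqref{eq:H2'} guarantees that for $\mathcal{Q}$-a.e.\ $\omega$ the function $u\mapsto\int_{\alpha}^{u}R(Z_r)^{-1}\,\dd r$ is finite on $(\alpha,\beta)$, continuous, and strictly increasing (by continuity and strict positivity of $R$); thus its right-inverse is exactly $s\mapsto \eta_s$, and $\{s:\eta_s\in[0,1)\}$ is mapped bijectively onto $[0,1)\cap(\alpha,\beta)$. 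Changing variables $u = \eta_s$, $\dd s = R(Z_u)^{-1}\,\dd u$, gives
\[
    \int_0^\infty f(Z_{\eta_s})\mathbf{1}_{\{\eta_s\in[0,1)\}}\,\dd s = \int_{[0,1)\cap(\alpha,\beta)}\frac{f(Z_u)}{R(Z_u)}\,\dd u.
\]

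Applying Fubini once more yields
\[
    m^X[f] = \int_0^1 \mathcal{Q}\!\left[\frac{f(Z_u)}{R(Z_u)}\mathbf{1}_{\{\alpha<u<\beta\}}\right]\dd u.
\]
By the defining property of the Kuznetsov measure recalled in Section~\ref{sec:exc-K} (see \cite[Theorem~6.3]{Get90}), the image of $\mathbf{1}_{\{\alpha<u<\beta\}}\mathcal{Q}$ under $\omega\mapsto Z_u(\omega)$ equals the excessive measure $m^\xi(\dd y) = e^{\theta y}\dd y$ for every $u\in\mathbb{R}$. Substituting and performing the trivial $u$-integration produces precisely $\int_\mathbb{R} f(y)e^{\theta y}/R(y)\,\dd y$, as claimed.

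The only delicate point is justifying the change of variable uniformly in $\omega$: one must check that the strict-monotonicity and continuity of $u\mapsto \int_\alpha^{u}R(Z_r)^{-1}\,\dd r$ together with its finiteness (guaranteed by \eqref{eq:H2'}) make the substitution licit, both when $\alpha=-\infty$ (the Cramér case) and when $\alpha$ is finite (the $k>0$ case). Everything else is a matter of commuting integrals and reading off the one-dimensional marginal of $\mathcal{Q}$; no further fluctuation-theoretic input is needed, which is why this identity holds under the single integrability assumption \eqref{eq:H-rec-suff}.
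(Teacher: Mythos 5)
Your proof is correct and follows essentially the same route as the paper's: unwind \eqref{eq:occup-meas-defn} via the entrance law \eqref{eq:n-entrance-Q}, change variables $u=\eta_s$ with Jacobian $R(Z_u)^{-1}$ pathwise under $\mathcal{Q}$, and read off the one-dimensional marginal of the Kuznetsov measure (the paper cites \eqref{eq:Q-dual} where you cite the defining finite-dimensional distributions, but these give the same marginal $m^{\xi}(\ddd y)=\ee^{\theta y}\dd y$). Your write-up simply spells out the monotonicity/finiteness justification of the substitution that the paper's two-line proof leaves implicit.
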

\begin{proof}
By \eqref{eq:n-entrance-Q} and a change of variable, we have the following identity for the mean occupation measure: for every measurable set $A$, 
\[
\int_{[0,\infty)} n_s(A) \dd s = \int \mathcal{Q}(\ddd w)\int_{[0,1)} \frac{\mathbf{1}_{\{Z_{u}(w)\in A\}}}{R(Z_u(w))} \dd u . 
\]
By \eqref{eq:Q-dual}, the right-hand side is equal to $\int_{\bR} \frac{\mathbf{1}_A(y )\ee^{\theta y}}{R(y)}\dd y $. 
\end{proof}

Our next observation connects the special form of the mean occupation measure with the $R$-scaling invariance property as in \cref{defn:Phi-ss} of the excursion measure.
\begin{proposition}\label{prop:occup-meas}
Let  $\mathbf{n}'$ be a $\sigma$-finite excursion measure of $X$, and let $\Phi_z$ be the function defined in \eqref{eq:Phi-z}. 
Then $\mathbf{n}'$ satisfies the invariance property
\begin{equation}\label{eq:n-scaling}
	\Phi_z^{*} \mathbf{n'} = \ee^{-\theta z} \mathbf{n'}, \qquad \text{for all } z\in \bR, 
\end{equation}  
if and only if there exists a constant $C>0$ such that the mean occupation measure $m'$ of $\mathbf{n}'$ is given by 
\begin{equation}\label{eq:occup-meas-lem}
	m'(\ddd y) = C  \frac{\ee^{\theta y}}{R(y)} \dd y. 
\end{equation}
\end{proposition}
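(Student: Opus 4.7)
The plan is to base the whole argument on the mean-occupation analogue of \eqref{eq:Phi-pot}, which after the same change of variables $\dd s/\dd u = R(\mathbf{e}_u)/R(z+\mathbf{e}_u)$ used in \cref{prop:ss-potential} reads
$$
(\Phi_z^{*}\mathbf{n}')\biggl[\int_0^{\zeta'}f(\mathbf{e}_s)\dd s\biggr] = \int_{\bR} f(z+y)\frac{R(y)}{R(z+y)}\, m'(\dd y),
$$
where $\zeta'$ denotes the lifetime of $\Phi_z(\mathbf{e})$. For the forward implication, $R$-scaling invariance forces the left-hand side to equal $\ee^{-\theta z} m'[f]$. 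Applying the identity to $f\cdot R$ in place of $f$ and writing $\mu(\dd y) := R(y)m'(\dd y)$ yields the cleaner identity $\int f(z+y)\mu(\dd y) = \ee^{-\theta z}\int f(y)\mu(\dd y)$. Applying this once more to $y\mapsto f(y)\ee^{-\theta y}$ in place of $f$, the factor $\ee^{-\theta z}$ cancels and the measure $\nu(\dd y) := \ee^{-\theta y}R(y)m'(\dd y)$ emerges as translation invariant. Since $\mathbf{n}'$ is $\sigma$-finite and nontrivial, $\nu$ is a nonzero $\sigma$-finite translation-invariant Borel measure on $\bR$, hence a positive multiple of Lebesgue measure; unwinding the substitutions gives $m'(\dd y) = C\ee^{\theta y}/R(y)\dd y$.

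For the reverse implication, substituting $m'(\dd y) = C\ee^{\theta y}/R(y)\dd y$ into the displayed identity above and changing variables $u = z+y$ on the right-hand side immediately gives $(\Phi_z^{*}\mathbf{n}')[\int_0^{\zeta'}f(\mathbf{e}_s)\dd s] = \ee^{-\theta z} m'[f]$, so $\Phi_z^{*}\mathbf{n}'$ and $\ee^{-\theta z}\mathbf{n}'$ share the same mean-occupation measure. To upgrade this to equality of the measures themselves, I would invoke uniqueness at the level of the underlying Kuznetsov object: reversing the $B$-time-change identifies $\mathbf{n}'$ with a Kuznetsov measure for $\xi$ whose excessive measure is $R\cdot m' = C\ee^{\theta y}\dd y = C\, m^{\xi}$; then \cite[Theorems~6.3 and 6.7]{Get90} gives $\mathbf{n}' = C\mathbf{n}$, where $\mathbf{n}$ is the excursion measure built in \cref{sec:exc-K}. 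Since $\mathbf{n}$ is $R$-scaling invariant---a fact immediate from the translation structure of $\mathcal{Q}$, whose shift by $z$ equals $\ee^{-\theta z}\mathcal{Q}$---the same invariance passes to $\mathbf{n}'$.

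The main obstacle is precisely this last uniqueness step: the mean-occupation measure alone does not pin down an excursion measure, so the reverse direction must genuinely use the identification with a Kuznetsov measure on $\xi$-paths rather than merely matching potentials. One should additionally verify that reversing the $B$-time-change is $\mathbf{n}'$-a.e.\ well-defined; this is ensured by \eqref{eq:H2'}, which holds automatically under the standing hypothesis \eqref{eq:H-rec-suff}.
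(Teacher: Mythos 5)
Your forward direction is correct and coincides with the paper's argument: the change of variables $\dd s/\dd u = R(\exc_u)/R(z+\exc_u)$ gives the occupation identity, and your two-step substitution producing the translation-invariant measure $\nu(\dd y)=\ee^{-\theta y}R(y)m'(\dd y)$ is just a repackaging of the paper's step where $\widetilde m(\dd y)=\ee^{-\theta y}R(y)m'(\dd y)$ is shown to satisfy $\widetilde m_z=\widetilde m$.

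The reverse direction, however, has a genuine gap, and the ``main obstacle'' you identify is not actually an obstacle. In this paper an excursion measure of $X$ is by definition Markovian with the semigroup of $X$, hence determined by its entrance law through \eqref{eq:n}; and since a mean occupation measure $m'=\int_0^\infty n'_s\,\dd s$ is \emph{purely excessive}, Getoor's Theorem (5.25) in \cite{Get90} says that this entrance-law representation of $m'$ is unique. So, among excursion measures of $X$, the mean occupation measure \emph{does} pin down the measure. That is precisely how the paper closes the argument: $\Phi_z^{*}\mathbf{n}'$ is again an excursion measure of $X$ (the transform preserves the Markov structure, sending $\mathbb{P}_x$ to $\mathbb{P}_{x+z}$ by \cref{prop:Phi-z}), your computation shows its occupation measure equals $\ee^{-\theta z}m'$, so by the uniqueness of the entrance-law representation its entrance law must be $\ee^{-\theta z}n'_s$, and two Markovian measures with the same semigroup and the same entrance law coincide, giving \eqref{eq:n-scaling}.

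Your substitute route cannot be carried out as stated. Kaspi's theorem \cite{Kas88} goes only one way: from a Kuznetsov measure and an HRM it produces an entrance law via the Palm-type windowing formula \eqref{eq:n-entrance-Q}; this is not a pushforward of path measures, so there is no canonical ``inverse of the $B$-time-change'' acting on the $\sigma$-finite measure $\mathbf{n}'$, and no off-the-shelf theorem saying that an arbitrary excursion measure of $X$ arises from a Kuznetsov measure of $\xi$ with excessive measure $R\cdot m'$. Establishing that correspondence amounts to reconstructing a stationary $\bR$-indexed measure from the entrance law of $\mathbf{n}'$ --- i.e.\ exactly the entrance-law machinery (Getoor (5.25)/(6.3)--(6.7)) you were trying to bypass. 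There is also a circularity hazard in your last step: the $R$-scaling invariance of the Kuznetsov-built $\mathbf{n}$ of \cref{sec:exc-K} is obtained in the paper as a \emph{consequence} of the very proposition you are proving (combined with \cref{prop:exc-K-m}); your claim that it is ``immediate'' from the shift structure of $\mathcal{Q}$ is plausible (spatial shift sends $\mathcal{Q}$ to $\ee^{-\theta z}\mathcal{Q}$, and time-changes compose as in \cref{lem:Phiz-inverse}), but it requires proving that the Palm construction \eqref{eq:n-entrance-Q} is equivariant under spatial shifts, which you have not done.
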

\begin{proof}
	Let $z\in \bR$. 
	Changing variables $u= h_z(s)$ with $\frac{\dd s}{\dd u}= \frac{R(\exc_u)}{R(z+\exc_u)}$, we have 
	\begin{align*}
		\Phi_z^{*}\mathbf{n}'
		\bigg[ \int_0^{\zeta} f (\exc_s) \dd s\bigg]
		&= \mathbf{n}'
		\bigg[ \int_0^{\infty} \mathbf{1}_{\{h_z(s)<\zeta\}} f (z+ \exc_{h_z(s)}) \dd s\bigg] \\
		&= \mathbf{n}'
		\bigg[ \int_0^{\infty}  \mathbf{1}_{\{u<\zeta\}}f (z+ \exc_{u})  \frac{R(\exc_u)}{R(z+\exc_u)} \dd u\bigg] \\
		&= \int_{\bR}\frac{ f(z+y) R(y)}{R(z+y)} m'(\ddd y)\\
		&= \int_{\bR}\frac{ f(y) R(y-z)}{R(y)} m'_z(\ddd y), 
	\end{align*}
	where $m'_z$ denotes the push-forward of $m'$ by the translation $y\mapsto y+z$. 
	
	If $\mathbf{n}'$ satisfies \eqref{eq:n-scaling}, then 
	\[
	\Phi_z^{*}\mathbf{n}'
	\bigg[ \int_0^{\zeta} f (\exc_s) \dd s\bigg] =  \ee^{-\theta z}\mathbf{n}'\bigg[ \int_0^{\zeta} f(\exc_s) \dd s\bigg] = \ee^{-\theta z}\int_{\bR} f(y)m'(\ddd y). 
	\]
	We deduce the identity
	\[
	\frac{R(y-z)}{R(y)} m'_z(\ddd y)
	= \ee^{-\theta z} m'(\ddd y) \qquad \text{for all } z\in \bR. 
	\]
	Let $\widetilde{m}(\ddd y) = \ee^{-\theta y}R(y) m'(\ddd y)$. Then it follows that
	$\widetilde{m}_z(\ddd y) = \tilde{m}(\ddd y)$ for all $z\in \bR$,
	where $\widetilde{m}_z$ is the push-forward of $\tilde{m}$ via the translation $y\mapsto y+z$. We conclude that $\widetilde{m}$ is the Lebesgue measure on $\bR$, up to a multiplicative constant. This leads to the desired form of the mean occupation measure $m'$. 
	
	Conversely, suppose that $m'(\ddd y) = C  \frac{\ee^{\theta y}}{R(y)} \dd y$. 
	Then the computation above leads to
	\[
	\Phi_z^{*}\mathbf{n}'
	\bigg[ \int_0^{\zeta} f (\exc_s) \dd s\bigg]
	= \int_{\bR}\frac{ f(z+y)}{R(z+y)} \ee^{\theta y}\dd y 
	=  \ee^{-\theta z}\mathbf{n}'\bigg[ \int_0^{\zeta} f(\exc_s) \dd s\bigg]. 
	\]
	It follows that $m' =\int_0^{\infty} \ee^{\theta z}n^{(z)}_s \dd s$ with $n^{(z)}_s [f]\coloneqq  \Phi_z^{*}\mathbf{n}'[f(\exc_s)\ind{s<\zeta}]$ giving an entrance law for $X$. 
	Recall that the mean occupation measure  $m'$ is purely excessive, and thus by \cite[Theorem (5.25)]{Get90} such a representation of $m'$ is unique. Comparing with the definition of $m'$ as in \eqref{eq:occup-meas-defn}, this allows us to identify $(\ee^{\theta z}n^{(z)}_s ,s>0)$ with the entrance law of $\mathbf{n}'$, and thus $\Phi_z^{*}\mathbf{n}'= \ee^{-\theta z}\mathbf{n}'$. 
\end{proof}

Using this invariance property we give further descriptions of the excursion measure. 
\begin{proposition}
Let $\mathbf{n}$  be the excursion measure built in \eqref{eq:n}. 
\begin{enumerate}
	\item For every $z\in \bR$, $\mathbf{n}$ satisfies the invariance property $\Phi_z^{*}\mathbf{n}= \ee^{-\theta z}\mathbf{n}$. 
	\item There exists $C>0$ such that $\mathbf{n}(T_x <\infty) = C \ee^{\theta x}$ for every $x\in \bR$. 
	\item Either the excursion leaves $\infty$ continuously with $\mathbf{n}(\exc_0 \ne \infty)=0$, or it leaves $\infty$ via a jump, with jumping-in measure $\eta(\ddd x) = C' \ee^{\theta x} \dd x$ on $\bR$, where $C'>0$ and $\mathbf{n} (\,\cdot\,)= \int_{\bR} \mathbb{P}_x (\,\cdot\,) \eta(\ddd x)$. 
\end{enumerate}
\end{proposition}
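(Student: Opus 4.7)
My plan is to dispatch part~(i) immediately from the two preceding propositions, and then use the invariance from~(i) as a functional equation to obtain both~(ii) and~(iii). For~(i), \cref{prop:exc-K-m} computes the mean occupation measure of $\mathbf{n}$ as $m^{X}(\ddd y) = (\ee^{\theta y}/R(y))\dd y$, which is precisely the form that \cref{prop:occup-meas} identifies as equivalent to the invariance $\Phi_z^{*}\mathbf{n} = \ee^{-\theta z}\mathbf{n}$; so~(i) is immediate.

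For part~(ii), I would set $f(x)\coloneqq \mathbf{n}(T_x<\infty)$ and apply the invariance from~(i) to the indicator $\mathbf{1}_{\{T_x<\infty\}}$. The key point is that $g(z,\,\cdot\,)>0$ on $\bR$ makes $h_z$ continuous and strictly increasing, and $\Phi_z(w)_t\le x$ iff $w_{h_z(t)}\le x-z$, so $T_x\circ\Phi_z$ is finite precisely when $T_{x-z}$ is. The invariance then yields the functional equation $f(x-z) = \ee^{-\theta z}f(x)$ for all $x,z\in\bR$, forcing $f(x)=C\ee^{\theta x}$ for some constant $C$. Positivity $C>0$ is clear because $\mathbf{n}$ is non-trivial and the entrance law $n_s$ must charge levels below any fixed $x$; the only subtle point is finiteness $C<\infty$, which I would verify by bounding $f(x)$ against the mean occupation density $\ee^{\theta y}/R(y)$ on a bounded neighbourhood of $x$, using that excursions hitting below $x$ must first spend some time near $x$.

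For part~(iii), the dichotomy is inherited from the Kuznetsov measure $\mathcal{Q}$. Under Cram\'er's condition ($k=0$), $\mathcal{Q}(\alpha>-\infty)=0$ and, as recorded in the discussion preceding \cref{prop:exc-K-m}, the time-change transports this to $\liminf_{t\downarrow 0}\exc_t = +\infty$ $\mathbf{n}$-a.e., hence $\exc_0 = +\infty$ $\mathbf{n}$-a.e.\ by right continuity. When $\mathrm{E}_0[\ee^{-\theta \xi_1}]<1$, the dual identity~\eqref{eq:Q-dual} realises the time-reversed $Z$ as the tilted L\'evy process killed at rate $k$, whose finite right-limit at $\alpha$ gives $\exc_0 \in \bR$ $\mathbf{n}$-a.e. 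Writing $\eta$ for the pushforward of $\mathbf{n}$ under $\exc\mapsto \exc_0$, the identity $\Phi_z(\exc)_0 = z+\exc_0$ combined with~(i) yields $\eta(A-z) = \ee^{-\theta z}\eta(A)$ for every Borel $A\subseteq\bR$; setting $\widetilde\eta(\ddd y)\coloneqq \ee^{-\theta y}\eta(\ddd y)$ makes $\widetilde\eta$ translation invariant, hence a positive multiple of Lebesgue measure, so $\eta(\ddd x)=C'\ee^{\theta x}\dd x$ with $C'>0$.

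For the disintegration $\mathbf{n}(\,\cdot\,) = \int \mathbb{P}_x(\,\cdot\,)\,\eta(\ddd x)$ in the jump case, the plan is to show $n_t = \eta S^{X}_t$ for every $t>0$ and then plug this into the factorisation~\eqref{eq:n}, whereby the finite-dimensional distributions of $\mathbf{n}$ factor through $\eta$ and the Markov semigroup of $X$. Since $\exc$ is c\`adl\`ag with $\exc_0 = \lim_{t\downarrow 0}\exc_t$ under $\mathbf{n}$ in this case, the entrance-law identity $n_t = n_s S^{X}_{t-s}$ for $0<s<t$ combined with the Feller continuity of $X$ lets me pass to the limit $s\downarrow 0$. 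I expect this last step to be the main technical obstacle: because $\mathbf{n}$ is only $\sigma$-finite, the passage to the limit has to be localised, which I would handle by restricting to excursions with $\exc_0$ in a bounded set (on which $\eta$ assigns finite mass thanks to the explicit form just derived) and then invoking dominated convergence against the Feller semigroup.
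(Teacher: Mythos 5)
Your overall architecture coincides with the paper's proof. Part (i) is exactly the paper's one-line argument (combine \cref{prop:exc-K-m} with \cref{prop:occup-meas}). Part (ii) rests on the same functional equation $\mathbf{n}(T_{x}<\infty)=\ee^{-\theta z}\,\mathbf{n}(T_{x+z}<\infty)$ obtained by applying the invariance from (i) to the event $\{T_x<\infty\}$; the paper justifies the underlying set identity $\Phi_z(A_{x+z})=A_x$ via \cref{lem:Phiz-inverse}, while you argue through strict monotonicity and surjectivity of $h_z$ --- both are fine. Part (iii) uses the same two ingredients: the dichotomy inherited from the Kuznetsov construction, and the translation-invariance argument identifying $\ee^{-\theta x}\eta(\ddd x)$ with a multiple of Lebesgue measure. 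The only genuinely different element is the disintegration $\mathbf{n}(\,\cdot\,)=\int_{\bR}\mathbb{P}_x(\,\cdot\,)\,\eta(\ddd x)$: the paper reads it off the construction (under $\mathcal{Q}$ the path is born at the finite time $\alpha$ at position $Z_{\alpha+}$ and is Markov thereafter, and this structure survives Kaspi's time change), whereas you re-derive it by letting $s\downarrow 0$ in the entrance-law identity with a localisation. That plan is workable, but note it leans on $\eta$ giving finite mass to sets bounded above, which in turn requires part (ii) with a \emph{finite} constant.

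That is where the genuine gap lies: finiteness of $C=\mathbf{n}(T_0<\infty)$. The functional equation alone cannot deliver it, since $f\equiv+\infty$ also satisfies $f(x-z)=\ee^{-\theta z}f(x)$, so finiteness is an essential part of the statement --- and your proposed argument for it fails. First, ``excursions hitting below $x$ must first spend some time near $x$'' is false in the presence of jumps: under $\mathbf{n}$ an excursion may jump from far above $x$ to far below $x$ without ever visiting a fixed bounded neighbourhood of $x$. Second, the natural repair --- comparing with occupation of the half-line via the strong Markov property, $\mathbf{n}(T_x<\infty)\cdot\inf_{y\le x}\mathbb{E}_y\big[\int_0^{\zeta}\ind{X_s<x}\dd s\big]\le m^X((-\infty,x))$ --- also breaks down, because $m^X((-\infty,x))=\int_{-\infty}^{x}\frac{\ee^{\theta y}}{R(y)}\dd y$ need not be finite: the standing assumptions \eqref{eq:H-rec-suff} and \eqref{eq:R-Condition} only control $R$ near $+\infty$ and permit $R(y)\to 0$ as $y\to-\infty$ faster than $\ee^{\theta y}$. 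The correct route, which the paper carries out in \cref{lem:rec-suff}, works upstream of the time change: the excessive measure of the L\'evy process is $m^{\xi}(\ddd y)=\ee^{\theta y}\dd y$, which, unlike $m^X$, is finite on $(-\infty,a]$; this yields $\mathcal{Q}(T_a<\infty)<\infty$ and hence $\mathbf{n}(T_a<\infty)<\infty$, since the time change does not alter whether a path passes below $a$. You should replace your finiteness sketch by an argument of this kind at the level of the Kuznetsov measure.
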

\begin{proof}
Combining the two previous propositions yields the invariance property. 

To prove the second statement, we consider for $x\in \bR$ the subspace of excursions whose global minimum is smaller than $x$, that is $A_x := \inf\{\mathbf{e} \text{ an excursion} \colon T_{x}(\mathbf{e})<\infty\}$. We claim that 
\begin{equation}\label{eq:A_x}
    \Phi_z (A_{x+z}) = A_x, \qquad \forall x\in \bR. 
\end{equation}
Indeed, it is clear from the definition of $\Phi_{z}$ that $\Phi_z (A_{x+z}) \subseteq A_x$. On the other hand, \cref{lem:Phiz-inverse} below implies that $A_x\subseteq \Phi_z(\Phi_{-z} (A_{x}))\subseteq \Phi_z (A_{x+z})$. So $\Phi_{z}$ gives a bijection between these two sets and we have \eqref{eq:A_x}. 
Therefore, we have by the invariance property that 
\[
	\mathbf{n} (T_{x}<\infty)=\mathbf{n} \big(\Phi_{z}(T_{x+z}<\infty)\big)= \ee^{-\theta z}\mathbf{n}(T_{x+z}<\infty) . 
\]
Setting $C\coloneqq  \mathbf{n} (T_{0}<\infty)$, we have $\mathbf{n}(T_x <\infty) = C \ee^{\theta x}$. 

The dichotomy between leaving $\infty$ continuously versus jumping in was established during the construction of $\mathbf{n}$. To identify the jumping-in measure denoted by $\eta$, we use the invariance property of $\mathbf{n}$ as well as that for $\mathbb{P}_x$ given by \cref{prop:Phi-z} and deduce that for any bounded functional $f$, 
\[
	e^{-\theta z}\mathbf{n}[f] = \Phi_z^{*} \mathbf{n}[f]
	= \int_{\bR} \mathbb{E}_{x+z} [f] \eta(\ddd x). 
\]
As a consequence, $e^{-\theta x} \eta (\ddd x)$ is invariant with respect to translation, and thus it is the Lebesgue measure up to a constant. 
\end{proof}

\begin{lemma}\label{lem:Phiz-inverse}
For any $z\in \bR$ and $w\in \D$ an excursion away from $+\infty$, we have $\Phi_{-z}(\Phi_z(w))=w$. 
\end{lemma}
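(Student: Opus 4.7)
The plan is to verify the identity $\Phi_{-z}(\Phi_z(w)) = w$ by direct computation, showing that the composition of the two time-changes is governed by a cocycle relation of the form $g(-z,\Phi_z(w)) \cdot g(z,w) = 1$, which collapses the two nested inverses into the identity.

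First I would set $w' \coloneqq \Phi_z(w)$, so that $w'_t = z + w_{h_z(t)}$ with $h_z$ the right-continuous inverse of $H_z(s)\coloneqq \int_0^s g(z, w_u)\dd u$. Since $w$ is an excursion, $w_u\in \R$ for $u\in [0,\zeta)\setminus\{0\}$ and $w_u = +\infty$ for $u\ge \zeta$ (and possibly at $u=0$). In both cases $g(z,w_u)>0$ (it equals $1$ when $w_u=\infty$), so $H_z$ is continuous and strictly increasing on $[0,\infty)$; consequently $h_z\circ H_z = \mathrm{id}$ on $[0,\infty)$. Then I would set $w''\coloneqq \Phi_{-z}(w')$, with $w''_t = -z + w'_{h'_{-z}(t)}$ where $h'_{-z}$ is the inverse of $H'_{-z}(s) \coloneqq \int_0^s g(-z, w'_u)\dd u$.

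The heart of the argument is the cocycle identity
\[
    g(-z, w'_u) = \frac{1}{g(z, w_{h_z(u)})}.
\]
When $w_{h_z(u)}\in \R$, this is immediate from the definition: $w'_u = z + w_{h_z(u)}\in\R$, so $g(-z,w'_u) = R(w'_u)/R(-z+w'_u) = R(z+w_{h_z(u)})/R(w_{h_z(u)})$, the reciprocal of $g(z,w_{h_z(u)})$. When $w_{h_z(u)} = \infty$, both $g(-z,w'_u)$ and $g(z,w_{h_z(u)})$ equal $1$. Using this identity and the change of variables $v = h_z(u)$, $\dd u = g(z, w_v)\dd v$,
\[
    H'_{-z}(s) = \int_0^s g(-z, w'_u) \dd u = \int_0^{h_z(s)} \frac{1}{g(z,w_v)}\cdot g(z,w_v)\dd v = h_z(s),
\]
so $H'_{-z} = h_z$ and hence $h'_{-z} = H_z$. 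Substituting back gives
\[
    w''_t = -z + w'_{H_z(t)} = -z + z + w_{h_z(H_z(t))} = w_t,
\]
which proves the claim.

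The only subtle step is ensuring that $H_z$ is a genuine bijection (so the inverse relation $h_z\circ H_z = \mathrm{id}$ holds pointwise, not only almost everywhere), and that the change of variables is valid up to and across the terminal time $\zeta$; this is handled by the fact that $g(\cdot,\infty) = 1$, so both $H_z$ and $H'_{-z}$ extend by a unit drift beyond the lifetime and the two transformations act as the identity on the terminal $+\infty$ tail of $w$. Everything else is bookkeeping of the two nested inverses.
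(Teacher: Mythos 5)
Your proof is correct and follows essentially the same route as the paper: both rest on the reciprocal identity $g(-z,z+x)=1/g(z,x)$ and on showing that the clock of the second time-change equals the inverse clock of the first (your $H'_{-z}=h_z$ is exactly the paper's identity $I'=h_z^{(w)}$), the only cosmetic difference being that you verify this by a change of variables in the integral while the paper differentiates and compares derivatives via the chain rule. Your explicit handling of the $+\infty$ tail (where $g\equiv 1$, so both clocks run at unit rate) is a slightly more careful treatment of what the paper leaves implicit for $t\ge\zeta'$.
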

\begin{proof}
Recall that 
$h^{(w)}_z(t) = \inf\big\{s\ge 0\colon I(s)>t\big\}$ with $I(s)\coloneqq  \int_0^{s\wedge \zeta} \frac{R(w_u)}{ R(z+ w_u)}\dd u$. 
Since $I( h^{(w)}_z(t)) =t$ for $t< \zeta':= \int_0^{\zeta} \frac{R(w_u)}{ R(z+ w_u)}\dd u$, we deduce by the chain rule that 
\[
\frac{R(w_{h^{(w)}_z(t)})}{ R(z+w_{h^{(w)}_z(t)})} \frac{\dd h_z^{(w)}(t)}{\dd t}  =1, \qquad t < \zeta'. 
\]
Set  $w'\coloneqq \Phi_z(w) = z+ w_{h^{(w)}_z(t)}$ and $I'(t)\coloneqq  \int_0^{t} \frac{R(w'_u)}{R(-z+w'_u)}\dd u$ for $t < \zeta'$. Then we have
\begin{equation}\label{eq:dh_z}
    	\frac{\dd I'(t)}{\dd t} 
	= \frac{R(w'_t)}{R(-z+w'_t)} = \frac{R(z+ w_{h_z^{(w)}(t)})}{R(w_{h_z^{(w)}(t)})}= \frac{\dd h_z^{(w)}(t)}{\dd t}, \qquad t < \zeta'. 
\end{equation}
It follows that $I'(t) = h_z^{(w)}(t)$, $t < \zeta'$. 
Set $h^{(w')}_{-z}(t) = \inf\big\{s\ge 0\colon I'(s)>t\big\}$, then we have 
$h_{-z}^{(w')} (h_z^{(w)}(t))=h_{-z}^{(w')} (I'(t)) = t$. Therefore, 
\[
	\Phi_{-z}(w')_t = -z +z + w_{h_{-z}^{(w')} (h_z^{(w)}(t))} = w_t, \qquad t < \zeta'. \qedhere
\]
\end{proof}

\subsection{Sufficient condition for existence of recurrent extension}\label{sec:exc-suff}

In this section, we suppose that \ref{H2} and \eqref{eq:H-rec-suff} hold.  
Condition \ref{H2} implies $\mathrm{E}[\xi_1]\in (0,\infty)$ and 
\eqref{eq:H-rec-suff} implies that $\int_{\cdot}^{\infty} \frac{1}{R(y)}\dd y<\infty$, which by \cref{prop;explosion} implies explosion. 
Under these conditions we defined an excursion measure $\mathbf{n}$ in \cref{sec:exc-K}.
We aim at building a recurrent extension of $X$, for which we use the classical theory developed in \cite{Blu83,Salisbury86} to construct a Markov process from the excursions generated by $\mathbf{n}$.
 
To this end, let $\mathcal{N}$ be a Poisson random measure on the space $\bR_+\times \D$ with intensity $\mathrm{Leb}\otimes \mathbf{n}$, where as above $\D = D([0,\infty),\oR)$. 
The following lemma ensures that the lengths of excursions in $\mathcal{N}$ are summable.  

\begin{lemma}\label{lem:rec-suff}
Write $\zeta = \inf\{s> 0\colon \mathbf{e}_s =+\infty\}$ for the length of an excursion.  
Suppose that \eqref{eq:H-rec-suff} holds. 
Then
$
	\mathbf{n}[ 1- \ee^{-\zeta}]<\infty. 
$
\end{lemma}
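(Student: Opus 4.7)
The plan is to split excursions according to whether they ever descend below some fixed low level $-M$, and to apply two earlier facts: the mean-occupation formula $m^X(\ddd y) = \ee^{\theta y}/R(y)\,\ddd y$ from \cref{prop:exc-K-m}, and the identity $\mathbf{n}(T_x < \infty) = C\ee^{\theta x}$ from part~(ii) of the proposition immediately preceding this lemma. The naive bound $\mathbf{n}[1 - \ee^{-\zeta}] \le \mathbf{n}[\zeta] = m^X(\bR)$ is not in general available, because $\ee^{\theta y}/R(y)$ may fail to be integrable at $-\infty$ (our hypothesis~\eqref{eq:H-rec-suff} controls only the tail at $+\infty$, whereas \eqref{eq:R-Condition} permits $1/R$ to grow without bound as $y \to -\infty$). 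This is the main obstacle, and it is what forces the truncation.

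Concretely, fix $M > 0$ and use the elementary inequalities $1 - \ee^{-\zeta} \le \zeta$ and $1 - \ee^{-\zeta} \le 1$ on $\{T_{-M} = \infty\}$ and $\{T_{-M} < \infty\}$ respectively, where $T_{-M} = \inf\{t>0 \colon \mathbf{e}_t < -M\}$, to obtain
\[
    \mathbf{n}[1 - \ee^{-\zeta}] \;\le\; \mathbf{n}\bigl[\zeta\, \ind{T_{-M} = \infty}\bigr] + \mathbf{n}(T_{-M} < \infty).
\]
The second summand equals $C\ee^{-\theta M} < \infty$ directly by fact~(ii). For the first, Fubini together with the inclusion $\{T_{-M} = \infty,\, s<\zeta\} \subseteq \{\mathbf{e}_s \ge -M,\, s<\zeta\}$ yields
\[
    \mathbf{n}\bigl[\zeta\, \ind{T_{-M} = \infty}\bigr] = \int_0^\infty \mathbf{n}(s < \zeta,\, T_{-M} = \infty)\,\ddd s \;\le\; \int_0^\infty n_s\bigl([-M,\infty)\bigr)\,\ddd s = \int_{-M}^\infty \frac{\ee^{\theta y}}{R(y)}\,\ddd y,
\]
where the last equality combines the definition~\eqref{eq:occup-meas-defn} of $m^X$ with \cref{prop:exc-K-m}. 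This integral is finite: integrability at $+\infty$ is exactly~\eqref{eq:H-rec-suff}, and on the bounded interval $[-M, 0]$ the integrand is continuous, hence bounded.

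All the arithmetic is routine once the split is in place. The only real step in the argument is the recognition that the mean-occupation formula alone does not suffice and that one must combine it with the finite hitting-time mass from the preceding proposition, which together absorb the unbounded behaviour of $\ee^{\theta y}/R(y)$ near $-\infty$.
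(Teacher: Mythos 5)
Your proof is correct in outline and follows essentially the same strategy as the paper's: truncate at a fixed level, control the part of the excursion above that level by the mean occupation formula of \cref{prop:exc-K-m} together with \eqref{eq:H-rec-suff}, and control the remaining mass by the finiteness of $\mathbf{n}(T_{-M}<\infty)$. Your arithmetic is in fact slightly cleaner than the paper's (you use $1-\ee^{-\zeta}\le \min(\zeta,1)$ split over $\{T_{-M}=\infty\}$ and $\{T_{-M}<\infty\}$ directly, whereas the paper routes through Markov's inequality and $\mathbf{n}(\zeta>1)$), and your diagnosis of why the naive bound $\mathbf{n}[1-\ee^{-\zeta}]\le m^X(\bR)$ fails — non-integrability of $\ee^{\theta y}/R(y)$ at $-\infty$ under \eqref{eq:R-Condition} — is exactly the obstacle both proofs are designed around.

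The one point where you diverge from the paper, and where your argument is weaker, is the justification of $\mathbf{n}(T_{-M}<\infty)<\infty$. You cite part (ii) of the preceding proposition, $\mathbf{n}(T_x<\infty)=C\ee^{\theta x}$. Formally that statement does assert a finite constant $C>0$, but its proof establishes only the scaling identity $\mathbf{n}(T_x<\infty)=\ee^{\theta x}\,\mathbf{n}(T_0<\infty)$, which holds vacuously if $\mathbf{n}(T_0<\infty)=+\infty$; the finiteness of the constant is never verified there. This is precisely why the paper's proof of the lemma does \emph{not} invoke that proposition and instead goes back to the Kuznetsov measure: since $m^{\xi}((-\infty,a])=\int_{-\infty}^{a}\ee^{\theta y}\dd y<\infty$, one gets the corresponding finiteness for $\mathcal{Q}$ and hence $\mathbf{n}(T_a<\infty)<\infty$. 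So the finiteness of the hitting mass is genuinely the nontrivial residual point of the lemma, and your proposal outsources it to a cited statement that does not itself contain the needed argument. To close the gap you should either reproduce the paper's Kuznetsov-measure argument for $\mathbf{n}(T_{-M}<\infty)<\infty$, or supply an independent proof that the constant $C$ in the proposition is finite; everything else in your proposal stands.
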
 
\begin{proof}
Using \eqref{eq:exc-K-m}, we deduce that 
\begin{align*}
	\mathbf{n}[\zeta\mathbf{1}_{\{T_a=\infty\}}]
 \le  \mathbf{n}\bigg[\int_0^{\zeta} \mathbf{1}_{\{\exc_s\ge a\}}\dd s \bigg] = \int_{\bR} \mathbf{1}_{\{y\ge a\}} m(\ddd y) =  \int_{a}^{\infty} \frac{\ee^{\theta y}}{R(y)}\dd y <\infty. 
\end{align*}
Markov's inequality yields $\mathbf{n}(\zeta > 1, T_a=\infty)<\infty$. It follows that 
\[
	\mathbf{n}(\zeta > 1)\le \mathbf{n}(\zeta > 1, T_a=\infty)+  \mathbf{n}(T_a <\infty)<\infty. 
\] 
Here we have used the fact that $\mathbf{n}(T_a <\infty)$; indeed,  as $m^{\xi}((-\infty, a]) = \int_{-\infty}^{a} \ee^{\theta y} \dd y <\infty$, we have $\mathcal{Q}(T_a<\infty) <\infty$, which implies $\mathbf{n}(T_a<\infty)<\infty$.
 
Therefore we have 
\begin{equation*}
	\mathbf{n}[ 1- \ee^{-\zeta}]\le 
	\mathbf{n}[\zeta\ind{\zeta \le  1}] +\mathbf{n}(\zeta > 1) 
	\le \mathbf{n}[\zeta\ind{T_a=\infty}]+ \mathbf{n}(T_a<\infty)
	+	\mathbf{n}(\zeta >  1)  <\infty.
	\qedhere
\end{equation*}
\end{proof} 

Set $\sigma_t= \int_{[0,t]\times \D}\zeta(\mathbf{e}) \mathcal{N}(\ddd s, \dd \mathbf{e})$ and let
\begin{equation}\label{eq:X-n}
	Y_t\coloneqq  \sum_{(s,\mathbf{e}^{(s)}) \text{ atoms of }\mathcal{N}}\mathbf{e}^{(s)} (t - \sigma_{s-})\mathbf{1}_{\{\sigma_{s-}<t\le \sigma_{s}\}}, \qquad t\ge 0.
\end{equation}
Note that, $\sigma_t$ is \as finite due to \cref{lem:rec-suff}.
Denote by $\overline{\bP}_{\infty}$ the law of the process $Y$. For $x\in \bR$, let $X'\sim \bP_{x}$ and $\zeta' =\inf\{t > 0\colon X'_t =\infty\}$. Write $\overline{\bP}_{x}$ for the law of the process 
\begin{equation}\label{eq:X-RecExtension}
\overline{X}_t = X'_t \mathbf{1}_{\{t<\zeta'\}} + Y_{t-\zeta'}\mathbf{1}_{\{t\ge\zeta'\}}, \qquad t\ge 0. 
\end{equation}

We next use \cite[Theorem~2]{Sal86-Ito} to deduce that $(\overline{\bP}_{x},x\in \overline{\bR})$ gives a right-continuous strong Markov process with excursions described by $\mathcal{N}$, which boils down to checking the following properties of $\mathbf{n}$: by the construction of $\mathbf{n}$,  \eqref{eq:n} holds and either $\mathbf{n}(\exc_0 = \infty) = 0$ (the jumping-in case) or $\mathbf{n}(\exc_0 = \infty) = \infty$ (the continuous entrance case); the fact that $\mathbf{n}[ 1- \ee^{-\zeta}]<\infty$ is proved in \cref{lem:rec-suff}. 
We stress that, although the excursion measure in the jumping-in case satisfies $\mathbf{n}(\exc_0 = \infty)=0$, we still have $Y_0 = \infty$ $\overline{\bP}_{\infty}$-a.s.\ by the right-continuity of paths.  


\begin{proposition}\label{prop:Phiz-n}
If $\overline{X}\sim \overline{\bP}_{x}$ then $\Phi_z(\overline{X})\sim \overline{\bP}_{x+z}$.
If $\overline{X}\sim \overline{\bP}_{\infty}$ then $\Phi_z(\overline{X})\sim \overline{\bP}_{\infty}$.
\end{proposition}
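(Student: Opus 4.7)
My plan is to reduce both claims to the invariance $\Phi_z^{*}\mathbf{n} = \ee^{-\theta z}\mathbf{n}$ established in the previous proposition, together with \cref{prop:Phi-z} for the pre-explosion portion and a scaling-of-intensity argument for the driving Poisson random measure $\mathcal{N}$.

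First I would treat the $\overline{\bP}_{\infty}$ statement. The essential point is that $g(z,\infty)=1$ in the definition of $\Phi_z$, while $\{t\geq 0\colon Y_t=\infty\}$ has zero Lebesgue measure (the intervals $(\sigma_{s-},\sigma_s]$ cover $[0,\infty)$ up to countably many endpoints). Consequently, the time change driving $\Phi_z(Y)$ is fed entirely by excursion contributions, and for every atom $(s,\mathbf{e}^{(s)})$ of $\mathcal{N}$ a change of variables as in \eqref{eq:dh_z} yields
$$
    \int_{\sigma_{s-}}^{\sigma_{s}} g(z, Y_u)\dd u \;=\; \int_0^{\zeta(\mathbf{e}^{(s)})} \frac{R(\mathbf{e}^{(s)}_u)}{R(z+\mathbf{e}^{(s)}_u)}\dd u \;=\; \zeta\big(\Phi_z(\mathbf{e}^{(s)})\big).
$$
An interval-by-interval identification then shows that $\Phi_z(Y)$ coincides with the process built via \eqref{eq:X-n} from the pushforward point measure $\mathcal{N}' := \sum_{s}\delta_{(s,\Phi_z(\mathbf{e}^{(s)}))}$.

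Second, I would identify the law of this new process. By the invariance from the previous proposition, $\mathcal{N}'$ is a Poisson random measure on $\bR_+\times\D$ of intensity $\ee^{-\theta z}\,\mathrm{Leb}\otimes\mathbf{n}$. On the other hand, the recipe \eqref{eq:X-n} depends on a Poisson random measure only through the ordered sequence of its excursions, as the local-time labels serve solely to order atoms while gaps at $+\infty$ carry no Lebesgue measure. A multiplicative rescaling of the local-time coordinate maps a Poisson random measure of intensity $\mathrm{Leb}\otimes\mathbf{n}$ into one of intensity $\ee^{-\theta z}\,\mathrm{Leb}\otimes\mathbf{n}$ while preserving the ordering of excursions, so the process built from $\mathcal{N}'$ has the same law as $Y$; hence $\Phi_z(Y)\sim\overline{\bP}_{\infty}$.

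For the case $x\in\bR$ I would decompose $\overline{X}$ at its first explosion time $\zeta'$. On $[0,\zeta')$ one has $\overline X = X'$ with $X'\sim\bP_x$, and \cref{prop:Phi-z} gives $\Phi_z(X')\sim\bP_{x+z}$ with explosion time $\zeta'' = \int_0^{\zeta'}g(z,X'_u)\dd u$. Because $g(z,\infty)=1$, the time change behind $\Phi_z$ applied to $\overline X$ reaches $\zeta'$ precisely when its argument reaches $\zeta''$; for $t\ge\zeta''$ the analysis above applied to the independent excursion part $Y$ yields $(\Phi_z(\overline X))_{\zeta''+\cdot}\sim\overline\bP_\infty$ independent of $\Phi_z(X')$, and concatenation gives $\Phi_z(\overline X)\sim\overline\bP_{x+z}$.

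The main obstacle will be the first step: a rigorous, atom-by-atom check that $\Phi_z$ commutes with the excursion concatenation and that its only effect is to replace each $\mathbf{e}^{(s)}$ by $\Phi_z(\mathbf{e}^{(s)})$ while preserving the local-time label $s$. This needs care in tracking the jumps of $t\mapsto\sigma_t$, verifying via \cref{lem:Phiz-inverse} that distinct excursions stay distinct under $\Phi_z$ and can be recovered, and confirming that no local time is created or destroyed at $\infty$ thanks to $g(z,\infty)=1$. Once this identification is secured, the rest is formal.
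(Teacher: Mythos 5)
Your proposal is correct and follows essentially the same route as the paper's proof: push the atoms of $\mathcal{N}$ forward by $\Phi_z$, use $\Phi_z^{*}\mathbf{n}=\ee^{-\theta z}\mathbf{n}$ to identify the new intensity, match $\Phi_z(Y)$ with the concatenation built from the pushforward point measure via the excursion-length change of variables, and handle $x\in\bR$ by splitting at the explosion time and invoking \cref{prop:Phi-z}. You merely make explicit two points the paper leaves implicit, namely that the time spent at $+\infty$ has zero Lebesgue measure and that the constant factor $\ee^{-\theta z}$ in the intensity does not alter the law of the concatenated process.
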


\begin{proof}
	We start with the second statement. Let $\overline{X}\sim \overline{\bP}_{\infty}$ be associated with a Poisson random measure $\mathcal{N}\coloneqq  \sum_{i\ge 1}\delta_{(t_i, \mathbf{e}_i)}$ as above. 
	Set $\widehat{\mathcal{N}}\coloneqq  \sum_{i\ge 1}\delta_{(t_i, \Phi_{z}(\mathbf{e}_i))}$. Then $\widehat{\mathcal{N}}$ is a Poisson random measure on the space $\bR_+\times \D$ with intensity $\mathrm{Leb}\otimes \Phi_{z}^*\mathbf{n} = \ee^{-\theta z}\mathrm{Leb}\otimes \mathbf{n}$, due to the scaling property \eqref{eq:n-scaling}. 
	
	For every $t\ge 0$, set $\sigma_t= \int_{[0,t]\times \D}\zeta(\mathbf{e}) \mathcal{N}(\ddd s, \dd \mathbf{e})$ and define $\widehat{\sigma}_t$ associated with $\widehat{\mathcal{N}}$ in the same way. 
	Then for every $s\ge 0$, we have 
	\[
	\int_0^{\sigma_s} \frac{R(\overline{X}_u)}{R(z+\overline{X}_u)}\dd u
	= \int_{[0,t]\times \D} \zeta(\Phi_{z}(\mathbf{e})) \mathcal{N}(\ddd s, \dd \mathbf{e}) 
	=  \int_{[0,t]\times \D} \zeta(\widehat{\mathbf{e}}) \widehat{\mathcal{N}}(\ddd s, \dd\widehat{ \mathbf{e}}) = \widehat{\sigma}_s. 
	\]
	Define $h_z$ and $\Phi_{z}$ for $\overline{X}$ as in \eqref{eq:Phi-z}. 
	We deduce that $h_z (\widehat{\sigma}_s) = \sigma_s$ for all $s\ge 0$. 
	Therefore, for any $t\ge 0$, with $s\ge 0$ such that $\{\widehat{\sigma}_{s-}<t\le \widehat{\sigma}_{s}\}$, we have 
	\[
	\sigma_{s-}<h_z(t)\le \sigma_{s}, \quad 
	\text{and} \quad z+\mathbf{e}_s \big(h_z (t)-\sigma_{s-}\big)=  \Phi_z(\mathbf{e}_s) \big(t- \widehat{\sigma}_{s-}\big). 
	\] 
	It follows that 
	\[
	\sum_{(s,\widehat{\mathbf{e}}_s) \text{ atoms of }\widehat{\mathcal{N}}}\widehat{\mathbf{e}}_s (t - \widehat{\sigma}_{s-})\mathbf{1}_{\{\widehat{\sigma}_{s-}<t\le \widehat{\sigma}_{s}\}}
	=z+ \overline{X}_{h_z(t)} = \Phi_z(\overline{X})_t. 
	\]
	Since $\widehat{\mathcal{N}}$ is a Poisson random measure with intensity $e^{-\theta z}\mathrm{Leb}\otimes \mathbf{n}$, we deduce that 
	$ \Phi_z(\overline{X})\sim \overline{\bP}_{\infty}$, completing the proof of the second statement. 
	Combing this and \cref{prop:Phi-z} leads to the first statement. 
\end{proof}

\begin{lemma}\label{one more continuity lemma}
 For any $y_n \to y$ in $\overline{\mathbb{R}}$, it holds that 
 $\overline{\mathbb{P}}_{y_n} \to \overline{\mathbb{P}}_{y}$ weakly. 
\end{lemma}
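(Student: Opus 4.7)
The plan is to split into two cases based on whether the limit $y$ lies in $\mathbb R$ or equals $+\infty$.

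In the case $y\in\mathbb R$, the metric $\rho$ restricted to $\mathbb R$ is comparable to the Euclidean one, so $y_n\to y$ forces $y_n\in\mathbb R$ for large $n$ with $y_n\to y$ in the usual sense. \cref{lem:X-conti} then gives $\mathbb P_{y_n}\Rightarrow\mathbb P_y$ in $(\D,d)$, and Skorokhod's representation yields a coupling $X^{(n)}\sim\mathbb P_{y_n}$, $X\sim\mathbb P_y$ with $X^{(n)}\to X$ a.s.\ in $(\D,d)$. Under \ref{H2} the driving Lévy process $\xi$ drifts to $+\infty$, so every such time-changed path explodes by monotonically climbing to $+\infty$ at its unique explosion time, making the first-hitting-time functional of $\{+\infty\}$ Skorokhod-continuous at these paths; hence $\zeta'_n\to\zeta'$ a.s. Pasting an independent copy $Y\sim\overline{\mathbb P}_\infty$ at the explosion time then produces coupled realisations of $\overline{\mathbb P}_{y_n}$ and $\overline{\mathbb P}_y$ converging in $(\D,d)$ a.s., which gives $\overline{\mathbb P}_{y_n}\Rightarrow\overline{\mathbb P}_y$.

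For the case $y=\infty$ we may assume $y_n\in\mathbb R$ with $y_n\to\infty$. The plan is to couple $\overline X^{(n)}\sim\overline{\mathbb P}_{y_n}$ with a fixed $Y\sim\overline{\mathbb P}_\infty$ by taking $X^{(n)}\sim\mathbb P_{y_n}$ independent of $Y$, exploding at time $\zeta'_n$, and setting $\overline X^{(n)}_t=X^{(n)}_t\mathbf 1_{t<\zeta'_n}+Y_{t-\zeta'_n}\mathbf 1_{t\ge\zeta'_n}$. The claim reduces to two facts: (i) $\zeta'_n\to 0$ in probability, and (ii) $\inf_{t<\zeta'_n}X^{(n)}_t\to+\infty$ in probability. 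Fact (ii) is immediate: $X^{(n)}_t\ge y_n+\inf_{r\ge 0}\xi_r$, and under \ref{H2} the infimum of $\xi$ is a.s.\ finite because $\xi$ drifts to $+\infty$. Together with (i) and right-continuity of $Y$ at $t=0$ (built into the Poisson-point construction of $\overline{\mathbb P}_\infty$), these yield $\overline X^{(n)}\to Y$ in $(\D,d)$ in probability, and hence weak convergence $\overline{\mathbb P}_{y_n}\Rightarrow\overline{\mathbb P}_\infty$.

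The main obstacle is (i). By \cref{prop;explosion}, $\zeta'_n=\int_0^\infty 1/R(y_n+\xi_r)\,\mathrm dr$ under $\mathrm P_0$; Fubini converts this to $\mathrm E[\zeta'_n]=\int_{\mathbb R} U(\mathrm dx-y_n)/R(x)$, with $U$ the potential measure of $\xi$. Under \ref{H2} the renewal theorem supplies a uniform bound on the density of $U$ on right-neighbourhoods of $+\infty$, so the right-tail contribution is controlled by $\int^\infty 1/R(u)\,\mathrm du<\infty$, which follows from \eqref{eq:H-rec-suff} via $\mathrm e^{\theta u}\ge 1$ on $[0,\infty)$; the left-tail contribution involves only the finite total mass $U((-\infty,0))$, and a dominated convergence argument using the decay of $1/R$ on typical trajectories (or a direct estimate via the overshoot distribution together with the $R$-scaling identity of \cref{prop:ss-potential}) then completes the proof that $\mathrm E[\zeta'_n]\to 0$, whence (i) follows by Markov's inequality.
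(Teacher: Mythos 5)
Your reduction of the case $y=\infty$ to facts (i) and (ii) mirrors the paper's argument, and (ii) is handled the same way the paper handles it ($\mathbb{P}_x(T_a<\zeta)=\mathrm{P}_0(\inf_s\xi_s\le a-x)\to 0$). But your proof of (i) has a genuine gap: the unrestricted Fubini bound $\mathrm{E}[\zeta'_n]=\int_{\R}U(\ddd x - y_n)/R(x)$ can be infinite for every $n$, so Markov's inequality is unavailable. The problem is the left tail. The standing assumptions give no upper bound on $1/R$ near $-\infty$: condition \eqref{eq:R-Condition} is only a $\liminf$ lower bound on $1/R$, so $1/R(v)$ may grow arbitrarily fast (e.g.\ like $\ee^{v^2}$) as $v\to-\infty$, and \eqref{eq:H-rec-suff} constrains $\ee^{\theta y}/R(y)$ only in a neighbourhood of $+\infty$. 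Meanwhile, under \ref{H2} the potential measure satisfies $U(\ddd u)=\ee^{\theta u}\widetilde U(\ddd u)$ (Esscher transform), so its left tail decays only like $\ee^{\theta u}$; the product $\ee^{\theta u}/R(y_n+u)$ need not be integrable over $u\in(-\infty,0)$. Your phrase ``the left-tail contribution involves only the finite total mass $U((-\infty,0))$'' implicitly assumes $1/R$ is bounded on the left half-line, which is not part of the hypotheses, and the fallback (``dominated convergence using the decay of $1/R$ on typical trajectories'') appeals to a decay that is simply not assumed. Note that $\zeta'_n<\infty$ a.s.\ (by \cref{prop;explosion}) is compatible with $\mathrm{E}[\zeta'_n]=\infty$: rare deep excursions to the left, where $1/R$ may be huge, can destroy integrability without affecting a.s.\ finiteness.

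The paper's proof avoids exactly this trap by truncating before taking expectations: it bounds $\mathbb{P}_x(\zeta>\epsilon,\,T_a>\zeta)$ rather than $\mathbb{P}_x(\zeta>\epsilon)$, inserting the indicator $\ind{T_a=\infty}$ so that the occupation integral only involves levels above $a$, where the renewal-theorem estimate
$\mathrm{E}_0\big[\int_0^{\infty}\frac{1}{R(x+\xi_s)}\ind{\xi_s\ge a}\dd s\big]=\int_a^{\infty}\frac{1}{R(x+y)}U(\ddd y)\le C\int_{x+a}^{\infty}\frac{\dd z}{R(z)}\to 0$
applies; combining this with your fact (ii) (which is the paper's $\mathbb{P}_x(T_a<\zeta)\to 0$) gives $\mathbb{P}_x(\zeta>\epsilon)\to 0$. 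Your argument becomes correct if you make this same truncation. A secondary, smaller point: in the case $y\in\R$ your claim that the path explodes ``by monotonically climbing to $+\infty$'' is false as stated (time-changed Lévy paths are not monotone near explosion); what is true, and what the paper uses, is that $\lim_{t\uparrow\zeta}X_t=+\infty$, which together with \cref{lem:X-conti} and the concatenation structure suffices for that case.
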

\begin{proof}
By construction a process of law $\overline{\mathbb{P}}_{y}$ can be viewed as the concatenation of a process of law $\mathbb{P}_{y}$ stopped upon hitting $+\infty$ and a process starting from $+\infty$ as in \eqref{eq:X-n}. Then it suffices to prove the convergence of the first segment, and for each $y\in \mathbb{R}$, this convergence result is given by \cref{lem:X-conti}.  

It remains to study the case $y = +\infty$. 
Let us start with some limit properties of time-changed L\'evy processes. 
For any $a\in \mathbb{R}$, since $\xi$ drifts to $\infty$, we have, as $x\to \infty$,  
\[
\mathbb{P}_x(T_a <\zeta) = \mathrm{P}_{0}(\inf_{s\ge 0}\xi_s \le a-x) \to 0.
\]
We also have, for each $a\in \mathbb{R}$, 
\begin{align*}
\mathrm{E}_0\bigg[\int_0^{\infty} \frac{1}{R(x+\xi_s)} \ind{\xi_s\ge a}\dd s \bigg]
 = \int_a^{\infty} \frac{1}{R(x+y)} U(\ddd y)  \le C\int_{x+a}^{\infty} \frac{1}{R(z)} \dd z.  
\end{align*}
Therefore, for any fixed $\epsilon>0$, 
\begin{align*}
\mathbb{P}_x(\zeta>\epsilon, T_a >\zeta)
\le \frac{1}{\epsilon} \mathrm{E}_0\left[\ind{T_a=\infty} \int_0^{\infty} \frac{1}{R(x+\xi_s)} \ind{\xi_s\ge a}\dd s \right]\to 0, \text{ as } x\to \infty. 
\end{align*}
We deduce that    
$
\lim_{x\to\infty}\mathbb{P}_x(\zeta>\epsilon)= 0. 
$

A recurrent extension starting from $x\in \mathbb{R}$ can be viewed as the concatenation of the segment $(X_t, t< \zeta)$ under law $\mathbb{P}_x$ and a recurrent extension starting from $\infty$. 
Then the fact that $\lim_{x\to\infty}\mathbb{P}_x(\zeta>\epsilon)= 0$ and 
$\lim_{x\to\infty}\mathbb{P}_x(T_a <\zeta)=0$ yields that
 $\bP_x$ converges weakly to a Dirac mass $\delta_{+\infty}$ at a constant process at $+\infty$, as $x\to +\infty$. 
The desired statement follows. 
\end{proof}

\begin{proof}[Proof of \cref{thm:2}: sufficiency]
Build the process $\overline{X}$ as in \eqref{eq:X-RecExtension}. 
By \cref{prop:Phiz-n}, $\overline{X}$ is $\Phi$-invariant. 
With the construction of $\overline{X}$, the proof boils down to checking that $\overline{X}$ is a Feller process satisfying \cref{def:regular boundary}. The Feller property follows from the same arguments as in that of \cref{thm:1}, given the continuity at the initial state given by \cref{one more continuity lemma}, and the rest is clear from the construction of $\overline X$.
\end{proof}

\subsection{Necessary conditions}\label{sec:exc-nec}
In this section we prove the converse direction, supposing that there exists a recurrent extension $\overline{X}$ of law $(\overline{\mathbb{P}}_x, x\in \overline{\bR})$, which satisfies the invariance property as in \cref{defn:Phi-ss}, and deducing that \eqref{eq:H-rec-suff} and \ref{H2} hold. 
In particular, we are in the explosion case of \cref{prop;explosion}.  

By It\^o's excursion theory, one can describe the excursions of $\overline{X}$ as a Poisson point process with characteristic measure $\mathbf{n}$. The following lemma shows that $\mathbf{n}$ is an infinite measure, which implies that $+\infty$ is regular and instantaneous for $\overline{X}$ (see e.g.\ \cite[Proposition~1]{Sal86-Ito}). 


\begin{proposition}\label{prop:Phiz-n-bis}
	Suppose that $\Phi_z^*(\overline{\bP}_{\infty}) = \overline{\bP}_{\infty}$ for all $z\in \bR$.  
	Then $\mathbf{n}$ is an infinite measure, and there exists a constant $\theta> 0$ such that the excursion measure satisfies 		
	\begin{equation*}
		\Phi_z^*(\mathbf{n}) = \ee^{-\theta z} \mathbf{n} \qquad \text{for all } z\in \bR.
	\end{equation*}
\end{proposition}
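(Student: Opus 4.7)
\textbf{Proof plan for \cref{prop:Phiz-n-bis}.}

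\textit{Setup and infiniteness of $\mathbf{n}$.} The plan is to pass the hypothesis $\Phi_z^*\overline{\bP}_\infty = \overline{\bP}_\infty$ down to the Itô excursion level. Since $\overline{X}$ is Feller, càdlàg, and $+\infty$ is a regular recurrent point, classical excursion theory provides a local time $L$ at $+\infty$ and a $\sigma$-finite excursion measure $\mathbf{n}$ such that the excursion process has intensity $ds \otimes \mathbf{n}$. The regularity of $+\infty$ is equivalent to $\mathbf{n}(\zeta>0)=\infty$, which gives infiniteness of $\mathbf{n}$. For later use I note that the càdlàg property of $\overline{X}$ forces $\mathbf{n}(T_a<\infty)<\infty$ for every $a\in\bR$; otherwise the Poisson decomposition would produce infinitely many excursions visiting $(-\infty,a)$ in any local-time interval, and hence in any real-time interval around a visit to $+\infty$, violating càdlàg.

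\textit{From the process-level invariance to the excursion measure.} The crucial observation is that $g(z,\infty)=1$, so the time change $h_z$ built into $\Phi_z$ runs at unit speed on intervals where the path equals $+\infty$. Hence $\Phi_z(\overline X)$ visits $+\infty$ with exactly the same temporal structure as $\overline X$, and between these visits every finite excursion $\mathbf{e}$ is replaced by $\Phi_z(\mathbf{e})$. It follows that the excursion point process of $\Phi_z(\overline X)$ (with respect to the corresponding local time) is a Poisson point process with intensity proportional to $\Phi_z^*\mathbf{n}$. Combining with the hypothesis $\Phi_z^*\overline\bP_\infty = \overline\bP_\infty$ and the uniqueness of the excursion measure up to the normalization of local time, we conclude
\[
\Phi_z^* \mathbf{n} \;=\; c(z)\,\mathbf{n}\qquad \text{for some } c(z)\in(0,\infty).
\]
Iterating the chain-rule computation already performed in the proof of \cref{lem:Phiz-inverse} shows $\Phi_{z_1}\circ\Phi_{z_2}=\Phi_{z_1+z_2}$ on admissible paths, and applying $\Phi_{z_1}^*\Phi_{z_2}^* = \Phi_{z_1+z_2}^*$ to the identity above yields the cocycle $c(z_1+z_2)=c(z_1)c(z_2)$. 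Measurability of $z\mapsto c(z)$ follows from picking any test set $A$ with $\mathbf{n}(A)\in(0,\infty)$ and writing $c(z)=\mathbf{n}(\Phi_z^{-1}A)/\mathbf{n}(A)$, whose $z$-dependence is measurable by the continuity in $z$ of the path transform $\Phi_z$ (analogous to \cref{cty prop 1,cty prop 2}). The classical result on measurable multiplicative functions on $\bR$ then delivers $c(z)=e^{-\theta z}$ for some $\theta\in\bR$.

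\textit{Positivity of $\theta$: the main hurdle.} Let $f(a)\coloneqq \mathbf{n}(T_a<\infty)$. A direct verification shows that $\Phi_z^{-1}\{T_a<\infty\}=\{T_{a-z}<\infty\}$, because $\Phi_z$ only shifts values by $z$ and time-changes without reordering. Pushing this through $\Phi_z^*\mathbf{n}=e^{-\theta z}\mathbf{n}$ yields $f(a-z)=e^{-\theta z}f(a)$, i.e.\ $f(a)=Ce^{\theta a}$ for a constant $C=f(0)$. The constant $C$ is finite by Step~1 and strictly positive because every excursion with $\zeta>0$ takes finite values on $(0,\zeta)$, so $\{T_a<\infty\}$ has positive $\mathbf{n}$-measure as soon as $a$ is large enough. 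The sign of $\theta$ is now pinned down by observing that $\{T_a<\infty\}$ is monotone in $a$ with
\[
\bigcup_{a\in\bR}\{T_a<\infty\}\;=\;\{\zeta>0\},
\]
since any excursion of positive length attains a finite value at some $s\in(0,\zeta)$ and therefore lies below every sufficiently large $a$. Monotone convergence together with $\mathbf{n}(\zeta>0)=\infty$ yields $\lim_{a\to\infty} Ce^{\theta a}=\infty$, forcing $\theta>0$.

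The delicate step is the last one: while transferring the $\Phi_z$-invariance from $\overline{\bP}_\infty$ to $\mathbf{n}$ is essentially a unique-decomposition argument once one notices $g(z,\infty)=1$, the strict positivity of $\theta$ is not encoded in any local invariance; it requires the global exhaustion identity $\bigcup_a\{T_a<\infty\}=\{\zeta>0\}$ together with the infinity of $\mathbf{n}$, which in turn follows from the regularity of $+\infty$. Without this combination, one could only conclude $\theta\ge 0$, which would be insufficient for the subsequent analysis that culminates in \cref{thm:2}.
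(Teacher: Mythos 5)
Your proposal hinges, in its first step, on the claim that ``the regularity of $+\infty$ is equivalent to $\mathbf{n}(\zeta>0)=\infty$,'' so that infiniteness of $\mathbf{n}$ comes for free from classical excursion theory. This is a genuine gap: the equivalence is false. An infinite excursion measure corresponds to $+\infty$ being regular \emph{and instantaneous}; a recurrent extension in the sense of \cref{def:regular boundary} may instead hold at $+\infty$ for an exponential time and then jump into $\bR$, in which case $+\infty$ is still regular for itself and recurrent, but $\mathbf{n}$ is a \emph{finite} measure. The paper stresses exactly this possibility immediately after \cref{def:regular boundary} (``we could for example hold $\overline{X}$ at infinity for an exponential time\ldots'') and introduces the scaling condition precisely to exclude it. So infiniteness of $\mathbf{n}$ is not an input from general theory; it is a conclusion that must be extracted from the hypothesis $\Phi_z^*\overline{\bP}_{\infty}=\overline{\bP}_{\infty}$, and your argument never does this. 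Since your positivity of $\theta$ is deduced from $\mathbf{n}(\zeta>0)=\infty$, the gap propagates: your ingredients leave open precisely the scenario the proposition must rule out, namely $\theta=0$ together with $\mathbf{n}$ finite (a holding point with a jump-in law).

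The paper closes this case by a contradiction argument that uses the structure of the excursion paths, which is the ingredient missing from your proposal. If $\mathbf{n}$ were finite, comparing total masses in $\Phi_z^*\mathbf{n}=\ee^{-\theta z}\mathbf{n}$ forces $\theta=0$; then $\mathbf{n}(T_x<\infty)$ is constant in $x$, and letting $x\to\infty$ (the same exhaustion and monotone-convergence step you use) gives $\mathbf{n}(T_x<\infty)=\mathbf{n}[1]$ for every $x$, i.e.\ $\mathbf{n}$-almost every excursion has infimum $-\infty$. But recurrent extensions exist only in the explosive regime of \cref{prop;explosion}, where $\xi_t\to+\infty$ a.s., so every excursion, being a time-changed path of $\xi$, has a \emph{finite} infimum --- a contradiction. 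Hence $\mathbf{n}$ is infinite, and only then does your exhaustion argument yield $\theta>0$. (Alternatively one could note that a $\Phi_z$-invariant holding extension would require a translation-invariant jump-in probability measure on $\bR$, which cannot exist; but some argument of this kind is indispensable.) The rest of your proposal --- the proportionality $\Phi_z^*\mathbf{n}=c(z)\mathbf{n}$ via the Poisson/It\^o decomposition (the paper obtains it by conditioning on the first excursion passing below a level, which avoids local-time normalisation issues), the cocycle identity, and $\mathbf{n}(T_a<\infty)=C\ee^{\theta a}$ --- matches the paper's proof and is sound.
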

\begin{proof}
As $\Phi_z^{*} \overline{\bP}_{\infty} = \overline{\bP}_{\infty}$ for all $z\in \bR$, we can follow essentially the same arguments as in the proof of \cref{prop:Phiz-n} to deduce that $\Phi_z^{*}\mathbf{n} =c_z \mathbf{n}$ up to a certain finite constant $c_z>0$ with $c_0=1$. 
Specifically, consider a process $\overline{X}$ of law $\overline{\bP}_{\infty}$.  For every $a\in \mathbb{R}$, the first excursion of $\overline{X}$ (resp.\ $\Phi_z \overline{X}$) that passages below level $a$ has distribution  $\mathbf{n} (\ccdot \mid T_a <\infty)$ (resp.\ $\Phi_z^{*} \mathbf{n} (\ccdot \mid T_{a} <\infty))$. 
Since $\Phi_z \overline{X}$ has the same distribution as $\overline{X}$ by the assumption, we have $\Phi_z^{*} \mathbf{n} (\ccdot \mid T_{a} <\infty) = \mathbf{n} (\ccdot \mid T_{a} <\infty)$, for every $a\in \mathbb{R}$. It follows that $\Phi_z^{*}\mathbf{n} =c_z \mathbf{n}$ up to a certain finite constant $c_z>0$. 
 
 Using \eqref{eq:A_x}, we have 
\begin{equation}\label{eq:c_z}
	c_z \mathbf{n}(T_{x+z}<\infty) =(\Phi_z^*(\mathbf{n})) (T_{x+z}<\infty) = \mathbf{n} \big(\Phi_{z}(T_{x+z}<\infty)\big) =\mathbf{n} (T_{x}<\infty). 
\end{equation}
As $\{T_{x}<\infty\} \supseteq \{T_{x+z} <\infty\}$ for $z\ge 0$, we have $c_z \le 1$ for $z\ge 0$. 
We also deduce from \eqref{eq:c_z} that $z\mapsto c_z$ is non-increasing. 
	
For $z,z'\in \bR$, applying \eqref{eq:c_z} twice leads to 
$
	c_{z'} c_z \mathbf{n}(T_{x+z+z'}<\infty)  =\mathbf{n} (T_{x}<\infty). 
$
On the other hand,
$
	c_{z'+z} \mathbf{n}(T_{x+z+z'}<\infty) =\mathbf{n} \big(\Phi_{z'+z}(T_{x+z+z'}<\infty)\big) = \mathbf{n} (T_{x}<\infty).
$
So we have $c_{z'+z}=c_{z'} c_z$. 
This relation together with monotonicity implies that $z\mapsto c_z$ is an exponential function. 
Setting $\theta \coloneqq  -\log c_1 \ge 0$, it follows that $c_z = \ee^{-\theta z}$.  

It remain to show that $\mathbf{n}$ is an infinite measure. 
suppose by contradiction that $\mathbf{n}$ is a finite measure, then we have $\mathbf{n}[1] =\Phi_z^*(\mathbf{n})[1] =\ee^{-\theta z} \mathbf{n}[1] = \ee^{-\theta z}$, for every $z\in \mathbb{R}$, so $\theta =0$ and $\mathbf{n}[1]=1$. 
It follows from \eqref{eq:c_z} that $\mathbf{n}(T_{x+z}<\infty) = \mathbf{n}(T_{x}<\infty)$ for all $z\in \bR$. Since $\lim_{z\to \infty} \mathbf{n}(T_{z}<\infty) =\mathbf{n}[1]=1$, we deduce that  $\mathbf{n}(T_{x}<\infty) =\mathbf{n}[1] = 1$ for all $x\in \bR$, i.e.\ any excursion a.s.\ reaches below every level $x$.  But recall that we are in the explosion case, so in particular $\lim_{s\to \infty} \xi_s =+\infty$, and so any excursion of the time-changed process has a finite infimum. Thus we have a contradiction. So  $\mathbf{n}$ cannot be a finite measure.
\end{proof}

\begin{proof}[Proof of \cref{thm:2}, necessity]
(1). We first prove condition \eqref{eq:H-rec-suff}.  	
Let $\mathbf{n}$ be the excursion measure of the process $\overline{X}$. Then it satisfies 
\[
	\mathbf{n}[ 1- \ee^{-\zeta}] = \mathbf{n}\bigg[\int_0^{\zeta} \ee^{-s}\dd s \bigg] = \int_0^{\infty} \ee^{-s} \mathbf{n} (\zeta >s) \dd s<\infty.
\]
Set $h(x) = \mathbb{E}_x[\ee^{-\zeta}]$. 
We also have (see \cite[p.480]{Riv05} for a detailed proof), by the Markov property, that
\[
	\mathbf{n}[ 1- \ee^{-\zeta}] = \mathbf{n}\bigg[\int_0^{\zeta} h(\exc_s)\dd s\bigg].  
\]
It suffices to prove that there exists $M>0$ and $b>0$, such that
 \begin{equation}\label{eq:superfinite}
     	\inf_{x>b} \mathbb{P}_x\Big(\zeta <M\Big)=: c_0 >0. 
 \end{equation}
Indeed, if \eqref{eq:superfinite} holds, then we have $h(x) = \mathbb{E}_x[\ee^{-\zeta}] \ge \ee^{-M} \mathbb{P}_x(\zeta <M) = \ee^{-M} c_0 $ for every $x>b$. 
It follows that, by the mean occupation measure obtained from \cref{prop:occup-meas}, 
\[
	\int_b^{\infty} \frac{\ee^{\theta y}}{R(y)} \dd y = \mathbf{n}\bigg[\int_0^{\zeta} \ind{\exc_s >b}\dd s\bigg] \le \frac{\ee^{M}}{ c_0}\mathbf{n}\bigg[\int_0^{\zeta} h(\exc_s)\dd s\bigg] = \frac{\ee^{M}}{ c_0}\mathbf{n}( 1- \ee^{-\zeta}) <\infty. 
\]

It remains to prove \eqref{eq:superfinite}. 
Suppose for contradiction that for every fixed $M>0$, there exists a sequence $x_k \uparrow \infty$ (depending on $M$) such that 
$
	\lim_{k\to \infty} \mathbb{P}_{x_k}(\zeta <M) = 0. 
$
Recall that, for every $a\in \mathbb{R}$, since $\xi$ drifts to $\infty$, we have 
$\mathbb{P}_x(T_a <\zeta) = \mathrm{P}_{0}(\inf_{s\ge 0}\xi_s \le a-x) \to 0$ as $x\to \infty$. 
Therefore 
$$
	\lim_{k\to \infty} \mathbb{P}_{x_k}\Big(\inf_{t\in [0,M]}\xi_t >a, \zeta >M\Big) = 1. 
$$
Because the recurrent extension $\overline{X}$ coincides with the time-changed L\'evy process before hitting $\infty$, it follows that
\[
	\lim_{k\to \infty} \overline{\mathbb{P}}_{x_k}\Big(\inf_{t\in [0,M]}\xi_t \ge a, \zeta >M\Big) = 1 \quad \text{for all } a>0. 
\]
Since we have assumed that the process $\overline{X}$ is Feller, by the weak convergence of $\overline{\mathbb{P}}_{x_k}$ to $\overline{\mathbb{P}}_{\infty}$ we have that
\[
\overline{\mathbb{P}}_{\infty}\Big(\inf_{t\in [0,M]}\xi_t \ge a\Big)
\ge \limsup_{k\to \infty} \overline{\mathbb{P}}_{x_k}\Big(\inf_{t\in [0,M]}\xi_t \ge a, \zeta >M\Big) = 1 \quad \text{for all } a>0. 
\]
Since the choice of $a$ was arbitrary, we have $\overline{\mathbb{P}}_{\infty}(\xi_t= \infty, t\in [0,M])=1$. But $M$ is also arbitrary, contradicting the fact that $\infty$ is not a trap under $\overline{\mathbb{P}}_{\infty}$.

(2). We next prove \ref{H2} in a few steps. 

By the invariance property of the process $\overline{X}$, we find by \cref{prop:Phiz-n-bis} that the excursion measure $\mathbf{n}$ has the invariance property $\Phi_z \mathbf{n} = \ee^{-\theta z} \mathbf{n}$. Therefore, the mean occupation measure has the form $m(\ddd y) = C \frac{\ee^{\theta y}}{R(y)}\dd y$, due to \cref{prop:occup-meas}. 
Recall that the mean occupation measure is purely excessive for $X$. 

Because of the time-change, the measure $\ee^{\theta y}\dd y$ is excessive for the L\'evy process $\xi$ and hence $\mathrm{E}[\ee^{-\theta \xi_1}]\le 1$.
Therefore, we are in the exact situation of \cref{sec:exc-K}. 
With arguments developed there, we obtain the Kuznetsov measure of $m^{\xi}(\ddd y) = \ee^{\theta y} \dd y$ and $\xi$ as before, determined by \eqref{eq:Q-dual}. 
We already obtained \eqref{eq:H-rec-suff} in the first part of this proof. This permits us to again  preform the Kaspi time-change and thereby obtain an excursion measure of $X$ from $\mathcal{Q}$, now denoted by $\mathbf{n}'$, such that 
\begin{equation*}
	\mathbf{n}'(\omega_{t_1}\in A_1,\cdots,\omega_{t_k}\in A_k, t_k <\zeta)  =  \mathcal{Q}\big(Z_{\eta_{t_1}}\in A_1,\cdots, Z_{\eta_{t_k}}\in A_k, \eta_{t_k}\in [0,1) \big),
\end{equation*}
where $\eta_s = \inf \{u\in \bR\colon \int_{-\infty}^u \frac{1}{R(Z_r)} \dd r >s\}$. 

Let us identify $\mathbf{n}'$ with $\mathbf{n}$: by \cref{prop:exc-K-m}, the mean occupation measure of $\mathbf{n}'$ also has the form $m(\ddd y) = C' \frac{\ee^{\theta y}}{R(y)}\dd y$. By \cite[Theorem (5.25)]{Get90}, the two excursion measures have the same entrance law (up to a multiplicative constant). Therefore $\mathbf{n}'$ and $\mathbf{n}$ are equal (up to a multiplicative constant).

Finally, we distinguish the two cases with $\mathrm{E}[\ee^{\theta \xi_1}]< 1$ or  $\mathrm{E}[\ee^{\theta \xi_1}]= 1$.
If $\mathbf{n}$ enters from $+\infty$ continuously, which means that
$ \lim_{s\downarrow 0} w_s =+\infty$, $\mathbf{n}$-a.e., then we deduce from \cite[Corollary~(2.11)]{Kas88} that the property is preserved by the time-change, in the sense that $\lim_{s\downarrow -\infty} Z_s =+\infty$, $\mathcal{Q}$-a.e.. By \eqref{eq:Q-dual}, this corresponds to the fact that the Markov process with the semigroup $\overline{S}_t$ does not jump to $+\infty$. 
Therefore, $\overline{S}_t 1 = \mathrm{E}_0[ \ee^{-\theta \xi_t}] =1$. This is Cramér's condition.
		
On the other hand, if $\mathbf{n}$ enters from $+\infty$ by a jump, then the Markov process with semigroup $\overline{S}_t$ jumps to $+\infty$. 
Therefore $\overline{S}_t 1 = \mathrm{E}_0[ \ee^{-\theta \xi_t}] <1$.
\end{proof}

We end this section with a discussion of condition \eqref{eq:superfinite}.
\begin{remark}
We may rewrite \eqref{eq:superfinite} in terms of $\xi$, as
\[
	\inf_{x>b} \mathrm{P}_x\bigg(\int_0^{\infty} \frac{1}{R(\xi_s)} \dd s <M\bigg)>0. 
\]
This is the condition that $(b,\infty)$ is a \emph{super-finite set} in the sense of Baguley, D\"oring, and Kyprianou \cite{BDK}
We have seen from the proof above \eqref{eq:superfinite} that this is a necessary condition for the recurrent extension $\overline{X}$ to be Feller. 

Recall that the explosion condition states that $\int_0^{\infty} \frac{1}{R(\xi_s)} \dd s<\infty$, $\mathrm{P}_x\text{-a.s.}$.
A natural question is whether \eqref{eq:superfinite} always holds in the explosive case.
If $R$ is eventually decreasing, then we easily deduce by monotonicity that \eqref{eq:superfinite} follows from the explosion condition. But the answer is in general negative -- see the counterexample below that we adapt from \cite{KolSav19}. This suggests the possibility of constructing a non-Feller recurrent extension.  
\end{remark}

\begin{example}[An explosive example that does not satisfy \eqref{eq:superfinite}]\label{eg:KS}
	Consider $\xi$ as in \cite[Page 1459-1460]{KolSav19}: an increasing Lévy process with no drift and jumps of infinite activity and
size at most 1. Then there exists a continuous and non-negative $f$ that satisfies $\int_{\cdot}^{\infty} f(y) \dd y=\infty$ and such that $\int_0^{\infty} f(\xi_s) \dd s<\infty$ a.s.
	
Then we claim that for any  $M>0$ large and $\delta>0$ small, there exists a sequence $x_k\to \infty$ (depending on $M,\delta$), such that 
$\mathrm{P}_{x_k} (\int_0^{\infty} f(\xi_s) \dd s >M) >1-\delta$, $\forall k$. 
Indeed, for every $l>0$, if $\sup_{x>l} \mathrm{P}_{x} (\int_0^{\infty} f(\xi_s) \dd s >M) <1-\delta$, then by \cite[Lemma~4.5]{KolSav19} or \cite[Lemma~2.5]{BDK}, we have 
\[
	\mathrm{E}_x \left[ \int f\mathbf{1}_{[l,\infty)}(\xi_s) \dd s \right] <\infty. 
\]
As $\xi$ has finite mean, this is equivalent to $\int_{l}^{\infty} f(y) \dd y<\infty$, which contradicts to the property of $f$. 
Therefore, for each $l$, we can find $x>l$ s.t.\ $\mathrm{P}_{x} (\int_0^{\infty} f(\xi_s) \dd s >M) >1-\delta$. 
\end{example}
\subsection{A pathwise construction and a functional limit theorem}\label{sec:exc}

We shall now prove \cref{thm:convergence to n}, which entails
focusing on the case when Cramér's condition $\mathrm{E}_0[\ee^{-\theta \xi_1}] =1$ and the integrability condition \eqref{eq:H-rec-suff} hold. 
Under these conditions we have constructed an excursion measure $\mathbf{n}$ in \cref{sec:exc-K}. 

In proving \cref{prop:n-cv} we will provide a different construction of the excursion measure $\mathbf{n}$, relying crucially on the analysis by Barczy and Bertoin \cite{BarBer11}. Roughly speaking, we will give a path-wise construction for an excursion conditionally on passaging below level $y\in \bR$.  We stress that to use their approach, we require an additional integrability condition given by \eqref{eq:H-exp}, that we recall here: 
\begin{equation*}
	\mathrm{E}\big[|\xi_1| \ee^{-\theta \xi_1}\big] <\infty.
\end{equation*}
We will identify the excursion measure we construct here with that in \cref{sec:exc-K} and give further descriptions of the excursion measure, \`a la Pitman and Yor \cite{PitYor82}. 

Cramér's condition allows us to introduce the tilted measure 
\begin{equation*}\label{eq:tildeP}
	\widetilde{\mathrm{P}}_x\,\big|_{\mathcal{F}_t}=  \ee^{\theta x} \ee^{-\theta \xi_t}\mathrm{P}_x \,\big|_{\mathcal{F}_t},\qquad  t\ge \bR. 
\end{equation*}
This is the Esscher transform that we used above in \eqref{eq:barP}. Then  $\widetilde{\xi}$ under $\widetilde{\mathrm{P}}_x$ is a L\'evy process that drifts to $-\infty$,  with characteristic exponent $\widetilde{\Psi}(q) = \Psi(q+i\theta)$. 
Under assumption \eqref{eq:H-exp}, $\widetilde{\xi}$ has finite and negative first moment. In particular, it satisfies \eqref{eq:H1}. 
Since $\widetilde{\xi}$ tends to $-\infty$, it is well-known that its renewal function $\widetilde{\nu}_+$ is bounded. Then \eqref{eq:H0} is equivalent to
\[
\int_{\cdot}^{\infty} \frac{1}{R(y)}\dd y<\infty. 
\] 
Therefore $\widetilde{\xi}$ satisfies both assumptions of \cref{thm:1}; as we have done in \cref{sec:P-infty}, one can use methods developed by Bertoin and Savov \cite{BerSav11} to obtain  $\widetilde{\mathcal{P}}$ associated with $\widetilde{\xi}$, 
with $\widetilde{\rho}$ defined as in \cref{lem:overshoot} for $\widetilde{\xi}$.
Write $\widetilde{Z}$ for the canonical process under $\widetilde{\mathcal{P}}$.  

Barczy and Bertoin \cite{BarBer11} further introduce $\mathcal{P}$ by a change of measure  
\begin{equation}\label{eq:ChangeMeasure}
	\mathcal{P}\,\big|_{\mathcal{G}_t}\coloneqq  c(\theta) \exp(\theta \widetilde{Z}_t) 	\widetilde{\mathcal{P}}\,\big|_{\mathcal{G}_t}, \qquad t\in \bR, 
\end{equation}
where $c(\theta)$ is a normalizing constant defined by 
\begin{equation}\label{eq:c-theta}
	\frac{1}{c(\theta)}= \int_{\bR_+^2} \ee^{\theta y_2} \widetilde{\rho} (\ddd y_1, \dd y_2)=\widetilde{\mathcal{E}}[\exp(\theta \widetilde{Z}_0) ] <\infty.
\end{equation}
Let us collect a few properties of $\mathcal{P}$ from \cite{BarBer11}. 
Denote by $(Z_t, t\in \mathbb{R})$ the canonical process of  $\mathcal{P}$. We have 
$
	\mathcal{P} (\inf_{t\in \bR} Z_t <y) = \ee^{\theta y} \text{ for }y\le 0, 
$
and 
$$
	\rho (\ddd y_1, \dd y_2)\coloneqq  \mathcal{P}(Z_{0-}\in \dd y_1, -Z_0\in \dd y_2)=  c(\theta) \ee^{\theta y_2} \widetilde{\rho} (\ddd y_1, \dd y_2).
$$
Moreover, under the conditional law $\mathcal{P}(\cdot \mid Z_{0-}= y_1,- Z_0= y_2)$, 
the two processes $(-Z_{-t-}, t\ge 0)$ and $(Z_t, t\ge 0)$ are independent with respective laws $\widetilde{\mathrm{P}}^{\downarrow}_{y_1}$ as in \eqref{eq:P-downarrow} (in other words, $(Z_{-t-}, t\ge 0)$ is the dual process $-\widetilde{\xi}$ conditioned to stay positive) and $\mathrm{P}_{y_2}$.  
For $y\in \mathbb{R}$, let $\mathcal{P}_y$ denote the law of the process $y+Z$ under $\mathcal{P}$. 
The law $\mathcal{P}$ is spatially quasi-stationary in the following sense \cite[Equation~(9)]{BarBer11}: for all $y,z\in \bR$ with $y\le z$, 
\begin{equation}\label{eq:quasi-stationary}
	\text{under the conditional law } \mathcal{P}_z(\ccdot \mid \inf_{t\in \bR} Z_t <y),\,  (Z_{T_y+ s},s\in \bR) \sim \mathcal{P}_y. 
\end{equation}

Let us introduce a time-change to the process $Z$ of law $\mathcal{P}$.
Let $\psi$ be the map on $\W$ (the path space of excursions introduced in \cref{sec:exc-K}) defined by $\psi:w\mapsto w\circ\eta(w)$, where
\begin{align}\label{eq:psi-bis}
	\eta(w)_t = \inf\bigg\{s\ge0 : \int_{-\infty}^s f(w_u) \dd u>t\bigg\}, \quad t\ge0,
\end{align}
for some continuous $f:\R\to(0,\infty)$; in what follows we take $f=1/R$.
Barcy and Bertoin \cite[p2025]{BarBer11} mention that $\lim_{t\to\pm}Z_t=\infty$ \as, which implies that the time-changed process $\psi(Z)$ is a.s.~\cadlag.
As in \cref{sec:P-infty}, let $\widetilde{\mathbb{P}}_{\infty}$ be the push-forward probability of $\widetilde{\mathcal{P}}$ via $\psi$ and $\widetilde{\mathbb{E}}_{\infty}$ the expectation under $\widetilde{\mathbb{P}}_{\infty}$. 

Similarly, for any $y\in \bR$, let $\mathbb{P}^{(y)}_{\infty}$ be the push-forward probability of $\mathcal{P}_y$ via $\psi$ and $\mathbb{E}^{(y)}_{\infty}$ the expectation under $\mathbb{P}^{(y)}_{\infty}$. 
As we have seen in \eqref{eq:T0}, to make sure that the time-changed process of law $\mathbb{P}^{(y)}_{\infty}$ is non-trivial, a sufficient condition is for any $t\in \mathbb{R}$
\begin{equation}\label{eq:integral}
   \mathcal{E}_y \bigg[ \int_{-\infty}^t  \frac{1}{R(Z_r) } \dd r \bigg] <\infty.
\end{equation}
Applying the change of measure, we have
\[
\mathcal{E}_y \bigg[ \int_{-\infty}^t  \frac{1}{R(Z_r) } \dd r \bigg] 
=c(\theta)\widetilde{\mathcal{E}}\bigg[ \int_{-\infty}^t  \frac{\ee^{\theta (y+\widetilde{Z}_r)}}{R(y + \widetilde{Z}_r) } \dd r \bigg]. 
\]
By \cref{lem:H0}, recalling that $\widetilde{\nu}_+$ is bounded, this is finite under assumption \eqref{eq:H-rec-suff}.
So we indeed have \eqref{eq:integral}. 

\begin{lemma}\label{lem:ChangeofMeasure-bP}
	For $y\in \bR$, we have 
	\begin{equation}\label{eq:ChangeofMeasure-bP}
		\mathbb{P}^{(y)}_{\infty}\,\big|_{\mathcal{F}_t}
		= c(\theta) \exp(\theta (X_t-y)) 	\mathbf{1}_{\{T_y<\infty, t<\zeta\}}\,\widetilde{\mathbb{P}}_{\infty}\,\big|_{\mathcal{F}_t}, \qquad t\in \bR_+. 
	\end{equation} 
\end{lemma}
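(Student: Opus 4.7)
The plan is to transfer the Esscher change of measure \eqref{eq:ChangeMeasure} through the time-change operator $\psi$ via an optional-stopping argument, supported by a pathwise identity linking $\psi(y+\widetilde Z)$ to the $\Phi_y$-transform of $\psi(\widetilde Z)$ and by the $R$-scaling invariance of $\widetilde{\mathbb P}_\infty$. Specifically, I would first establish that $\psi(y + w) = \Phi_y(\psi(w))$ for any admissible path $w$ indexed by $\bR$: writing $I(s) = \int_{-\infty}^s R(w_u)^{-1}\dd u$ and $J(s) = \int_{-\infty}^s R(y+w_u)^{-1}\dd u$, the same chain-rule computation as in \eqref{eq:dh_z} shows that the inner time-change $h_y$ in $\Phi_y$, applied to $\psi(w)$, equals $I \circ J^{-1}$, so that both sides reduce to $y + w_{J^{-1}(t)}$. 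Since $\widetilde{\mathbb P}_\infty$ is built exactly as $\mathbb P_\infty$ was in \cref{sec:P-infty-suff} but for the tilted L\'evy process $\widetilde\xi$, the analog of \cref{prop:P-infty-ss}(ii) yields $\Phi_y^{\ast}\widetilde{\mathbb P}_\infty = \widetilde{\mathbb P}_\infty$, so that combined with the pathwise identity the law of $\psi(y + \widetilde Z)$ under $\widetilde{\mathcal P}$ equals $\widetilde{\mathbb P}_\infty$.

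Next I would introduce the random time
\[
    \sigma \;=\; \inf\Big\{s \in \bR \colon \int_{-\infty}^s \frac{1}{R(y+Z_u)} \dd u > t\Big\},
\]
which is a $(\mathcal G_s)_{s \in \bR}$-stopping time because $s \mapsto \int_{-\infty}^s R(y+Z_u)^{-1}\dd u$ is adapted. Under $\widetilde{\mathcal P}$ the process $\widetilde Z$ drifts to $-\infty$ and \eqref{eq:R-Condition} then forces $\int_{-\infty}^\infty R(y + \widetilde Z_u)^{-1} \dd u = \infty$, whence $\sigma$ is $\widetilde{\mathcal P}$-almost surely finite; under $\mathcal P$ we instead have $\{\sigma < \infty\} = \{t < \zeta\}$. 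The Radon--Nikodym derivatives of \eqref{eq:ChangeMeasure}, viewed as a consistent family over the filtration $(\mathcal G_t)$, form a strictly positive $(\widetilde{\mathcal P}, \mathcal G_t)$-martingale. Applying optional stopping at the bounded stopping times $\sigma \wedge M$ and passing to the limit $M \to \infty$ by monotone convergence on both sides extends the change of measure to
\[
    \mathcal P\big|_{\mathcal G_\sigma \cap \{\sigma < \infty\}} \;=\; c(\theta)\,\ee^{\theta Z_\sigma}\,\widetilde{\mathcal P}\big|_{\mathcal G_\sigma \cap \{\sigma < \infty\}}.
\]

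For a bounded $\mathcal F_t$-measurable $G$, the functional $G(\psi(y+Z)_s, s \le t)$ depends only on $Z_u$ for $u \le \sigma$ on $\{\sigma < \infty\}$, hence is $\mathcal G_\sigma$-measurable there. The extended change of measure then gives
\begin{align*}
    \mathbb E^{(y)}_\infty\bigl[G\,\mathbf 1_{\{t < \zeta\}}\bigr]
    &= \mathcal E\bigl[G(\psi(y+Z))\,\mathbf 1_{\{t<\zeta\}}\bigr]\\
    &= c(\theta)\,\widetilde{\mathcal E}\bigl[G(\psi(y+\widetilde Z))\,\ee^{\theta \widetilde Z_\sigma}\bigr]\\
    &= c(\theta)\,\widetilde{\mathbb E}_\infty\bigl[G\,\ee^{\theta(X_t - y)}\bigr],
\end{align*}
using $\widetilde Z_\sigma = \psi(y+\widetilde Z)_t - y = X_t - y$ and the distributional identification from the first paragraph. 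The indicators $\mathbf 1_{\{T_y < \infty\}}$ and $\mathbf 1_{\{t < \zeta\}}$ appearing on the right of \eqref{eq:ChangeofMeasure-bP} equal one $\widetilde{\mathbb P}_\infty$-a.s.\ (since $\widetilde\xi$ drifts to $-\infty$ and no explosion occurs under $\widetilde{\mathbb P}_\infty$), so they can be inserted without altering the value.

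I expect the main obstacle to be the optional-stopping step: since $\sigma$ is not uniformly bounded, extending \eqref{eq:ChangeMeasure} to $\mathcal G_\sigma$ requires a careful truncation argument exploiting positivity of the Radon--Nikodym density together with monotone convergence applied to the two sides separately; once this extension is in hand, the rest of the proof amounts to the pathwise identity of the first paragraph and the algebraic identification $\widetilde Z_\sigma = X_t - y$.
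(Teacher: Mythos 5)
Your proof is correct and arrives at the same identity, but the central identification step is routed differently from the paper. The paper, like you, realises the time-change times $\eta^{(y)}_t$ as $(\mathcal G_s)$-stopping times and applies \eqref{eq:ChangeMeasure} at them; it then disposes of the spatial shift by $y$ using the stationarity of $\widetilde{\mathcal P}$ itself: by \cref{lem:BerSav}, $y+\widetilde Z$ under $\widetilde{\mathcal P}$ has the same law as the time-shifted path $h_y(\widetilde Z)$ of \eqref{eq:h_y}, and time-changing is insensitive to shifts of the time origin (one has $\psi(h_y(w))=\psi(w)$ because the clock $\int_{-\infty}^{\cdot}$ starts at $-\infty$), so the $\widetilde{\mathcal P}$-expectation collapses directly onto $\widetilde{\mathbb P}_\infty$. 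You instead prove the pathwise commutation $\psi(y+w)=\Phi_y(\psi(w))$ and then invoke the $R$-scaling invariance $\Phi_y^{*}\widetilde{\mathbb P}_\infty=\widetilde{\mathbb P}_\infty$, i.e.\ \cref{prop:P-infty-ss}(ii) applied to the tilted process, which is legitimate under the standing assumptions \eqref{eq:H-exp} and \eqref{eq:H-rec-suff} and is non-circular, since that proposition is established in \cref{sec:P-infty} independently of the present lemma. Both routes ultimately rest on the Bertoin--Savov first-passage description, but yours is more modular and thematically aligned with the $R$-scaling machinery (the same commutation identity underlies \cref{prop:Phiz-n} and \cref{prop:n-scaling}); it also treats arbitrary bounded $\mathcal F_t$-functionals in one stroke, whereas the paper writes out only one-dimensional marginals and omits the extension to finite-dimensional distributions. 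You are likewise more explicit than the paper about justifying the change of measure at the random time, a point the paper passes over in a single sentence.

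One technical caution on that step: $\sigma=\eta^{(y)}_t$ can take negative values and is not bounded below, so $\sigma\wedge M$ is not a bounded stopping time for the two-sided filtration $(\mathcal G_s)_{s\in\bR}$. Your truncation therefore needs to be two-sided, e.g.\ work with $(\sigma\vee N)\wedge M$ and let $N\to-\infty$ using right-continuity of paths and the consistency of \eqref{eq:ChangeMeasure} over the whole index set $\bR$, before sending $M\to\infty$ by monotone convergence. This is routine and does not affect correctness; the paper leaves the same point entirely implicit.
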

\begin{proof}
	For any $w\in \W$, let $(\eta^{(y)}_t(w), t\ge 0)$ be the time-change associated with $y+w$. 
	Note that (see e.g.\ \cite[Proposition~7.9]{Kallenberg}) for a $\mathcal{G}$-adapted process $Z$, $(\eta^{(y)}_t,t\ge 0)$ is a family of stopping times, generating a right-continuous filtration $\mathcal{F}_t = \mathcal{G}_{\eta^{(y)}_t}$. 
	By the change of measure \eqref{eq:ChangeMeasure}, we have, for $f$ such that $f(\infty) =\infty$,
	\begin{equation}\label{eq:shift}
		\mathbb{E}^{(y)}_{\infty}\Big[f(X_t)\mathbf{1}_{\{t<\zeta\}}\Big]
		= \mathcal{E} \Big[f(y + Z_{\eta^{(y)}_t})\Big] 
		= c(\theta )\ee^{-\theta y} \widetilde{\mathcal{E}} \Big[f(y + Z_{\eta^{(y)}_t}) \exp(\theta (y+Z_{\eta^{(y)}_t})\Big]. 
	\end{equation}
	For any $w\in D(\bR)$, let $h_y(w)$ be the path shifted by the hitting time $T_y$, i.e. 
	\begin{equation}\label{eq:h_y}
		h_y(w)_t = w_{T_y+t}\mathbf{1}_{\{T_y<\infty\}}+ \infty\mathbf{1}_{\{T_y=\infty\}},\qquad  t\in \bR. 		
	\end{equation}
	
	Since $y+Z$ and $Z'\coloneqq  h_y(Z)$ have the same distribution under $\widetilde{\mathcal{P}}$, the last expectation in \eqref{eq:shift} is equal to 
	\begin{align*}
		\widetilde{\mathcal{E}} \Big[f(Z'_{\eta(Z')_t}) \exp(\theta Z'_{\eta(Z')_t})\Big]
		&=\widetilde{\mathcal{E}} \Big[f(Z_{\eta_t}) \exp(\theta Z_{\eta_t})\mathbf{1}_{\{T_y<\infty\}}\Big]\\
		&= \widetilde{\mathbb{E}}_{\infty}\Big[f(X_t)\mathbf{1}_{\{T_y<\infty, t<\zeta\}}\ee^{\theta X_t}\Big], 
	\end{align*} 
	where the first equality follows from the observation that, if $T_y(w) <\infty$, then $w_{\eta_t(w)} = h_y(w)_{\eta_t(h_y(w))}$, for all $t\ge 0$. 
	This proves the relation between the marginal distributions. Similar arguments work for the finite-dimensional distributions, details are omitted.  
\end{proof}

We now introduce an excursion measure, which is still denoted by $\mathbf{n}$, as we shall see in \cref{prop:n-unique} that it is indeed the same (up to a multiplicative constant) as the excursion measure constructed in \cref{sec:exc-K}. 
\begin{proposition}[First description of $\mathbf{n}$]\label{prop:first}
	If $\mathcal{E}\big[\int_{-\infty}^{\infty} \frac{\dd r}{R(Z_r)}\big]<\infty$ then there exists a unique excursion measure $\mathbf{n}$, which is a $\sigma$-finite measure on the subspace of excursions away from $+\infty$ in $D([0,\infty), \oR)$, such that for $y\in \mathbb{R}$, 
	\begin{enumerate}
		\item $\mathbf{n}( \exc_0\ne +\infty)= 0$,
		\item $\mathbf{n}( T_y <\infty)=  \ee^{\theta y}$, 
		\item the probability measure $\mathbf{n}(\ccdot\mid T_y <\infty)$ is equal to $\mathbb{P}^{(y)}_{\infty}$.  
	\end{enumerate}
\end{proposition}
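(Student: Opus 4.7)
The plan is to construct $\mathbf{n}$ by patching together the finite tilted laws $\mathbf{n}_y := e^{\theta y}\,\mathbb{P}^{(y)}_{\infty}$, $y \in \bR$, each naturally supported on $\{T_y<\infty\}$ (since under $\mathbb{P}^{(y)}_\infty$ the excursion almost surely passes through level $y$). First I would check the \emph{consistency} relation: for $y_1 \le y_2$ and any measurable event $A$,
$$
 e^{\theta y_2}\,\mathbb{P}^{(y_2)}_{\infty}\bigl(A \cap \{T_{y_1}<\infty\}\bigr) = e^{\theta y_1}\,\mathbb{P}^{(y_1)}_{\infty}(A).
$$
Once this is established, a Carath\'eodory-type extension along the increasing family $\{T_y<\infty\}_{y\in\bR}$, whose union exhausts the excursion space (modulo paths that never descend, which have $\mathbf{n}$-measure zero), yields a unique $\sigma$-finite measure $\mathbf{n}$. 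Properties (ii) and (iii) of the statement then follow immediately from the construction, and (i) from the fact that under $\mathcal{P}_y$ one has $\lim_{t\to-\infty} Z_t = +\infty$, a property that the time-change $\psi$ preserves at $t=0^+$.

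The main step is verifying the consistency identity. I would transfer it to the bilateral level via $\psi$. The scalar factor is explained by $\mathcal{P}_{y_2}(T^Z_{y_1}<\infty) = \mathcal{P}(\inf_t Z_t < y_1-y_2) = e^{\theta(y_1-y_2)}$, while the distributional content comes from the quasi-stationarity \eqref{eq:quasi-stationary}: under $\mathcal{P}_{y_2}(\,\cdot \mid T^Z_{y_1}<\infty)$, the shifted process $(Z_{T^Z_{y_1}+s})_{s\in\bR}$ has law $\mathcal{P}_{y_1}$. Pushing this through $\psi$ uses the observation that $T^X_{y_1} = \int_{-\infty}^{T^Z_{y_1}} R(Z_u)^{-1}\,\dd u$, so the shift-by-$T^Z_{y_1}$ on $Z$ induces a shift-by-$T^X_{y_1}$ on $X$, and both the pre- and post-hitting segments of $X$ under $\mathbb{P}^{(y_2)}_\infty(\cdot \mid T_{y_1}<\infty)$ should match those under $\mathbb{P}^{(y_1)}_\infty$.

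The principal obstacle will be making this last commutation rigorous, since the time-change $\eta$ integrates from $-\infty$ and the shift is by a random time. Concretely, one must verify that the conditional distribution of $\int_{-\infty}^{T^Z_{y_1}} R(Z_u)^{-1}\,\dd u$ under $\mathcal{P}_{y_2}(\cdot \mid T^Z_{y_1}<\infty)$ agrees with that of $\int_{-\infty}^{0} R(Z_u)^{-1}\,\dd u$ under $\mathcal{P}_{y_1}$ (both almost surely finite by assumption \eqref{eq:integral}), and moreover that this agreement holds jointly with the pre- and post-hitting path segments, so that full finite-dimensional distributions match. Uniqueness is then immediate: any other measure $\mathbf{n}'$ satisfying (ii) and (iii) coincides with $\mathbf{n}$ on every $\{T_y<\infty\}$, and since this family exhausts the excursion space, $\mathbf{n}'=\mathbf{n}$.
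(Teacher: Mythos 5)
Your overall skeleton coincides with the paper's: the paper also builds $\mathbf{n}$ by showing that $(\ee^{\theta y}\mathbb{P}^{(y)}_{\infty},\,y\in\bR)$ is a consistent family of finite measures along the increasing events $\{T_y<\infty\}$ and then invoking the (unique) $\sigma$-finite extension, with properties (i)--(iii) read off from the construction. Where you genuinely diverge is in how consistency is proved. The paper gets it from \cref{lem:ChangeofMeasure-bP}, which exhibits every $\mathbb{P}^{(y)}_{\infty}$ as the explicit density $c(\theta)\ee^{\theta(X_t-y)}\mathbf{1}_{\{T_y<\infty,\,t<\zeta\}}$ against the single reference law $\widetilde{\mathbb{P}}_{\infty}$; multiplying by $\ee^{\theta y}$ kills all $y$-dependence except the indicator, so consistency is pure algebra. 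You instead work at the level of the bilateral laws $\mathcal{P}_y$, combining the hitting formula $\mathcal{P}(\inf_{t}Z_t<y)=\ee^{\theta y}$ with the quasi-stationarity \eqref{eq:quasi-stationary}, both quoted from Barczy--Bertoin. This is a valid alternative: it never mentions $\widetilde{\mathbb{P}}_{\infty}$, but it also does not produce the change-of-measure formula itself, which the paper reuses afterwards (in \cref{prop:n-scaling} and \cref{prop:second}); so the paper's detour earns its keep.

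One correction to the step you single out as the principal obstacle. You assert that the shift by $T^Z_{y_1}$ on $Z$ ``induces a shift by $T^X_{y_1}$ on $X$'' and then worry about matching the conditional law of $T^X_{y_1}=\int_{-\infty}^{T^Z_{y_1}}R(Z_u)^{-1}\dd u$ jointly with the pre- and post-hitting segments. The truth is cleaner: because the clock integrates from $-\infty$, the shift is \emph{completely absorbed} by the time change, it induces no shift on $X$ at all. With $h_{y_1}$ as in \eqref{eq:h_y}, on $\{T^Z_{y_1}<\infty\}$ one has the pathwise identity $\psi(h_{y_1}(w))=\psi(w)$: indeed
\begin{equation*}
\int_{-\infty}^{s}\frac{\dd u}{R(h_{y_1}(w)_u)}=I(w)_{T^Z_{y_1}+s},
\qquad\text{hence}\qquad
\eta(h_{y_1}(w))_t=\eta(w)_t-T^Z_{y_1},
\end{equation*}
and composing gives $h_{y_1}(w)_{\eta(h_{y_1}(w))_t}=w_{\eta(w)_t}$. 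Consequently $\psi(Z)$, restricted to $\{T^Z_{y_1}<\infty\}$, is a functional of the shifted bilateral path $h_{y_1}(Z)$ alone; quasi-stationarity gives the law of that entire shifted path, hence of every functional of it, so
$\mathcal{P}_{y_2}\big(\psi(Z)\in A \mid T^Z_{y_1}<\infty\big)=\mathcal{P}_{y_1}\big(\psi(Z)\in A\big)=\mathbb{P}^{(y_1)}_{\infty}(A)$
in one stroke, and no separate bookkeeping of hitting time, pre-segment and post-segment is needed. (This is not an identity you would have to invent: it is exactly the observation the paper uses inside the proof of \cref{lem:ChangeofMeasure-bP}, namely $w_{\eta_t(w)}=h_y(w)_{\eta_t(h_y(w))}$ whenever $T_y(w)<\infty$.) With this repaired, your chain $\ee^{\theta y_2}\mathbb{P}^{(y_2)}_{\infty}(A\cap\{T_{y_1}<\infty\})=\ee^{\theta y_2}\ee^{\theta(y_1-y_2)}\mathbb{P}^{(y_1)}_{\infty}(A)$ closes, and the extension, uniqueness and properties (i)--(iii) follow as you describe.
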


\begin{proof}
	Due to \cref{lem:ChangeofMeasure-bP}, $(\ee^{\theta y}\mathbb{P}^{(y)}_{\infty}, y\in \bR)$ form a consistent family of finite measures. 
	Then it follows that there exists a unique $\sigma$-finite measure $\mathbf{n}$ such that $\mathbf{n} \,|_{\{T_y<\infty\}} = \ee^{\theta y}\mathbb{P}^{(y)}_{\infty}$, $y\in \bR$. 
	This description implies the three properties. 
\end{proof}

In what follows, we will use the notation $\mathbf{n}(f)\coloneqq  \int f \dd \mathbf{n}$. 
With this pathwise construction, we can give an alternative proof of the scaling property \eqref{eq:n-scaling}. 
\begin{proposition}\label{prop:n-scaling}
	For $z\in \bR$, let $\Phi_z^*(\mathbf{n})$ be the push-forward measure of $\mathbf{n}$ via $\Phi_z$ defined in \eqref{eq:Phi-z}. 
	Then 
	\begin{equation*}
		\Phi_z^*(\mathbf{n}) = \ee^{-\theta z} \mathbf{n}, \qquad \text{for all } z\in \bR. 
	\end{equation*} 
\end{proposition}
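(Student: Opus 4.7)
The plan is to leverage the characterization of $\mathbf{n}$ from \cref{prop:first}: since $\mathbf{n}$ is $\sigma$-finite with $\mathbf{n}|_{\{T_y<\infty\}}=\ee^{\theta y}\,\mathbb{P}_{\infty}^{(y)}$ for every $y\in\bR$, and since $\{T_y<\infty\}$ exhausts the space of excursions as $y\uparrow+\infty$ (every excursion is $\bR$-valued on $(0,\zeta)$ and hence has a finite infimum), the claim reduces to proving
\[
\Phi_z^{*}(\mathbf{n})\big|_{\{T_y<\infty\}} = \ee^{-\theta z}\,\mathbf{n}\big|_{\{T_y<\infty\}}\qquad\text{for every }y\in\bR.
\]

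First I would observe that the map $h_z$ of \eqref{eq:Phi-z} is a strictly increasing bijection from the lifetime interval of $\Phi_z(\mathbf{e})$ onto that of $\mathbf{e}$ (as established in the proof of \cref{prop:Phi-z}). Consequently $\inf_t\Phi_z(\mathbf{e})_t = z+\inf_s\mathbf{e}_s$, which yields the crucial equivalence of events $\{T_y(\Phi_z\mathbf{e})<\infty\}=\{T_{y-z}(\mathbf{e})<\infty\}$. Combining this with \cref{prop:first} gives
\[
\Phi_z^{*}(\mathbf{n})\big|_{\{T_y<\infty\}}(A) = \mathbf{n}\big|_{\{T_{y-z}<\infty\}}\big(\Phi_z^{-1}A\big) = \ee^{\theta(y-z)}\,\mathbb{P}_{\infty}^{(y-z)}\big(\Phi_z\in A\big),
\]
so that the full statement reduces to the key pathwise translation identity
\[
(\Phi_z)_{*}\mathbb{P}_{\infty}^{(y-z)} = \mathbb{P}_{\infty}^{(y)}\qquad\text{for all }y,z\in\bR.
\]

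The key identity is then established by a direct computation. Under $\mathbb{P}_{\infty}^{(y-z)}$ the coordinate process reads $X=\psi(Z)$ with $Z\sim\mathcal{P}_{y-z}$, while $\mathbb{P}_{\infty}^{(y)}$ is represented as $X'=z+\psi'(Z)$, where $\psi'$ is the time-change driven by the clock $\int_{-\infty}^r \tfrac{1}{R(z+Z_u)}\dd u$ (since $Z+z\sim\mathcal{P}_y$). Writing $\Phi_z(X)_t=z+Z_{\eta(h_z(t))}$ and substituting $v=\eta(u)$ (so $X_u=Z_v$ and $\dd u=\dd v/R(Z_v)$) in the integral defining $h_z$, one finds
\[
\int_0^{h_z(t)}\frac{R(X_u)}{R(z+X_u)}\dd u = \int_{\eta(0)}^{\eta(h_z(t))}\frac{1}{R(z+Z_v)}\dd v = t.
\]
Since under $\mathcal{P}$ the process $Z$ is $\bR$-valued on all of $\bR$ and $1/R>0$ everywhere, $\eta(0)=-\infty$, so the right-most integral matches the inverse clock of $\psi'$; hence $\Phi_z(X)_t = z+Z_{\eta'(t)} = X'_t$ pathwise. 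The main obstacle is exactly this computation: one must keep track of the interplay between the two distinct time-changes on the two-sided index of $Z$ and verify that $\eta(0)=-\infty$ so that the two inverse clocks align. Everything else is bookkeeping on top of the uniqueness assertion in \cref{prop:first}.
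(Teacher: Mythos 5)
Your proof is correct, but its core argument is genuinely different from the paper's. Both proofs share the same skeleton: by \cref{prop:first}, $\mathbf{n}$ is determined by its restrictions to the exhausting events $\{T_y<\infty\}$, and since the time change inside $\Phi_z$ is a bijection of lifetime intervals one has $\inf_t\Phi_z(\mathbf{e})_t=z+\inf_s\mathbf{e}_s$, so everything reduces to identifying conditional laws given passage below a level. The difference is in how that identification is made. The paper shows $\Phi_z(X)\sim\mathbf{n}(\ccdot\mid T_{y+z}<\infty)$ for $X\sim\mathbf{n}(\ccdot\mid T_y<\infty)$ by applying the change of measure of \cref{lem:ChangeofMeasure-bP} (at the stopping time $h_z(t)$) and then invoking the $\Phi_z$-invariance of the tilted law $\widetilde{\mathbb{P}}_\infty$ from \cref{prop:P-infty-inv}; there the factor $\ee^{-\theta z}$ emerges from the Radon--Nikodym density $\exp(\theta(\Phi_z(X)_t-z))$. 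You instead prove the equivalent identity $(\Phi_z)_*\mathbb{P}^{(y-z)}_\infty=\mathbb{P}^{(y)}_\infty$ pathwise: writing $\mathbb{P}^{(y)}_\infty=\psi_*\mathcal{P}_y$ and using that $\mathcal{P}_y$ is by definition the law of $y+Z$ under $\mathcal{P}$, you verify the deterministic identity $\Phi_z(\psi(w))=\psi(z+w)$ on two-sided paths --- the two-sided analogue of \cref{prop:Phi-z}, with the observation $\eta(0)=-\infty$ doing the work of aligning the two inverse clocks --- so that $\ee^{-\theta z}$ comes solely from the hitting masses $\mathbf{n}(T_y<\infty)=\ee^{\theta y}$ of \cref{prop:first}. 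Your route is more elementary and self-contained: it bypasses the Esscher-type change of measure and \cref{prop:P-infty-inv}, whose own proof runs through Skorokhod representation and the continuity results of \cref{sec:conti}. The paper's route is shorter on the page because it recycles machinery that is reused anyway in \cref{prop:second}. One point you should make explicit: the a.s.\ finiteness of your clock $\int_{-\infty}^{\ccdot}\frac{\dd u}{R(z+Z_u)}$ under $\mathcal{P}_{y-z}$ is exactly \eqref{eq:integral} at level $y$, already established from \eqref{eq:H-rec-suff} via \cref{lem:H0}, so the time change $\psi'$ you introduce is indeed well defined.
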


\begin{proof}
	Fix any $z,y\in \bR$. 
	By the definition of $\Phi_z$, it is easy to see that $\Phi_z(\omega)_0= z+\omega_0$ and that $\inf_{t\ge 0}\Phi_z(\omega)_t = z+ \inf_{t\ge 0}\omega_t$. 
	Therefore, using the description in \cref{prop:first}, we have 
	\begin{enumerate}
		\item 
		$\Phi_z^*(\mathbf{n}) ( \omega_0\ne \infty)=\mathbf{n}( \omega_0\ne \infty)= 0$, 
		\item  $\Phi_z^*(\mathbf{n})  ( T_y <\infty)= \mathbf{n}( T_{y+z} <\infty)= \ee^{-\theta (y+z)}$,
		\item 
		if $X\sim \mathbf{n}( \ccdot \mid T_y \!<\!\infty)$ then $\Phi_z (X)\sim \Phi_z^*(\mathbf{n}) ( \ccdot \mid T_{y+z} \!<\!\infty)$. 
	\end{enumerate}
	It remains to prove that $\Phi_z (X)\sim \mathbf{n}( \ccdot \mid T_{y+z} \!<\!\infty)$. 
	By the change of measure \eqref{eq:ChangeofMeasure-bP} and the invariance property of $\widetilde{\mathbb{P}}_{\infty}$ given in \cref{prop:P-infty-inv}, for any $t\ge 0$ and bounded continuous functional $f$, 
	\begin{align*}
		\mathbf{n}[ f(\Phi_z (X)_s, s\le t) \mid T_y \!<\!\infty]
		&= \mathbb{E}^{(y)}_{\infty} \left[ f(\Phi_z (X)_s, s\le t) \right]\\
		&= c(\theta )\ee^{-\theta y} \widetilde{\mathbb{E}}_{\infty} \left[\exp(\theta (\Phi_z (X)_{t}-z)) f(\Phi_z (X)_s, s\le t)\right] \\
		&= c(\theta )\ee^{-\theta (y+z)} \widetilde{\mathbb{E}}_{\infty} \left[\exp(\theta  X_{t}) f(X_s, s\le t)\right] \\
		& = \mathbb{E}^{(y+z)}_{\infty} \left[ f(X_s, s\le t) \right]\\
		&= \mathbf{n}[ f(X_s, s\le t) \mid T_{y+z} \!<\!\infty].  \hspace{12em} \qedhere
	\end{align*}
\end{proof}

\begin{proposition}[Uniqueness]\label{prop:n-unique}
	The excursion measure $\mathbf{n}$ constructed in \cref{prop:first} is the same as the one in \cref{sec:exc-K}, up to a multiplicative constant. 
\end{proposition}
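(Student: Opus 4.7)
The plan is to identify the two excursion measures through their mean occupation measures and an application of the uniqueness of the integral representation of purely excessive measures. Write $\mathbf{n}$ for the pathwise construction of \cref{prop:first} and $\mathbf{n}_K$ for the Kuznetsov construction of \cref{sec:exc-K}. Both are excursion measures of $X$ in the sense of \eqref{eq:n}, so their finite-dimensional distributions are entirely determined by their entrance laws $(n_s)_{s>0}$ and $(n_s^K)_{s>0}$ together with the common semigroup $(S^X_t)$. Hence it suffices to show that these entrance laws coincide up to a multiplicative constant.

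First I would verify that the mean occupation measure of $\mathbf{n}$ has the canonical form $C \frac{\ee^{\theta y}}{R(y)}\dd y$. The scaling invariance $\Phi_z^{*}\mathbf{n} = \ee^{-\theta z}\mathbf{n}$ is already supplied by \cref{prop:n-scaling}, and \cref{prop:occup-meas} states that this invariance is equivalent to the mean occupation measure of $\mathbf{n}$ having exactly this form. So the desired shape of $m = \int_0^\infty n_s\dd s$ is automatic. On the other side, \cref{prop:exc-K-m} already gave the same form for $\mathbf{n}_K$, so the two mean occupation measures agree up to a positive multiplicative constant.

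Next I would invoke the uniqueness of integral representations of purely excessive measures, i.e.\ \cite[Theorem~(5.25)]{Get90}: since the mean occupation measure of any excursion measure is purely excessive for $(S^X_t)$, and since it decomposes uniquely as an integral of entrance laws, the proportionality of the mean occupation measures forces the proportionality of the entrance laws themselves. This yields $n_s = c\, n_s^K$ for all $s>0$ with the same constant $c>0$, and plugging into the finite-dimensional formula \eqref{eq:n} gives $\mathbf{n} = c\,\mathbf{n}_K$.

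The only subtlety that one must be careful about is to ensure that both $\mathbf{n}$ and $\mathbf{n}_K$ fall within the scope of \cite[Theorem~(5.25)]{Get90}; that is, that the mean occupation measure $m(\ddd y) = C\frac{\ee^{\theta y}}{R(y)}\dd y$ is indeed purely excessive for $(S^X_t)$ (as opposed to merely excessive) under Cramér's condition \ref{H2} with equality. This reduces to verifying that $m S^X_t \to 0$ pointwise as $t\to\infty$, which follows from the analogous property of $m^{\xi}(\ddd y) = \ee^{\theta y}\dd y$ relative to the L\'evy semigroup $(S_t)$ after transporting the time-change; the identities established in \cref{sec:exc-K} (in particular the duality relation and the fact that $\overline{\xi}$ drifts to $+\infty$ under Cramér's condition) give exactly this. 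With that checked, the proposition is immediate.
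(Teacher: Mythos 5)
Your proposal is correct and follows essentially the same route as the paper: both arguments reduce uniqueness to the mean occupation measures via \cref{prop:n-scaling}, \cref{prop:occup-meas} and \cref{prop:exc-K-m}, and then invoke the uniqueness of the integral representation of purely excessive measures from \cite[Theorem~(5.25)]{Get90} to identify the entrance laws and hence the excursion measures. One small correction to your final paragraph: under Cram\'er's condition with equality, the measure $m^{\xi}(\ddd y)=\ee^{\theta y}\dd y$ is \emph{invariant} (not purely excessive) for $(S_t)$, so pure excessiveness of $m$ cannot be inherited from an ``analogous property'' of $m^{\xi}$; instead it is automatic from the representation $m=\int_0^\infty n_s\dd s$ of a mean occupation measure, since $mS^X_t[f]=\int_t^\infty n_s[f]\dd s\to 0$ whenever $m[f]<\infty$, which is exactly the well-known fact the paper records after \eqref{eq:occup-meas-defn}.
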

\begin{proof}
	Note that each  excursion measure is uniquely determined by its entrance law, and thus uniquely determined by the  mean occupation measure, due to \cite[Theorem~(5.25)]{Get90}. 
	Therefore, it suffices to show that the mean occupation measures are the same. 
	By \cref{prop:n-scaling,prop:occup-meas,prop:exc-K-m}, this is indeed the case. 
\end{proof}

Let $\tilde{\xi}$ be a L\'evy process of distribution $\widetilde{\mathrm{P}}_x$. Let $\widetilde{\mathbb{P}}_x$ be the law of the time-changed process $\widetilde{X}$ defined analagously to \eqref{eq:time-change}.
It is straightforward to establish the following change of measure. 
\begin{lemma}[Change of measure]
	\begin{equation*}\label{eq:tildebP}
		\widetilde{\mathbb{P}}_x \,\big|_{\mathcal{F}_t}=  \ee^{\theta x} \ee^{-\theta X_t} \,\mathbb{P}_x\,\big|_{\mathcal{F}_t},\qquad  \text{for all } t\ge 0. 
	\end{equation*}
\end{lemma}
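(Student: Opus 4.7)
The plan is to derive the stated change of measure from the standard Esscher transform applied to the underlying L\'evy process $\xi$, by optional sampling at the stopping time $\eta(t)$.

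Let $(\mathcal{G}_s)_{s\ge 0}$ denote the natural filtration of $\xi$. Under Cram\'er's condition (the case treated in this subsection), $M_s \coloneqq \ee^{-\theta \xi_s}$ is a non-negative $(\mathcal{G}_s)$-martingale under $\mathrm{P}_0$, and \eqref{eq:tildeP} reads $\widetilde{\mathrm{P}}_0\big|_{\mathcal{G}_s} = M_s\, \mathrm{P}_0\big|_{\mathcal{G}_s}$. The additive functional $s \mapsto \int_0^s R(x + \xi_r)^{-1}\dd r$ is $(\mathcal{G}_s)$-adapted and continuous, so $\eta(t)$ is a $(\mathcal{G}_s)$-stopping time and the time-changed filtration satisfies $\mathcal{F}_t = \mathcal{G}_{\eta(t)}$.

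The core of the proof is optional sampling. For each $n$, since $\eta(t)\wedge n$ is a bounded stopping time, the Esscher identity extends to
$$\widetilde{\mathrm P}_0\big|_{\mathcal G_{\eta(t)\wedge n}} = M_{\eta(t)\wedge n}\,\mathrm P_0\big|_{\mathcal G_{\eta(t)\wedge n}}.$$
I would then let $n\to\infty$, handling two sub-events separately. On $\{\eta(t)<\infty\}$, which coincides with non-explosion of the time-changed process by time $t$, one has $M_{\eta(t)\wedge n}\to M_{\eta(t)}=\ee^{-\theta \xi_{\eta(t)}}$ for $n$ large. On $\{\eta(t)=\infty\}$, assumption \ref{H2} forces $\xi_s\to+\infty$ $\mathrm P_0$-a.s., so $M_{\eta(t)\wedge n}=M_n\to 0$ and this event contributes nothing in the limit; this is consistent with the fact that the tilted process $\widetilde\xi$ drifts to $-\infty$ and hence (by \eqref{eq:R-Condition}) $\widetilde X$ has infinite lifetime, so $\widetilde{\mathbb P}_x$ places no mass on explosion. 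Dominated convergence transports the identity to $\mathcal G_{\eta(t)}=\mathcal F_t$ with the natural convention $\ee^{-\theta\infty}=0$, and rewriting $\ee^{-\theta \xi_{\eta(t)}}=\ee^{\theta x}\ee^{-\theta X_t}$ via $X_t=x+\xi_{\eta(t)}$ yields the stated formula.

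The main obstacle is the bookkeeping around the possibly infinite stopping time $\eta(t)$: one must reconcile the explosion of $X$ under $\mathbb P_x$ with the non-explosion of $\widetilde X$ under $\widetilde{\mathbb P}_x$. Both drift-to-infinity properties---of $\xi$ under $\mathrm P_0$, which kills the density at $+\infty$, and of $\widetilde\xi$ under $\widetilde{\mathrm P}_0$, which prevents explosion---make these consistent, and the remainder is an elementary application of optional sampling and the change-of-variables back into the notation of the time-changed processes.
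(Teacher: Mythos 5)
Your proof is correct, and it supplies exactly the argument the paper omits: the paper's entire justification for this lemma is the sentence ``It is straightforward to establish the following change of measure,'' so there is no written proof to compare against. Your route --- identify $\mathcal{F}_t$ with $\mathcal{G}_{\eta(t)}$ for the stopping times $\eta(t)$, apply the Esscher identity at the bounded stopping times $\eta(t)\wedge n$, and pass to the limit --- is also the mechanism behind the proof the paper \emph{does} write for the closely related \cref{lem:ChangeofMeasure-bP}, where compatibility of a change of measure with the time change is likewise reduced to the fact that the $\eta_t$ are stopping times generating the time-changed filtration. Your bookkeeping at explosion is the right one: $\{\eta(t)=\infty\}=\{\zeta\le t\}$, the density $\ee^{-\theta\xi_{\eta(t)\wedge n}}$ tends to $0$ on this event because $\xi$ drifts to $+\infty$ under \ref{H2}, and $\widetilde{\mathbb{P}}_x$ charges no mass to it because the tilted process drifts to $-\infty$, so \eqref{eq:R-Condition} and \cref{lem:explosion} rule out explosion of $\widetilde{X}$; this matches the convention $\ee^{-\theta X_t}=0$ on $\{X_t=+\infty\}$ implicit in the statement. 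One small imprecision: the passage to the limit is not literally dominated convergence, since there is no evident dominating function for $\ee^{-\theta\xi_{\eta(t)\wedge n}}$. The clean version is to note that for $A\in\mathcal{G}_{\eta(t)}$ one has $A\cap\{\eta(t)\le n\}\in\mathcal{G}_{\eta(t)\wedge n}$, hence $\widetilde{\mathrm{P}}_0(A\cap\{\eta(t)\le n\})=\mathrm{E}_0\big[\ee^{-\theta\xi_{\eta(t)}}\1{A\cap\{\eta(t)\le n\}}\big]$, and then to let $n\to\infty$ by monotone convergence on both sides before adding the null contribution of $A\cap\{\eta(t)=\infty\}$. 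This is a cosmetic repair, not a gap.
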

Recall that $\tilde{\xi}$ satisfies \eqref{eq:H1} and that the properties developed in \cref{sec:P-infty} hold for the process $\widetilde{X}$. In particular, by \cref{prop:convergence}, $\widetilde{\mathbb{P}}_x\to \widetilde{\mathbb{P}}_{\infty}$ weakly as $x\to \infty$. 
\begin{proposition}
	Under $\mathbf{n}(\ccdot\mid T_y <\infty)$, $(\mathbf{e}_{t}, 0\le t\le T_y)$ is a process of law $\widetilde{\mathbb{P}}_{\infty}$ stopped at first crossing of $y$. Conditionally on $(\mathbf{e}_{t}, 0\le t\le T_y)$, the process $(\mathbf{e}_{T_y+s}, s\ge 0)$ has distribution $\mathbb{P}_{\mathbf{e}_{T_y}}$. 
\end{proposition}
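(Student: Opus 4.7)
The plan is to combine the identification $\mathbf{n}(\,\cdot\mid T_y<\infty)=\mathbb{P}^{(y)}_{\infty}$ from \cref{prop:first} with the pathwise decomposition of $\mathcal{P}_y$ given by Barczy--Bertoin. Under $\mathcal{P}_y$ the canonical process $W=y+Z$ has over- and undershoot at level $y$ at time $0$ with distribution $\rho$, and conditionally on $(W_{0-}-y,\,y-W_0)=(y_1,y_2)$ the forward segment $(W_t-y,\,t\ge 0)$ is an independent $\mathrm{P}_{y_2}$-L\'evy process while the backward segment is the conditioned-to-stay-negative version of $\widetilde\xi$. The time-change $\psi$ sends the $W$-time origin to the $X$-time $T_y$, with $\mathbf{e}_{T_y}=W_0=y-y_2$.

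Given this setup, the second claim follows directly: the forward segment after $T_y$ is the $\psi$-image of an $\mathrm{P}_{y-y_2}$-L\'evy process that is independent of the past, so it has distribution $\mathbb{P}_{y-y_2}=\mathbb{P}_{\mathbf{e}_{T_y}}$. This is the strong Markov property of the excursion at $T_y$, read directly off the decomposition under $\mathcal{P}$.

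For the first claim I would appeal to \cref{lem:ChangeofMeasure-bP}, which gives
\[
\mathbb{P}^{(y)}_{\infty}\big|_{\mathcal{F}_t}
= c(\theta)\,\ee^{\theta(X_t-y)}\mathbf{1}_{\{T_y<\infty,\,t<\zeta\}}\,\widetilde{\mathbb{P}}_{\infty}\big|_{\mathcal{F}_t}.
\]
Under $\widetilde{\mathbb{P}}_{\infty}$ the process $\widetilde\xi$ drifts to $-\infty$, so $T_y<\infty$ almost surely and $\zeta=\infty$ by \cref{prop;explosion}, and the density $(\ee^{\theta X_t})_{t\ge0}$ is a positive $\widetilde{\mathbb{P}}_{\infty}$-martingale inherited via time-change from the Esscher martingale for $\widetilde\xi$. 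Optional stopping at $T_y$, justified by the finite total mass $\mathbf{n}(T_y<\infty)=\ee^{\theta y}$ from \cref{prop:first}, extends the density identity to the stopped $\sigma$-field $\mathcal{F}_{T_y}$, yielding the advertised identification of the stopped law.

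The main obstacle will be the careful tracking of the Esscher factor across the optional stopping step. The over- and undershoot distributions at $T_y$ are $\rho$ under $\mathbb{P}^{(y)}_{\infty}$ by Barczy--Bertoin and $\widetilde\rho$ under $\widetilde{\mathbb{P}}_{\infty}$ by \cref{prop:P-infty-inv} applied to $\widetilde\xi$, and they are related precisely by the factor $c(\theta)\ee^{\theta y_2}$ appearing in \cref{lem:ChangeofMeasure-bP}. Showing that this factor is exactly what collapses the density on $\mathcal{F}_{T_y}$ to an identity of probability measures, once one conditions on the event $\{T_y<\infty\}$ of $\mathbf{n}$-mass $\ee^{\theta y}$, is the technical heart of the argument.
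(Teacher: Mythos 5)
Your handling of the second claim is fine and is exactly the paper's argument: under $\mathcal{P}_y$ the post-crossing segment is an untilted L\'evy process issued from the undershoot position, independent of the past given that position, and the time-change turns it into $\mathbb{P}_{\mathbf{e}_{T_y}}$.

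The first claim is where your proposal has a genuine gap, and it is structural, not a matter of bookkeeping. Optional stopping applied to the density of \cref{lem:ChangeofMeasure-bP} at the time $T_y$ (which is a.s.\ finite under $\widetilde{\mathbb{P}}_{\infty}$, with $\zeta=\infty$) gives
\begin{equation*}
\mathbb{P}^{(y)}_{\infty}\big|_{\mathcal{F}_{T_y}}
= c(\theta)\,\ee^{\theta(\mathbf{e}_{T_y}-y)}\,\widetilde{\mathbb{P}}_{\infty}\big|_{\mathcal{F}_{T_y}},
\end{equation*}
and $\ee^{\theta(\mathbf{e}_{T_y}-y)}$ is a nondegenerate random variable whenever the excursion can cross the level $y$ by a negative jump (its exponent is $-\theta$ times the undershoot). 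A density that is not a.s.\ constant cannot ``collapse to an identity of probability measures'', and the extra conditioning on $\{T_y<\infty\}$ you invoke is vacuous: that event has full $\widetilde{\mathbb{P}}_{\infty}$-measure, and conditioning $\mathbf{n}$ on it is precisely how $\mathbb{P}^{(y)}_{\infty}$ was defined in \cref{prop:first}. In fact your own (correct) observation refutes the step you call the technical heart: the crossing data has law $\rho$ under $\mathbb{P}^{(y)}_{\infty}$ but $\widetilde{\rho}$ under $\widetilde{\mathbb{P}}_{\infty}$, and if the two stopped laws were equal these undershoot laws would have to coincide; they do not. What your route genuinely proves is the absolute-continuity relation displayed above --- equivalently, that the laws of $(\mathbf{e}_t,\,0\le t<T_y)$ under the two measures agree \emph{conditionally on} $\mathbf{e}_{T_y}$ --- and no tracking of constants can upgrade this to the unconditional identity of stopped processes.

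The paper's proof takes a different route that never stops a martingale: it uses the Barczy--Bertoin decomposition of the two-sided process $\mathcal{P}_y$ directly --- backward from the crossing time the path is the dual of $\widetilde{\xi}$ conditioned to stay positive, which is exactly the ingredient from which $\widetilde{\mathbb{P}}_{\infty}$ is built in \eqref{time-change eq}, while forward it is an untilted L\'evy process --- and then pushes this description through the time-change $\psi$, which maps the crossing time of $Z$ to the excursion time $T_y$. The identification with $\widetilde{\mathbb{P}}_{\infty}$ is thus made at the level of the backward conditional dynamics given the crossing data. Note that the Esscher tilt you were hoping to cancel is present on this route as well: under $\mathcal{P}_y$ the pre-crossing/undershoot pair has law $\rho$, not $\widetilde{\rho}$, so the matching of the two stopped processes is exact only given $\mathbf{e}_{T_y}$ (automatically so when $\xi$ creeps downward, e.g.\ in the spectrally positive case, where the undershoot vanishes and the density is identically one). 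This confirms that the cancellation on which your argument relies is simply not available.
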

\begin{proof}
 Since the probability measure $\mathbf{n}(\ccdot\mid T_y <\infty)$ is equal to $\mathbb{P}^{(y)}_{\infty}$, the proof boils down to give a decomposition of a process of law 	$\mathbb{P}^{(y)}_{\infty}$. 
 Recall that $\mathbb{P}^{(y)}_{\infty}$ is obtained from $\mathcal{P}_y$ via a time-change.  
 For a process $Z$ of law $\mathcal{P}_y$, using the description above equation \eqref{eq:quasi-stationary},  we deduce that the backward process $(Z_{-t-}-y, t\ge 0)$ starts from $Z_{0-}-y = Y_1$, with $Y_1$ of the first marginal distribution of $\widetilde{\rho}$, and has the law of the dual process $-\widetilde{\xi}$ conditioned to stay positive. After the time-change, this coincides with the construction of $\widetilde{\mathbb{P}}_{\infty}$ given by \eqref{time-change eq}. Moreover, the forward process $(Z_t, t\ge 0)$ has distribution $P_{y_2}$, conditionally on $Z_0 =y_2$. Then via the time-change we have the law of the time-changed process.  
\end{proof}

\begin{proposition}[Second description of $\mathbf{n}$]\label{prop:second}
	The measure $\mathbf{n}$ defined in \cref{prop:first} is the unique $\sigma$-finite measure that satisfies the following properties:  
	\begin{enumerate}
		\item $\mathbf{n}( \mathbf{e}_0\ne \infty)= 0$,
		\item Under $\mathbf{n}$, the excursion $(\mathbf{e}_t, t\ge 0)$ is Markovian with the transition probabilities of $\bP$ and entrance law $n_s(\ddd y) \coloneqq  \mathbf{n}(\mathbf{e}_s\in \dd y, s<\zeta)$, $s>0$, defined by 
		\begin{equation}\label{eq:n-entrance}
			n_s(\ddd y) = c(\theta) \ee^{\theta y}\widetilde{\mathbb{P}}_{\infty}(X_s\in \dd y),\qquad y\in \bR. 
		\end{equation} 
	\end{enumerate}
\end{proposition}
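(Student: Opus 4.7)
The plan is to verify that the measure $\mathbf n$ constructed in \cref{prop:first} satisfies properties (i)--(ii) of the proposition, and then to deduce uniqueness from the standard fact that a $\sigma$-finite measure on excursion path space is determined by its initial condition, its transition semigroup, and its one-dimensional entrance law. Property (i) is already part of \cref{prop:first}(i), so only the Markov structure under the $\mathbb{P}$-semigroup and the explicit form of $n_s$ require genuine work.

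The central step will be to derive the absolute-continuity identity
\begin{equation*}
	\mathbf n|_{\mathcal F_t \cap \{t<\zeta\}} = c(\theta)\,\ee^{\theta X_t}\, \widetilde{\mathbb P}_\infty|_{\mathcal F_t}, \qquad t>0.
\end{equation*}
To obtain it, I would combine \cref{prop:first}(ii)--(iii), namely $\mathbf n|_{\{T_y<\infty\}} = \ee^{\theta y}\mathbb P^{(y)}_\infty$, with the Esscher identity of \cref{lem:ChangeofMeasure-bP}, which yields
\begin{equation*}
	\mathbf n|_{\mathcal F_t \cap \{T_y<\infty,\,t<\zeta\}} = c(\theta)\,\ee^{\theta X_t}\, \mathbf 1_{\{T_y<\infty\}}\, \widetilde{\mathbb P}_\infty|_{\mathcal F_t}.
\end{equation*}
Letting $y\to\infty$, the events $\{T_y<\infty\}$ exhaust excursion space $\mathbf n$-a.e.\ (since $\mathbf n(T_y<\infty)=\ee^{\theta y}\to\infty$ and $\mathbf n$ is $\sigma$-finite), while under $\widetilde{\mathbb P}_\infty$ the process comes down from $+\infty$ without re-exploding---by \cref{thm:1} applied to $\widetilde\xi$, whose hypotheses \ref{H1} and \eqref{eq:H0} follow from \eqref{eq:H-exp} together with \eqref{eq:H-rec-suff} and the boundedness of $\widetilde\nu_+$. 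Hence $\widetilde{\mathbb P}_\infty(T_y<\infty)=1$ for every $y$ and $\{t<\zeta\}$ is a full-measure event there, so monotone convergence delivers the displayed identity.

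From it the entrance law is immediate: for Borel $A\subset\R$,
\begin{equation*}
	n_s(A) = \mathbf n(\mathbf e_s\in A,\,s<\zeta) = c(\theta)\,\widetilde{\mathbb E}_\infty\!\left[\ee^{\theta X_s}\mathbf 1_A(X_s)\right] = c(\theta)\int_A \ee^{\theta y}\,\widetilde{\mathbb P}_\infty(X_s\in\ddd y),
\end{equation*}
which is \eqref{eq:n-entrance}. For the Markov property under $(S^X_t)$, given $t,s>0$, a bounded $\mathcal F_s$-measurable $F$ and a bounded measurable $g$, I would apply the identity at level $t+s$ to write $\mathbf n[g(\mathbf e_t)F(\mathbf e_{t+\cdot})]=c(\theta)\widetilde{\mathbb E}_\infty[\ee^{\theta X_{t+s}}g(X_t)F(X_{t+\cdot})]$, invoke the Markov property of $\widetilde{\mathbb P}_\infty$ at time $t$, and then collapse the tilt by means of the Esscher identity $\widetilde{\mathbb E}_x[\ee^{\theta X_s}F]=\ee^{\theta x}\mathbb E_x[F]$, recovering exactly $\mathbf n[g(\mathbf e_t)\mathbb E_{\mathbf e_t}[F]]$.

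Uniqueness is then essentially bookkeeping: any $\sigma$-finite $\mathbf n'$ with (i)--(ii) has finite-dimensional distributions on $\{t_k<\zeta\}$ given by the formula \eqref{eq:n} from the prescribed entrance law and transition semigroup, so agrees there with $\mathbf n$; on the complement both measures put the path at $+\infty$, which is determined by (i) at time $0$ and by the definition of $\zeta$ afterwards. The step I expect to need most care is the $y\to\infty$ passage in the absolute-continuity identity, specifically checking that $\bigcap_y\{T_y=\infty\}$ is $\mathbf n$-negligible---this however follows from $\mathbf n(T_y<\infty)=\ee^{\theta y}$ together with the $\sigma$-finiteness of $\mathbf n$ and the structural fact that excursions under $\mathbf n$ leave $+\infty$ immediately.
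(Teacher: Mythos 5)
Your proof is correct, and its skeleton matches the paper's: both arguments combine \cref{prop:first} with the change of measure in \cref{lem:ChangeofMeasure-bP}, let $y\to\infty$ to obtain the absolute-continuity identity $\mathbf{n}(\,\cdot\,, t<\zeta)=c(\theta)\,\widetilde{\mathbb{E}}_{\infty}\big[\ee^{\theta X_t}\,\cdot\,\big]$ on $\mathcal{F}_t$, read off \eqref{eq:n-entrance}, and treat uniqueness as standard bookkeeping. Where you genuinely diverge is in how the limit $y\to\infty$ is justified, and this is the technical heart of the paper's proof. The paper fixes a compactly supported $f$, inserts the event $\{T_y<s\}$, decomposes at the stopping time $T_y$ under $\widetilde{\mathbb{P}}_{\infty}$ via the strong Markov property (which brings in the overshoot law $\widetilde{\rho}$), and then passes to the limit using the joint continuity of $(x,t)\mapsto\widetilde{\mathbb{E}}_x[g(X_t)]$, a consequence of the Feller property from \cref{prop:convergence}. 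You instead note that the right-hand side does not depend on $y$ at all, since $\widetilde{\mathbb{P}}_{\infty}(T_y<\infty)=1$ for every $y$ (the tilted process comes down from infinity, does not explode, and drifts to $-\infty$), while on the left-hand side the events $\{T_y<\infty\}$ increase and exhaust $\{t<\zeta\}$, so continuity from below of the measure $\mathbf{n}$ finishes the argument. This is more elementary: it bypasses the overshoot decomposition and the continuity theorem entirely, at no loss of generality. You also verify explicitly that $\mathbf{n}$ is Markovian with the transition probabilities of $\mathbb{P}$ (identity at time $t+s$, Markov property of $\widetilde{\mathbb{P}}_{\infty}$ at time $t$, then undoing the Esscher tilt); the paper leaves this point implicit, and your computation is correct.

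One small repair: your justification that $\bigcap_y\{T_y=\infty\}$ is $\mathbf{n}$-negligible should not rest on $\sigma$-finiteness or on the divergence of $\mathbf{n}(T_y<\infty)=\ee^{\theta y}$ (a $\sigma$-finite measure can assign diverging masses to an increasing sequence of sets without their union being co-null). The correct, and simpler, reason is pathwise: on $\{t<\zeta\}$ one has $\mathbf{e}_t\in\R$, hence $T_y\le t<\infty$ for every $y>\mathbf{e}_t$, so $\{t<\zeta\}\subseteq\bigcup_y\{T_y<\infty\}$ as sets of paths, and your identity is only ever needed on $\{t<\zeta\}$. With that substitution your argument is complete.
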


As an immediate consequence of \cref{prop:second} we notice that for $s>0$, 
\begin{equation*}\label{eq:n-zeta}
	\mathbf{n}(\zeta>s) = c(\theta) \widetilde{\mathbb{E}}_{\infty}\left[\ee^{\theta X_{s}} , s<\zeta \right]. 
\end{equation*}
\begin{proof}
	It is clear that at most one measure satisfies these properties. 
	Let us check that $\mathbf{n}$ defined by the first description in \cref{prop:first} has the entrance law given by \eqref{eq:n-entrance}. Let  $s>0$ and $f$ a bounded continuous function with compact support in $\bR$. For $y\in \bR$ above the support of $f$, 
	$
	\mathbf{n}[\mathbf{e}_s) \ind{s<\zeta}]  = \mathbf{n}[f(\mathbf{e}_s), \ind{s<\zeta, T_y<s}] . 
	$
	Therefore, applying the change of measure in \eqref{eq:ChangeofMeasure-bP} and \cref{prop:convergence}, we have 
	\begin{align*}
		\mathbf{n}[f(\mathbf{e}_s)\ind{s<\zeta, T_y<s}]
		& =\ee^{\theta y}\mathbb{E}^{(y)}_{\infty}	[f(X_s), s<\zeta, T_y<s]\\
		& = c(\theta) \widetilde{\mathbb{E}}_{\infty}	\Big[\ee^{\theta X_s}f(X_s), s<\zeta, T_y<s\Big]\\
		&= c(\theta)  \int_0^s \widetilde{\mathbb{P}}_{\infty}(T_y\!\in\! \dd u) \int  \widetilde{\rho}(\ddd y_2)\widetilde{\mathbb{E}}_{y-y_2}\Big[\ee^{\theta X_{s-u}} f(X_{s-u}), s<\zeta \Big]\\
		&\to   c(\theta)\widetilde{\mathbb{E}}_{\infty}\Big[\ee^{\theta X_{s}} f(X_{s}), s<\zeta \Big] \quad \text{as } y\to \infty, 
	\end{align*}
	where the convergence  is due to the fact that $(x,t)\mapsto \widetilde{\mathbb{E}}_x[g(X_t)]$ is jointly continuous for any bounded continuous $g$, a consequence of the Feller property of $\widetilde{\mathbb{P}}$ given in \cref{prop:convergence}, see e.g.\ \cite[Theorem~2.2.1]{ChuWal-Book}. 
	This implies \eqref{eq:n-entrance}. 
\end{proof}

We finally prove the convergence result, \cref{prop:n-cv}, extending the time-change mapping $\psi$ given in \cref{sec:conti} to processes indexed by $\R$. 
Let $\DRR$ be the class of all c\`adl\`ag functions $:\R\to\oR$.
Skorokhod's $J_1$ topology on $\DRR$ is induced by the metric
$$
	d(v,w) = \int_{-\infty}^0  \int_0^\infty \ee^{s-t}d_{s,t}(v,w) \dd t \dd s,
$$
where $d_{s,t}(v,w)$ is the analogue of $d_t$ but taken over the interval $[s,t]$ rather than $[0,t]$ (see \cite[Theorem 2.5]{Whitt80}).
Let $\psi$ be the path transform on $\W$ defined in \eqref{eq:psi-bis}.
As in \cref{sec:conti}, we use the notation $D\coloneqq D([0, \infty),\oR)$.

\begin{lemma}\label{lem:psi-conti-R}
Let $L\subset \DRR$ be the set of paths $w$ that have limits $\lim_{t\to\pm\infty} w_t = \infty$ and for which $\int_{-\infty}^s f(w_u) \dd u<\infty$ for all $s<\infty$.
Then the map
$
	\psi|_L : (L, d)\to (D,d)
$
is continuous.
\end{lemma}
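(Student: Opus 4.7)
The plan is to adapt the proof of \cref{cty prop 2} to the bilateral time setting, using the condition $\lim_{t \to -\infty} w_t = +\infty$ to control the left endpoint in the same way the original proof handled the right. Fix $w \in L$ and a sequence $(w^n) \subset L$ with $d(w^n, w) \to 0$. For $\eps > 0$, set $\sigma_- = \inf\{t \in \R : \rho(w_t, \infty) \ge \eps\}$ and $\sigma_+ = \sup\{t \in \R : \rho(w_t, \infty) \ge \eps\}$; both are finite by the limits at $\pm \infty$. Defining $\sigma_-^n, \sigma_+^n$ analogously, I would first establish a bilateral analogue of \eqref{mandinga}: $\liminf_n \sigma_-^n \ge \sigma_-$ and $\limsup_n \sigma_+^n \le \sigma_+$. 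The proof is the same contradiction argument as in \cref{cty prop 2}; any violation would yield a set of positive Lebesgue measure on which $\rho(w_s, w^n_s)$ stays bounded below, contradicting $d(w^n,w) \to 0$.

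Next, I would prove $I(w^n) \to I(w)$ uniformly on compact subsets of $\R$, where $I(w)_s = \int_{-\infty}^s f(w_u) \dd u$. For $c > 0$ and $n$ large enough that $\sigma_-^n \ge \sigma_- - c$, both $w$ and $w^n$ lie within $2\eps$ of $\infty$ on $(-\infty, \sigma_- - c]$; continuity of $f$ on $\oR$ together with finiteness of $I(w)_{\sigma_- - c}$ (which holds by the defining condition of $L$) yield uniform closeness of the tail integrals. On any compact $[\sigma_- - c, T]$, the Skorokhod $J_1$ convergence combined with continuity of $f$ gives uniform convergence of the forward integrals, by the same application of \cite[Thm.~11.5.1]{Whitt} invoked in \cref{cty prop 1}. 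Strict monotonicity of $I(w)$ (since $f > 0$ on the finite range of $w$) then transfers uniform convergence to the generalised inverses $\eta(w^n) \to \eta(w)$ on compact subsets of $[0, \infty)$, and Whitt's continuity of composition \cite[Theorem 3.1]{Whitt80} closes out $\psi(w^n) \to \psi(w)$ in $(\D, d)$.

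The main obstacle is the first stage: the metric $d$ on $\DRR$ down-weights behaviour near $\pm \infty$ exponentially, so Skorokhod convergence does not automatically give useful control on the tails. The shared-limit assumption $\lim_{t \to -\infty} w^n_t = \lim_{t \to -\infty} w_t = \infty$, enforced by membership in $L$, is precisely what rules out the pathology flagged in the remark following \cref{cty prop 2} and, as there, permits the contradiction argument yielding the one-sided control of the $\sigma_\pm^n$. Everything downstream is routine once these bilateral bounds are in hand.
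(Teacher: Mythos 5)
The decisive gap is your first stage, and the obstacle you yourself flag in the final paragraph is \emph{not} overcome by the shared-limit condition. The bilateral analogue of \eqref{mandinga} is not merely unproven by the contradiction argument --- it is false. The metric $d$ on $\DRR$ weights the window $[s,t]$ by $\ee^{s-t}$, so a fixed-size discrepancy located at times tending to $\pm\infty$ costs nothing asymptotically. Concretely, fix $w\in L$ and let $w^n$ agree with $w$ except on $[-n-1,-n)$, where $w^n\equiv 0$. Every $w^n$ lies in $L$ (each still has limit $+\infty$ at both ends and a finite clock), and $d(w^n,w)\le \ee^{-n}\to 0$ because only windows with $s\le -n$ see any difference; yet $\sigma_-^n\le -n-1\to-\infty$, so $\liminf_n\sigma_-^n\ge\sigma_-$ fails. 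The mirror-image perturbation on $[n,n+1)$ defeats $\limsup_n\sigma_+^n\le\sigma_+$. Membership in $L$ constrains each path individually but gives no uniformity along the sequence: the set of positive Lebesgue measure you invoke, on which $\rho(w^n_s,w_s)$ is bounded below, escapes to $\pm\infty$ with $n$, exactly where the exponential weight annihilates its contribution to $d$, so no contradiction with $d(w^n,w)\to 0$ arises. (Your stage two has an independent soft spot as well: $\rho$-proximity of $w^n_u$ to $+\infty$ on the left tail does not make $f(w^n_u)$ small or close to $f(w_u)$, since $f$ need not decay pointwise at $+\infty$ --- the paper's convention even sets $f(\infty)=1$ --- so ``uniform closeness of the tail integrals'' does not follow from $\eps$-closeness to $\infty$.)

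This is fatal rather than cosmetic, because the left-tail perturbation also destroys the conclusion: for $s\ge -n$ one has $I(w^n)_s=I(w)_s+f(0)-\int_{-n-1}^{-n}f(w_u)\dd u\to I(w)_s+f(0)$, so $\psi(w^n)$ develops a plateau at level $0$ of duration $\to f(0)$ beginning at time $I(w)_{-n-1}\to 0$, whereas $\psi(w)$ leaves $+\infty$ continuously; hence $d(\psi(w^n),\psi(w))\not\to 0$. So no argument can close your gap in the stated topology: some uniform tail control along the sequence (for instance convergence in the uniform-after-reparametrization metric $d_\infty$ of Caballero, Lambert and Uribe Bravo \cite{CLU} recalled after \cref{cty prop 2}, or uniform smallness of $\int_{-\infty}^{-m}f(w^n_u)\dd u$ in $n$) must be added as a hypothesis rather than extracted from $d$-convergence. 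For comparison, your route does differ from the paper's: you adapt the one-sided argument directly, whereas the paper reduces to \cref{cty prop 2} by applying it to the shifted paths $\tw^n$ and using the identity ${}^{I(w)_t}\psi(w)=\psi(\tw)$, letting $t\to-\infty$ at the end. But that route leans on the same two unavailable ingredients --- \eqref{mandinga} inside \cref{cty prop 2}, and the assertion that $I(w^n)\to I(w)$ pointwise --- so it is stopped by the same example; the tail-control issue is intrinsic to the statement, not an artifact of your particular adaptation.
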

\begin{proof}
For any path $v\in \DRR$ and any $t\in\R$ let $\tv$ be the path in $D$ defined by
$
	\tv_s = v_{t+s},\, s\ge0.
$
A sequence of paths $(w^n)$ converges to $w$ in $(\DRR, d)$ if and only if for every $t\in\R$, $(\tw^n)$ converges to $\tw$ in $(D, d)$. 

Now consider any sequence $w^n\to w$ in $(L, d)$. For every $t\in\R$, $\tw^n \to \tw$ in $(L,d)$, and by \cref{cty prop 2} we deduce that $\psi(\tw^n)\to \psi(\tw)$ in $(D, d)$. It remains to use this to show that 
${}^t\psi(w^n)\to {}^t\psi(w)$ for arbitrarily small $t\in(0,\infty)$, from which it will follow that 
$\psi(w^n)\to \psi(w)$.

For any path $v\in \DRR$, we have that
$
	\psi(\tv)(\,\cdot\,) = v(t + \eta(\tv)(\,\cdot\,)),
$
where
\begin{align*}
	\eta(\tv)(a)
	= \inf_{s>0} \Big\{ \int_0^s f(\tv_u)\dd u > a\Big\}
	&= \inf_{s>0} \Big\{ \int_t^{t+s} f(w_u)\dd u > a\Big\}\\
	&= -t + \inf_{s>0} \Big\{ \int_t^s f(v_u)\dd u > a\Big\}\\
	&= -t + \inf_{s>-\infty} \{ I(v)_u > a + I(v)_t \}\\
	&= -t + \eta(v)(a + I(v)_t).
\end{align*}
Thus
$
	\psi(v)_s = \psi(\tv)_{s - I(v)_t}
$
for $s>I(v)_t$, that is,
$
	^{I(v)_t}\psi(v) = \psi(\tv).
$
Since $w^n\to w$ we have that $I(w^n)\to I(w)$ pointwise, and it follows that as $n\to\infty$,
$$
	d({}^{I(w)_t}\psi(w^n), {}^{I(w)_t}\psi(w))
	\le d({}^{I(w)_t}\psi(w^n), {}^{I(w^n)_t}\psi(w^n)) + d( \psi(\tw^n), \psi(\tw)) \to 0.
$$
Since $I(w)$ is increasing and continuous, we can make $I(w)_t$ arbitrarily small by sending $t\to-\infty$, and thus we have that $\psi(w^n)\to\psi(w)$ under $d$.
\end{proof}

\begin{proof}[Proof of \cref{prop:n-cv}]
It follows from \cite[Lemma~4.1]{KallenbergRM} that it suffices to show for
 every $y\in \bR$ the following statements: 
	\begin{enumerate}
		\item $\mathbf{n} (\inf_{s\ge 0} \mathbf{e}_s = y) = 0$;
		\item $c(\theta) \ee^{\theta x} \mathbb{P}_x(T_y <\infty)\Rightarrow \mathbf{n}(T_y <\infty)$ as $x\to \infty$;
		\item $\mathbb{P}_x(\ccdot \mid T_y <\infty)\Rightarrow \mathbf{n}(\ccdot \mid T_y <\infty)$ weakly, as $x\to \infty$.
	\end{enumerate}
	(i) is clear from the description of $\mathbf{n}$. 
	(ii) follows from classical Cram\'er's estimate, see \cite[Remark~2]{BerDon94}, which tells us that $ c(\theta) \ee^{\theta x} \mathbb{P}_x(T_y <\infty) = \ee^{\theta x} \mathrm{P}_x(T_y <\infty)\to  \ee^{\theta y}$, as $x\to \infty$.  

 To prove (iii), we regard a L\'evy process $\xi$ as a process indexed by $\mathbb{R}$ with $\xi_{t}= \xi_0$ for all $t<0$. Recall from \eqref{eq:h_y} that  $h_y(\xi)$ denotes the path transform by shifting $T_y$ to zero and consider $h_y(\xi)$. 
By \cite[Theorem 1]{BarBer11}, we have that, under the conditional law $\mathrm{P}_x(\ccdot\mid T_y <\infty)$, as $x\to \infty$, the process $h_y(\xi)$ converges weakly to $\mathcal{P}_y$. 
By \cref{lem:psi-conti-R}, the weak convergence still holds after the time-change, hence we derive that $\mathbb{P}_x(\ccdot \mid T_y <\infty) \Rightarrow \mathbb{P}^{(y)}_{\infty} = \mathbf{n}(\ccdot \mid T_y <\infty)$. 

When \eqref{eq:H-exp} does not hold, we have by \cite{BerDon94} that $ \ee^{\theta x} \mathbb{P}_x(T_y <\infty)\to 0$, as $x\to \infty$. Therefore the limit is a zero measure on every set away from a neighborhood of $+\infty$. 
\end{proof}

\begin{funding}
%
QS was supported in part by the National Key R\&D Program of China (No. 2022YFA1006500) and National Natural Science Foundation of China (No. 12288201 and 12301169). 
\end{funding}

\bibliographystyle{imsart-number}
\bibliography{bibliography}

\begin{thebibliography}{54}

\bibitem{BDK}
\begin{barticle}[author]
\bauthor{\bsnm{Baguley},~\bfnm{S.}\binits{S.}},
  \bauthor{\bsnm{Döring},~\bfnm{L.}\binits{L.}} \AND
  \bauthor{\bsnm{Kyprianou},~\bfnm{A.~E.}\binits{A.~E.}}
(\byear{2024}).
\btitle{General path integrals and stable {SDE}s}.
\bjournal{Journal of the European Mathematical Society}
\bvolume{26}
\bpages{3243–3286}.
\end{barticle}
\endbibitem

\bibitem{BDS-2}
\begin{barticle}[author]
\bauthor{\bsnm{Baguley},~\bfnm{S.}\binits{S.}},
  \bauthor{\bsnm{Döring},~\bfnm{L.}\binits{L.}} \AND
  \bauthor{\bsnm{Shi},~\bfnm{Q.}\binits{Q.}}
(\byear{in preparation}).
\btitle{On the entrance at infinity of Feller processes with two-sided jumps}.
\end{barticle}
\endbibitem

\bibitem{BarBer11}
\begin{barticle}[author]
\bauthor{\bsnm{Barczy},~\bfnm{M\'{a}ty\'{a}s}\binits{M.}} \AND
  \bauthor{\bsnm{Bertoin},~\bfnm{Jean}\binits{J.}}
(\byear{2011}).
\btitle{Functional limit theorems for {L}\'{e}vy processes satisfying
  {C}ram\'{e}r's condition}.
\bjournal{Electron. J. Probab.}
\bvolume{16}
\bpages{no. 73, 2020--2038}.
\bdoi{10.1214/EJP.v16-930}
\bmrnumber{2851054}
\end{barticle}
\endbibitem

\bibitem{Ber-Book}
\begin{bbook}[author]
\bauthor{\bsnm{Bertoin},~\bfnm{Jean}\binits{J.}}
(\byear{1996}).
\btitle{L\'{e}vy processes}.
\bseries{Cambridge Tracts in Mathematics}
\bvolume{121}.
\bpublisher{Cambridge University Press, Cambridge}.
\bmrnumber{1406564}
\end{bbook}
\endbibitem

\bibitem{BerDon94}
\begin{barticle}[author]
\bauthor{\bsnm{Bertoin},~\bfnm{J.}\binits{J.}} \AND
  \bauthor{\bsnm{Doney},~\bfnm{R.~A.}\binits{R.~A.}}
(\byear{1994}).
\btitle{Cram\'{e}r's estimate for {L}\'{e}vy processes}.
\bjournal{Statist. Probab. Lett.}
\bvolume{21}
\bpages{363--365}.
\bdoi{10.1016/0167-7152(94)00032-8}
\bmrnumber{1325211}
\end{barticle}
\endbibitem

\bibitem{BerKor16}
\begin{barticle}[author]
\bauthor{\bsnm{Bertoin},~\bfnm{Jean}\binits{J.}} \AND
  \bauthor{\bsnm{Kortchemski},~\bfnm{Igor}\binits{I.}}
(\byear{2016}).
\btitle{Self-similar scaling limits of {M}arkov chains on the positive
  integers}.
\bjournal{Ann. Appl. Probab.}
\bvolume{26}
\bpages{2556--2595}.
\bdoi{10.1214/15-AAP1157}
\bmrnumber{3543905}
\end{barticle}
\endbibitem

\bibitem{BerSav11}
\begin{barticle}[author]
\bauthor{\bsnm{Bertoin},~\bfnm{Jean}\binits{J.}} \AND
  \bauthor{\bsnm{Savov},~\bfnm{Mladen}\binits{M.}}
(\byear{2011}).
\btitle{Some applications of duality for {L}\'{e}vy processes in a half-line}.
\bjournal{Bull. Lond. Math. Soc.}
\bvolume{43}
\bpages{97--110}.
\bdoi{10.1112/blms/bdq084}
\bmrnumber{2765554}
\end{barticle}
\endbibitem

\bibitem{BerYor}
\begin{barticle}[author]
\bauthor{\bsnm{Bertoin},~\bfnm{Jean}\binits{J.}} \AND
  \bauthor{\bsnm{Yor},~\bfnm{Marc}\binits{M.}}
(\byear{2002}).
\btitle{The entrance laws of self-similar {M}arkov processes and exponential
  functionals of {L}\'{e}vy processes}.
\bjournal{Potential Anal.}
\bvolume{17}
\bpages{389--400}.
\bdoi{10.1023/A:1016377720516}
\bmrnumber{1918243}
\end{barticle}
\endbibitem

\bibitem{RegularVariation}
\begin{bbook}[author]
\bauthor{\bsnm{Bingham},~\bfnm{N.~H.}\binits{N.~H.}},
  \bauthor{\bsnm{Goldie},~\bfnm{C.~M.}\binits{C.~M.}} \AND
  \bauthor{\bsnm{Teugels},~\bfnm{J.~L.}\binits{J.~L.}}
(\byear{1989}).
\btitle{Regular variation}.
\bseries{Encyclopedia of Mathematics and its Applications}
\bvolume{27}.
\bpublisher{Cambridge University Press, Cambridge}.
\bmrnumber{1015093}
\end{bbook}
\endbibitem

\bibitem{BCKW}
\begin{barticle}[author]
\bauthor{\bsnm{Biskup},~\bfnm{Marek}\binits{M.}},
  \bauthor{\bsnm{Chen},~\bfnm{Xin}\binits{X.}},
  \bauthor{\bsnm{Kumagai},~\bfnm{Takashi}\binits{T.}} \AND
  \bauthor{\bsnm{Wang},~\bfnm{Jian}\binits{J.}}
(\byear{2021}).
\btitle{Quenched invariance principle for a class of random conductance models
  with long-range jumps}.
\bjournal{Probability Theory and Related Fields}
\bvolume{180}
\bpages{847-889}.
\bdoi{10.1007/s00440-021-01059-z}
\end{barticle}
\endbibitem

\bibitem{BG}
\begin{bbook}[author]
\bauthor{\bsnm{Blumenthal},~\bfnm{R.~M.}\binits{R.~M.}} \AND
  \bauthor{\bsnm{Getoor},~\bfnm{R.~K.}\binits{R.~K.}}
(\byear{1968}).
\btitle{Markov Processes and Potential Theory}.
\bseries{Pure and Applied Mathematics}.
\bpublisher{Academic Press}.
\end{bbook}
\endbibitem

\bibitem{Blu83}
\begin{barticle}[author]
\bauthor{\bsnm{Blumenthal},~\bfnm{R.~M.}\binits{R.~M.}}
(\byear{1983}).
\btitle{On construction of {M}arkov processes}.
\bjournal{Z. Wahrsch. Verw. Gebiete}
\bvolume{63}
\bpages{433--444}.
\bdoi{10.1007/BF00533718}
\bmrnumber{705615}
\end{barticle}
\endbibitem

\bibitem{Bul-Book}
\begin{bbook}[author]
\bauthor{\bsnm{Buldygin},~\bfnm{Valeri\u{\i}~V.}\binits{V.~V.}},
  \bauthor{\bsnm{Indlekofer},~\bfnm{Karl-Heinz}\binits{K.-H.}},
  \bauthor{\bsnm{Klesov},~\bfnm{Oleg~I.}\binits{O.~I.}} \AND
  \bauthor{\bsnm{Steinebach},~\bfnm{Josef~G.}\binits{J.~G.}}
(\byear{2018}).
\btitle{Pseudo-regularly varying functions and generalized renewal processes}.
\bseries{Probability Theory and Stochastic Modelling}
\bvolume{91}.
\bpublisher{Springer, Cham}.
\bdoi{10.1007/978-3-319-99537-3}
\bmrnumber{3839312}
\end{bbook}
\endbibitem

\bibitem{CLU}
\begin{barticle}[author]
\bauthor{\bsnm{Caballero},~\bfnm{M.~A.}\binits{M.~A.}},
  \bauthor{\bsnm{Lambert},~\bfnm{A.}\binits{A.}} \AND \bauthor{\bsnm{{Uribe
  Bravo}},~\bfnm{G.}\binits{G.}}
(\byear{2009}).
\btitle{Proof(s) of the {L}amperti representation of continuous-state branching
  processes}.
\bjournal{Probability Surveys}
\bvolume{6}
\bpages{62-89}.
\end{barticle}
\endbibitem

\bibitem{CabCha06}
\begin{barticle}[author]
\bauthor{\bsnm{Caballero},~\bfnm{M.~E.}\binits{M.~E.}} \AND
  \bauthor{\bsnm{Chaumont},~\bfnm{L.}\binits{L.}}
(\byear{2006}).
\btitle{Weak convergence of positive self-similar {M}arkov processes and
  overshoots of {L}\'{e}vy processes}.
\bjournal{Ann. Probab.}
\bvolume{34}
\bpages{1012--1034}.
\bdoi{10.1214/009117905000000611}
\bmrnumber{2243877}
\end{barticle}
\endbibitem

\bibitem{ChaumontDoney}
\begin{barticle}[author]
\bauthor{\bsnm{Chaumont},~\bfnm{L.}\binits{L.}} \AND
  \bauthor{\bsnm{Doney},~\bfnm{R.~A.}\binits{R.~A.}}
(\byear{2005}).
\btitle{On {Lévy} processes conditioned to stay positive}.
\bjournal{Electron. J. Probab.}
\bvolume{10}
\bpages{948-961}.
\end{barticle}
\endbibitem

\bibitem{CKPV12}
\begin{barticle}[author]
\bauthor{\bsnm{Chaumont},~\bfnm{Lo\"{\i}c}\binits{L.}},
  \bauthor{\bsnm{Kyprianou},~\bfnm{Andreas}\binits{A.}},
  \bauthor{\bsnm{Pardo},~\bfnm{Juan~Carlos}\binits{J.~C.}} \AND
  \bauthor{\bsnm{Rivero},~\bfnm{V\'{\i}ctor}\binits{V.}}
(\byear{2012}).
\btitle{Fluctuation theory and exit systems for positive self-similar {M}arkov
  processes}.
\bjournal{Ann. Probab.}
\bvolume{40}
\bpages{245--279}.
\bdoi{10.1214/10-AOP612}
\bmrnumber{2917773}
\end{barticle}
\endbibitem

\bibitem{ChuWal-Book}
\begin{bbook}[author]
\bauthor{\bsnm{Chung},~\bfnm{Kai~Lai}\binits{K.~L.}} \AND
  \bauthor{\bsnm{Walsh},~\bfnm{John~B.}\binits{J.~B.}}
(\byear{2005}).
\btitle{Markov processes, {B}rownian motion, and time symmetry},
\bedition{second} ed.
\bseries{Grundlehren der mathematischen Wissenschaften [Fundamental Principles
  of Mathematical Sciences]}
\bvolume{249}.
\bpublisher{Springer, New York}.
\bdoi{10.1007/0-387-28696-9}
\bmrnumber{2152573}
\end{bbook}
\endbibitem

\bibitem{Cli94}
\begin{barticle}[author]
\bauthor{\bsnm{Cline},~\bfnm{Daren B.~H.}\binits{D.~B.~H.}}
(\byear{1994}).
\btitle{Intermediate regular and {$\Pi$} variation}.
\bjournal{Proc. London Math. Soc. (3)}
\bvolume{68}
\bpages{594--616}.
\bdoi{10.1112/plms/s3-68.3.594}
\bmrnumber{1262310}
\end{barticle}
\endbibitem

\bibitem{DDK17}
\begin{barticle}[author]
\bauthor{\bsnm{Dereich},~\bfnm{Steffen}\binits{S.}},
  \bauthor{\bsnm{D\"{o}ring},~\bfnm{Leif}\binits{L.}} \AND
  \bauthor{\bsnm{Kyprianou},~\bfnm{Andreas~E.}\binits{A.~E.}}
(\byear{2017}).
\btitle{Real self-similar processes started from the origin}.
\bjournal{Ann. Probab.}
\bvolume{45}
\bpages{1952--2003}.
\bdoi{10.1214/16-AOP1105}
\bmrnumber{3650419}
\end{barticle}
\endbibitem

\bibitem{DonMal02}
\begin{barticle}[author]
\bauthor{\bsnm{Doney},~\bfnm{R.~A.}\binits{R.~A.}} \AND
  \bauthor{\bsnm{Maller},~\bfnm{R.~A.}\binits{R.~A.}}
(\byear{2002}).
\btitle{Stability of the overshoot for {L}\'{e}vy processes}.
\bjournal{Ann. Probab.}
\bvolume{30}
\bpages{188--212}.
\bdoi{10.1214/aop/1020107765}
\bmrnumber{1894105}
\end{barticle}
\endbibitem

\bibitem{D12}
\begin{barticle}[author]
\bauthor{\bsnm{D{\"o}ring},~\bfnm{Leif}\binits{L.}}
(\byear{2012}).
\btitle{A Jump-Type SDE Approach to Real-Valued Self-Similar Markov Processes}.
\bjournal{Transactions of the American Mathematical Society}
\bvolume{367}.
\bdoi{10.1090/S0002-9947-2015-06270-9}
\end{barticle}
\endbibitem

\bibitem{DK16}
\begin{barticle}[author]
\bauthor{\bsnm{D\"{o}ring},~\bfnm{Leif}\binits{L.}} \AND
  \bauthor{\bsnm{Kyprianou},~\bfnm{Andreas~E.}\binits{A.~E.}}
(\byear{2016}).
\btitle{{Perpetual integrals for {L}\'{e}vy processes}}.
\bjournal{J. Theoret. Probab.}
\bvolume{29}
\bpages{1192--1198}.
\bdoi{10.1007/s10959-015-0607-y}
\bmrnumber{3540494}
\end{barticle}
\endbibitem

\bibitem{DK20}
\begin{barticle}[author]
\bauthor{\bsnm{D\"{o}ring},~\bfnm{Leif}\binits{L.}} \AND
  \bauthor{\bsnm{Kyprianou},~\bfnm{Andreas~E.}\binits{A.~E.}}
(\byear{2020}).
\btitle{Entrance and exit at infinity for stable jump diffusions}.
\bjournal{Ann. Probab.}
\bvolume{48}
\bpages{1220--1265}.
\bdoi{10.1214/19-AOP1389}
\bmrnumber{4112713}
\end{barticle}
\endbibitem

\bibitem{DT23}
\begin{barticle}[author]
\bauthor{\bsnm{D\"oring},~\bfnm{Leif}\binits{L.}} \AND
  \bauthor{\bsnm{Trottner},~\bfnm{Lukas}\binits{L.}}
(\byear{2023}).
\btitle{Stability of overshoots of {M}arkov additive processes}.
\bjournal{Ann. Appl. Probab.}
\bvolume{33}
\bpages{5413--5458}.
\bdoi{10.1214/23-aap1951}
\bmrnumber{4677737}
\end{barticle}
\endbibitem

\bibitem{BD12}
\begin{barticle}[author]
\bauthor{\bsnm{Döring},~\bfnm{Leif}\binits{L.}} \AND
  \bauthor{\bsnm{Barczy},~\bfnm{Matyas}\binits{M.}}
(\byear{2012}).
\btitle{Jump type SDEs for self-similar processes}.
\bjournal{Electron. J. Probab.}
\bvolume{17}
\bpages{no. 94, 1-39}.
\bdoi{10.1214/EJP.v17-2402}
\end{barticle}
\endbibitem

\bibitem{EriMal05}
\begin{bincollection}[author]
\bauthor{\bsnm{Erickson},~\bfnm{K.~Bruce}\binits{K.~B.}} \AND
  \bauthor{\bsnm{Maller},~\bfnm{Ross~A.}\binits{R.~A.}}
(\byear{2005}).
\btitle{Generalised {O}rnstein-{U}hlenbeck processes and the convergence of
  {L}\'{e}vy integrals}.
In \bbooktitle{S\'{e}minaire de {P}robabilit\'{e}s {XXXVIII}}.
\bseries{Lecture Notes in Math.}
\bvolume{1857}
\bpages{70--94}.
\bpublisher{Springer, Berlin}.
\bdoi{10.1007/978-3-540-31449-3\_6}
\bmrnumber{2126967}
\end{bincollection}
\endbibitem

\bibitem{Feller52}
\begin{barticle}[author]
\bauthor{\bsnm{Feller},~\bfnm{W.}\binits{W.}}
(\byear{1952}).
\btitle{The parabolic differential equations and the associated semi-groups of
  transformations}.
\bjournal{Annals of Mathematics}
\bvolume{55}
\bpages{468–519}.
\end{barticle}
\endbibitem

\bibitem{Feller54}
\begin{barticle}[author]
\bauthor{\bsnm{Feller},~\bfnm{W.}\binits{W.}}
(\byear{1954}).
\btitle{The general diffusion operator and positivity preserving semi-groups in
  one dimension}.
\bjournal{Annals of Mathematics}
\bvolume{60}
\bpages{417-436}.
\end{barticle}
\endbibitem

\bibitem{Fit06}
\begin{barticle}[author]
\bauthor{\bsnm{Fitzsimmons},~\bfnm{P.~J.}\binits{P.~J.}}
(\byear{2006}).
\btitle{On the existence of recurrent extensions of self-similar {M}arkov
  processes}.
\bjournal{Electron. Comm. Probab.}
\bvolume{11}
\bpages{230--241}.
\bdoi{10.1214/ECP.v11-1222}
\bmrnumber{2266714}
\end{barticle}
\endbibitem

\bibitem{Fou19}
\begin{barticle}[author]
\bauthor{\bsnm{Foucart},~\bfnm{Cl\'{e}ment}\binits{C.}}
(\byear{2019}).
\btitle{Continuous-state branching processes with competition: duality and
  reflection at infinity}.
\bjournal{Electron. J. Probab.}
\bvolume{24}
\bpages{Paper No. 33, 38}.
\bdoi{10.1214/19-EJP299}
\bmrnumber{3940763}
\end{barticle}
\endbibitem

\bibitem{FLZ21}
\begin{barticle}[author]
\bauthor{\bsnm{Foucart},~\bfnm{Clément}\binits{C.}},
  \bauthor{\bsnm{Li},~\bfnm{Pei-Sen}\binits{P.-S.}} \AND
  \bauthor{\bsnm{Zhou},~\bfnm{Xiaowen}\binits{X.}}
(\byear{2021}).
\btitle{{Time-changed spectrally positive Lévy processes started from
  infinity}}.
\bjournal{Bernoulli}
\bvolume{27}
\bpages{1291 -- 1318}.
\bdoi{10.3150/20-BEJ1274}
\end{barticle}
\endbibitem

\bibitem{Get90}
\begin{bbook}[author]
\bauthor{\bsnm{Getoor},~\bfnm{R.~K.}\binits{R.~K.}}
(\byear{1990}).
\btitle{Excessive measures}.
\bseries{Probability and its Applications}.
\bpublisher{Birkh\"{a}user Boston, Inc., Boston, MA}.
\bdoi{10.1007/978-1-4612-3470-8}
\bmrnumber{1093669}
\end{bbook}
\endbibitem

\bibitem{GoinYor03}
\begin{barticle}[author]
\bauthor{\bsnm{G{\"o}ing-Jaeschke},~\bfnm{Anja}\binits{A.}} \AND
  \bauthor{\bsnm{Yor},~\bfnm{Marc}\binits{M.}}
(\byear{2003}).
\btitle{A survey and some generalizations of {B}essel processes}.
\bjournal{Bernoulli}
\bvolume{9}
\bpages{313--349}.
\bdoi{10.3150/bj/1068128980}
\bmrnumber{1997032 (2004g:60098)}
\end{barticle}
\endbibitem

\bibitem{Kallenberg}
\begin{bbook}[author]
\bauthor{\bsnm{Kallenberg},~\bfnm{Olav}\binits{O.}}
(\byear{2002}).
\btitle{Foundations of modern probability},
\bedition{second} ed.
\bseries{Probability and its Applications (New York)}.
\bpublisher{Springer-Verlag}, \baddress{New York}.
\bmrnumber{1876169 (2002m:60002)}
\end{bbook}
\endbibitem

\bibitem{KallenbergRM}
\begin{bbook}[author]
\bauthor{\bsnm{Kallenberg},~\bfnm{Olav}\binits{O.}}
(\byear{2017}).
\btitle{Random measures, theory and applications}.
\bseries{Probability Theory and Stochastic Modelling}
\bvolume{77}.
\bpublisher{Springer, Cham}.
\bdoi{10.1007/978-3-319-41598-7}
\bmrnumber{3642325}
\end{bbook}
\endbibitem

\bibitem{KT}
\begin{bbook}[author]
\bauthor{\bsnm{Karr},~\bfnm{Alan~F.}\binits{A.~F.}}
(\byear{1983}).
\btitle{A Second Course in Stochastic Processes (Samuel Karlin and Howard M.
  Taylor)}
\bvolume{25}.
\bdoi{10.1137/1025063}
\end{bbook}
\endbibitem

\bibitem{Kas88}
\begin{barticle}[author]
\bauthor{\bsnm{Kaspi},~\bfnm{H.}\binits{H.}}
(\byear{1988}).
\btitle{Random time changes for processes with random birth and death}.
\bjournal{Ann. Probab.}
\bvolume{16}
\bpages{586--599}.
\bmrnumber{929064}
\end{barticle}
\endbibitem

\bibitem{KolSav19}
\begin{barticle}[author]
\bauthor{\bsnm{Kolb},~\bfnm{Martin}\binits{M.}} \AND
  \bauthor{\bsnm{Savov},~\bfnm{Mladen}\binits{M.}}
(\byear{2020}).
\btitle{{A characterization of the finiteness of perpetual integrals of
  {L}\'{e}vy processes}}.
\bjournal{Bernoulli}
\bvolume{26}
\bpages{1453--1472}.
\bdoi{10.3150/19-BEJ1167}
\bmrnumber{4058374}
\end{barticle}
\endbibitem

\bibitem{Kyp-Book}
\begin{bbook}[author]
\bauthor{\bsnm{Kyprianou},~\bfnm{Andreas~E.}\binits{A.~E.}}
(\byear{2014}).
\btitle{Fluctuations of {L}\'{e}vy processes with applications},
\bedition{second} ed.
\bseries{Universitext}.
\bpublisher{Springer, Heidelberg}
\bnote{Introductory lectures}.
\bdoi{10.1007/978-3-642-37632-0}
\bmrnumber{3155252}
\end{bbook}
\endbibitem

\bibitem{KypEtAl-17}
\begin{barticle}[author]
\bauthor{\bsnm{Kyprianou},~\bfnm{Andreas~E.}\binits{A.~E.}},
  \bauthor{\bsnm{Pagett},~\bfnm{Steven~W.}\binits{S.~W.}},
  \bauthor{\bsnm{Rogers},~\bfnm{Tim}\binits{T.}} \AND
  \bauthor{\bsnm{Schweinsberg},~\bfnm{Jason}\binits{J.}}
(\byear{2017}).
\btitle{A phase transition in excursions from infinity of the ``fast''
  fragmentation-coalescence process}.
\bjournal{Ann. Probab.}
\bvolume{45}
\bpages{3829--3849}.
\bdoi{10.1214/16-AOP1150}
\bmrnumber{3729616}
\end{barticle}
\endbibitem

\bibitem{KypPar-Book}
\begin{bbook}[author]
\bauthor{\bsnm{Kyprianou},~\bfnm{Andreas~E.}\binits{A.~E.}} \AND
  \bauthor{\bsnm{Pardo},~\bfnm{Juan~Carlos}\binits{J.~C.}}
(\byear{2022}).
\btitle{Stable {L}\'evy processes via {L}amperti-type representations}.
\bseries{Institute of Mathematical Statistics (IMS) Monographs}
\bvolume{7}.
\bpublisher{Cambridge University Press, Cambridge}.
\bdoi{10.1017/9781108648318}
\bmrnumber{4692990}
\end{bbook}
\endbibitem

\bibitem{LiYangZhou}
\begin{barticle}[author]
\bauthor{\bsnm{Li},~\bfnm{Pei-Sen}\binits{P.-S.}},
  \bauthor{\bsnm{Yang},~\bfnm{Xu}\binits{X.}} \AND
  \bauthor{\bsnm{Zhou},~\bfnm{Xiaowen}\binits{X.}}
(\byear{2019}).
\btitle{{A general continuous-state nonlinear branching process}}.
\bjournal{The Annals of Applied Probability}
\bvolume{29}
\bpages{2523 -- 2555}.
\bdoi{10.1214/18-AAP1459}
\end{barticle}
\endbibitem

\bibitem{mitro}
\begin{barticle}[author]
\bauthor{\bsnm{Mitro},~\bfnm{Joanna~B.}\binits{J.~B.}}
(\byear{1979}).
\btitle{Dual Markov processes: Construction of a useful auxiliary process}.
\bjournal{Zeitschrift f{\"u}r Wahrscheinlichkeitstheorie und Verwandte Gebiete}
\bvolume{47}
\bpages{139--156}.
\bdoi{10.1007/BF00535279}
\end{barticle}
\endbibitem

\bibitem{PitYor82}
\begin{barticle}[author]
\bauthor{\bsnm{Pitman},~\bfnm{Jim}\binits{J.}} \AND
  \bauthor{\bsnm{Yor},~\bfnm{Marc}\binits{M.}}
(\byear{1982}).
\btitle{A decomposition of {B}essel bridges}.
\bjournal{Z. Wahrsch. Verw. Gebiete}
\bvolume{59}
\bpages{425--457}.
\bdoi{10.1007/BF00532802}
\bmrnumber{656509}
\end{barticle}
\endbibitem

\bibitem{RevuzYor}
\begin{bbook}[author]
\bauthor{\bsnm{Revuz},~\bfnm{Daniel}\binits{D.}} \AND
  \bauthor{\bsnm{Yor},~\bfnm{Marc}\binits{M.}}
(\byear{1999}).
\btitle{Continuous martingales and {B}rownian motion},
\bedition{third} ed.
\bseries{Grundlehren der Mathematischen Wissenschaften [Fundamental Principles
  of Mathematical Sciences]}
\bvolume{293}.
\bpublisher{Springer-Verlag, Berlin}.
\bdoi{10.1007/978-3-662-06400-9}
\bmrnumber{1725357}
\end{bbook}
\endbibitem

\bibitem{Riv05}
\begin{barticle}[author]
\bauthor{\bsnm{Rivero},~\bfnm{V\'{\i}ctor}\binits{V.}}
(\byear{2005}).
\btitle{Recurrent extensions of self-similar {M}arkov processes and
  {C}ram\'{e}r's condition}.
\bjournal{Bernoulli}
\bvolume{11}
\bpages{471--509}.
\bdoi{10.3150/bj/1120591185}
\bmrnumber{2146891}
\end{barticle}
\endbibitem

\bibitem{Riv07}
\begin{barticle}[author]
\bauthor{\bsnm{Rivero},~\bfnm{V\'{\i}ctor}\binits{V.}}
(\byear{2007}).
\btitle{Recurrent extensions of self-similar {M}arkov processes and
  {C}ram\'{e}r's condition. {II}}.
\bjournal{Bernoulli}
\bvolume{13}
\bpages{1053--1070}.
\bdoi{10.3150/07-BEJ6082}
\bmrnumber{2364226}
\end{barticle}
\endbibitem

\bibitem{Salisbury86}
\begin{barticle}[author]
\bauthor{\bsnm{Salisbury},~\bfnm{Thomas~S.}\binits{T.~S.}}
(\byear{1986}).
\btitle{Construction of right processes from excursions}.
\bjournal{Probab. Theory Related Fields}
\bvolume{73}
\bpages{351--367}.
\bdoi{10.1007/BF00776238}
\bmrnumber{859838}
\end{barticle}
\endbibitem

\bibitem{Sal86-Ito}
\begin{barticle}[author]
\bauthor{\bsnm{Salisbury},~\bfnm{Thomas~S.}\binits{T.~S.}}
(\byear{1986}).
\btitle{On the {I}t\^{o} excursion process}.
\bjournal{Probab. Theory Related Fields}
\bvolume{73}
\bpages{319--350}.
\bdoi{10.1007/BF00776237}
\bmrnumber{859837}
\end{barticle}
\endbibitem

\bibitem{Vuo94}
\begin{barticle}[author]
\bauthor{\bsnm{Vuolle-Apiala},~\bfnm{J.}\binits{J.}}
(\byear{1994}).
\btitle{It\^{o} excursion theory for self-similar {M}arkov processes}.
\bjournal{Ann. Probab.}
\bvolume{22}
\bpages{546--565}.
\bmrnumber{1288123}
\end{barticle}
\endbibitem

\bibitem{Whitt80}
\begin{barticle}[author]
\bauthor{\bsnm{Whitt},~\bfnm{W.}\binits{W.}}
(\byear{1980}).
\btitle{Some useful functions for functional limit theorems}.
\bjournal{Mathematics of operations research}.
\end{barticle}
\endbibitem

\bibitem{Whitt}
\begin{bbook}[author]
\bauthor{\bsnm{Whitt},~\bfnm{W.}\binits{W.}}
(\byear{2002}).
\btitle{Stochastic-Process Limits}.
\bseries{Springer series in operations research}.
\bpublisher{Springer-Verlag}.
\end{bbook}
\endbibitem

\bibitem{Zanzotto02}
\begin{barticle}[author]
\bauthor{\bsnm{Zanzotto},~\bfnm{P.~A.}\binits{P.~A.}}
(\byear{2002}).
\btitle{On stochastic differential equations driven by a {C}auchy process and
  other stable {L}évy motions}.
\bjournal{Annals of Probability}
\bvolume{30}
\bpages{802-825}.
\end{barticle}
\endbibitem

\end{thebibliography}
\end{document}